\numberwithin{equation}{section}
\numberwithin{figure}{section}
\theoremstyle{plain}
\newtheorem{thm}{\protect\theoremname}[section]
\theoremstyle{plain}
\newtheorem{cor}[thm]{\protect\corollaryname}
\theoremstyle{plain}
\newtheorem{conjecture}[thm]{\protect\conjecturename}
\theoremstyle{remark}
\newtheorem{rem}[thm]{\protect\remarkname}
\theoremstyle{plain}
\newtheorem{lem}[thm]{\protect\lemmaname}
\theoremstyle{definition}
\newtheorem{defn}[thm]{\protect\definitionname}
\theoremstyle{plain}
\newtheorem{prop}[thm]{\protect\propositionname}
\def\makebbb#1{
    \expandafter\gdef\csname#1\endcsname{
        \ensuremath{\Bbb{#1}}}
}\makebbb{R}\makebbb{N}\makebbb{Z}\makebbb{C}\makebbb{H}\makebbb{E}\makebbb{H}\makebbb{P}\makebbb{B}\makebbb{Q}\makebbb{E}
\providecommand{\conjecturename}{Conjecture}
\providecommand{\corollaryname}{Corollary}
\providecommand{\definitionname}{Definition}
\providecommand{\lemmaname}{Lemma}
\providecommand{\propositionname}{Proposition}
\providecommand{\remarkname}{Remark}
\providecommand{\theoremname}{Theorem}
\begin{document}
\title{Kähler-Einstein metrics arising from micro-canonical measures and
Hamiltonian dynamics}
\author{Robert J. Berman}
\begin{abstract}
We introduce new probabilistic and variational constructions of (twisted)
Kähler-Einstein metrics on complex projective algebraic varieties,
drawing inspiration from Onsager's statistical mechanical model of
turbulence in two-dimensional incompressible fluids. The probabilistic
construction involves microcanonical measures associated with the
level sets of the pluricomplex energy, which give rise to maximum
entropy principles. These, in turn, yield novel characterizations
of Kähler-Einstein metrics, as well of Fano varieties admitting such
metrics. Additionally, connections to Hamiltonian dynamics are uncovered,
resulting in a new evolution equation, that generalizes both the 2D
incompressible Euler equation and the 2D semi-geostrophic equation.
\end{abstract}

\maketitle

\section{Introduction}

The primary aim of this work is to introduce new probabilistic and
variational constructions of Kähler-Einstein metrics, along with their
twisted counterparts, complementing the constructions in \cite{berm8,berm8 comma 5}
and \cite{bbgz,berm6,bbegz}, respectively. The inspiration comes
from Onsager's statistical mechanical description of turbulence in
two-dimensional incompressible fluids in terms of point vortices and
the emergence of large-scale long-lived coherent structures, such
as vortices and jets \cite{o}. In particular, this work builds upon
the mathematical developments of Onsager's approach in \cite{e-s,clmp2,e-h-t,ki2}. 

\subsection{\label{subsec:Setup intro}Complex-geometric setup}

We start by introducing the complex-geometric setup (see Section \ref{subsec:Preliminaries polarized}
for further background). Let $X$ be a compact complex manifold of
dimension $n$ endowed with a big holomorphic line bundle $L.$ Denoting
by $H^{0}(X,kL)$ the space of all global holomorphic sections with
values in the $k$ th tensor power of $L$ (adopting additive notation
for tensor powers) this means that 
\begin{equation}
N:=N_{k}:=\dim H^{0}(X,kL)=\text{vol\ensuremath{(L)}}k^{n}/n!+o(k^{n})\label{eq:def of N k}
\end{equation}
 for a strictly positive number $\text{vol \ensuremath{(L)}, }$called
the \emph{volume} of $L.$ In particular, $N\rightarrow\infty$ iff
$k\rightarrow\infty.$ 

We fix the data $(\left\Vert \cdot\right\Vert ,dV)$ consisting of
a Hermitian metric $\left\Vert \cdot\right\Vert $ on $L$ and a volume
form $dV$ on $X$ with (at worst) divisorial singularities. We denote
by $\theta$ the curvature two-form of the metric $\left\Vert \cdot\right\Vert $
on $L$ multiplied by $i/2\pi$ and assume that it is \emph{not }the
case that $\theta\geq0$ and $\theta^{n}=CdV$ for some constant $C$
(otherwise the function $e(\beta)$ in Thm \ref{thm:micro variational principle low energ intro}
would be constant; see Remark \ref{rem:e beta constant}). We will
introduce new probabilistic and variational constructions of solutions
to the following \emph{twisted Kähler-Einstein equation} induced by
the data $(\theta,dV)$ and a parameter $\beta\in\R:$
\begin{equation}
\mbox{\ensuremath{\mbox{Ric}}\ensuremath{\omega}}+\beta\omega=\beta\theta+\ensuremath{\mbox{Ric}}\ensuremath{dV}\label{eq:twisted KE intro}
\end{equation}
 for a positive current $\omega$ in the first Chern class of $L$
such that $\log(\omega_{\beta}^{n}/dV)\in L^{1}(X)$ (the solution
is a Kähler metric iff $L$ is ample and $\ensuremath{dV}$ is a smooth
volume form \cite{au,y,bbegz}). The starting point of the constructions
is the basic fact that a solution $\omega_{\beta}$ can be recovered
from its volume form $\omega_{\beta}^{n}:$
\begin{equation}
\omega_{\beta}:=\frac{i}{2\pi\beta}\partial\bar{\partial}\left(\log\frac{\omega_{\beta}^{n}}{dV}\right)+\theta,\label{eq:omega beta in terms of mu}
\end{equation}
 when $\beta\neq0.$ We recall that when $\beta=0,$ the equation
\ref{eq:twisted KE intro} is the \emph{Calabi-Yau equation} \cite{y,begz,bbgz}
\begin{equation}
\mbox{\ensuremath{\mbox{Ric}}\ensuremath{\omega}}=\ensuremath{\mbox{Ric}}\ensuremath{dV}\,\,\,\,\:\,\,\,\left(\iff\omega^{n}=\frac{\text{vol}\ensuremath{(L)}}{\int_{X}dV}dV\right)\label{eq:cy eq}
\end{equation}
 Moreover, when the data $(\theta,dV)$ satisfies the following compatibility
condition 
\begin{equation}
\ensuremath{\text{Ric}}\ensuremath{dV=-\beta\theta}\label{eq:theta is Ricci}
\end{equation}
a solution $\omega$ to the corresponding equation \ref{eq:twisted KE intro}
for $\beta\in\{\pm1,0\}$ is a \emph{Kähler-Einstein metric,} i.e.
a Kähler metric with constant Ricci curvature: 
\begin{equation}
\mbox{\ensuremath{\mbox{Ric}}\ensuremath{\omega}}=-\beta\omega.\label{eq:KE eq intro}
\end{equation}
When $\beta=\pm1$ the compatibility condition \ref{eq:theta is Ricci}
implies that $L=\pm K_{X}$ (where $K_{X}$ denotes the canonical
line bundle of $X$) and it amounts to taking the metric $\left\Vert \cdot\right\Vert $
on $L$ to be the one induced by any multiple of $dV.$ More generally,
by taking the volume form $dV$ to be singular along a given divisor
$\Delta$ in $X$ one gets, when $\beta\in\{\pm1,0\}$ Kähler-Einstein
metrics that are singular along $\Delta;$ see Section \ref{subsec:K=0000E4hler-Einstein-metrics}. 

As recalled in Section \ref{subsec:Singular_complex_varieties} the
setup above (and the results below) essentially contains the case
when $X$ has log terminal (klt) singularities. Such complex varieties
first appeared in the Minimal Model Program in birational algebraic
geometry and have come to play a prominent role in Kähler geometry
\cite{s-t,berman6ii,li1,l-x-z}. But it should be stressed that the
results below are new already when $X$ and $dV$ are smooth and $L$
is ample.

\subsubsection{Energy}

The pair $(\left\Vert \cdot\right\Vert ,dV)$ induces a Hilbert space
structure on $H^{0}(X,kL).$ The Slater determinant $\Psi^{(k)}$
of the Hilbert space $H^{0}(X,kL)$ is the holomorphic section of
$(kL)^{\boxtimes N_{k}}\rightarrow X^{N_{k}}$ determined, up to a
choice of sign, by demanding that it be totally anti-symmetric and
normalized: 
\[
(N_{k}!)^{-1}\int_{X^{N_{k}}}\left\Vert \Psi^{(k)}\right\Vert ^{2}dV^{\otimes N_{k}}=1
\]
Concretely,
\[
\Psi^{(k)}(x_{1},...,x_{N_{k}}):=\det\left(\Psi_{i}^{(k)}(x_{j})\right)_{i,j\leq N_{k}}
\]
where $\Psi_{1}^{(k)},...,\Psi_{N_{k}}^{(k)}$ is a fixed orthonormal
basis in the Hilbert space $H^{0}(X,kL).$ Consider the following
lsc function on $X^{N}$ (that only depends on the data $(\theta,dV)$):

\[
E^{(N_{k})}(x_{1},...,x_{N_{k}}):=-\frac{1}{N_{k}k}\log\left\Vert \Psi^{(k)}(x_{1},...,x_{N_{k}})\right\Vert ^{2},
\]
(in the statistical mechanical framework, discussed in Section \ref{subsec:Equivalence-of-the},
it represents the \emph{energy per particle}). Under the embedding
defined by the empirical measure 
\begin{equation}
\delta_{N}:\,\,X^{N}/S_{N}\rightarrow\mathcal{P}(X):\,\,\,\delta_{N}(x_{1},\ldots,x_{N}):=\frac{1}{N}\sum_{i=1}^{N}\delta_{x_{i}}\label{eq:emp measure intro}
\end{equation}
 from the space $X^{N}/S_{N}$ of configurations of $N$ points on
$X$ into the space $\mathcal{P}(X)$ of probability measure, the
following convergence, established in \cite{berm8,berm8 comma 5},
holds in the sense of Gamma-convergence (see Definition \ref{def:Gamma}): 

\emph{
\[
\lim_{N_{k}\rightarrow\infty}E^{(N_{k})}(x_{1},...,x_{N_{k}})=E(\mu),
\]
} where $E(\mu)$ is the \emph{pluricomplex energy of} $\mu$ (relative
to $\theta)$ introduced in \cite{bbgz} , normalized so that 
\[
e_{\text{\ensuremath{\text{min }}}}:=\inf_{\mathcal{P}(X)}E(\mu)=0
\]
(see Section \ref{subsec:The pluricomplex energy}). Set 
\[
e_{0}:=E(dV/\int_{X}dV)
\]
The assumptions on $(\theta,dV)$ ensure that $e_{0}>0.$

\subsubsection{Entropy}

The \emph{entropy} $S(\mu)$ of a probability measure $\mu,$ relative
$dV$ and the \emph{specific entropy $S(e)$ at an energy level }$e$
are defined by 
\begin{equation}
S(\mu):=-\int_{X}\log(\mu/dV)\mu,\,\,\,\,S(e):=\sup_{\mu\in\mathcal{P}(X)}\left\{ S(\mu):\,\,E(\mu)=e\right\} \label{eq:def of S intro}
\end{equation}
(adopting the sign convention used in the statistical mechanics literature).
More precisely, if $\mu$ is not absolutely continuous wrt $dV$ then
$S(\mu):=-\infty$ and if $\{\mu:E(\mu)=e\}$ is empty, then $S(e):=-\infty.$
Given $e\in\R$ a probability measure $\mu^{e}$ is called a \emph{maximum
entropy measure} (of energy $e)$ if it attains the supremum defining
$S(e).$ 

\subsection{Main results in the low energy regime $(e\protect\leq e_{0})$}

Given $e$ and $\epsilon\in]0,\infty]$ consider the uniform probability
measure on the ``lower energy shell'' $\left\{ E^{(N)}\in]e-\epsilon,e[\right\} $
in $X^{N}:$

\begin{equation}
\mu_{]e-\epsilon,e[}^{(N)}:=1_{\left\{ E^{(N)}\in]e-\epsilon,e[\right\} }dV^{\otimes N}/\int_{\left\{ E^{(N)}\in]e-\epsilon,e[\right\} }dV^{\otimes N}.\label{eq:def of lower micro intro}
\end{equation}
We will view the empirical measure $\delta_{N}$ (formula \ref{eq:emp measure intro})
as a random measure, i.e. a random variable on the probability space
$(X^{N},\mu_{]e-\epsilon,e[}^{(N)}).$
\begin{thm}
\label{thm:LDP for micro intro}Given $e\in]0,e_{0}]$ and $\epsilon\in]0,\infty],$ 

\[
\lim_{N\rightarrow\infty}N^{-1}\log\int_{\left\{ E^{(N)}\in]e-\epsilon,e[\right\} }dV^{\otimes N}=S(e)
\]
 and $\delta_{N}$ converges in probability towards the unique maximum
entropy measure $\mu^{e}$ of energy $e.$ Additionally, if $e\in]0,e_{0}[,$
the convergence is \emph{exponential}, i.e. for any given $\delta>0$
there exists $C>0$ such that
\[
\text{Prob \ensuremath{\left(d(\frac{1}{N}\sum_{i=1}^{N}\delta_{x_{i}},\mu^{e})\geq\delta\right)\leq Ce^{-N/C}}},
\]
where $d$ denotes a fixed metric on $\mathcal{P}(X),$ metrizing
the weak topology. 
\end{thm}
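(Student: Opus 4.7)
The strategy is to derive the asserted conditional LDP by combining two well-known inputs: Sanov's theorem for the empirical measure under the product measure $dV^{\otimes N}$, and the $\Gamma$-convergence $E^{(N)}\rightarrow E$ along the empirical measure embedding, established in \cite{berm8,berm8 comma 5}. Concretely, writing $\mu_{0}:=dV/\int_{X}dV$ and using $\int_{A}dV^{\otimes N}=(\int_{X}dV)^{N}\mu_{0}^{\otimes N}(A)$, Sanov's theorem yields for any Borel $A\subset\mathcal{P}(X)$ the asymptotics $N^{-1}\log\int_{\{\delta_{N}\in A\}}dV^{\otimes N}\rightarrow\sup_{A}S(\mu)$, as an upper bound over $\overline{A}$ and a matching lower bound over the interior. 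The role of the $\Gamma$-convergence is to transfer the microscopic constraint $\{E^{(N)}\in\,]e-\epsilon,e[\}$ into an asymptotic constraint on $\delta_{N}$.

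For the upper bound on $Z_{N}:=\int_{\{E^{(N)}\in\,]e-\epsilon,e[\}}dV^{\otimes N}$, the $\Gamma$-liminf inequality forces any weak accumulation point $\mu$ of $\delta_{N}(x^{(N)})$ along configurations with $E^{(N)}(x^{(N)})<e$ to satisfy $E(\mu)\leq e$; a finite open cover of the compact space $\mathcal{P}(X)$ by small Wasserstein balls then yields $\limsup N^{-1}\log Z_{N}\leq\sup\{S(\mu):E(\mu)\leq e\}$. For the lower bound, I would fix $\eta\in\,]0,\epsilon[$ and choose $\mu^{\eta}$ with $E(\mu^{\eta})=e-\eta$ and $S(\mu^{\eta})$ close to $S(e)$, using the left-continuity of the specific entropy $S(\cdot)$ at $e$. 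The $\Gamma$-limsup clause then provides a recovery sequence $x^{(N)}$ with $\delta_{N}(x^{(N)})\rightarrow\mu^{\eta}$ and $E^{(N)}(x^{(N)})\rightarrow e-\eta$, placing $x^{(N)}$ inside the shell for all large $N$; a Sanov-type lower bound in a Wasserstein neighbourhood of $\mu^{\eta}$ then gives $\liminf N^{-1}\log Z_{N}\geq S(\mu^{\eta})$, and letting $\eta\downarrow 0$ delivers the matching $\liminf\geq S(e)$.

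These two bounds together establish a conditional LDP for $\delta_{N}$ under the micro-canonical measure $\nu_{N}:=Z_{N}^{-1}\mathbf{1}_{\{E^{(N)}\in\,]e-\epsilon,e[\}}dV^{\otimes N}$, with rate function $I(\mu)=S(e)-S(\mu)$ on $\{E(\mu)\leq e\}$ and $+\infty$ elsewhere. Uniqueness of the maximum entropy measure $\mu^{e}$ follows from the strict concavity of the entropy functional $S$ on the convex sublevel set $\{E\leq e\}$ (using the convexity of the pluricomplex energy from \cite{bbgz}); consequently $I$ vanishes only at $\mu^{e}$, and the upper bound applied to closed neighbourhood-complements of $\mu^{e}$ forces $\delta_{N}\rightarrow\mu^{e}$ in $\nu_{N}$-probability. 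For $e\in\,]0,e_{0}[$ the maximizer sits in the interior of $\{E\leq e\}$, and a compactness argument combined with uniqueness of $\mu^{e}$ produces a uniform entropy gap $\sup\{S(\mu):d(\mu,\mu^{e})\geq\delta,\,E(\mu)\leq e\}<S(e)$, which upgrades the convergence in probability to the exponential rate $Ce^{-N/C}$.

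The hard part will be establishing the qualitative properties of the specific entropy function $S(\cdot)$ on $[0,e_{0}]$ that feed the matching of upper and lower bounds: strict monotonicity, left-continuity at $e$, and uniqueness of the maximizer $\mu^{e}$. These reduce to the strict convexity of the pluricomplex energy $E$ along affine paths in $\mathcal{P}(X)$, a nontrivial input from \cite{bbgz}, together with the strict concavity of the entropy functional. Once these ingredients are secured, the combination of the $\Gamma$-convergence of $E^{(N)}$ with Sanov's theorem is essentially routine.
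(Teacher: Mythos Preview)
Your lower bound argument has a genuine gap. The $\Gamma$-limsup clause gives you a \emph{single} recovery sequence $x^{(N)}$ with $\delta_{N}(x^{(N)})\rightarrow\mu^{\eta}$ and $E^{(N)}(x^{(N)})\rightarrow e-\eta$, but a Sanov lower bound requires that an entire Wasserstein neighbourhood $\{\delta_{N}\in B_{\delta}(\mu^{\eta})\}$ lie inside the energy shell $\{E^{(N)}\in\,]e-\epsilon,e[\}$. Gamma-convergence gives you no such uniform control: configurations $y^{(N)}$ with $\delta_{N}(y^{(N)})$ arbitrarily close to $\mu^{\eta}$ can have $E^{(N)}(y^{(N)})$ arbitrarily large (for instance, whenever two of the points nearly collide), so there is no inclusion $\{\delta_{N}\in B_{\delta}(\mu^{\eta})\}\subset\{E^{(N)}<e\}$. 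The paper in fact stresses precisely this point in Section~\ref{subsec:Equivalence-of-the}: the Sanov-plus-conditioning route of \cite{e-s,e-h-t} requires the mean-field approximation \eqref{eq:strong reg assu on E intro}, which fails badly here since $E(\delta_{N}(x))=\infty$ identically, and the resulting conditional LDP can fail even for repulsive singular interactions.

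The paper's proof bypasses this obstacle by passing through the \emph{canonical} ensemble. One first shows that $F(\beta)$ is $C^{1}$ and strictly concave on $]0,\infty[$ (Theorem~\ref{thm:micro variational principle low energ intro}), then invokes the LDP for the Gibbs measures $\nu_{\beta}^{(N)}$ established in \cite{berm8,berm8 comma 5}, which crucially uses the \emph{quasi-superharmonicity} of $E^{(N)}$ rather than mere $\Gamma$-convergence. The convergence $-N^{-1}\log Z_{N,\beta}\rightarrow F(\beta)$ then yields both bounds on the shell integral via an exponential-tilting argument (Theorem~\ref{thm:general conv of mean entropy }): the upper bound by inserting $e^{\beta NE^{(N)}}e^{-\beta NE^{(N)}}$, and the lower bound by choosing $\beta_{j}$ with $F'(\beta_{j})\in\,]e-\epsilon,e[$ and using that $E^{(N)}$ concentrates at $F'(\beta_{j})$ under $\nu_{\beta_{j}}^{(N)}$ (Lemma~\ref{lem:conv in prob av E N}). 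The exponential convergence of $\delta_{N}$ then follows from a Cram\'er-type differentiability criterion (Lemma~\ref{lem:Cramer variant}) applied to the perturbed free energies. In short, you need an analytic input beyond $\Gamma$-convergence---here quasi-superharmonicity---to control the lower bound.
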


The proof of the previous theorem leverages, in particular, the strict
concavity of $S(e)$ established in the following:
\begin{thm}
\label{thm:micro variational principle low energ intro}Given a number
$e\in]0,e_{0}],$ the specific entropy $S$ coincides with its concave
envelope at $e$ and 
\begin{itemize}
\item There exists a unique maximizer $\mu^{e}$ of the entropy $S(\mu)$
on the subspace of all probability measure $\mu$ satisfying $E(\mu)=e$
or equivalently $E(\mu)\leq e.$ In particular, 
\[
S(\mu^{e})=S(e),
\]
 where $S(e)$ is the specific entropy.
\item The maximizer $\mu^{e}$ is the normalized volume form $\omega_{\beta}^{n}/\int_{X}\omega_{\beta}^{n}$
of the unique solution $\omega_{\beta}$ to the twisted Kähler-Einstein
equation \ref{eq:twisted KE intro} for $\beta\geq0$ such that $E(\omega_{\beta}^{n}/\int_{X}\omega_{\beta}^{n})=e.$ 
\item The corresponding function $e\mapsto\beta(e)$ gives a strictly decreasing
continuous map between $[0,e_{0}]$ and $[0,\infty]$ with the property
that
\[
\beta(e)=\partial S(e)/\partial e
\]
\item As a consequence, $S(e)$ is $C^{1}$ and strictly increasing and
strictly concave on $[0,e_{0}].$
\end{itemize}
\end{thm}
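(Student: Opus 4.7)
The strategy is to trade the constrained microcanonical problem for its dual ``canonical'' counterpart and then transfer the information back through Legendre duality. For each $\beta\in[0,\infty)$ introduce the free-energy functional
\[
F_{\beta}(\mu):=S(\mu)-\beta E(\mu)
\]
on the weakly compact space $\mathcal{P}(X)$. Since $S$ is strictly concave and upper semicontinuous while $E$ is convex and lower semicontinuous, $F_\beta$ is strictly concave and upper semicontinuous and thus admits a unique maximizer $\mu_\beta$. Computing the first variation --- using $\delta S/\delta\mu=-\log(\mu/dV)$ together with the differentiability formula $\delta E/\delta\mu=-\phi_\mu$ for the pluricomplex potential from \cite{bbgz} --- yields the Euler--Lagrange equation
\[
\log(\mu_\beta/dV)=\beta\phi_{\mu_\beta}+\mathrm{const},
\]
which, matched against the representation \ref{eq:omega beta in terms of mu}, is equivalent to the twisted K\"ahler--Einstein equation \ref{eq:twisted KE intro}. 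Existence and uniqueness of $\omega_\beta$ for $\beta\geq 0$ is then classical.

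Next, introduce the partition function $Z(\beta):=F_\beta(\mu_\beta)=S(\mu_\beta)-\beta e(\beta)$, where $e(\beta):=E(\mu_\beta)$. As a supremum of affine functions of $\beta$, $Z$ is convex, and the envelope theorem gives $Z'(\beta)=-e(\beta)$. Convexity forces $\beta\mapsto e(\beta)$ to be non-increasing, and strict decrease follows because any equality $e(\beta_1)=e(\beta_2)$ with $\beta_1<\beta_2$ would force $\mu_{\beta_1}=\mu_{\beta_2}$ (each being the unique entropy-maximizer on its own energy level), contradicting the Euler--Lagrange equation at distinct $\beta$. The endpoint $\beta=0$ gives the Calabi--Yau measure $dV/\int_X dV$ of energy $e_0$; comparing $F_\beta(\mu_\beta)\geq F_\beta(dV/\int_X dV)$ forces $e(\beta)\to 0$ as $\beta\to\infty$. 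Continuity of $\beta\mapsto\mu_\beta$ at any finite $\beta$ is a standard weak-compactness/uniqueness argument: any cluster point of $\mu_{\beta_n}$ maximizes $F_\beta$ by combining usc of $S$ with lsc of $E$, hence must equal $\mu_\beta$; continuity of the convex $Z$ then upgrades this to continuity of $e(\beta)$.

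For the microcanonical statement, given $e\in]0,e_0]$ set $\beta(e)$ to be the inverse of $\beta\mapsto e(\beta)$ and $\mu^e:=\mu_{\beta(e)}$. For any $\nu\in\mathcal{P}(X)$ with $E(\nu)\leq e$ one has
\[
S(\nu)=F_{\beta(e)}(\nu)+\beta(e)E(\nu)\leq Z(\beta(e))+\beta(e)e=S(\mu^e),
\]
with equality only at $\nu=\mu^e$ by strict concavity. This simultaneously proves uniqueness of the entropy-maximizer over $\{E\leq e\}$ and $\{E=e\}$, identifies it with the twisted K\"ahler--Einstein volume form, and displays $S(e)=\inf_{\beta\geq 0}(Z(\beta)+\beta e)$ as the Legendre dual of the convex $Z$, so that $S$ coincides with its concave envelope and is concave on $[0,e_0]$. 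Strict concavity, $C^1$ regularity, and the identity $\partial S/\partial e=\beta(e)$ then follow from the standard duality between strict convexity and $C^1$ smoothness, applied to the strictly convex $Z$.

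The step I expect to be hardest is the large-parameter analysis $\beta\to\infty$, which amounts to showing that the family of twisted K\"ahler--Einstein volume forms concentrates on the set of energy minimizers as the coupling becomes strong. This degeneration cannot be read off the Euler--Lagrange equation alone; one must exploit, for instance, the explicit form $\mu_\beta=C_\beta e^{\beta\phi_\beta}dV$ together with uniform energy/entropy estimates to force $e(\beta)\to 0$, thereby securing surjectivity of $\beta\mapsto e(\beta)$ onto all of $[0,e_0]$. A secondary subtlety is the rigorous justification of the first variation of $E$ when the maximizer may have an unbounded potential, but this is handled by the differentiability theory of the pluricomplex energy on the finite-energy class.
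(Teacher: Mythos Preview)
Your overall strategy is correct and coincides with the paper's: pass to the free-energy functional $F_\beta$, exploit strict concavity/convexity to get a unique optimizer $\mu_\beta$ solving the twisted K\"ahler--Einstein equation, and then recover the microcanonical statements by Legendre duality between $\beta$ and $e$. Your argument for the strict monotonicity of $e(\beta)$ is also essentially the paper's Lemma~\ref{lem:e strict decreasing for pos beta}, phrased slightly more directly.

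There is, however, a genuine gap at the point you yourself flag as delicate. The sentence ``comparing $F_\beta(\mu_\beta)\geq F_\beta(dV/\int_X dV)$ forces $e(\beta)\to 0$ as $\beta\to\infty$'' is wrong: with your sign convention this inequality reads $S(\mu_\beta)-\beta e(\beta)\geq S(\mu_0)-\beta e_0$, i.e.\ $\beta\bigl(e_0-e(\beta)\bigr)\geq S(\mu_0)-S(\mu_\beta)\geq 0$, which only gives $e(\beta)\leq e_0$. To push $e(\beta)$ down to $e_{\min}=0$ you must compare against competitors of arbitrarily small energy \emph{and} finite entropy, and such competitors are not available for free because $E$ is highly singular (any discrete measure has $E=\infty$, and energy minimizers need not be absolutely continuous with respect to $dV$). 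The paper handles this via the \emph{energy approximation property} of $dV$ (Definition~\ref{def:def of affine cont and energy appr} and Lemma~\ref{lem:on polarized has affine cont and energy appr}): every $\mu\in\mathcal{P}(X)$ can be weakly approximated by $\mu_j\ll dV$ with $E(\mu_j)\to E(\mu)$. This is a nontrivial potential-theoretic input, proved by analyzing the $\beta\to\infty$ limit of the Monge--Amp\`ere equation \eqref{eq:ma eq with beta} and the convergence of $u_\beta$ to the envelope $u_\theta$ (results from \cite{be0,b-b-w}). Once this property is in hand, Proposition~\ref{Prop:properties when E convex and energy app} gives $\partial F(]0,\infty[)=]e_{\min},e_0[$, i.e.\ surjectivity of $\beta\mapsto e(\beta)$ onto $]0,e_0[$, and your Legendre-duality argument goes through. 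Without it, you only obtain the conclusions of the theorem on the a priori smaller interval $]\lim_{\beta\to\infty}e(\beta),\,e_0]$.
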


\begin{cor}
\label{cor:conv towards S etc in low energ intro}Given $e\in]0,e_{0}]$
and $\epsilon\in]0,\infty]$ the following convergence holds: 
\begin{equation}
\lim_{N\rightarrow\infty}\int_{X^{N-1}}\mu_{]e-\epsilon,e[}^{(N)}=\mu^{e}.\label{eq:conv of first marginal in Cor}
\end{equation}
\end{cor}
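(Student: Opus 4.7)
The plan is to deduce the convergence from the in-probability convergence of the empirical measure $\delta_{N}$ supplied by Theorem \ref{thm:LDP for micro intro}, combined with the permutation symmetry of $\mu_{]e-\epsilon,e[}^{(N)}$. The key identification is that the first marginal in question is precisely the expected empirical measure under $\mu_{]e-\epsilon,e[}^{(N)}$, and once this is in place the result follows from bounded convergence.

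First I would record the symmetry: since the Slater determinant $\Psi^{(k)}$ is anti-symmetric, $\|\Psi^{(k)}\|^{2}$ is symmetric in $(x_{1},\ldots,x_{N})$, hence $E^{(N)}$ is symmetric, and therefore $\mu_{]e-\epsilon,e[}^{(N)}$ is invariant under the natural $S_{N}$-action on $X^{N}$. Consequently, the first marginal $\nu_{N}:=\int_{X^{N-1}}\mu_{]e-\epsilon,e[}^{(N)}$ agrees with the law of every coordinate $x_{i}$, and for any bounded Borel $f$ on $X$ one has
\[
\int_{X}f\,d\nu_{N}=\mathbb{E}_{\mu_{]e-\epsilon,e[}^{(N)}}\!\Bigl[\tfrac{1}{N}\sum_{i=1}^{N}f(x_{i})\Bigr]=\mathbb{E}_{\mu_{]e-\epsilon,e[}^{(N)}}\!\Bigl[\int_{X}f\,d\delta_{N}\Bigr].
\]
Thus $\nu_{N}$ is the mean (or intensity) measure of the random probability measure $\delta_{N}$ on $(X^{N},\mu_{]e-\epsilon,e[}^{(N)})$.

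Next I would promote convergence in probability to convergence of means. Fix $f\in C_{b}(X)$. The map $\nu\mapsto\int_{X}f\,d\nu$ is continuous on $(\mathcal{P}(X),d)$, so Theorem \ref{thm:LDP for micro intro} implies that the real-valued random variable $\int_{X}f\,d\delta_{N}$ converges in probability to the constant $\int_{X}f\,d\mu^{e}$. Since this random variable is uniformly bounded by $\|f\|_{\infty}$, bounded convergence yields
\[
\int_{X}f\,d\nu_{N}=\mathbb{E}\Bigl[\int_{X}f\,d\delta_{N}\Bigr]\longrightarrow\int_{X}f\,d\mu^{e}.
\]
As $f\in C_{b}(X)$ was arbitrary, this is weak convergence $\nu_{N}\to\mu^{e}$ in $\mathcal{P}(X)$, which is the claim.

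There is essentially no hard step. The only point deserving a remark is that the argument works uniformly across both regimes distinguished in Theorem \ref{thm:LDP for micro intro}: in the open range $e\in\,]0,e_{0}[$ one even has exponential concentration, while at the boundary $e=e_{0}$ only convergence in probability is asserted, but this is already enough because $\int f\,d\delta_{N}$ is uniformly bounded, so no additional integrability hypothesis is needed. Thus the corollary is a direct consequence of Theorem \ref{thm:LDP for micro intro} via the symmetry/bounded-convergence argument above.
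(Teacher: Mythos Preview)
Your proof is correct and follows essentially the same route as the paper: use the $S_N$-symmetry of $\mu_{]e-\epsilon,e[}^{(N)}$ to identify the first marginal with the expectation of $\delta_N$, then invoke the convergence in probability from Theorem~\ref{thm:LDP for micro intro} together with boundedness of $\langle \delta_N,f\rangle$ to pass to the limit. If anything, you spell out the bounded convergence step more explicitly than the paper does.
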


In particular, when $X$ is a \emph{variety of general type, }i.e.
$K_{X}$ is big, we can take $L=K_{X}$ and $(\theta,dV)$ satisfying
the compatibility relation \ref{eq:theta is Ricci}. By \cite{au,y,begz,bbgz},
$X$ admits a unique Kähler-Einstein metric $\omega_{\text{KE}}$
and taking $e=E(\mu_{\text{KE}}),$ where $\mu_{\text{KE}}$ denotes
the normalized volume form $\omega_{\text{KE}}^{n}/\int\omega_{\text{KE}}^{n}$
of $\omega_{\text{KE}},$ the results above yield new constructions
of $\mu_{\text{KE}}$ (and thus of $\omega_{\text{KE}},$ using formula
\ref{eq:omega beta in terms of mu} with for $\beta=1).$ Moreover,
for a general big line bundle $L$ the previous result yield a new
construction of the solution $\omega$ of the Calabi-Yau equation
\ref{eq:cy eq} by letting $e$ increase towards $e_{0}:$
\[
\omega=\lim_{e\rightarrow e_{0}}\frac{i}{2\pi dS(s)/de}\partial\bar{\partial}\left(\log\frac{\mu^{e}}{dV}\right)
\]

\subsection{\label{subsec:The-high-energy intro}Main results in the high energy
regime on Fano varieties $(e>e_{0})$}

Now assume that $X$ is a\emph{ Fano manifold.} This means that its
anti-canonical line bundle $-K_{X}$ is ample. We will then take $L:=-K_{X}$
and the metric on $L$ to be the one induced by $dV$ and assume that
the curvature $\theta$ of the metric is semi-positive, $\theta\geq0.$
More generally, we will allow $X$ to have log terminal singularities
(as discussed in Section \ref{subsec:Singular_complex_varieties}).
In this setup the twisted KE equation \ref{eq:twisted KE intro},
for $\beta\in[-1,0[$ coincide with \emph{Aubin's equation} 
\begin{equation}
\mbox{\ensuremath{\mbox{Ric}}\ensuremath{\omega}}=-\beta\omega+(1+\beta)\theta,\label{eq:Aubin intro}
\end{equation}
appearing in Aubin's method of continuity for producing a Kähler-Einstein
metric on a Fano manifold by decreasing $\beta$ from $0$ towards
$-1$ (with $-\beta$ representing ``time''). The positivity assumption
on $\theta$ ensures that a solution - if it exists - is unique for
any $\beta\in]-1,0[$ \cite{b-m,bern,bbegz,c-d-s}. However, for $\beta=-1,$
i.e. when $\omega$ is a \emph{Kähler-Einstein (KE) metric, }the uniqueness
only holds modulo the action of the group $\text{Aut }(X)_{0}$ of
automorphisms of $X$ homotopic to the identity. By the solution of
the Yau-Tian-Donaldson (YTD) conjecture $X$ admits a KE metric iff
$X$ is \emph{K-polystable} \cite{c-d-s,berman6ii,li1,l-x-z}. This
is an algebro-geometric notion akin to stability in Geometric Invariant
Theory. In this case, Aubin's equation can be solved when $\beta\geq-1.$
In general, by \cite{bbj,c-r-z,li1}, the invariant 
\[
R(X):=\sup\left\{ -\beta\in[0,1]:\,\,\,\exists\omega\,\text{solving equation }\ref{eq:Aubin intro}\right\} 
\]
is strictly positive and may be expressed in algebro-geometric terms
as $R(X)=\min\{1,\delta(X)\},$ where $\delta(X)$ is the invariant
introduced in \cite{f-o} and expressed as a stability threshold in
\cite{bl-j}. 

Given $e\in]e_{0},\infty[$ and $\epsilon\in]0,\infty[$ we now consider
the uniform probability measure on the ``upper energy shell'' $\left\{ E^{(N)}\in]e,e+\epsilon[\right\} $
in $X^{N}:$
\begin{equation}
\mu_{]e,e+\epsilon[}^{(N)}:=1_{\left\{ E^{(N)}\in]e,e+\epsilon[\right\} }dV^{\otimes N}/\int_{\left\{ E^{(N)}\in]e,e+\epsilon[\right\} }dV^{\otimes N},\label{eq:def of upper micro}
\end{equation}
We define the \emph{critical energy} $e_{c}$ (depending on $(X,\theta)$)
as follows: if $X$ admits a KE metric, then 
\begin{equation}
e_{c}:=\text{\ensuremath{\inf}}\left\{ E(\mu):\,\,\mu\in\mathcal{P}(X):\,\,\mu\,\,\text{is Kähler-Einstein}\right\} \,\,\left(\in]e_{0},\infty[\right)\label{eq:def of E c intro}
\end{equation}
where $\mu$ is called KE if it is the volume form of a KE metric.
Otherwise, $e_{c}:=\infty.$
\begin{thm}
\label{thm:micro var principle Fano intro}Assume given $e\in]0,e_{c}[.$
When $e_{c}<\infty$ we also allow $e=e_{c},$ but if $\text{Aut }(X)_{0}$
is non-trivial the uniqueness results below then require the additional
assumption $\theta>0$ on the regular locus $X_{\text{reg}}$ of $X.$ 
\begin{itemize}
\item The specific entropy $S$ coincides with its concave envelope at $e.$
\item There exists a unique maximizer $\mu^{e}$ of the entropy $S(\mu)$
on the subspace of all probability measure $\mu$ with pluri-complex
energy $E(\mu)=e$ (or equivalently $E(\mu)\geq e).$ In particular,
\[
S(\mu^{e})=S(e),
\]
 where $S(e)$ is the specific entropy (formula \ref{eq:def of S intro}). 
\item The maximizer $\mu^{e}$ is the normalized volume form $\omega_{\beta}^{n}/\int_{X}\omega_{\beta}^{n}$
of the unique solution $\omega_{\beta}$ to Aubin's equation \ref{eq:Aubin intro}
such that $E(\omega_{\beta}^{n}/\int_{X}\omega_{\beta}^{n})=e,$ where
$\beta<0$ iff $e>e_{0}.$ 
\item the corresponding function $e\mapsto\beta(e)$ gives a strictly decreasing
continuous map between $[0,e_{c}]$ and $[-R(X),\infty]$ with the
property that
\[
\beta(e)=\partial S(e)/\partial e
\]
\item As a consequence, $S(e)$ is $C^{1}$ and strictly concave on $]0,e_{c}[,$
strictly increasing on $]0,e_{0}[$ and strictly decreasing on $]e_{0},e_{c}[.$ 
\end{itemize}
\end{thm}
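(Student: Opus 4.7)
The plan is to mirror the structure of the proof of Theorem~\ref{thm:micro variational principle low energ intro}, adapted to the Fano/Aubin regime where now $\beta<0$, reducing the constrained variational problem to the existence and uniqueness theory for Aubin's equation \eqref{eq:Aubin intro} recalled in Section~\ref{subsec:The-high-energy intro}.

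First I would translate the constrained problem $\sup\{S(\mu):E(\mu)=e\}$ into an unconstrained dual via Lagrange multipliers, introducing the free-energy functional $F_\beta(\mu):=S(\mu)+\beta E(\mu)$ with $\beta<0$. Since $E$ is convex and lower semicontinuous and $S$ is concave and upper semicontinuous on $\mathcal{P}(X)$, the functional $F_\beta$ is concave and upper semicontinuous, and for $\beta\in(-R(X),0)$ it is coercive by the Moser--Trudinger/Ding-type inequalities implicit in the definition of $R(X)$ coming from \cite{bbj,c-r-z,li1}. This yields, for each such $\beta$, existence of a unique maximizer $\mu_\beta$ whose Euler--Lagrange equation, via formula \eqref{eq:omega beta in terms of mu}, is precisely Aubin's equation \eqref{eq:Aubin intro} at parameter $\beta$.

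Next I would analyze the map $\beta\mapsto e(\beta):=E(\mu_\beta)$. Setting $F(\beta):=\sup_\mu F_\beta(\mu)$, the envelope identity $F'(\beta)=e(\beta)$ together with strict convexity of $F$ yields strict monotonicity of $\beta\mapsto e(\beta)$. Continuity would be established via stability of Aubin's equation under perturbations of $\beta$, extracted from the continuity-method arguments of \cite{bbj,c-r-z,li1,bbegz}. Inverting gives the claimed strictly decreasing continuous bijection $e\mapsto\beta(e)$ from $[0,e_c]$ onto $[-R(X),\infty]$. The remaining structural statements --- that $S$ agrees with its concave envelope at $e$, that $\partial S/\partial e=\beta(e)$, and that $S$ is $C^1$, strictly concave on $(0,e_c)$, strictly increasing on $(0,e_0)$ and strictly decreasing on $(e_0,e_c)$ --- are then formal consequences of the Legendre-transform relation $S(e)=F(\beta(e))-\beta(e)\,e$. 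The equivalence of the constraints $E(\mu)=e$ and $E(\mu)\geq e$ comes from noting that any maximizer with $E(\mu)>e$ would correspond to Lagrange multiplier $\beta=0$, i.e.\ to the Calabi--Yau measure of energy $e_0<e$, a contradiction.

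The main obstacle will be the boundary case $e=e_c$, corresponding to the K\"ahler--Einstein endpoint $\beta=-1$. There Aubin's equation admits a family of solutions parametrized by the orbit $\mathrm{Aut}(X)_0\cdot\omega_{\mathrm{KE}}$ via Bando--Mabuchi uniqueness, and since the pluricomplex energy $E$ is $\mathrm{Aut}(X)_0$-invariant, uniqueness of $\mu^{e_c}$ is a priori obstructed along this orbit. The additional hypothesis $\theta>0$ on $X_{\mathrm{reg}}$ is what rescues the argument: combined with the infimum characterization of $e_c$ in \eqref{eq:def of E c intro} and a Futaki/moment-map style computation along the orbit, it forces all KE volume forms saturating the infimum to coincide. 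Executing this reduction cleanly, and simultaneously verifying the limiting behavior $e(\beta)\to e_c$ as $\beta\to -1$ in the case $R(X)=1$, is the technically most delicate step.
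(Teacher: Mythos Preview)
Your overall strategy---pass to the free-energy dual, establish existence, uniqueness and continuity of the optimizer $\mu_\beta$, show $F$ is $C^1$ with strictly monotone derivative, and read off the properties of $S$ by Legendre duality---is exactly the paper's. But there is a genuine gap at the uniqueness step, caused by a sign confusion.

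With your convention $F_\beta=S+\beta E$ and $\beta<0$, the functional is indeed concave (concave plus concave), so the maximizer is unique by strict concavity of $S$. However, its Euler--Lagrange equation is $\mu\propto e^{-\beta u_\mu}dV$, which is the Monge--Amp\`ere equation \eqref{eq:ma eq with beta} at the paper's parameter $-\beta>0$, i.e.\ the \emph{low}-energy regime already handled in Theorem~\ref{thm:micro variational principle low energ intro}. To reach the high-energy Fano range $e>e_0$ one needs the Lagrange multiplier $S'(e)<0$, which in the paper's convention means minimizing $F_\beta=\beta E-S$ with $\beta<0$. That functional is a sum of a concave term ($\beta E$) and a strictly convex term ($-S$) and is \emph{neither convex nor concave} in $\mu$; strict concavity cannot be invoked. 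The paper therefore obtains uniqueness of $\mu_\beta$ for $\beta\in(-R(X),0)$ from the uniqueness theorem for Aubin's equation (Theorem~\ref{thm:unique on Fano}, relying on Bando--Mabuchi and Berndtsson and their singular extensions) together with Theorem~\ref{thm:minimizer satisf twisted}. This PDE input is the essential new ingredient in the negative-temperature regime, and your proposal bypasses it only because of the sign slip.

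A second error concerns the endpoint $e=e_c$: the pluricomplex energy $E$ is \emph{not} $\mathrm{Aut}(X)_0$-invariant, since it depends on the fixed reference form $\theta$. The paper exploits precisely this non-invariance. Under the hypothesis $\theta>0$ on $X_{\mathrm{reg}}$, Lemma~\ref{lem:E strict convex along geod} shows that $E$ is strictly convex along psh geodesics in the orbit of K\"ahler--Einstein measures; hence the infimum $e_c$ in \eqref{eq:def of E c intro} is attained at a unique point, and Lemmas~\ref{lem: e beta contin etc on Fano} and~\ref{lem:conv towards KE on Fano} show $\mu_\beta$ converges to it as $\beta\to-1^+$. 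A Futaki or moment-map computation locates critical points but does not by itself give uniqueness of the minimum.

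Two smaller points are also glossed over. Strict monotonicity of $e(\beta)$ (equivalently, strict concavity of $F$) is asserted without justification; the paper's argument (the analogue for $\beta<0$ of Lemma~\ref{lem:e strict decreasing for pos beta}) uses uniqueness of Monge--Amp\`ere potentials to force any coincidence $e(\beta_0)=e(\beta_1)$ to imply $u$ is constant, contradicting the standing hypothesis on $(\theta,dV)$. And the identification $S=F^*$ on $(0,e_c)$ requires knowing $S(e)$ is finite and upper semicontinuous there, which the paper secures via the energy-approximation and affine-continuity properties (Lemma~\ref{lem:on polarized has affine cont and energy appr}, Proposition~\ref{prop:general prop of entropy and free on complex}) before invoking the abstract Lemma~\ref{lem:S strictly concave on image of dF}.
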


When $X$ is K-polystable and $\text{Aut }(X)_{0}$ is trivial the
previous theorem yields a new variational principle for the unique
Kähler-Einstein metric of $X,$ saying its normalized volume form
$\mu_{\text{KE}}$ uniquely maximizes the entropy $S(\mu)$ among
all measures $\mu$ sharing the same pluricomplex energy as $\mu_{\text{KE}}.$
More generally, when $\text{Aut }(X)_{0}$ is non-trivial and $\theta>0$
on $X_{\text{reg}}$ the variational principle holds for the Kähler-Einstein
volume form with minimal pluricomplex energy,

In general, for any Fano variety $S(e)$ is continuous on all of $]0,\infty[$
and unimodal with a maximum at $e=e_{0}.$ However, strict concavity
may fail when $e>e_{c}.$ For example, by the following result it
fails on any homogeneous Fano manifold, for example on $\P^{n}.$ 
\begin{cor}
\label{cor:not K poly}If $X$ is not K-polystable (or equivalently:
$X$ does not admit a KE metric) then $S(e)$ is strictly concave
on $]0,\infty[.$ The converse also holds if $\text{Aut }(X)_{0}$
is non-trivial. In this case $X$ is thus K-polystable iff $S(e)$
is not strictly concave. 
\end{cor}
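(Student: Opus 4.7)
The forward direction reduces immediately to Theorem \ref{thm:micro var principle Fano intro}. If $X$ is not K-polystable then by the Yau--Tian--Donaldson correspondence quoted in Section \ref{subsec:The-high-energy intro} $X$ admits no K\"ahler--Einstein metric, so by definition $e_{c}=+\infty$, and the last bullet of Theorem \ref{thm:micro var principle Fano intro} gives strict concavity of $S$ on $]0,e_{c}[\,=\,]0,\infty[$.

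For the converse, I assume $X$ is K-polystable and $\text{Aut }(X)_{0}$ is non-trivial; the plan is to exhibit an interval on which $S(e)$ is \emph{affine}, hence not strictly concave. First I would fix a non-trivial one-parameter subgroup $\{g_{t}\}\subset\text{Aut }(X)_{0}$ and form the family of KE measures $\mu_{t}:=(g_{t})_{*}\mu_{\text{KE}}$, each of which is KE by biholomorphic invariance of the KE equation. Next I would invoke the variational characterization of KE measures in the K-polystable setting: the functional $F_{-1}(\mu):=S(\mu)+E(\mu)$ is bounded above on $\mathcal{P}(X)$ with supremum $c$, and this supremum is attained precisely on the $\text{Aut }(X)_{0}$-orbit $\mathcal{M}_{\text{KE}}$ of $\mu_{\text{KE}}$ (the Legendre-dual reformulation of the Ding-functional characterization of KE metrics, whose Euler--Lagrange equation is Aubin's equation \ref{eq:Aubin intro} at $\beta=-1$). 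In particular $S(\mu_{t})=c-E(\mu_{t})$ for every $t$.

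Setting $I:=\{E(\mu_{t}):t\in\R\}$, the argument then has two halves. For any $\mu$ with $E(\mu)=e$ the universal bound $F_{-1}(\mu)\leq c$ gives $S(\mu)\leq c-e$, hence $S(e)\leq c-e$ for $e\in I$. Conversely, picking $t$ with $E(\mu_{t})=e$ shows $S(e)\geq S(\mu_{t})=c-e$. Combining, $S(e)=c-e$ on $I$; provided $I$ contains a non-degenerate sub-interval, $S$ is affine there and so fails to be strictly concave on $]0,\infty[$.

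The hard part will be verifying that $t\mapsto E(\mu_{t})$ is genuinely non-constant. My plan is to use Matsushima's theorem to extract a non-trivial $\C^{\times}$-subgroup from the reductive group $\text{Aut }(X)_{0}$ and to lift its action to a non-trivial Monge--Amp\`ere geodesic of KE potentials in the Mabuchi space, along which the pluricomplex energy $E$ (via its Legendre duality with the Monge--Amp\`ere energy on potentials) varies non-trivially. The assumption $\theta>0$ on $X_{\text{reg}}$, which is exactly the extra hypothesis appearing in the uniqueness clause of Theorem \ref{thm:micro var principle Fano intro}, is what guarantees the geodesic is non-trivial in the singular setting. This also clarifies conceptually why failure of strict concavity of $S$ at the level $\beta=-1$ mirrors precisely the failure of uniqueness of the KE measure modulo $\text{Aut }(X)_{0}$.
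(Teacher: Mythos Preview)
Your forward direction matches the paper's exactly. For the converse your argument is correct in substance but follows a different (and arguably more transparent) path than the paper.

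\medskip
\textbf{Comparison with the paper.} The paper argues by contradiction via Legendre duality. It first establishes $\{F(\beta)>-\infty\}=[-1,\infty[$ (equation \ref{eq:F finite in pf Cor K poly}): assuming $F(-1-\epsilon)\geq -C$ would give the coercivity bound $F_{-1}(\mu)\geq\epsilon E(\mu)+C$, contradicted by evaluating on the curve $\mu_{t}=g_{t}^{*}\mu_{\text{KE}}$ along which $F_{-1}$ is constant while $E(\mu_{t})\to\infty$ (Remark \ref{rem:E tends to infty}). Then, assuming $S$ strictly concave on $]0,\infty[$, one has $S=F^{*}$; the general inclusion $\partial S\subset\overline{\{F>-\infty\}}=[-1,\infty[$ together with $\lim_{e\to e_{c}^{-}}S'(e)=-1$ and concavity forces $S'(e)\equiv-1$ on $]e_{c},\infty[$, i.e.\ $S$ is affine there, a contradiction.

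You instead exhibit the affine piece directly: from $\inf F_{-1}=:-c>-\infty$ (achieved exactly on the KE orbit, by \cite[Thm 4.8iii]{bbegz} and Theorem \ref{thm:minimizer satisf twisted} combined with Theorem \ref{thm:unique on Fano}) you get $S(e)\leq c-e$ for all $e$, with equality on $I=\{E(\mu_{t}):t\in\R\}$. Since $t\mapsto E(\mu_{t})$ is continuous (Theorem \ref{thm:prop of energies}) and tends to $\infty$ (Remark \ref{rem:E tends to infty}), $I$ contains a half-line and you are done. Both routes rest on the same two ingredients (KE measures minimise $F_{-1}$; $E(\mu_{t})\to\infty$), but your packaging avoids the Legendre-duality detour.

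\medskip
\textbf{Two corrections.} First, your definition ``$F_{-1}(\mu):=S(\mu)+E(\mu)$'' clashes with the paper's convention $F_{\beta}(\mu)=\beta E(\mu)-S(\mu)$; what you mean is $-F_{-1}$ in the paper's notation. Second, and more importantly, you should drop the appeal to $\theta>0$ on $X_{\text{reg}}$. Corollary \ref{cor:not K poly} does not assume this, and your argument does not need it: Remark \ref{rem:E tends to infty} gives $E(\mu_{t})\to\infty$ along the geodesic induced by any one-parameter subgroup \emph{without} any positivity hypothesis on $\theta$ (strict convexity of $E$ along geodesics, Lemma \ref{lem:E strict convex along geod}, does need $\theta>0$, but you only need non-constancy). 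With that adjustment your proof goes through in the stated generality.
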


It would be interesting to know if it is necessary to assume that
$\text{Aut }(X)_{0}$ is non-trivial in the equivalence appearing
in the previous theorem. Anyhow, since $X$ is K-polystable iff $X\times\P^{1}$
is K-polystable \cite{z} the previous theorem shows that, in general,
$X$ is K-polystable iff the specific entropy associated with $X\times\P^{1}$
is not strictly concave on $]0,\infty[.$

In the light of Theorem \ref{thm:LDP for micro intro} it is natural
to pose the following
\begin{conjecture}
\label{conj:intro}Assume given a number $e\in]e_{0},e_{c}[$ and
$\epsilon\in]0,\infty].$ When $X$ admits a unique Kähler-Einstein
metric we also allow $e=e_{c}.$ Then 
\begin{equation}
\lim_{N\rightarrow\infty}N^{-1}\log\int_{\left\{ E^{(N)}\in]e,e+\epsilon[\right\} }dV^{\otimes N}=S(e),\,\,\,\,\lim_{N\rightarrow\infty}\delta_{N}=\mu^{e},\label{eq:conv towards S(e) in conj Fano intro}
\end{equation}
 where the convergence of $\delta_{N}$ holds in probability and $\mu^{e}$
denotes the unique maximum entropy measure $\mu^{e}$ of energy $e.$ 
\end{conjecture}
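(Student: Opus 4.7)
The plan is to mimic the strategy behind Thm~\ref{thm:LDP for micro intro}, but with the $\Gamma$-convergence upper bound replaced by an equivalence-of-ensembles argument: the $\Gamma$-liminf inequality $E(\mu)\le\liminf E^{(N)}$ affords no useful constraint on the upper energy shell $\{E^{(N)}>e\}$, so the low-energy proof does not transfer directly. For $e\in\,]e_{0},e_{c}[\,$, Thm~\ref{thm:micro var principle Fano intro} supplies a unique inverse temperature $\beta:=\beta(e)=\partial S/\partial e\in\,]-R(X),0[\,$, with $\mu^{e}$ the unique maximizer of $S(\mu)-\beta E(\mu)$. The corresponding canonical (Gibbs) ensemble
\[
\mu_{\beta}^{(N)}:=Z_{N}(\beta)^{-1}e^{-\beta NE^{(N)}}dV^{\otimes N}\propto\|\Psi^{(k)}\|^{2\beta/k}\,dV^{\otimes N}
\]
should concentrate on $\delta_{N}\approx\mu^{e}$, and the aim is to transfer this concentration to the microcanonical ensemble $\mu_{]e,e+\epsilon[}^{(N)}$.

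The first step is to prove a large deviation principle for $\delta_{N}$ under $\mu_{\beta}^{(N)}$ with rate function $\mu\mapsto\beta E(\mu)-S(\mu)+F(\beta)$, where $F(\beta):=\sup_{\mu}(S(\mu)-\beta E(\mu))$; by the Legendre duality implicit in Thm~\ref{thm:micro var principle Fano intro} one has $F(\beta)=S(e)-\beta e$ and the unique minimizer is $\mu^{e}$. This should follow from the $\Gamma$-convergence $E^{(N)}\to E$ of \cite{berm8} combined with Sanov's theorem, adapted as in the canonical constructions of \cite{berm8,berm8 comma 5}, provided $Z_{N}(\beta)$ is controlled uniformly in $k$. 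The second step is the equivalence-of-ensembles argument. Since $\beta<0$, the Chebyshev-type estimate
\[
\int_{\{E^{(N)}\in\,]e,e+\epsilon[\,\}}dV^{\otimes N}\le\int e^{-\beta N(E^{(N)}-e)}1_{\{E^{(N)}>e\}}\,dV^{\otimes N}\le e^{\beta Ne}Z_{N}(\beta)
\]
already yields $\limsup N^{-1}\log\int_{\{E^{(N)}\in\,]e,e+\epsilon[\,\}}dV^{\otimes N}\le\beta e+F(\beta)=S(e)$. For the matching lower bound one uses Step~1 together with the strict concavity of $S$ at $e$ (Thm~\ref{thm:micro var principle Fano intro}) to show that $\mu_{\beta}^{(N)}\{|E^{(N)}-e|\le\delta\}\to1$ for every $\delta>0$; substituting into the defining integral of $Z_{N}(\beta)$ then gives $N^{-1}\log\int_{\{|E^{(N)}-e|\le\delta\}}dV^{\otimes N}\to S(e)$, and splitting this thin shell into its upper and lower halves (again using strict concavity) identifies the $]e,e+\epsilon[$ contribution as asymptotically equal to the full shell. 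The convergence $\delta_{N}\to\mu^{e}$ in probability under $\mu_{]e,e+\epsilon[}^{(N)}$ then follows because this microcanonical measure is dominated, up to an $e^{o(N)}$ factor, by the canonical one, which already concentrates at $\mu^{e}$ by Step~1.

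The main obstacle will be Step~1, the LDP for the canonical ensemble at negative $\beta$. In contrast to the positive-temperature case, the weight $\|\Psi^{(k)}\|^{2\beta/k}$ is singular on the diagonals of $X^{N}$, and one needs uniform-in-$k$ integrability together with sharp upper and lower bounds on $Z_{N}(\beta)$. Such estimates are intimately tied to the stability threshold $\delta(X)$ and the existence range for Aubin's equation: the critical inverse temperature $\beta=-R(X)$ is precisely where integrability begins to fail. Extending the LDP framework of \cite{berm8,berm8 comma 5} from the K\"ahler-Einstein endpoint to the full open interval $\beta\in\,]-R(X),0[\,$, and up to the boundary when a unique KE metric exists (needed in order to include $e=e_{c}$ as allowed by the conjecture), is expected to be delicate, in the spirit of the analogous passage carried out in \cite{clmp2} for Onsager's point vortex model; this is presumably why the statement is offered as a conjecture rather than as a theorem.
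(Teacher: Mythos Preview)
Your proposal is not so much a proof as a correct diagnosis of why the statement is a \emph{conjecture} in the paper rather than a theorem. The paper does not prove Conjecture~\ref{conj:intro}; it establishes precisely the conditional results you outline. Your Step~2 (the equivalence-of-ensembles transfer) is essentially Theorem~\ref{thm:general conv of mean entropy } of the paper: the Chebyshev upper bound you write is exactly the paper's argument, and the lower bound is obtained there via Lemma~\ref{lem:conv in prob av E N} (convergence of $E^{(N)}$ in probability under the Gibbs measure, deduced from the assumed convergence of $N^{-1}\log Z_{N,\beta}$ together with the differentiability of $F$). This is then packaged, in the Fano setting, as Theorem~\ref{thm:conditional Fano}, which assumes the convergence $-N^{-1}\log Z_{N,\beta}\to F(\beta)$ for $\beta\in\,]-R(X),0[$ and deduces the conjecture.

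One point of comparison: for the convergence of $\delta_{N}$ the paper does not quite use the domination argument you sketch (microcanonical $\le e^{o(N)}\cdot$ canonical). Instead it perturbs $dV\mapsto dV_{tu}=e^{-tu}dV$ and applies the Cram\'er-type criterion of Lemma~\ref{lem:Cramer variant}, which requires the partition-function convergence for \emph{all} such perturbed volume forms with $\theta>0$; this is weaker than your full LDP hypothesis pointwise in $dV$, but needs to hold for a family. Either route reduces the conjecture to your Step~1.

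You have correctly identified the genuine gap: the LDP (or even just the partition-function asymptotics) for the canonical ensemble at $\beta<0$ is open for $n>1$. The paper records this as the conjectured convergence of partition functions from \cite{berm8,be2}, notes that it is equivalent to an upper-bound on the mean energy in the canonical ensemble (see the discussion after Proposition~\ref{thm:cond II}), and invokes \cite{be2} to settle the case $n=1$ (Theorem~\ref{thm:n is one intro}). Your closing remark that the integrability of $\|\Psi^{(k)}\|^{2\beta/k}$ is tied to the stability threshold and that the endpoint $\beta=-R(X)$ is where control is lost is exactly the picture the paper presents.
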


We establish conditional results in the direction of the conjecture,
assuming the convergence of the partition functions conjectured in
\cite{berm8,be2} (see (Theorems \ref{thm:conditional Fano}, \ref{thm:cond II}).
Since the latter conjecture was established in \cite{be2} when $n=1$
(building on \cite{k,clmp}) we arrive at the following
\begin{thm}
\label{thm:n is one intro}Assume that $\dim X=1$ (i.e. $X=\P^{1}$)
and $e\in[e_{0},e_{c}[$ (and $\epsilon\in]0,\infty]).$ Then the
convergence \ref{eq:conv towards S(e) in conj Fano intro} towards
$S(e)$ holds. If moreover $\theta>0,$ then $\delta_{N}$ converges
\emph{exponentially} in probability at speed $N$ towards $\mu^{e}.$
As a consequence, 
\begin{equation}
\lim_{N\rightarrow\infty}\int_{X^{N-1}}\mu_{]e,e+\epsilon[}^{(N)}=\mu^{e}.\label{eq:conv of first marginal in Cor-1}
\end{equation}
\end{thm}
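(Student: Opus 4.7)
The plan is to deduce both statements from the general conditional Theorems \ref{thm:conditional Fano} and \ref{thm:cond II}, whose only missing hypothesis is the convergence of the canonical partition functions
\[
Z_N(\beta) := \int_{X^N} e^{-\beta N k E^{(N)}}\, dV^{\otimes N}
\]
at every inverse temperature $\beta \in [-R(X),0]$. First I would invoke these conditional results as a black box, reducing the theorem to verifying the partition-function asymptotics on $X = \P^1$ in the regime $\beta \in [-1,0]$ (since $R(\P^1) = 1$).

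In dimension one, $E^{(N)}$ coincides, up to a vanishing correction, with the classical logarithmic pair Hamiltonian on the two-sphere, and the required mean-field limit
\[
\lim_{N \to \infty} N^{-1} \log Z_N(\beta) \;=\; -\inf_{\mu \in \mathcal{P}(X)} \bigl( \beta E(\mu) - S(\mu) \bigr)
\]
is precisely the theorem of Kiessling \cite{k} and Caglioti--Lions--Marchioro--Pulvirenti \cite{clmp}, extended up to the critical endpoint $\beta = -R(X) = -1$ in \cite{be2}. From this, the first assertion of \ref{eq:conv towards S(e) in conj Fano intro} follows by Legendre duality between the canonical and micro-canonical ensembles: the strict concavity of $S$ on $]0,e_c[$ granted by Theorem \ref{thm:micro var principle Fano intro} makes $\beta \mapsto -\inf_\mu(\beta E - S)$ smooth and strictly convex, so a standard G\"artner--Ellis / Laplace argument converts the canonical asymptotics into the micro-canonical identity on energy shells with $e \in [e_0, e_c[$ (and up to $e = e_c$ when $X$ admits a unique K\"ahler--Einstein metric).

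The additional positivity assumption $\theta > 0$ in the second part is there to single out a unique maximum entropy measure $\mu^e$ despite the non-triviality of $\mathrm{Aut}(\P^1)_0 = \mathrm{PSL}_2(\C)$, placing us in the uniqueness regime of Theorem \ref{thm:micro var principle Fano intro}. Combining this uniqueness and the strict concavity of $S$ with the Gamma-convergence of $E^{(N)}$ to $E$, one obtains, for every $\delta > 0$, an exponential bound
\[
\mu_{]e, e+\epsilon[}^{(N)}\bigl(\, d(\delta_N, \mu^e) \geq \delta \,\bigr) \;\leq\; C e^{-N/C},
\]
because any cluster point $\nu \neq \mu^e$ of the empirical measure would satisfy $S(\nu) < S(e)$ and thus contribute negligibly to the micro-canonical mass. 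The marginal convergence \ref{eq:conv of first marginal in Cor-1} is then automatic: by $S_N$-symmetry the first marginal of $\mu_{]e,e+\epsilon[}^{(N)}$ equals the expectation $\E[\delta_N]$ under that measure, and exponential concentration of $\delta_N$ at $\mu^e$ transfers to convergence of this expectation against every bounded continuous test function. The main technical hurdle is ensuring that the canonical asymptotics of \cite{be2} apply uniformly up to the critical value $\beta = -1$, equivalently up to $e = e_c$; this is the point where $\theta > 0$ and the strict monotonicity of $e \mapsto \beta(e)$ are both indispensable.
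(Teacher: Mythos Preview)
Your high-level strategy matches the paper's: reduce to Theorem~\ref{thm:conditional Fano} and verify its partition-function hypothesis via the LDP for Gibbs measures in \cite{be2} (building on \cite{k,clmp}). However, two of your supporting explanations are wrong and would mislead you if you tried to fill in the details.

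First, the role of $\theta>0$ is not to secure uniqueness of $\mu^e$. For $e\in[e_0,e_c[$ (strictly less than $e_c$), uniqueness already holds without any positivity assumption by Theorem~\ref{thm:micro var principle Fano intro}; the extra hypothesis $\theta>0$ is only relevant for uniqueness at $e=e_c$, which is excluded here. The actual reason $\theta>0$ enters is in the mechanism for exponential convergence inside Theorem~\ref{thm:conditional Fano}: one perturbs the reference measure to $dV_{tu}:=e^{-tu}dV$ for $u\in C^2(X)$ and small $t$, shows $\lim_{N\to\infty} N^{-1}\log\mathbb{E}(e^{-Nt\langle u,\delta_N\rangle})=S_{tu}(e)-S(e)$, and applies the Cram\'er-type criterion Lemma~\ref{lem:Cramer variant} by proving $t\mapsto S_{tu}(e)$ is Gateaux differentiable at $t=0$. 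This requires that the perturbed data $(dV_{tu},\theta_{tu})$ remain in the Fano setup, i.e.\ $\theta_{tu}=\mathrm{Ric}\,dV_{tu}>0$, and that the partition-function convergence hold for \emph{all} such perturbations---both guaranteed by $\theta>0$ and the \cite{be2} result applied to each $dV_{tu}$.

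Second, your ``main technical hurdle'' is a non-issue: since $e<e_c$ strictly, the corresponding $\beta=\beta(e)$ lies in $]-1,0]$, bounded away from $-R(\P^1)=-1$, so no endpoint analysis at $\beta=-1$ is needed. Relatedly, your informal argument for exponential convergence (``any cluster point $\nu\neq\mu^e$ would satisfy $S(\nu)<S(e)$ and contribute negligibly'') is not a proof: no microcanonical LDP is established here, and the paper explicitly notes that such an LDP can fail for singular Hamiltonians. The exponential rate comes from the perturbation-and-differentiability route above, not from a direct large-deviation bound on energy shells.
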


As discussed in Section \ref{subsec:Generalization-to-log Fano},
the results above naturally generalize to \emph{log Fano varieties}
$(X,\Delta)$ - as appearing in the most general formulation of the
Yau-Tian-Donaldson conjecture \cite{li1,l-x-z} - consisting of a
normal projective variety $X$ and an effective divisor $\Delta$
on $X$ such that $-(K_{X}+\Delta)$ defines an ample $\Q-$line bundle. 

\subsection{\label{subsec:Relations-to-Hamiltonian}Relations to Hamiltonian
dynamics }

Consider the case when $dV$ is a smooth volume form and $L$ is ample.
Then $dV$ can, after rescaling, be expressed as the volume form of
a unique Kähler form $\omega_{X}$ in $c_{1}(X).$ The form $\omega_{X}$
defines a symplectic form on $X$ and thus induces a symplectic form
$\omega_{X^{N}}$ on the $N-$fold product $X^{N}.$ The Hamiltonian
flow of $E^{(N)}$ on $(X^{N},\omega_{X^{N}})$ for initial data of
finite $E^{(N)}(=e\in\R)$ exists for all positive times $t$ (see
Section \ref{sec:Hamiltonian-flows-and}). The measures $\mu_{]e-\epsilon,e[}^{(N)}$
and $\mu_{]e,e+\epsilon[}^{(N)}$ on $X^{N}$ (defined by \ref{eq:def of lower micro intro}
and \ref{eq:def of lower micro intro}, respectively) are invariant
under the Hamiltonian flow of $E^{(N)}$ (since $E^{(N)}$ and $dV^{\otimes N}$
are). Assume, for example, that the limit $\mu^{e}$ of $\mu_{]e-\epsilon,e[}^{(N)}$
exists as $\epsilon\rightarrow0$ and that $\mu^{e}$ is \emph{ergodic.}
Then time-averages along a trajectory $(x_{1}(t),...,x_{N}(t))$ in
$X^{N}$ are, for almost any initial configuration with $E^{(N)}=e,$
distributed according to the probability measure $\mu^{e}$ on $X^{N}.$
Moreover, as explained in Section \ref{sec:Hamiltonian-flows-and},
formal arguments suggest that the Hamiltonian flow of $E^{(N)}$ on
$X^{N}$ converges, as $N\rightarrow\infty,$ to a solution of the
following evolution equation on $\mathcal{P}(X):$
\begin{equation}
\frac{\partial\rho_{t}}{\partial t}=-\nabla\cdot(\rho_{t}J\nabla u_{\rho_{t}}),\,\,\,\frac{1}{\text{Vol \ensuremath{(L)}}}(\frac{i}{2\pi}\partial\bar{\partial}u_{\rho_{t}}+\theta)^{n}=\rho_{t}\omega_{x}^{n},\label{eq:incomr complex Euler}
\end{equation}
 where the gradient and the scalar product are defined wrt the Kähler
metric $\omega_{X}.$ This equation is a higher dimensional generalization
of the incompressible 2D Euler equation (corresponding to $n=1)$.
Accordingly, it will be called the\emph{ (incompressible) complex
Euler-Monge-Ampère equation}. When $n=2$ and $X$ is a torus, $X=(S^{1}\times iS^{1})\times(S^{1}\times iS^{1}),$
solutions to equation \ref{eq:incomr complex Euler} that are independent
of the imaginary factors correspond to solution to the periodic semi-geostrophic
equation in $\R^{2}$ \cite{be-br,Loe} (describing large-scale atmospheric
flows). 

As explained in Section \ref{subsec:Formal-proof using Ham} the equation
\ref{eq:incomr complex Euler}, can be viewed as a Hamiltonian flow
of $E(\mu)$ on $\mathcal{P}(X)$ (or more precisely, along the coadjoint
orbits in $\mathcal{P}(X)$ of the group of Hamiltonian diffeomorphisms
of $X,$ endowed with the Kirillov-Kostant-Souriau symplectic form\emph{).}
Assuming ergodicity, Theorems \ref{thm:LDP for micro intro}, \ref{thm:micro variational principle low energ intro}
thus suggests that, as $N\rightarrow\infty,$ large-time averages
of a trajectory of the Hamiltonian flow on $X^{N},$ emanating from
almost any initial configuration of energy $e\in]0,e_{0}[,$ are distributed
according to the maximum entropy measure $\mu^{e}.$ \footnote{Even if ergodicity does not hold the microcanonical measures are still
relevant for the large time-behavior under the weaker form of ergodicity
discussed in \cite[page 865]{e-s}.} This indicates that the density of $\mu^{e}$ is a stationary solution
of the evolution equation \ref{eq:incomr complex Euler}, which is
confirmed in Section \ref{subsec:Conserved-quantities}. In the light
of the analogy with turbulent 2D flows, this suggests a picture where
twisted Kähler-Einstein metrics emerge as a coherent large-scale structure
in a fluid on $X,$ microscopically described by the Hamiltonian dynamics
of the ``discrete'' pluricomplex energy $E^{(N)}.$

\subsection{Relations to statistical mechanics and previous results}

\subsubsection{\label{subsec:Equivalence-of-the}Equivalence of the microcanonical
and canonical ensembles }

Let $(X,\omega_{X})$ be a compact symplectic manifold and consider
a lsc symmetric function $E^{(N)}$ on $X^{N}.$ Assume that, for
a given $e\in\R,$ the level set $\{E^{(N)}=e\}$ is a submanifold
of $X^{N}.$ Then the probability measures $\mu_{]e-\epsilon,e[}^{(N)}$
and $\mu_{]e,e+\epsilon[}^{(N)}$ in formula \ref{eq:def of lower micro intro}
and formula \ref{eq:def of upper micro}, respectively, both converge,
as $\epsilon\rightarrow0,$ towards the same measure $\mu^{e},$ supported
on the submanifold $\{E^{(N)}=e\}$ of $X^{N},$ called the \emph{microcanonical
measure} in statistical mechanics. The corresponding probability space
$(X^{N},\mu^{e})$ is the \emph{microcanonical ensemble} representing
the equilibrium state of of $N$ interacting particles on $X$ at
a fixed energy level $e.$ However, since, in general, $\{E^{(N)}=e\}$
need not be regular regularized microcanonical measures of the form
$\mu_{]e-\epsilon,e[}^{(N)}$ and $\mu_{]e,e+\epsilon[}^{(N)}$ are
frequently used, with the ``thickness'' $\epsilon$ serving as a
regularization parameter \cite{clmp2}.

Under the following ``mean field type'' assumption the convergence
towards $S(e)$ and the exponential convergence of $\delta_{N}$ towards
$\mu^{e}$ in Theorem \ref{thm:LDP for micro intro} follows from
results in \cite{e-s,e-h-t}: 
\begin{equation}
\lim_{N\rightarrow\infty}\sup_{X^{N}}\left|E^{(N)}(x_{1},...x_{N})-E(\delta_{N}(x_{1},...x_{N}))\right|=0\label{eq:strong reg assu on E intro}
\end{equation}
for a continuous functional $E(\mu)$ on $\mathcal{P}(X),$where $X$
is assumed compact. More precisely, as shown in \cite{e-s,e-h-t},
given $a,b\in\R$ the law of the empirical measure $\delta_{N}$ -
viewed as a random variable on $(X^{N},\mu_{]a,b[}^{(N)})$ - satisfies
a Large Deviation Principle (LDP) at speed $N$ with a rate functional
which is equal to the following functional, up to an additive constant:
\begin{equation}
I(\mu)=-S(\mu)\,\,\text{if \ensuremath{E(\mu)\in]a,b[,\,\,\,I(\mu)=\infty\,\,\text{if \ensuremath{E(\mu)\notin]a,b[}}}}\label{eq:def of I}
\end{equation}
In contrast, in the present setup both $E^{(N)}(x_{1},...x_{N})$
and $E(\mu)$ are highly singular. For example, $E^{(N)}(x_{1},...x_{N})\rightarrow\infty$
as soon as two points merge and $E(\delta_{N}(x_{1},...x_{N}))=\infty$
for any $(x_{1},...x_{N})\in X^{N}.$ For such singular situations
the aforementioned LDP fails, in general. Indeed, as shown in forthcoming
work \cite{ber12} the LDP may fail for two different reasons. First
of all, it may happen that the infimum of the functional $I(\mu)$
appearing in formula \ref{eq:def of I} equals $-\infty.$ Secondly,
even if the infimum in question is finite and uniquely attained, the
LDP may fail already in the low-energy region. For example, this is
the case when $E^{(N)}$ is a singular repulsive power-law (e.g. the
Coulomb interaction in a compact domain $X$ of $\R^{n}$ for $n\geq3).$ 

While the proof of the microcanonical LDP in \cite{e-s,e-h-t} is
reduced to Sanov's classical LDP (corresponding to the case when $E^{(N)}$
is constant), the proof of Theorem \ref{thm:LDP for micro intro}
is inspired by the notion of ``equivalence of ensembles'' in statistical
mechanics. It suggests that in certain situations the microcanonical
ensemble $(X^{N},\mu^{e})$ is, when $N\rightarrow\infty,$ equivalent
to the \emph{canonical ensemble }$(X^{N},\nu_{\beta}^{(N)}),$ where
$\beta\in\R$ and $\nu_{\beta}^{(N)}$ is the \emph{Gibbs measure},
whose density is proportional to $e^{-\beta E^{(N)}},$ representing
a thermal equilibrium state at inverse temperature $\beta.$ More
precisely, following \cite{e-s,e-h-t} \emph{(thermodynamic) equivalence
of ensembles} is said to hold at energy level $e$ if the specific
entropy $S$ coincides with its concave envelope $S^{**}$ at $e.$
If moreover $S$ is differentiable at $e,$ then the inverse temperature
$\beta$ corresponding to the energy level $e$ is 
\[
\beta:=dS(e)/de
\]
(which is the classical thermodynamic definition of inverse temperature)
and if $S$ is strictly concave this relation can be reversed. Here,
after establishing the differentiability and strict concavity of $S(e)$
(as stated in Theorem \ref{thm:micro variational principle low energ intro})
the convergence results in Theorem \ref{thm:LDP for micro intro}
are deduced from the Large Deviation Principle (LDP) for the corresponding
Gibbs measures at $\beta>0,$ established in \cite{berm8,berm8 comma 5}.
More generally, the convergence results are shown to hold in a rather
general setup where $E^{(N)}$ is assumed quasi-superharmonic and
$E(\mu)$ is convex (Theorem \ref{thm:LDP for quasi super}). 

An interesting feature that appears when $e$ is in the high energy
region, $e>e_{0},$ is that, by Theorem \ref{thm:micro var principle Fano intro},
the energy level $e$ corresponds to $\beta<0,$ i.e. a state of \emph{negative}
temperature. Accordingly, the convergence results in Theorem \ref{thm:n is one intro},
where $\dim X=1,$ are deduced from the LDP for the corresponding
Gibbs measures with $\beta<0$ in \cite{berm11b,be2} (that build
on \cite{clmp,k}).

The proof of Theorems \ref{thm:micro variational principle low energ intro},
\ref{thm:micro var principle Fano intro} exploit that, in general,
the Legendre transform $S^{*}(\beta)$ of $S(e)$ coincides with the\emph{
free energy} $F(\beta)$ at inverse temperature $\beta$ (formula
\ref{eq:def of F beta of F beta mu intro}). The proofs are reduced
to showing that $F(\beta)$ is strictly concave and $C^{1}-$differentiable.
The fact that the functional $E(\mu)$ is not continuous leads to
some new features, as compared to the setup in \cite{e-s,e-h-t},
further explored in \cite{ber12}. In particular, in order for the
equivalence of ensembles to hold when $e$ decreases towards the minimum
of $E(\mu)$ the measure $dV$ needs, as shown in \cite{be1,ber9},
to satisfy a condition dubbed the ``energy approximation property'',
which in the present setup admits a potential-theoretic characterization
(see Prop \ref{Prop:properties when E convex and energy app} and
the proof of Lemma \ref{lem:on polarized has affine cont and energy appr}). 

\subsubsection{The infimum of the Mabuchi functional and Aubin's method of continuity}

As observed in \cite{berm6}, in the present complex-geometric setup
the free energy $F(\beta)$ coincides with the infimum of the \emph{twisted
Mabuchi functional} on the space of Kähler metrics in $c_{1}(L)$
\cite{ma} and the inverse temperature $\beta$ corresponds to the
parameter $\beta$ in the twisted Kähler-Einstein equation \ref{eq:twisted KE intro}.
In the course of the proof of Theorem \ref{thm:micro var principle Fano intro}
we extend some essentially well-known results for $F(\beta)$ and
Aubin's method of continuity (going back to \cite{b-m}) - concerning
the case when $L$ is ample and $X$ and $dV$ are smooth - to the
case when $L$ is merely big and $X$ and $dV$ are allowed to be
singular. For example, for any Fano variety $X$ Lemma \ref{lem:conv towards KE on Fano}
shows that if $X$ admits some Kähler-Einstein metric and $\theta>0$
on the regular locus of $X,$ then the solution $\omega_{\beta}$
of Aubin's equation \ref{eq:Aubin intro} converges, as $\beta$ is
decreased to $-1$ to the unique Kähler-Einstein metric $\omega_{\text{KE}}$
on $X$ with minimal energy (i.e. its volume form minimizes $E(\mu)$
among all Kähler-Einstein metrics on $X).$ In contrast to the method
in \cite{b-m} - which requires that $X$ be non-singular, since it
relies on linearising the equations - we use a variational approach,
building on \cite{bbegz}. 

\subsubsection{Point vortices}

The case when $n=1$ and $X$ is the Riemann sphere is closely related
to results in \cite{clmp2,k,k-w}, concerning Onsager's statistical
mechanical approach to the incompressible 2D Euler equation \cite{o}.
In this approach the vorticity of the fluid (i.e. the rotation of
the velocity field) is replaced by a sum of $N$ Dirac masses located
at points $(x_{1},..,x_{N}).$ Formally, the corresponding solution
of the Euler equations yields a Hamiltonian flow on $X^{N},$ whose
large time dynamics is - assuming ergodicity - described by corresponding
microcanonical ensemble, with $E^{(N)}$ defined as sum of pair interactions
$W(x_{i},x_{j}),$ where $-W(x,y)$ is a Green function for the Laplacian.
In particular, Onsager made a striking prediction about the existence
of negative temperature states, corresponding to energy levels $e>e_{0},$
which was experimentally confirmed only recently in 2D quantum superfluids
\cite{gau,jo}. The connection to \cite{clmp2,k,k-w} is detailed
in Section \ref{sec:Comparison-with-point} - the main new feature
is the exponential convergence. It should, however, be stressed that
the situation when $n>1$ is considerably more non-linear than the
case when $n=1.$ Indeed, when $n>1$ the role of the Laplace operator
is played by the complex Monge-Ampère operator, which is fully non-linear.
Consequently, while $E^{(N)}$ allows for a straightforward regularization
in the $n=1$ case\textemdash achieved by regularizing the Green function
of the Laplacian $-$ an effective regularization for $E^{(N)}$ when
$n>1$ remains elusive.

\subsection{Acknowledgements}

I am greatful to Mingchen Xia for discussions and very helpful comments
and corrections. Thanks also to Rolf Andreasson, Sébastien Boucksom,
Emanuele Caglioti, Klas Modin for discussions. This work was supported
by a Wallenberg Scholar grant from the Knut and Alice Wallenberg foundation.

\section{General (abstract) results}

\subsection{Preliminaries}

\subsubsection{Concavity}

Recall that a function $\phi$ on a convex subset $C$ of $\R^{d}$
taking values in $]-\infty,\infty]$ is said to be \emph{convex} on
$C$ if for any given two points $x_{0}$ and $x_{1}$ and $t\in]0,1[$
\[
\phi(tx_{0}+(1-t)x_{1})\leq t\phi(x_{0})+(1-t)\phi(x_{1})
\]
 and\emph{ strictly convex} on $C$ if the inequality above is strict
for any $t\in]0,1[.$ A function $f$ on $C$ is \emph{(strictly)
concave} if $-f$ is (strictly) convex. Here we will be mainly concerned
with the case when $d=1.$ In this case, if $f$ is concave and finite
on a closed interval $C\subset\R$, but not strictly convex, then
there exist two points $x_{0}$ and $x_{1}$ in $C$ such that $f$
is affine on $[x_{0},x_{1}].$ 

If $\phi$ is a convex function on $\R^{d}$ then its\emph{ subdifferential}
$(\partial\phi)$ at a point $x_{0}\in\R^{d}$ is defined as the convex
set
\begin{equation}
(\partial\phi)(x_{0}):=\left\{ y_{0}:\,\phi(x_{0})+y_{0}\cdot(x-x_{0})\leq\phi(x)\,\,\,\forall x\in\R^{d}\right\} \label{eq:sub gradient prop}
\end{equation}
In particular, if $\phi(x_{0})=\infty,$ then $(\partial\phi)(x_{0})$
is empty. Similarly, if $f$ is concave on $\R^{d}$ then its \emph{superdifferential}
$(\partial f)(x_{0})$ is defined as above, but reversing the inequality.
In other words, $(\partial f)(x_{0}):=$ $-(\partial(-f)(x_{0}).$
In the case when $f$ is concave on $\R$ and finite in a neighborhood
of $x_{0}$ 
\[
(\partial f)(x)=[f'(x+),f'(x-)],
\]
 where $f'(x+)$ and $f'(x-)$ denote the right and left derivatives
of $f$ at $x,$ respectively. In particular, $f$ is differentiable
at $x$ iff $(\partial f)(x)$ consists of a single point. If $f$
is a function on $\R^{d}$ taking values in $[-\infty,\infty]$ its
(concave) \emph{Legendre transform }is the usc and concave function
on $\R^{d}$ (taking values in $[-\infty,\infty[$) defined by

\[
f^{*}(y):=\inf_{x\in\R^{n}}\left(\left\langle x,y\right\rangle -f(x)\right),
\]
 and satisfies $f^{**}=f.$ It follows readily from the definitions
that 
\begin{equation}
y\in\partial f(x)\iff x\in\partial f^{*}(y).\label{eq:y in gradient iff x in gradient}
\end{equation}

\begin{rem}
\label{rem:top vector space}All the previous properties apply more
generally when $x\in\mathcal{M}$ for a locally convex topological
vector space if $y$ is taken to be in the topological dual $\mathcal{M}^{*}$
of $\mathcal{M}$ \cite{d-z}.
\end{rem}

We will make use of the following basic lemmas (see the appendix of
\cite{ber9} for proofs).
\begin{lem}
\label{lem:diff implies dual strc conc}Let $f$ be a concave function
on $\R$ and assume that $f$ is differentiable in a neighborhood
of $[x_{0},x_{1}].$ Then $f^{*}$ is strictly concave in the interior
of $[y_{0},y_{1}]:=[f'(x_{1}),f'(x_{0})].$ 
\end{lem}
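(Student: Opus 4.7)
The plan is to argue by contradiction using the duality $y\in \partial f(x)\iff x\in\partial f^{*}(y)$ recorded in formula \ref{eq:y in gradient iff x in gradient}, together with the monotonicity of the superdifferential of a concave function on $\R$.

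Suppose $f^{*}$ fails to be strictly concave on the interior of $[y_{0},y_{1}]$. Since $f^{*}$ is concave (as the Legendre transform of $f$) and finite on this interval, failure of strict concavity yields $a,b$ with $y_{0}<a<b<y_{1}$ such that $f^{*}$ is affine on $[a,b]$, with some slope $s\in\R$. By definition of the superdifferential, $s\in\partial f^{*}(y)$ for every $y\in(a,b)$, and the duality \ref{eq:y in gradient iff x in gradient} then gives
\[
y\in\partial f(s)\qquad\text{for every }y\in(a,b).
\]

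Next I would pin down the location of $s$. Recall that for a concave function on $\R$, the superdifferential is monotone: if $s_{1}<s_{2}$ and $p_{i}\in\partial f(s_{i})$, then $p_{1}\geq p_{2}$. Since $f$ is differentiable in a neighborhood of $[x_{0},x_{1}]$, we have $\partial f(x_{0})=\{f'(x_{0})\}=\{y_{1}\}$ and $\partial f(x_{1})=\{f'(x_{1})\}=\{y_{0}\}$. If $s<x_{0}$, monotonicity forces every element of $\partial f(s)$ to be $\geq y_{1}$, contradicting $(a,b)\subset(y_{0},y_{1})\subset\partial f(s)$; symmetrically $s>x_{1}$ is impossible. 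Hence $s\in[x_{0},x_{1}]$.

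Because $f$ is differentiable in a neighborhood of $[x_{0},x_{1}]$, at the point $s$ the superdifferential $\partial f(s)=\{f'(s)\}$ is a singleton. But the previous step exhibited the entire non-degenerate interval $(a,b)$ inside $\partial f(s)$, a contradiction. This forces $f^{*}$ to be strictly concave on the interior of $[y_{0},y_{1}]$.

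The only subtle point is the need for $f$ to be differentiable on an \emph{open} neighborhood of the closed interval $[x_{0},x_{1}]$, not merely on the open interval: this guarantees that the endpoint values $y_{0}=f'(x_{1})$ and $y_{1}=f'(x_{0})$ are single points of $\partial f$, which is what allows the monotonicity argument to rule out $s\notin[x_{0},x_{1}]$ without any issue at the endpoints. Apart from this bookkeeping, the proof is entirely formal duality plus monotonicity.
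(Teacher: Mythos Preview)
Your proof is correct. The paper does not supply its own proof of this lemma; it simply refers the reader to the appendix of \cite{ber9}. Your argument---assume an affine piece of $f^{*}$ on $[a,b]\subset(y_{0},y_{1})$, use the duality \eqref{eq:y in gradient iff x in gradient} to place the whole interval $(a,b)$ inside $\partial f(s)$ for the common slope $s$, then use monotonicity of the superdifferential together with $\partial f(x_{0})=\{y_{1}\}$ and $\partial f(x_{1})=\{y_{0}\}$ to force $s\in[x_{0},x_{1}]$ and hence $\partial f(s)$ a singleton---is the standard one and is exactly the kind of argument one expects in \cite{ber9}. One small point you might make explicit for completeness is why $f^{*}$ is finite on $[y_{0},y_{1}]$: since $f$ is differentiable (hence $f'$ continuous and non-increasing) on a neighborhood of $[x_{0},x_{1}]$, every $y\in[y_{0},y_{1}]$ equals $f'(x)$ for some $x\in[x_{0},x_{1}]$, so $x\in\partial f^{*}(y)$ and in particular $f^{*}(y)>-\infty$.
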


Note that, in general, $f^{**}\geq f.$ Concerning the strict inequality
we have the following
\begin{lem}
\label{lem:convex envol affine}Let $f$ be a function on $\R$ such
that $\sup_{\R}f<\infty$ and $U\Subset\R$ a non-empty open set where
$f$ is finite and usc. Then $\{f^{**}>f\}\cap U$ is open in $U$
and $f^{**}$ is affine on $\{f^{**}>f\}\cap U.$ 
\end{lem}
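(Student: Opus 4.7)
The plan is to exploit the characterization of $f^{**}$ as the smallest concave usc majorant of $f.$ Indeed, for any concave usc function $g\ge f,$ the monotonicity $g^{*}\le f^{*}$ of the Legendre transform combined with the involution $g^{**}=g$ forces $g\ge f^{**},$ so $f^{**}$ is minimal among such majorants. The argument then splits into establishing openness and affineness.

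For openness of $\{f^{**}>f\}\cap U,$ I first observe that $f^{**}$ is finite on $U.$ Indeed $\sup_{\R}f<\infty$ gives $f^{**}\le\sup_{\R}f<\infty,$ while the majorant inequality $f^{**}\ge f$ together with finiteness of $f$ on $U$ yields $f^{**}>-\infty$ there. A concave function on $\R$ that is finite on an open set is automatically continuous on that set, so $f^{**}$ is continuous on $U.$ Since $f$ is usc on $U,$ the difference $f^{**}-f$ is lsc on $U,$ and hence $\{f^{**}-f>0\}\cap U$ is open in $U.$

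For affineness I argue by contradiction on each connected component $(a,b)$ of $V:=\{f^{**}>f\}\cap U.$ A standard fact about concave functions on an interval shows that if $f^{**}$ is not affine on $(a,b),$ then there exists $x^{*}\in(a,b)$ at which $f^{**}$ is strictly concave in the chord sense, meaning that for all sufficiently small $\epsilon>0$ the chord $L_{\epsilon}$ joining $(x^{*}-\epsilon,f^{**}(x^{*}-\epsilon))$ and $(x^{*}+\epsilon,f^{**}(x^{*}+\epsilon))$ satisfies $L_{\epsilon}(x^{*})<f^{**}(x^{*})$ (otherwise $f^{**}$ would be locally affine everywhere, hence affine on the connected set $(a,b)$). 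I would then define a candidate $g$ by setting $g=f^{**}$ outside $[x^{*}-\epsilon,x^{*}+\epsilon]$ and $g=L_{\epsilon}$ on this interval. Comparing the slope of $L_{\epsilon}$ with the one-sided derivatives of $f^{**}$ at the junction points (using the monotonicity of difference quotients for concave functions) shows that $g$ is concave on $\R;$ since $f^{**}$ is continuous on $U\supset[x^{*}-\epsilon,x^{*}+\epsilon],$ the function $g$ is also usc, and by construction $g\le f^{**}$ with $g(x^{*})<f^{**}(x^{*}).$

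The main obstacle is to verify $g\ge f$ on the perturbation interval, which I would handle by shrinking $\epsilon.$ On $[x^{*}-\epsilon,x^{*}+\epsilon]$ the affine function $L_{\epsilon}$ is bounded below by $\min\{f^{**}(x^{*}-\epsilon),f^{**}(x^{*}+\epsilon)\},$ which tends to $f^{**}(x^{*})$ by continuity of $f^{**}.$ Meanwhile the usc hypothesis on $f$ together with $x^{*}\in V$ gives $\limsup_{x\to x^{*}}f(x)\le f(x^{*})<f^{**}(x^{*}),$ so for $\epsilon$ small enough $g>f$ pointwise on $[x^{*}-\epsilon,x^{*}+\epsilon].$ The resulting $g$ is then a concave usc majorant of $f$ lying strictly below $f^{**}$ at $x^{*},$ contradicting the minimality of $f^{**}.$ This forces $f^{**}$ to be affine on every connected component of $V.$
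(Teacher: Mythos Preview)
Your argument is correct. The paper does not supply its own proof of this lemma but defers to the appendix of \cite{ber9}, so there is no in-paper argument to compare against. Your route---characterizing $f^{**}$ as the least concave usc majorant of $f$, deducing openness from the lower semicontinuity of $f^{**}-f$ on $U$, and obtaining affineness on each component via a chord-replacement contradiction---is a standard and clean way to establish the result. The only points worth flagging are minor: the claim that a concave function not affine on $(a,b)$ admits a point $x^{*}$ of non-local-affineness relies on the observation that locally affine everywhere on a connected interval forces global affineness (via constancy of the derivative), which you state correctly; and the verification that $g\ge f$ on the replacement interval genuinely uses both the usc hypothesis on $f$ and the continuity of $f^{**}$ at $x^{*}$, exactly as you arrange it by shrinking $\epsilon$.
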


\subsection{General setup}

The following setup is referred to as the (Very General Setup) in
\cite{ber9} (when $X$ is allowed to be non-compact). Let $X$ be
a compact topological space, $E(\mu)$ a lsc functional on the space
$\mathcal{P}(X)$ of all probability measures on $X$ and $dV$ a
measure on $X$ such that $e_{0}<\infty,$ where
\[
e_{0}:=E(\mu_{0})<\infty,\,\,\mu_{0}:=dV/\int_{X}dV,\,\,\,e_{\text{min}}:=\inf_{\mathcal{P}(X)}E(\mu),\,\,\,e_{\text{max}}:=\sup_{\mathcal{P}(X)}E(\mu)
\]
(in the complex-geometric setup $e_{\text{min }}=0$ and $e_{\text{max}}=\infty)$.

\subsubsection{Entropy vs free energy}

In this general setup we define the entropy $S(\mu)$ (relative to
$dV)$ and the corresponding \emph{specific entropy} $S(e)$ (at given
energy level $e\in\R$) as in formula \ref{eq:def of S intro}. Given
$\beta\in\R$ the \emph{free energy $F(\beta)$ }(at inverse temperature
$\beta$) is defined by 

\begin{equation}
F(\beta)=\inf_{\mathcal{\mu\in P}(X)}F_{\beta}(\mu),\,\,\,F_{\beta}(\mu):=\beta E(\mu)-S(\mu),\label{eq:def of F beta of F beta mu intro}
\end{equation}
(where $F(\mu)$ is defined to be equal to $+\infty$ if $S(\mu)=-\infty).$
It follows ready from the definitions that, $F(\beta)$ is the Legendre
transform of $S(e):$ 
\begin{equation}
F(\beta)=S^{*}(\beta)\label{eq:F is S star}
\end{equation}
 (but, in general, it may happen that $F^{*}\neq S,$ since $S$ need
to be concave nor usc \cite{clmp2,e-h-t}). We recall the following
result \cite[Thm 4.4a]{e-h-t} \cite[Lemma 4.1]{ber9}:
\begin{lem}
\label{lemma:macrostate equivalence}(macrostate equivalence of ensembles).
Assume that $S^{**}(e)=S(e)>-\infty$ and that $S^{**}=S>-\infty$
in a neighborhood of $e.$ If $\mu^{e}$ is a maximal entropy measure
with energy $e,$ i.e. $S(\mu^{e})=S(e)$ and $E(\mu)=e,$ then $\mu^{e}$
minimizes the free energy functional $F_{\beta}(\mu)$ for any $\beta\in\partial S(e).$ 
\end{lem}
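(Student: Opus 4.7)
The plan is to reduce the statement to the Fenchel--Young equality between $S$ and its Legendre transform $S^{*}$. By \eqref{eq:F is S star} we have $F(\beta)=S^{*}(\beta)$, and $F(\beta) = \inf_{\mu} F_\beta(\mu)$ by definition, so the conclusion that $\mu^e$ minimizes $F_\beta$ is equivalent to the single numerical identity $F_\beta(\mu^e) = S^{*}(\beta)$. The left-hand side is transparent: by the maximum-entropy hypothesis $E(\mu^e)=e$ and $S(\mu^e)=S(e)$, so $F_\beta(\mu^e) = \beta e - S(e)$. Thus the lemma reduces to the Fenchel--Young identity
\[
S^{*}(\beta) = \beta e - S(e).
\]

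To prove this identity I first translate the subdifferential hypothesis $\beta\in\partial S(e)$ into the pointwise inequality
\[
S(e') \leq S(e) + \beta(e'-e) \qquad \forall\, e' \in \R.
\]
Since $S$ is not assumed to be globally concave one must argue slightly: the hypothesis $S(e)=S^{**}(e)$ closes the gap. On one hand, any affine $\ell(e')=S(e)+\beta(e'-e)$ that majorizes $S^{**}$ automatically majorizes $S$ (because $S\leq S^{**}$). On the other hand, any affine majorant of $S$ is $\geq S^{**}$, because $S^{**}$ is the smallest usc concave function dominating $S$, and affine functions are usc concave. Thus $\partial S(e)$ coincides with the superdifferential $\partial S^{**}(e)$ of the concave envelope, the latter being non-empty and convex thanks to the neighborhood assumption $S^{**}=S$ near $e$, which places $e$ in the interior of the finiteness domain of the concave function $S^{**}$.

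Armed with the inequality, I rewrite it as $\beta e'-S(e') \geq \beta e-S(e)$ for every $e'$, with equality at $e'=e$. Taking the infimum over $e'$ yields
\[
S^{*}(\beta) = \inf_{e'}\bigl(\beta e'-S(e')\bigr) = \beta e - S(e),
\]
whence $F_\beta(\mu^e) = \beta e - S(e) = S^{*}(\beta) = F(\beta) = \inf_\mu F_\beta(\mu)$, so $\mu^e$ is indeed a minimizer of $F_\beta$.

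The only delicate point is the non-concavity of $S$: the symbol $\partial S(e)$ has to be interpreted through the concave envelope $S^{**}$, and it is precisely the equality $S(e)=S^{**}(e)$ (together with its extension to a neighborhood) that makes the two subdifferentials coincide. Once the affine-majorant inequality is secured, the rest is a one-line Legendre-duality calculation and requires no further analytic input.
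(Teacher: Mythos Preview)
Your proof is correct and follows the standard Fenchel--Young duality argument: once one has the affine majorant inequality $S(e')\le S(e)+\beta(e'-e)$ for all $e'$, the identity $S^{*}(\beta)=\beta e-S(e)$ is immediate, and combined with $F_\beta(\mu^e)=\beta e-S(e)$ and $F=S^{*}$ this yields the conclusion. The paper does not give its own proof of this lemma but cites \cite[Thm 4.4a]{e-h-t} and \cite[Lemma 4.1]{ber9}; your argument is essentially the one found in those references. Your discussion of how $\partial S(e)$ must be read via $S^{**}$ (and why the hypothesis $S=S^{**}$ near $e$ makes this harmless and ensures non-emptiness) is accurate and worth keeping, since the paper only defines the superdifferential for concave functions.
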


\subsubsection{The microcanonical vs the canonical ensembles}

Given a lsc function $E^{(N)}$ on the $N-$fold product $X^{N}$
of $X,e\in\R$ and $\epsilon>0$ we define the corresponding \emph{lower}
and \emph{upper microcanonical measures} $\mu_{]e-\epsilon,e[}^{(N)}$
and $\mu_{]e,e+\epsilon[}^{(N)}$ as the probability measures on $X^{N}$
defined by formula \ref{eq:def of lower micro intro} and formula
\ref{eq:def of upper micro}, respectively. Given $\beta\in\R$ the
corresponding \emph{Gibbs measure} at inverse temperature $\beta$
(aka the \emph{canonical measure}) is defined by 
\begin{equation}
\nu_{\beta}^{(N)}:=e^{-\beta NE^{(N)}}dV^{\otimes N}/Z_{N,\beta},\,\,\,\,Z_{N,\beta}:=\int_{X^{N}}e^{-\beta NE^{(N)}}dV^{\otimes N},\label{eq:Gibbs meas}
\end{equation}
 which is a well-defined probability measure on $X^{N}$ when $Z_{N,\beta}<\infty.$

\subsection{Exponential convergence }

Recall that a sequence $Y_{N}$ of random variables taking values
in a metric space $(\mathcal{Y},d)$ is said to convergence \emph{exponentially}
(in probability) towards an element $y\in\mathcal{Y}$ at \emph{speed}
$R_{N}$ (where $R_{N}$ is a sequence tending to infinity) if for
any $\delta>0$ there exists a constant $C>0$ such that 
\[
\text{Prob \ensuremath{\left\{ d(Y_{N},y)\geq\delta\right\} }}\leq Ce^{-R_{N}/C}.
\]
 We will use the following criterion for exponential convergence in
the space $\mathcal{P}(X)$ of probability measures on a compact topological
space $X,$ endowed with its standard weak topology (which is metrizable
if $X$is a metric space).
\begin{lem}
\label{lem:Cramer variant}Let $X$ be a compact topological space
and $\Gamma_{N}$ a sequence of probability measures on $\mathcal{P}(X),$
$R_{N}$ a sequence in $\R$ tending to infinity and assume that the
function on $C^{0}(X)$ defined by
\begin{equation}
\Lambda(u\text{)}:=\limsup_{N\rightarrow\infty}R_{N}^{-1}\log\left\langle \Gamma_{N},e^{R_{N}\left\langle u,\cdot\right\rangle }\right\rangle ,\,\,\,u\in C^{0}(X)\label{eq:def of Lambda u}
\end{equation}
is finite and Gateaux differentiable at $0\in C^{0}(X)$ with differential
$\mu_{0}\in\mathcal{P}(X).$ Then, for any compact subset $\mathcal{K}$
of $\mathcal{P}(X)$ not containing $\mu_{0},$ there exists a constant
$C>0$ such that
\[
\Gamma_{N}(\mathcal{K})\leq Ce^{-R_{N}/C}.
\]
 In particular, if $\Gamma_{N}$ is the law of a random variable $Y_{N},$
then $Y_{N}$ converges exponentially at speed $R_{N}$ towards $\mu_{0},$
as $N\rightarrow\infty.$ Moreover, if $X$ is a compact manifold,
then $C^{0}(X)$ can be replaced with $C^{k}(X)$ for any positive
integer $k.$
\end{lem}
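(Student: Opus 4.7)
The plan is to deduce the bound by a Gärtner--Ellis type upper-bound argument: combine the exponential Chebyshev inequality at small $t>0$ with a compactness covering of $\mathcal{K}$ by open half-spaces separating it from $\mu_0$.

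First, for any $u \in C^0(X)$, $t > 0$ and $a \in \R$, the exponential Chebyshev inequality gives
\[
\Gamma_N\bigl(\{\mu : \langle u, \mu\rangle \geq a\}\bigr) \leq e^{-R_N t a}\,\bigl\langle \Gamma_N, e^{R_N \langle tu, \cdot\rangle}\bigr\rangle,
\]
so that taking $R_N^{-1}\log$ and $\limsup$ yields
\[
\limsup_{N\to\infty} R_N^{-1} \log \Gamma_N\bigl(\{\langle u, \cdot\rangle \geq a\}\bigr) \leq \Lambda(tu) - ta.
\]
Since $\Lambda(0)=0$, the Gateaux differentiability of $\Lambda$ at $0$ with differential $\mu_0$ means exactly that $\Lambda(tu) = t\langle u, \mu_0\rangle + o(t)$ as $t \to 0^+$. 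Hence whenever $a > \langle u, \mu_0\rangle$, choosing $t>0$ small enough renders the right-hand side strictly negative. This provides the key half-space estimate: every set of the form $\{\mu : \langle u, \mu\rangle \geq \langle u, \mu_0\rangle + \epsilon\}$ carries $\Gamma_N$-measure decaying exponentially at speed $R_N$.

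Next, to pass from half-spaces to the general compact set $\mathcal{K}$, I use a standard covering argument. Since $\mathcal{P}(X)$ is Hausdorff in the weak topology and $C^0(X)$ separates probability measures, for every $\mu \in \mathcal{K}$ there exist $u_\mu \in C^0(X)$ and $\epsilon_\mu > 0$ with $\langle u_\mu, \mu\rangle > \langle u_\mu, \mu_0\rangle + 2\epsilon_\mu$; the set $V_\mu := \{\nu : \langle u_\mu, \nu\rangle > \langle u_\mu, \mu_0\rangle + \epsilon_\mu\}$ is then an open weak neighborhood of $\mu$. By weak compactness of $\mathcal{K}$, finitely many $V_{\mu_1}, \ldots, V_{\mu_m}$ cover $\mathcal{K}$, and applying the half-space estimate to each and summing yields $\Gamma_N(\mathcal{K}) \leq C e^{-R_N/C}$ for some $C > 0$. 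The \emph{in particular} part of the lemma is then immediate, since when $d$ metrizes the weak topology the set $\{\nu : d(\nu, \mu_0) \geq \delta\}$ is weakly closed in the compact space $\mathcal{P}(X)$, hence itself compact, and does not contain $\mu_0$.

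The $C^k$-refinement stated in the final sentence requires only that, in the separation step, the $u_\mu$ can be taken to be $C^k$-smooth. This is automatic on a compact manifold $X$: by mollifying on coordinate charts and using a partition of unity, $C^k(X)$ is dense in $C^0(X)$ under the uniform norm, so already $C^k$-functions separate probability measures and generate the weak topology on $\mathcal{P}(X)$, and the covering argument applies verbatim. I do not foresee a serious obstacle in this proof: the argument is a soft functional-analytic one, with the entire analytic content of the lemma already packaged into the Gateaux differentiability hypothesis on $\Lambda$; the one place calling for care is merely that the hypothesis provides only a $\limsup$ bound on the log-moment generating function (rather than a limit), which is nevertheless exactly what the exponential Chebyshev estimate requires.
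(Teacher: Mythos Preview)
Your argument is correct and takes a genuinely different, more elementary route than the paper's. The paper proceeds via abstract convex duality: it first establishes that $\Lambda$ is lower semicontinuous on $C^{0}(X)$ (by showing each $\Lambda_N$ is $1$-Lipschitz), then invokes the general large-deviation upper bound \cite[Thm 4.5.3]{d-z} to obtain $\limsup R_N^{-1}\log\Gamma_N(\mathcal{K})\leq -\inf_{\mathcal{K}}\Lambda^{*}$, and finally shows $\Lambda^{*}$ has a unique minimizer at $\mu_0$ by the Legendre duality $\mu\in\partial\Lambda(0)\iff 0\in\partial\Lambda^{*}(\mu)$. For the $C^k$-statement the paper embeds $\mathcal{P}(X)$ into $H^{s}(X)^{*}$ via Sobolev embedding and reruns the same machinery there. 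Your approach instead unpacks the content of the Dembo--Zeitouni bound by hand: exponential Chebyshev on half-spaces plus a finite weak-open cover of $\mathcal{K}$, with the Gateaux hypothesis used directly to make each half-space rate negative. This is shorter, self-contained, and avoids both the lsc verification and the Sobolev detour; the paper's route, on the other hand, makes transparent the connection to the Legendre transform $\Lambda^{*}$ as a rate function and slots the lemma into the standard LDP framework.
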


\begin{proof}
We start with the case when $X$ is a compact topological space and
endow $C^{0}(X)$ with the Banach norm $\left\Vert \cdot\right\Vert $
defined by the sup-norm, called the \emph{strong }topology. 
\begin{equation}
\text{Claim:}\,\,\,\Lambda\,\,\text{is lower-semicontinuous (lsc) on }C^{0}(X).\label{eq:claim pf exp conv}
\end{equation}
To prove this first note that the function $\Lambda_{N}$ on $C^{0}(X)$
defined by 
\[
\Lambda_{N}(u\text{)}:=R_{N}^{-1}\log\left\langle \Gamma_{N},e^{R_{N}\left\langle u,\cdot\right\rangle }\right\rangle 
\]
 is Lipschitz continuous with Lipschitz constant $1.$ Indeed, since
$\Gamma_{N}$ is a probability measure on $\mathcal{P}(X)$ the function
$\Lambda_{N}$ is increasing wrt the standard order relation on $C^{0}(X)$
and satisfies $\Lambda_{N}(\cdot+c)=\Lambda_{N}(\cdot)+c$ for any
$c\in\R.$ It follows that, for any given $u_{0}$ and $u_{1}$ in
$C^{0}(X),$ 
\[
\Lambda_{N}(u_{0})\leq\Lambda_{N}(u_{1}+\left\Vert u_{0}-u_{1}\right\Vert )=\Lambda_{N}(u_{1})+\left\Vert u_{0}-u_{1}\right\Vert ,
\]
 which proves that $\Lambda_{N}$ has Lipschitz constant $1$ (after
interchanging the roles of $u_{0}$ and $u_{1}).$ Finally, the claim
above follows form the fact that the point-wise limsup $\Lambda$
of a sequence of functions $\Lambda_{N}$ having the same Lipschitz
constant (say $1)$ is lower semi-continuous. Indeed, 
\[
\Lambda(u):=\lim_{n\rightarrow\infty}\left(\sup_{m\geq n}\Lambda_{N}(u\text{)}\right)\leq\lim_{n\rightarrow\infty}\left(\sup_{m\geq n}\Lambda_{N}(u_{j}\text{)}+\left\Vert u-u_{j}\right\Vert \right)\leq\lim_{n\rightarrow\infty}\left(\sup_{m\geq n}\Lambda_{N}(u_{j}\text{)}\right)+\left\Vert u-u_{j}\right\Vert ,
\]
 showing that $\Lambda(u)\leq\Lambda(u_{j})+\left\Vert u-u_{j}\right\Vert .$
Letting $j\rightarrow\infty$ thus concludes the proof of the claim
above.

In the rest of the proof it will be convenient to follow the notation
in \cite{d-z} involving convex functions (rather than concave functions)
and thus define the Legendre transform $\Lambda^{*}$ of a function
$\Lambda$ on a topological vector space $\mathcal{U}$ as the following
convex lsc function on $\mathcal{U}^{*}:$
\[
\Lambda^{*}(\mu):=\sup_{u\in\mathcal{M}^{*}}\left(\left\langle \mu,u\right\rangle -\Lambda(u)\right)
\]
and the \emph{sub-differential }$\partial\phi(x)$ of a convex function
$\phi$ on a topological vector space $\mathcal{X}$ as the set of
all $y\in\mathcal{X}^{*}$ such that $\phi(\cdot)\geq\phi(x)+\left\langle y,\cdot-x\right\rangle .$
By \cite[Thm 4.5.3]{d-z}, for any compact subset $\mathcal{K}$ of
a Hausdorff topological vector space $\mathcal{M}$ 
\begin{equation}
\limsup_{N\rightarrow\infty}R_{N}^{-1}\log\Gamma_{N}(\mathcal{K})\leq-\inf_{\mathcal{K}}\Lambda^{*},\label{eq:upper bound on Laplace transform}
\end{equation}
 where $\Lambda$ is defined by formula \ref{eq:def of Lambda u}
on $\mathcal{M}^{*}.$ Given a compact topological space $X$ we will
take $\mathcal{M}$ to be the space $\mathcal{M}(X)$ of all signed
measures on $X,$ endowed with the weak{*}-topology wrt the test functions
$C^{0}(X).$ This topology is \emph{Hausdorff.} Indeed, in general,
the weak{*} topology is locally convex (with semi-norms $\left|\left\langle \cdot,u\right\rangle \right|$
for $u\in C^{0}(X)).$ In this case it also separated (i.e. $\left\langle \mu,u\right\rangle =0$
for all $u\in C^{0}(X)$ implies $\mu=0).$ Hence, it is Hausdorff.
Then the topological space $\mathcal{M}(X)$ can be identified with
the topological dual $C^{0}(X)^{*}$ of $C^{0}(X)$ (by Riesz representation
theorem). Moreover, the natural embedding of $C^{0}(X)$ into $C^{0}(X)^{**}$
is a bijection. Indeed, if $F$ is a linear continuous function on
$\mathcal{M}(X)$ we get $F(\mu)=\left\langle \mu,u\right\rangle $
for the continuous function $u(x):=F(\delta_{x})$ on $X$ (by expressing
$\mu\in\mathcal{M}(X)$ as a limit of a finite linear combinations
of Dirac masses). 

Since $\Lambda^{*}$ is lsc (being the sup of a family of continuous
convex functionals), it will, by the inequality \ref{eq:upper bound on Laplace transform},
be enough to show that $\Lambda^{*}$ has a unique minimum (at $\mu_{0}).$
In fact, in the present setup both $\Lambda^{*}$ and $\Lambda$ are
lsc convex functions. Indeed, according to the claim \ref{eq:claim pf exp conv}
$\Lambda$ is lsc on $C^{0}(X)$ and, in general, both the sup and
the limsup of a family of convex functions are convex. Next note that
the infimum of $\Lambda^{*}$ is attained at $\mu\in C^{0}(X){}^{*}$
iff $0\in\partial\Lambda^{*}$ (by the very definition of the sub-differential).
Now, in general, for any lsc convex functional $\Lambda$ defined
on a locally convex topological vector space $\mathcal{U}$ (here
$C^{0}(X),$ endowed with the strong topology) we have that 
\begin{equation}
\mu\in\partial\Lambda(u)\iff u\in\partial\Lambda^{*}(\mu).\label{eq:duality}
\end{equation}
Indeed, in general if a function $\Lambda(u)$ on a topological vector
space $\mathcal{U}$ is convex then, by the very definition of sub-differential,
\[
\Lambda(u)+\Lambda^{*}(\mu)=\left\langle \mu,u\right\rangle \iff\mu\in\partial\Lambda(u),
\]
 where $\Lambda^{*}$ is defined on $\mathcal{U}^{*}.$ But any lsc
convex function $\Lambda$ on a locally convex topological vector
space satisfies $(\Lambda^{*})^{*}=\Lambda$ (by the Hahn-Banach theorem;
see \cite[Lemma 4.5.8]{d-z}). Thus, applying the equivalence \ref{eq:duality}
to the convex function $\Lambda^{*}$ on $\mathcal{U}^{*}$ gives
\[
\Lambda^{*}(\mu)+\Lambda(u)=\left\langle \mu,u\right\rangle \iff u\in\partial\Lambda^{*}(\mu)
\]
 which proves \ref{eq:duality}. Hence, the infimum of $\Lambda^{*}$
is attained at $\mu\in\mathcal{U}^{*}$ iff $\mu\in\partial\Lambda(0).$
But since $\Lambda$ is assumed Gateaux differentiable at $0$ with
differential $\mu_{0}$ we have that $\partial\Lambda(0)=\{\mu_{0}\}.$
Indeed, this is classical when $\mathcal{U}=\R.$ If $\mu\in\partial\Lambda(0)$
restricting to an affine line thus forces $\left\langle \mu,u\right\rangle =\left\langle \mu_{0},u\right\rangle $
for any $u\in\mathcal{U}$ which implies that $\mu=\mu_{0},$ since
$\mathcal{U}=\mathcal{M}^{*}$ (as sets). Hence, $\mu=\mu_{0},$ as
desired. 

Next assume that $X$ is a compact manifold. Denote by $H^{s}(X)$
the Sobolev space of functions $f$ with $s$ derivatives in $L^{2}(X).$
We endow $H^{s}(X)$ with the strong topology, defined by the standard
Hilbert norm on $H^{s}(X)$ induced by a fixed metric $g$ on $X:$
$\left\Vert f\right\Vert _{s}^{2}:=\left\langle f+(-\Delta_{g})^{s}f,fdV_{g}\right\rangle ,$
where $\Delta_{g}$ and $dV_{g}$ denote the Laplacian and volume
form wrt $g.$ We denote by $H^{s}(X)^{*}$ the topological dual of
$H^{s}(X),$ endowed with the weak{*} topology. By the Riesz representation
theorem in Hilbert spaces the natural inclusion of $H^{s}(X)$ into
$H^{s}(X)^{**}$ is a bijection. It also follows from the Riesz representation
that $H^{s}(X)^{*}$is separated, hence\emph{ Hausdorff.} Furthermore,
by the Sobolev embedding theorem \cite{au3}, there is a continuous
embedding 
\[
H^{s}(X)\hookrightarrow C^{k}(X),
\]
 for $s$ sufficiently large, defined by the inclusion map. We fix
such an $s.$ Combining the Sobolev embedding with the basic fact
that $H^{s}(X)$ is dense in $C^{0}(X)$ (endowed with the sup-norm)
yields a continuous embedding 
\[
j:\,\mathcal{P}(X)\hookrightarrow H^{s}(X)^{*},\,\:\mu\mapsto\int\cdot\mu.
\]
Now fix a compact subset $\mathcal{K}$ of $\mathcal{P}(X).$ Since
continuous maps preserve compact subsets, $j(\mathcal{K})$ is compact
in $\mathcal{X}:=H^{s}(X)^{*}.$ Hence, applying the inequality \ref{eq:upper bound on Laplace transform}
to $\mathcal{M}:=H^{s}(X)^{*}$ gives
\[
\limsup_{N\rightarrow\infty}R_{N}^{-1}\log\Gamma_{N}(\mathcal{K})=\limsup_{N\rightarrow\infty}R_{N}^{-1}\log\left((j_{*}\Gamma_{N})(j(\mathcal{K}))\right)\leq-\inf_{j(\mathcal{K})}\Lambda^{*},
\]
 where $\Lambda$ is defined by formula \ref{eq:def of Lambda u}
on $H^{s}(X)^{**},$ which as a set may be identified with $H^{s}(X),$
which is embedded in $C^{k}(X).$ It will thus be enough to show that
the function $\Lambda^{*}$ on $H^{s}(\mathcal{X})^{*}$ has a unique
minimum at $\mu_{0}.$ But since we have a continuous embedding $H^{s}(X)\hookrightarrow C^{0}(X)$
we get, as before, that $\Lambda$ is lsc on $H^{s}(X)$ (endowed
with the strong topology). We can then thus conclude the proof by
repeating the previous argument, that only uses that $\mathcal{U}(:=H^{s}(X)$
with the strong topology) is a locally convex topology vector space.
\end{proof}
The previous lemma can be seen as an infinite dimensional version
of the criterion for exponential convergence of random variables with
values in $\R^{m}$ in \cite[Thm II.6.3]{el}. We will also have use
for the following one-dimensional version of the previous lemma (which
coincides with the case $m=1$ of \cite[Thm II.6.3]{el} when $T=\infty)$:
\begin{lem}
\label{lem:Cramer variant i R}Let $E_{N}$ be a sequence of real-valued
random variables. Assume that there exists $T>0$ such that for any
$t\in]-T,T[$ 
\[
\lim_{N\rightarrow\infty}R_{N}^{-1}\log\E(e^{tR_{N}E_{N}})=f(t)
\]
for sequence of numbers $R_{N}\rightarrow\infty$ and a function $f$
on $]-T,T[$ that is differentiable at $t=0.$ Then $E_{N}$ converges
exponentially in probability at speed $R_{N}$, as $N\rightarrow\infty,$
to $f'(0).$ 
\end{lem}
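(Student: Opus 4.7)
The plan is to use the classical Chernoff (exponential Markov) bound and leverage the differentiability of $f$ at $0$ to get a strictly negative exponent for both the upper and lower tails around $f'(0)$. Since this is just a one-dimensional version of the preceding lemma, one could in principle specialize that argument, but a direct proof is cleaner and more transparent.

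First I would observe that $f(0)=0$ automatically, since $\mathbb{E}(e^{0})=1$ and hence $R_N^{-1}\log\mathbb{E}(e^{0})=0$ for every $N$. Differentiability at $0$ therefore gives
\[
f(t)=tf'(0)+o(t)\quad\text{as }t\to 0.
\]
Fix $\delta>0$. For any $t\in\,]0,T[$, applying Markov's inequality to the nonnegative random variable $e^{tR_N E_N}$ at level $e^{tR_N(f'(0)+\delta)}$ gives
\[
\mathrm{Prob}\bigl\{E_N\geq f'(0)+\delta\bigr\}\leq e^{-tR_N(f'(0)+\delta)}\,\mathbb{E}\bigl(e^{tR_N E_N}\bigr).
\]
Taking $R_N^{-1}\log$ and passing to the limit using the hypothesis yields
\[
\limsup_{N\to\infty}R_N^{-1}\log\mathrm{Prob}\bigl\{E_N\geq f'(0)+\delta\bigr\}\leq -t(f'(0)+\delta)+f(t)=-t\delta+o(t),
\]
which for $t>0$ sufficiently small is bounded above by $-t\delta/2<0$. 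Hence there exists $C>0$ such that $\mathrm{Prob}\{E_N\geq f'(0)+\delta\}\leq Ce^{-R_N/C}$ for all $N$.

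Symmetrically, applying the same Chernoff argument with $t\in\,]-T,0[$ — or, equivalently, replacing $E_N$ by $-E_N$ and noting that the hypothesis transforms into the analogous statement with $\tilde f(t):=f(-t)$, which is still differentiable at $0$ with derivative $-f'(0)$ — handles the lower tail $\mathrm{Prob}\{E_N\leq f'(0)-\delta\}$ with a similar exponential estimate. Combining the two tail bounds and adjusting $C$ gives
\[
\mathrm{Prob}\bigl\{|E_N-f'(0)|\geq\delta\bigr\}\leq Ce^{-R_N/C},
\]
which is precisely exponential convergence at speed $R_N$ towards $f'(0)$.

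No step here is technically difficult; the only point one has to be careful about is to verify that $f(0)=0$ and to argue that the hypothesis $\lim_{N\to\infty}R_N^{-1}\log\mathbb{E}(e^{tR_NE_N})=f(t)$ (as opposed to just $\limsup$) together with differentiability at $0$ is enough to replace $f(t)$ by $tf'(0)+o(t)$ uniformly in $N$ — which it is, since we first fix $t$ and then let $N\to\infty$. Everything else is the standard Cramér-type exponential Markov argument, so this really is just the scalar specialization promised in the statement.
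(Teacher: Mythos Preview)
Your proof is correct and follows essentially the same route as the paper's own argument: both apply the Chernoff/exponential Markov bound $\mathrm{Prob}\{E_N\geq f'(0)+\delta\}\leq e^{-tR_N(f'(0)+\delta)}\mathbb{E}(e^{tR_NE_N})$, pass to the limit to get $-t\delta+o(t)$, choose $t>0$ small to make this strictly negative, and then handle the lower tail by symmetry. The only cosmetic difference is that you explicitly record $f(0)=0$, which the paper uses implicitly.
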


\begin{proof}
When the law of $E_{N}$ is supported on a compact subset of $\R$
the result follows from the previous lemma. To prove the general case
observe that for a given $e\in\R,$ we have
\[
\text{Prob \ensuremath{\left\{ E_{N}\geq e+\delta\right\} }}\leq e^{-R_{N}t(e+\delta)}\E(e^{tR_{N}E_{N}})
\]
for any $t\in\R.$ Hence, for any $t\in[-T,T],$ 
\[
\limsup_{N\rightarrow\infty}R_{N}^{-1}\log\text{Prob \ensuremath{\left\{ E_{N}\geq e+\delta\right\} }}\leq-t(e+\delta)+f(t)
\]
In particular, taking $e=f'(0)$ and using the assumption that $f(t)=tf'(0)+to(t),$

\[
\limsup_{N\rightarrow\infty}R_{N}^{-1}\log\text{Prob \ensuremath{\left\{ E_{N}\geq e+\delta\right\} }}\leq-t\delta+to(t)=-t\left(\delta+o(t)\right)
\]
 where $o(t)\rightarrow0$ as $t\rightarrow0.$ In particular, given
any $\delta>0$ we can take $t$ sufficiently small and strictly positive
so that the rhs above is strictly negative, as desired. Finally, repeating
the same argument with $E_{N}$ replaced by $-E_{N}$ concludes the
proof of the exponential convergence. 
\end{proof}

\subsection{General convergence towards the specific entropy as $N\rightarrow\infty$}

We start with the following elementary
\begin{lem}
\label{lem:differentiability of sup}Let $G_{0}$ and $v$ be functions
on a topological space $\mathcal{X}$ and set $G_{t}:=G_{0}+tv$ for
$t\in[-T,T].$ Assume that $G_{t}$ admits a unique maximizer $x_{t}\in\mathcal{X}$
for any $t\in[-T,T]$ and that $t\mapsto v(x_{t})$ is continuous.
Then 
\[
g(t):=\sup_{\mathcal{X}}(G_{0}+tv)
\]
is differentiable at $t=0$ and $g'(0)=v(x_{0}).$
\end{lem}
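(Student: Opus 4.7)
The plan is to recognize this as an envelope-type theorem and to extract differentiability at $t=0$ from two-sided difference quotient bounds, with the continuity hypothesis on $t\mapsto v(x_t)$ providing the squeeze.

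First I would record the basic identity $g(t)=G_0(x_t)+tv(x_t)$, which is immediate from the assumption that $x_t$ maximizes $G_t=G_0+tv$. Next I would exploit the two extremal properties: on the one hand $x_0$ maximizes $G_0$, so $G_0(x_s)\le G_0(x_0)=g(0)$; on the other hand, testing the supremum defining $g(s)$ at the point $x_0$ gives $g(s)\ge G_0(x_0)+sv(x_0)=g(0)+sv(x_0)$. Combining this with the identity for $g(s)$ yields the sandwich
\[
sv(x_0)\;\le\;g(s)-g(0)\;=\;\bigl(G_0(x_s)-G_0(x_0)\bigr)+sv(x_s)\;\le\;sv(x_s).
\]

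Now I would divide by $s$, keeping track of the sign. For $s>0$ this gives $v(x_0)\le (g(s)-g(0))/s\le v(x_s)$, and for $s<0$ the reversed inequalities $v(x_s)\le(g(s)-g(0))/s\le v(x_0)$. In either case the difference quotient is squeezed between $v(x_0)$ and $v(x_s)$, and the assumed continuity of $t\mapsto v(x_t)$ at $t=0$ forces $v(x_s)\to v(x_0)$ as $s\to 0$. Hence both one-sided limits of $(g(s)-g(0))/s$ exist and equal $v(x_0)$, proving that $g$ is differentiable at $0$ with $g'(0)=v(x_0)$.

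There is no real obstacle here: $g$ is automatically convex as a supremum of affine functions of $t$, so one could alternatively phrase the argument as showing that $v(x_0)$ is a subgradient at $0$ (from the testing inequality above) and that it is the only subgradient (using the upper bound $g(s)-g(0)\le sv(x_s)$ together with continuity). I prefer the direct squeeze since it avoids invoking convex-analytic machinery and makes transparent exactly where each hypothesis—unique maximizer, continuity of $v(x_t)$—is used.
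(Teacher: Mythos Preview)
Your proof is correct and is the standard envelope-theorem argument. The paper does not give a self-contained proof of this lemma at all: it simply refers to the appendix of \cite{ber-ber}, noting that the needed continuity of $t\mapsto v(x_t)$ (which there was deduced from upper semicontinuity of $G_0$) is here given directly as a hypothesis. Your squeeze argument is exactly the kind of computation that reference contains, so there is no substantive difference in approach.
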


\begin{proof}
This is shown as in the appendix of \cite{ber-ber} (where it was
only assumed that $G_{0}$ admits a unique minimizer, since $G_{0}$
was assumed usc). Note that in the present setup \cite[formula 6.1 ]{ber-ber}
holds, by assumption.
\end{proof}
\begin{lem}
\label{lem:F cont diff}Assume that, for any $\beta$ in $]\beta_{0},\beta_{1}[,$
$F_{\beta}(\mu)$ admits a unique minimizer $\mu_{\beta}$ and that
$E(\mu_{\beta})$ is continuous. Then $F$ is differentiable, $F'(\beta)=E(\mu_{\beta})$
and, as a consequence, $F'$ is continuous and $S(e)$ is finite on
$\partial F(]\beta_{0},\beta_{1}[).$
\end{lem}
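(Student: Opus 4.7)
The plan is to reduce the differentiability claim directly to Lemma~\ref{lem:differentiability of sup} applied to $\mathcal{X} = \mathcal{P}(X)$. Fix an arbitrary $\beta_\ast \in (\beta_0,\beta_1)$ and consider the concave family
\[
G_t(\mu) := -F_{\beta_\ast + t}(\mu) = S(\mu) - (\beta_\ast + t) E(\mu), \qquad t \in (\beta_0-\beta_\ast,\beta_1-\beta_\ast),
\]
on $\mathcal{P}(X)$. Setting $G_0 := -F_{\beta_\ast}$ and $v := -E$, one has $G_t = G_0 + tv$, and
\[
g(t) := \sup_{\mu \in \mathcal{P}(X)} G_t(\mu) = -F(\beta_\ast + t).
\]
I would verify the hypotheses of Lemma~\ref{lem:differentiability of sup} directly: the unique minimizer $\mu_{\beta_\ast + t}$ of $F_{\beta_\ast + t}$ is the unique maximizer of $G_t$, and the map $t \mapsto v(\mu_{\beta_\ast + t}) = -E(\mu_{\beta_\ast + t})$ is continuous by the hypothesis on $E(\mu_\beta)$. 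The lemma then yields $g'(0) = v(\mu_{\beta_\ast}) = -E(\mu_{\beta_\ast})$, i.e.\ $F'(\beta_\ast) = E(\mu_{\beta_\ast})$. Since $\beta_\ast$ was arbitrary, $F$ is differentiable on $(\beta_0,\beta_1)$ with $F'(\beta) = E(\mu_\beta)$, and continuity of $F'$ is immediate from the continuity hypothesis.

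For the last assertion, I would observe that differentiability of $F$ on $(\beta_0,\beta_1)$ collapses its superdifferential to a singleton, so
\[
\partial F(]\beta_0,\beta_1[) = \{E(\mu_\beta) : \beta \in (\beta_0,\beta_1)\}.
\]
For such $e = E(\mu_\beta)$, the minimizer identity gives $S(\mu_\beta) = \beta e - F(\beta)$. Here $F(\beta)$ is two-sidedly finite: the upper bound $F(\beta) \leq F_\beta(\mu_0) = \beta e_0 - S(\mu_0) < \infty$ holds since $e_0 < \infty$ and $S(\mu_0) = \log\int_X dV$ is finite, while $F(\beta) > -\infty$ follows from the Jensen bound $S(\mu) \leq \log \int_X dV$ valid for every $\mu \in \mathcal{P}(X)$ and the finiteness of $E(\mu_\beta)$. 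Hence $S(\mu_\beta) > -\infty$, and since $\mu_\beta$ is admissible in the variational problem defining $S(e)$, one concludes $S(e) \geq S(\mu_\beta) > -\infty$.

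I do not anticipate a substantive obstacle here: the essential analytic input is Lemma~\ref{lem:differentiability of sup}, already established, and the proof amounts to identifying the correct one-parameter perturbation (namely $v = -E$) and translating the hypothesis ``$E(\mu_\beta)$ is continuous'' into the continuity of $t \mapsto v(x_t)$ required by that lemma. The mildly delicate point is the finiteness check for $S(e)$, which relies on having both the Jensen upper bound on $S$ and the assumed existence of minimizers; both are standard in the present general setup.
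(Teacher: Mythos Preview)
Your proposal is correct and follows essentially the same route as the paper: both reduce the differentiability of $F$ to Lemma~\ref{lem:differentiability of sup} with $v=-E$, and both deduce finiteness of $S(e)$ from $S(\mu_\beta)=\beta E(\mu_\beta)-F(\beta)$ together with the finiteness of $F(\beta)$ and $E(\mu_\beta)$. Your write-up simply spells out more of the bookkeeping (the two-sided finiteness of $F(\beta)$ and the identification $\partial F(]\beta_0,\beta_1[)=\{E(\mu_\beta)\}$) than the paper's terse version.
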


\begin{proof}
The differentiability of $F$ and the formula $F'(\beta)=E(\mu_{\beta})$
follows from the previous lemma, using the assumption that $E(\mu_{\beta})$
is continuous. Since $F'(\beta)=E(\mu_{\beta})$ this shows that,
in fact, $F'$ is continuous. Finally, take $e=F'(\beta)=E(\mu_{\beta}).$
Since $F(\beta)$ is finite we have that $S(\mu_{\beta})>-\infty$
and hence $S(e)$ is finite. 
\end{proof}
\begin{lem}
\label{lem:S strictly concave on image of dF}Assume that the assumptions
in the previous lemma are satisfied and that $S$ is usc in the interior
of $\partial F(]\beta_{0},\beta_{1}[).$ Then, on the interior of
$\partial F(]\beta_{0},\beta_{1}[),$ $S=F^{*}$ and $S$ is strictly
concave and 
\begin{equation}
S(F'(\beta))=F^{*}(F'(\beta))=\beta F'(\beta)-F(\beta)=S(\mu_{\beta}).\label{eq:formula in Prop S strictly conc on image of dF}
\end{equation}
In particular, $\mu_{\beta}$ is the unique maximum entropy measure
of energy $F'(\beta).$
\end{lem}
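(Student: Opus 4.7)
The plan is to exploit Legendre duality: since $F=S^{\ast}$ is already a concave function of $\beta$ (being the infimum over $\mu$ of the affine-in-$\beta$ functions $\beta E(\mu)-S(\mu)$), and since the hypotheses of Lemma \ref{lem:F cont diff} give us that $F$ is $C^{1}$ on $]\beta_{0},\beta_{1}[$ with $F'(\beta)=E(\mu_{\beta})$ continuous, we can invoke Lemma \ref{lem:diff implies dual strc conc} applied to $F$ on any subinterval to conclude that $F^{\ast}$ is strictly concave on the interior of $\partial F(]\beta_{0},\beta_{1}[)=F'(]\beta_{0},\beta_{1}[)$. This handles the strict concavity claim as soon as we identify $S$ with $F^{\ast}$ on that set.

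Next I would establish the identity \ref{eq:formula in Prop S strictly conc on image of dF} by a direct computation. Fix $\beta\in]\beta_{0},\beta_{1}[$ and set $e:=F'(\beta)=E(\mu_{\beta})$. Because $F$ is concave and differentiable at $\beta$, the tangent line $\gamma\mapsto F(\beta)+e(\gamma-\beta)$ lies above $F$, so $\gamma e-F(\gamma)\geq\beta e-F(\beta)$ for all $\gamma$, with equality at $\gamma=\beta$; hence $F^{\ast}(e)=\beta e-F(\beta)$. On the other hand, since $\mu_{\beta}$ minimizes $F_{\beta}$, we have $S(\mu_{\beta})=\beta E(\mu_{\beta})-F_{\beta}(\mu_{\beta})=\beta e-F(\beta)$, giving $S(\mu_{\beta})=F^{\ast}(e)$.

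To upgrade this to $S(e)=S(\mu_{\beta})$ (and simultaneously obtain the uniqueness of the maximum entropy measure of energy $e$), the observation is that for any other $\mu$ with $E(\mu)=e$ one has
\[
\beta e-S(\mu)=F_{\beta}(\mu)\geq F(\beta)=\beta e-S(\mu_{\beta}),
\]
so $S(\mu)\leq S(\mu_{\beta})$ with equality only when $\mu$ minimizes $F_{\beta}$, which by the uniqueness assumption forces $\mu=\mu_{\beta}$. Taking the supremum over such $\mu$ gives $S(e)=S(\mu_{\beta})$ and, combined with the previous paragraph, $S(e)=F^{\ast}(e)$. Since $F^{\ast}$ is strictly concave on the interior of $\partial F(]\beta_{0},\beta_{1}[)$, the same holds for $S$ there, and the chain of equalities \ref{eq:formula in Prop S strictly conc on image of dF} is complete.

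I do not anticipate a serious obstacle: the argument is essentially a clean application of concave Legendre duality once differentiability of $F$ is in hand. The only point requiring care is making sure that each member of \ref{eq:formula in Prop S strictly conc on image of dF} is finite so that the manipulations $F_{\beta}(\mu)=\beta e-S(\mu)$ are unambiguous; this is guaranteed by the last conclusion of Lemma \ref{lem:F cont diff} which says $S(e)$ is finite on $\partial F(]\beta_{0},\beta_{1}[)$. The hypothesis that $S$ is usc on the interior of this set is not needed in the direct argument above (it would be needed only to invoke $S^{\ast\ast}=S$, but we bypass that via the explicit use of $\mu_{\beta}$), and indeed the proof shows that the usc assumption is automatically inherited from the strict concavity that we deduce.
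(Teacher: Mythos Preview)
Your proof is correct and takes a genuinely different, more elementary route than the paper. The paper argues by contradiction: assuming $S$ is not concave on the interior of $\partial F(]\beta_{0},\beta_{1}[)$, it invokes Lemma~\ref{lem:convex envol affine} (which needs the usc hypothesis on $S$) to find an open interval where $S^{**}>S$, and hence where $S^{**}=F^{*}$ is affine, contradicting the strict concavity of $F^{*}$ from Lemma~\ref{lem:diff implies dual strc conc}. Only after establishing $S=S^{**}=F^{*}$ does the paper identify the maximizer via Lemma~\ref{lemma:macrostate equivalence}. You instead go straight to the pointwise equality $S(e)=S(\mu_{\beta})=F^{*}(e)$ for $e=F'(\beta)$ by comparing $F_{\beta}(\mu)$ against $F_{\beta}(\mu_{\beta})$ over $\{E(\mu)=e\}$, which simultaneously produces the formula, the uniqueness of the maximum entropy measure, and the identification $S=F^{*}$---all without ever passing through $S^{**}$ or needing upper semicontinuity. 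Your observation that the usc hypothesis is redundant (and in fact a consequence, since $F^{*}$ is automatically usc) is correct and a nice byproduct of the direct approach; the paper's route obscures this because Lemma~\ref{lem:convex envol affine} genuinely requires usc as an input.
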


\begin{proof}
By the previous lemma $F(\beta)$ is differentiable for any $\beta$
in $]\beta_{0},\beta_{1}[.$ It follows from Lemma \ref{lem:diff implies dual strc conc}
that $F^{*}$ is strictly concave in the interior of $\partial F(]\beta_{0},\beta_{1}[).$
Next, let us show that $S$ is concave in the interior of $\partial F(]\beta_{0},\beta_{1}[)$
and that $S=F^{*}$ there. Assume, in order to get a contradiction,
that $S$ is \emph{not} concave. This means, by Lemma \ref{lem:convex envol affine},
that there exists a non-empty open interval $I$ in $\partial F(]\beta_{0},\beta_{1}[)$
such that
\[
S^{**}>S\,\,\text{on\,}I\subset\partial F(]\beta_{0},\beta_{1}[)
\]
It thus follows from Lemma \ref{lem:convex envol affine} that $S^{**}$
is affine on some open interval in $\partial F(]\beta_{0},\beta_{1}[).$
Now, in general $F(\beta)=S^{*}(\beta)$ (by \ref{eq:F is S star})
and hence $F^{*}(e)=S^{**}(e).$ But this means that $F^{*}$ is affine
on some open interval in $\partial F(]\beta_{0},\beta_{1}[),$ contradicting
that $F^{*}$ is strictly concave there. Hence, $S$ is concave and
$S=S^{**}$ in the interior of $\partial F(]\beta_{0},\beta_{1}[).$
This means that $S=F^{*}.$ Since, as explained above, $F^{*}$ is
strictly concave in $\partial F(]\beta_{0},\beta_{1}[)$ this shows
that $S$ is, in fact, strictly concave in $\partial F(]\beta_{0},\beta_{1}[).$
The second equality in formula \ref{eq:formula in Prop S strictly conc on image of dF}
follows from basic concave analysis. Finally, to prove the last equality
in formula \ref{eq:formula in Prop S strictly conc on image of dF},
recall that by Lemma \ref{lemma:macrostate equivalence} any optimizer
for $S(\mu)$ on $\{E(\mu)=e\}$ for $e$ an interior element in $\partial F(]\beta_{0},\beta_{1}[)$
minimizes $F_{\beta}(\mu)$ for $\beta$ such that $F'(\beta)=e.$
Hence, by assumption, the optimizer in question is $\mu_{\beta}.$ 
\end{proof}
\begin{lem}
\label{lem:conv in prob av E N}Assume that, for any $\beta$ in $]\beta_{0},\beta_{1}[,$$F(\beta)$
is differentiable and
\[
-\lim_{N\rightarrow\infty}N^{-1}\log Z_{N,\beta}=F(\beta).
\]
Then the function $E^{(N)},$ viewed as a random variable on the canonical
ensemble $(X^{N},\nu_{\beta}^{(N)}),$ converges in probability, as
$N\rightarrow\infty,$ to $F'(\beta).$ More precisely, the convergence
is exponential at speed $N.$
\end{lem}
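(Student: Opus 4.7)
The plan is to reduce the claim to the one-dimensional exponential convergence criterion of Lemma \ref{lem:Cramer variant i R} by identifying the logarithmic Laplace transform of $E^{(N)}$ under the Gibbs measure with a difference of partition functions.

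First I would compute the moment generating function. By the definition \ref{eq:Gibbs meas} of $\nu_\beta^{(N)},$ for any $t \in \mathbb{R}$ we have
\[
\mathbb{E}_{\nu_\beta^{(N)}}\bigl(e^{tN E^{(N)}}\bigr) = \frac{1}{Z_{N,\beta}}\int_{X^N} e^{tN E^{(N)}} e^{-\beta N E^{(N)}} dV^{\otimes N} = \frac{Z_{N,\beta - t}}{Z_{N,\beta}}.
\]
Hence
\[
N^{-1}\log \mathbb{E}_{\nu_\beta^{(N)}}\bigl(e^{tN E^{(N)}}\bigr) = N^{-1}\log Z_{N,\beta - t} - N^{-1}\log Z_{N,\beta}.
\]

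Next, fix $\beta \in {]\beta_0,\beta_1[}$ and set $T := \min(\beta-\beta_0,\beta_1-\beta) > 0$, so that $\beta - t \in {]\beta_0,\beta_1[}$ for every $t \in {]-T,T[}$. The hypothesis that $-N^{-1}\log Z_{N,\beta'} \to F(\beta')$ for every $\beta' \in {]\beta_0,\beta_1[}$ then yields, for $t \in {]-T,T[}$,
\[
\lim_{N\to\infty} N^{-1}\log \mathbb{E}_{\nu_\beta^{(N)}}\bigl(e^{tN E^{(N)}}\bigr) = F(\beta) - F(\beta - t) =: f(t).
\]
Since $F$ is assumed differentiable on $]\beta_0,\beta_1[$, the function $f$ is differentiable at $t = 0$ with
\[
f'(0) = F'(\beta).
\]

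Finally I apply Lemma \ref{lem:Cramer variant i R} with $E_N := E^{(N)}$ (viewed as a random variable on $(X^N,\nu_\beta^{(N)})$) and speed $R_N = N$. The hypotheses of the lemma are satisfied on $]-T,T[$ with the limit function $f$ differentiable at $0$, so we conclude that $E^{(N)}$ converges exponentially in probability, at speed $N$, to $f'(0) = F'(\beta)$, which is the desired statement. No step presents a serious obstacle: the only point to watch is that the window $]-T,T[$ on which the Laplace transform converges is genuinely open around $0$, which is guaranteed by $\beta$ lying in the open interval $]\beta_0,\beta_1[$.
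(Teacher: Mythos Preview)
Your proof is correct and follows essentially the same route as the paper: compute $\mathbb{E}(e^{tNE^{(N)}})=Z_{N,\beta-t}/Z_{N,\beta}$, pass to the limit to get $f(t)=F(\beta)-F(\beta-t)$ with $f'(0)=F'(\beta)$, and invoke Lemma~\ref{lem:Cramer variant i R}. Your version is in fact slightly more careful in specifying the open window $]-T,T[$ on which the convergence is available.
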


\begin{proof}
This follows directly from Lemma \ref{lem:Cramer variant i R}. Indeed,
\[
\E(e^{tNE^{(N)}})=Z_{N,(\beta-t)}/Z_{N,\beta}
\]
Hence, $f(t)=-F(\beta+t)+F(\beta)$ which is differentiable wrt $t$
at $t=0,$ by assumption and $f'(0)=F'(\beta).$ 
\end{proof}
We are now ready for the proof of following result, which is inspired
by the case of point vortices in a compact domain of $\R^{2}$ in
\cite[Thm 4.2]{clmp2}.
\begin{thm}
\label{thm:general conv of mean entropy }Assume that $F(\beta)\in C^{1}\left(]\beta_{0},\beta_{1}[\right)$
and that $S(e)$ is usc on $\partial F\left(]\beta_{0},\beta_{1}[\right).$
If for any $\beta$ in $]\beta_{0},\beta_{1}[$ 
\begin{equation}
-\lim_{N\rightarrow\infty}N^{-1}\log Z_{N,\beta}=F(\beta),\label{eq:assumption conv towards F beta}
\end{equation}
 then 
\begin{itemize}
\item for any $e$ in the interior of $\partial F\left(]\beta_{0},\beta_{1}[\cap]0,\infty[\right)$
\[
\lim_{N\rightarrow\infty}N^{-1}\log\int_{\left\{ e-\epsilon<E^{(N)}<e\right\} }dV^{\otimes N}=S(e)
\]
 and when $\beta_{0}=0$ the previous convergence also holds for $E=e_{0}(=F'(0)).$ 
\item For any $e$ in the interior of $\partial F\left(]\beta_{0},\beta_{1}[\cap]-\infty,0[\right)$
\[
\lim_{N\rightarrow\infty}N^{-1}\log\int_{\left\{ e<E^{(N)}<e+\epsilon\right\} }dV^{\otimes N}=S(e)
\]
 and when $\beta_{1}=0$ the previous convergence also holds for $E=e_{0}(=F'(0)),$
if $E(\mu_{\beta})$ is assumed to be continuous as $\beta$ increases
to $0.$ 
\end{itemize}
\end{thm}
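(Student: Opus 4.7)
The plan is a quantitative ``equivalence of ensembles'' argument: I will compare the microcanonical integral to the canonical partition function $Z_{N,\beta}$ via exponential Chebyshev bounds, and match the resulting Legendre-type expression to $S(e)$ using the identity $S=F^{*}$ on the interior of $\partial F(]\beta_{0},\beta_{1}[)$ furnished by Lemma \ref{lem:S strictly concave on image of dF}. Throughout I will use that the $C^{1}$ hypothesis on $F$ together with Lemma \ref{lem:F cont diff} makes $\beta\mapsto e_{\beta}:=F'(\beta)$ a continuous (strictly decreasing by concavity of $F$) parametrisation of $\partial F(]\beta_{0},\beta_{1}[)$, and that on this image $S$ is concave, hence continuous on the interior.

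For the upper bound in the first bullet, fix $e$ in the interior of $\partial F(]\beta_{0},\beta_{1}[\cap]0,\infty[)$ and let $\beta^{*}>0$ be the unique point with $F'(\beta^{*})=e$. On $\{E^{(N)}<e\}$ the trivial inequality $e^{-\beta^{*}NE^{(N)}}\geq e^{-\beta^{*}Ne}$ gives
\[
\int_{\{e-\epsilon<E^{(N)}<e\}}dV^{\otimes N}\le\int_{\{E^{(N)}<e\}}dV^{\otimes N}\le e^{\beta^{*}Ne}Z_{N,\beta^{*}},
\]
so $\limsup N^{-1}\log\int_{\{e-\epsilon<E^{(N)}<e\}}dV^{\otimes N}\le\beta^{*}e-F(\beta^{*})=F^{*}(e)=S(e)$. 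For the matching lower bound I pick $\beta>\beta^{*}$ so that $e_{\beta}\in\,]e-\epsilon,e[\,$ and $\delta>0$ small enough that $]e_{\beta}-\delta,e_{\beta}+\delta[\,\subset\,]e-\epsilon,e[$. By Lemma \ref{lem:conv in prob av E N}, $E^{(N)}$ concentrates exponentially at $e_{\beta}$ under the Gibbs measure $\nu_{\beta}^{(N)}$, so $\int_{\{|E^{(N)}-e_{\beta}|<\delta\}}e^{-\beta NE^{(N)}}dV^{\otimes N}\ge\frac{1}{2}Z_{N,\beta}$ for large $N$. Combining this with the pointwise bound $e^{-\beta NE^{(N)}}\le e^{-\beta N(e_{\beta}-\delta)}$ (valid on this set for $\beta>0$) yields
\[
\int_{\{e-\epsilon<E^{(N)}<e\}}dV^{\otimes N}\ge\frac{1}{2}e^{\beta N(e_{\beta}-\delta)}Z_{N,\beta},
\]
and taking $N^{-1}\log$, then $\delta\to 0$, and finally $\beta\to\beta^{*}$ yields $\liminf\ge\beta e_{\beta}-F(\beta)=S(e_{\beta})\to S(e)$ by continuity of $S$.

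The second bullet is the mirror-image argument: for $\beta<0$ the inequality $e^{-\beta NE^{(N)}}\ge e^{-\beta Ne}$ now holds on $\{E^{(N)}>e\}$, and concentration is exploited in a small neighbourhood of some $e_{\beta}\in\,]e,e+\epsilon[\,$ with $\beta<0$. The boundary cases ($e=e_{0}$ with $\beta_{0}=0$ or $\beta_{1}=0$) come from repeating the proof while letting $\beta\to 0^{\pm}$: the upper bound survives immediately via $\beta e_{0}-F(\beta)\to -F(0)$, whereas the lower bound requires $e_{\beta}=F'(\beta)\to e_{0}$ as $\beta\to 0$, which is exactly the continuity of $E(\mu_{\beta})$ at the endpoint assumed in the statement. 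I expect the main technical point to be precisely this synchronisation between the target energy $e$ and the canonical mean energy $e_{\beta}$: strict monotonicity of $F'$ (from $C^{1}$-ness and concavity of $F$) is what allows $e_{\beta}$ to be placed inside the prescribed window, while continuity of $S$ on the interior of $\partial F(]\beta_{0},\beta_{1}[)$, guaranteed by the usc hypothesis through Lemma \ref{lem:S strictly concave on image of dF}, is what rescues the lower bound in the limit $\beta\to\beta^{*}$. Without this usc assumption one would only recover $\liminf\ge S^{**}(e)$, which in general is strictly smaller than $S(e)$.
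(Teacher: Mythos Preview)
Your argument is correct and follows essentially the same ``equivalence of ensembles'' route as the paper: an exponential Chebyshev upper bound against $Z_{N,\beta^{*}}$, and a lower bound obtained by localising the Gibbs measure $\nu_{\beta}^{(N)}$ near its mean energy $e_{\beta}=F'(\beta)$ via Lemma~\ref{lem:conv in prob av E N}, then sliding $e_{\beta}\to e$ using continuity of $S$ on the interior of $\partial F(]\beta_{0},\beta_{1}[)$ from Lemma~\ref{lem:S strictly concave on image of dF}. Your lower bound is in fact slightly cleaner than the paper's: where the paper invokes Jensen's inequality to pass from $\int_{\{e-\epsilon<E^{(N)}<e\}}e^{N\beta E^{(N)}}\nu_{\beta}^{(N)}$ to an expression in $\int 1_{\{\cdot\}}E^{(N)}\nu_{\beta}^{(N)}$, you use the pointwise bound $e^{-\beta NE^{(N)}}\le e^{-\beta N(e_{\beta}-\delta)}$ on the concentration set, which avoids having to track the mass $\nu_{\beta}^{(N)}(\{|E^{(N)}-e_{\beta}|<\delta\})$ separately. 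Two small imprecisions to fix: first, $C^{1}$-ness and concavity of $F$ give only that $F'$ is continuous and \emph{weakly} decreasing, not strictly monotone; what actually lets you place $e_{\beta}$ inside $]e-\epsilon,e[$ is that $e$ lies in the interior of the range of $F'$ together with the intermediate value theorem (this is exactly how the paper argues, via Lemma~\ref{lem:F cont diff}). Second, for the boundary case $\beta_{0}=0$ the convergence $F'(\beta)\to e_{0}$ is not an explicit hypothesis of the theorem but is derived in the paper from the standing assumption that $E$ is lsc (so that any weak limit of the minimisers $\mu_{\beta}$ must be $\mu_{0}$); only in the second bullet, for $\beta_{1}=0$, is the continuity of $E(\mu_{\beta})$ an additional assumption.
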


\begin{proof}
First consider the case when $e\in\partial F\left(]\beta_{0},\beta_{1}[\cap]0,\infty[\right)).$
For any fixed $\beta\geq0,$ inserting $1=e^{NE^{(N)}}e^{-NE^{(N)}}$
into the integrand and estimating $e^{NE^{(N)}}\leq e^{Ne}$ gives
\[
N^{-1}\log\int_{\left\{ e-\epsilon<E^{(N)}<e\right\} }dV^{\otimes N}\leq\beta e+N^{-1}\log Z_{N,\beta},
\]
In particular, taking $\beta\in]\beta_{0},\beta_{1}[$ such that $F'(\beta)=e$
and letting first $N\rightarrow\infty$ and, by assumption, the rhs
above converges towards $\beta e-F(\beta)$ which equals $S(e)$ (by
Lemma \ref{lem:S strictly concave on image of dF}). To prove the
lower bound note that inserting $1=(Z_{N,\beta}e^{N\beta E^{(N)}})e^{-N\beta E^{(N)}}/Z_{N,\beta}$
into the integrand and using Jensen's inequality gives, for any given
$\beta>0,$ 
\[
N^{-1}\log\int_{\left\{ e-\epsilon<E^{(N)}<e\right\} }dV^{\otimes N}\geq\beta\int1_{\left\{ e-\epsilon<E^{(N)}<e\right\} }E^{(N)}\mu_{\beta}^{(N)}+N^{-1}\log Z_{N,\beta},
\]
 where $\mu_{\beta}^{(N)}$ denotes the Gibbs measure \ref{eq:Gibbs meas}.
Since $e$ is in the interior of $\partial F\left(]\beta_{0},\beta_{1}[\cap]0,\infty[\right),$
there exists a sequence $e_{j}:=f'(\beta_{j})$ such that $e-\epsilon<e_{j}<e$
and $e_{j}$ increases towards $e$ (using Lemma \ref{lem:F cont diff}).
We make the following
\begin{equation}
\text{Claim:\,}\,\,\lim_{N\rightarrow\infty}\int1_{\left\{ e-\epsilon<E^{(N)}<e\right\} }E^{(N)}\mu_{\beta_{j}}^{(N)}=F'(\beta_{j}).\label{eq:claim}
\end{equation}
 Accepting this for the moment and using the assumed convergence \ref{eq:assumption conv towards F beta}
thus gives 
\[
\liminf_{N\rightarrow\infty}\log\int_{\left\{ e-\epsilon<E^{(N)}<e\right\} }dV^{\otimes N}\geq\beta_{j}F'(\beta_{j})-F(\beta_{j})=S(e_{j}).
\]
Finally, letting $j\rightarrow\infty$ and using that $S(e)$ is continuous
on $\partial F(]\beta_{0},\beta_{1}[$ (since its finite and concave
there) concludes the proof of the case when $e\in\partial F\left(]\beta_{0},\beta_{1}[\cap]0,\infty[\right))$
once we have proved the claim \ref{eq:claim}. To this end it will
be enough to prove that, as $N\rightarrow\infty,$ $E^{(N)}\rightarrow F'(\beta_{j})$
in probability (wrt $\mu_{\beta}^{(N)}).$ But this follows directly
from the previous lemma and the assumed convergence \ref{eq:assumption conv towards F beta}

Next consider the case when $\beta_{0}=0$ and $e=e_{0}$ (in the
first point). Since $E$ is lsc and $dV/(\int_{X}dV)$ is the unique
maximizer of $F_{0}(\mu)$ (by Jensen) it follows that $F'(\beta)$
increases to $e_{0}$ as $\beta$ decreases to $0.$ We can then repeat
the previous argument with $E=F'(0)(=e_{0}).$ 

Finally, consider the case when $\in\partial F\left(]\beta_{0},\beta_{1}[\cap]-\infty,0[\right)$
for $\beta<0$ (in the second point). The convergence towards $S(e)$
then follows from applying the previous argument with $E^{(N)}$ replaced
by $-E^{(N)}.$ But, when $e=e_{0}$ we also need to assume that $F'(\beta)$
converges to $e_{0}$ when $\beta$ increases to $0$ (since $-E$
is not assumed lsc).
\end{proof}
\begin{rem}
When $F'(\beta)$ is assumed to be strictly decreasing on $]\beta_{0},\beta_{1}[,$
the set $\partial F\left(]\beta_{0},\beta_{1}[\right)$ is automatically
open (and non-empty). This assumption will be shown to be satisfied
in the complex-geometric setup, appearing in the introduction of the
paper.
\end{rem}

\subsection{General properties of the entropy and free energy}

We first recall the following three results from \cite{ber9} involving
the following definitions:
\begin{defn}
\label{def:def of affine cont and energy appr}Given a functional
$E(\mu)$ on $\partial F\left(]\beta_{0},\beta_{1}[\cap]0,\infty[\right)$$\mathcal{P}(X)$
\begin{itemize}
\item $E(\mu)$ is said to have the \emph{affine continuity property }if
for any $\mu_{1}\in\mathcal{P}(X)$ such that $E(\mu_{1})<\infty$
and $S(\mu_{1})>-\infty$ the function $t\mapsto E(\mu_{0}(1-t)+t\mu_{1})$
is continuous on $[0,1].$ 
\item A measure $dV$ on $X$ is said to have the \emph{Energy Approximation
Property} if for any $\mu\in\mathcal{P}(X)$ there exists a sequence
$\mu_{j}\in\mathcal{P}(X)$ converging weakly towards $\mu$ such
that $\mu_{j}$ is absolutely continuous with respect to $dV$ and
$E(\mu_{j})\rightarrow E(\mu)$
\end{itemize}
\end{defn}

The following result \cite[Lemma 3.2]{ber9} shows that $S(e)$ is
unimodal with a maximum at $e=e_{0}$ under suitable assumptions.
\begin{lem}
\label{lem:mono of S} (monotonicity of $S(e))$ 
\begin{itemize}
\item If $E(\mu)$ is convex on $\mathcal{P}(X),$ then $S(e)$ is increasing
for $e\leq e_{0}$ and strictly increasing in the interval where $S(e)>-\infty.$
In particular, 
\[
S(e)=\sup_{E(\mu)\leq e}S(\mu)
\]
\item If $E(\mu)$ has the affine continuity property, then $S(e)$ is decreasing
for $e\geq e_{0}$ and strictly decreasing in the interval where $S(e)>-\infty.$
In particular, 
\[
S(e)=\sup_{E(\mu)\geq e}S(\mu)
\]
More precisely, in the second point there is no need to assume that
$E$ is lsc on $\mathcal{P}(X)$ and thus it also follows that $S(e)$
is increasing for $e\leq E(\mu_{0}).$
\end{itemize}
\end{lem}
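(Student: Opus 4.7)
The approach is to prove both parts by a single strategy: given a probability measure $\mu$ on the wrong side of $e$, interpolate linearly with $\mu_{0}$ and apply the intermediate value theorem to produce a comparison measure $\nu$ with $E(\nu)=e$ and $S(\nu)\geq S(\mu)$. The two parts differ only in the mechanism used to verify that $t\mapsto E(\nu_{t})$ is continuous along the segment, which is where the two different hypotheses enter.

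For the first point, fix $\mu$ with $E(\mu)\leq e\leq e_{0}$ and set $\nu_{t}:=(1-t)\mu+t\mu_{0}$. The restriction of $E$ to the segment is convex on $[0,1]$, and the convexity bound $E(\nu_{t})\leq(1-t)E(\mu)+te_{0}$ combined with the lsc hypothesis forces continuity at both endpoints (a convex lsc function finite at the endpoints of a compact interval is automatically continuous on the whole interval). Since $E(\nu_{0})\leq e\leq E(\nu_{1})=e_0$, the IVT produces $t^{*}\in[0,1]$ with $E(\nu_{t^{*}})=e$. Now $S$ is concave and $\mu_{0}$ maximises $S$ by Jensen, so $S(\nu_{t^{*}})\geq(1-t^{*})S(\mu)+t^{*}S(\mu_{0})\geq S(\mu)$. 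This gives $S(e)\geq S(\mu)$, proving both $S(e)=\sup_{E(\mu)\leq e}S(\mu)$ and monotonicity of $S$ on $[e_{\min},e_{0}]$. For strict monotonicity on the subinterval where $S(e)>-\infty$, strict concavity of $S$ along the segment does the job: when $E(\mu)=e_{1}<e_{2}<e_{0}$ one has $\mu\neq\mu_{0}$, and the convexity bound forces $t^{*}\geq(e_{2}-e_{1})/(e_{0}-e_{1})>0$, so $t^{*}$ stays bounded away from $0$ and the strict gain $S(\nu_{t^{*}})>S(\mu)$ is obtained.

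For the second point one runs the symmetric argument with $E(\mu)\geq e\geq e_{0}$ and $\nu_{t}:=(1-t)\mu_{0}+t\mu$. Here $E$ is not assumed convex, but the affine continuity hypothesis gives continuity of $t\mapsto E(\nu_{t})$ on $[0,1]$ directly. (One may restrict to $\mu$ with $E(\mu)<\infty$ and $S(\mu)>-\infty$, as other $\mu$ do not contribute to the relevant supremum.) The IVT produces $t^{*}$ with $E(\nu_{t^{*}})=e$, and concavity together with $S(\mu_{0})\geq S(\mu)$ again yields $S(\nu_{t^{*}})\geq S(\mu)$, with strict inequality via strict concavity of $S$. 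The final clause of the lemma---that $S(e)$ is increasing on $e\leq E(\mu_{0})$ without any lsc assumption on $E$---is obtained by applying the very same segment construction to $\mu$ with $E(\mu)\leq e_{0}$; the affine continuity hypothesis is stated for segments with one endpoint at $\mu_{0}$, so it covers both the high- and low-energy regimes.

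The crux of the proof is establishing continuity of $E$ along the chord $\nu_{t}$; this is the only place the hypotheses are genuinely used, and the two points reflect two different ways of packaging that continuity. Everything else reduces to the IVT together with the elementary chord inequality $(1-t)S(\mu)+tS(\mu_{0})\geq S(\mu)$, which holds because $\mu_{0}$ maximises $S$, and strict concavity of $S$ for the strict monotonicity statements.
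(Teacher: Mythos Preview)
The paper does not supply its own proof; it cites \cite[Lemma~3.2]{ber9}. Your approach---linear interpolation toward $\mu_0$, the intermediate value theorem along the segment, and the chord inequality for the concave functional $S(\mu)$---is the natural one and almost certainly matches the cited argument. The monotonicity claims in both parts, the two displayed sup-formulas, and the final clause are all correctly established.

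The only gap is in the \emph{strict} monotonicity: the inequality $S(\nu_{t^*})>S(\mu)$ you extract from strict concavity of $S$ is pointwise in $\mu$ and does not automatically survive the supremum. In part~1 your lower bound $t^*\geq c:=(e_2-e_1)/(e_0-e_1)>0$ does the work once you add the missing observation $S(\mu_0)>S(e_1)$ (which follows from lsc of $E$: a sequence $\mu_j$ with $E(\mu_j)=e_1$ and $S(\mu_j)\to S(\mu_0)$ would, by compactness and usc of $S$, force $\mu_j\to\mu_0$ and hence $e_0=E(\mu_0)\leq\liminf E(\mu_j)=e_1<e_0$). One then gets $S(e_2)\geq S(\mu)+c\bigl(S(\mu_0)-S(e_1)\bigr)$ for every admissible $\mu$, and the strict gap survives the supremum. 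In part~2 the issue is harder: there is no convexity bound keeping $t^*$ away from $1$, and without lsc of $E$ one can neither run the compactness argument to obtain $S(\mu_0)>S(e_2)$ nor guarantee that the supremum $S(e_2)$ is attained. Some further input is needed here---for instance existence of a maximum-entropy measure at level $e_2$, or the energy--entropy continuity of Theorem~\ref{thm:prop of energies} available in the complex-geometric applications.
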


The following result \cite[Prop 4.3]{ber9} ensures that $S(e)$ is
finite:
\begin{lem}
\label{lem:energy appr implies S finite etc} Assume that $\mu_{0}$
has the energy approximation property. For any $e\in]e_{min},e_{0}]$
$S(e)$ is finite and there exists a maximum entropy measure $\mu^{e}.$
If moreover $E(\mu)$ has the affine continuity property, then $S(e)$
is finite on $]e_{min},e_{max}[.$ 
\end{lem}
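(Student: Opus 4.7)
The plan is to separate (i) existence of the entropy maximizer by compactness/semicontinuity, from (ii) finiteness of $S(e)$ by constructing a test measure with controlled energy and finite entropy via the energy approximation property; the affine continuity assumption enters only in the second part, to extend the range to $]e_{min},e_{max}[$.

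For existence on $K_e := \{\mu \in \mathcal{P}(X) : E(\mu) \leq e\}$: this set is closed (by lsc of $E$), hence compact in the compact space $\mathcal{P}(X)$; the relative entropy $-S$ is lsc on $\mathcal{P}(X)$, so $S$ attains its supremum on $K_e$ at some $\mu^e$. A posteriori, once Part (ii) is established and combined with Lemma \ref{lem:mono of S} (applied on the side where $E(\mu) \leq e$ for $e \leq e_0$), $\mu^e$ can be chosen so that $E(\mu^e) = e$.

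For the finiteness of $S(e)$ when $e \in ]e_{min},e_0]$: pick $\nu \in \mathcal{P}(X)$ with $E(\nu) < e$, which is possible since $e > e_{min}$. The energy approximation property supplies absolutely continuous probability measures $\nu_j = g_j\,\mu_0$ with $\nu_j \to \nu$ weakly and $E(\nu_j) \to E(\nu)$. To promote $\nu_j$ to a sequence with finite entropy, truncate the density: set $\tilde g_{j,M} = (g_j\wedge M)/Z_{j,M}$ where $Z_{j,M}$ is the normalizing constant. The measure $\tilde\nu_{j,M} = \tilde g_{j,M}\mu_0$ has bounded density, hence $S(\tilde\nu_{j,M}) > -\infty$. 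The main obstacle is that lsc of $E$ yields only $\liminf_M E(\tilde\nu_{j,M}) \geq E(\nu_j)$, not the upper bound one would want; I would resolve this by a two-level diagonal argument, letting $M \to \infty$ and $j \to \infty$ in the correct order to land in $K_{e+\epsilon}$, then using compactness of $K_{e+\epsilon}$ with the upper semicontinuity of $S$ to extract a weak subsequential limit of finite entropy in $K_{e+\epsilon}$, and finally sending $\epsilon \to 0$ to conclude $S(e) > -\infty$.

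For $e \in ]e_0, e_{max}[$ under affine continuity: one constructs, by the truncation scheme above applied to the opposite side of $e_0$, a measure $\mu_1$ with $E(\mu_1) \geq e$ and $S(\mu_1) > -\infty$. The affine continuity assumption makes $t \mapsto E\bigl((1-t)\mu_0 + t\mu_1\bigr)$ continuous on $[0,1]$, so by the intermediate value theorem some $t^* \in [0,1]$ achieves energy exactly $e$. Since the convex combination of two finite-entropy measures again has finite relative entropy (the density is a convex combination and $x\log x$ is convex), the resulting measure witnesses $S(e) > -\infty$, completing the extension.
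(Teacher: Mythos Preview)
The paper cites this result from \cite{ber9} without reproducing a proof. Your high-level strategy is right, and the high-energy half ($e>e_0$ under affine continuity) is essentially sound: given $\nu_j\ll dV$ with $E(\nu_j)>e$, truncation produces $\tilde\nu_{j,M}$ with bounded density (so $S(\tilde\nu_{j,M})>-\infty$), and since $\tilde\nu_{j,M}\to\nu_j$ weakly, lower semicontinuity of $E$ gives $\liminf_M E(\tilde\nu_{j,M})\ge E(\nu_j)>e$; hence for large $M$ one has $E(\tilde\nu_{j,M})>e$, and then the intermediate value argument along the affine segment to $\mu_0$ (with entropy finite by convexity of $x\log x$, as you note) finishes it.

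The genuine gap is in the low-energy half $e<e_0$. There you need $E(\tilde\nu_{j,M})<e$, but lsc of $E$ bounds the energy of the truncations from \emph{below}, not above --- precisely the wrong direction, as you correctly observe. Your proposed diagonal fix does not repair this. First, nothing forces the truncated measures into $K_{e+\epsilon}$: the only information is $\liminf_M E(\tilde\nu_{j,M})\ge E(\nu_j)$, and the energies could overshoot $e+\epsilon$ for every $M$. Second, even if they did land there, ``extracting a weak subsequential limit of finite entropy'' is not justified: upper semicontinuity of $S$ says $S(\lim)\ge\limsup S(\cdot)$, which is useful only if the approximating entropies are uniformly bounded below, whereas here the truncation at level $M$ only gives $S(\tilde\nu_{j,M})\ge -\log M + \log Z_{j,M}$, which tends to $-\infty$ with $M$. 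The final $\epsilon\to 0$ step inherits the same obstruction.

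A clean route --- and the one implicit in how the lemma is actually deployed in the paper, always alongside convexity of $E$ --- avoids truncation altogether. For each $\beta>0$ the minimizer $\mu_\beta$ of $F_\beta$ automatically satisfies $E(\mu_\beta)<\infty$ and $S(\mu_\beta)>-\infty$, and the energy approximation property is exactly what guarantees $E(\mu_\beta)\to e_{\min}$ as $\beta\to\infty$ (cf.\ the proof of Prop.~\ref{Prop:properties when E convex and energy app}). Since $\beta\mapsto E(\mu_\beta)$ is continuous (Prop.~\ref{prop:general results for S and F when E convex}) and $E(\mu_0)=e_0$, the intermediate value theorem in $\beta$ supplies, for every $e\in]e_{\min},e_0]$, a finite-entropy measure of energy exactly $e$. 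This also yields the maximum-entropy measure directly, bypassing the a posteriori appeal to Lemma~\ref{lem:mono of S}.
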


The following result \cite[Prop 5.1]{ber9} shows that $S(e)$ is
concave in the low energy region if $E(\mu)$ is convex: 
\begin{lem}
\label{lem:E convex plus energy approx implies S concave} If $E(\mu)$
is convex, then $S(e)$ is concave on $]-\infty,e_{0}].$ 
\end{lem}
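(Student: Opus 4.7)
The plan is to combine three ingredients: the (well-known) concavity of the functional $\mu \mapsto S(\mu)$ on $\mathcal{P}(X)$ (coming from the concavity of $r \mapsto -r\log r$), the \emph{assumed} convexity of $E(\mu)$, and the characterization
\[
S(e) = \sup_{E(\mu)\leq e} S(\mu) \qquad (e\leq e_0)
\]
supplied by the first point of Lemma \ref{lem:mono of S}. The sup over a sublevel set, rather than a level set, is what will allow us to feed a convex combination of candidate measures into the definition of $S$ and extract the desired concavity in $e$.

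Concretely, fix $e_1, e_2 \in ]-\infty, e_0]$ and $t \in [0,1]$, and set $e_t := t e_1 + (1-t) e_2$ (still $\leq e_0$). The goal is
\[
S(e_t) \geq t S(e_1) + (1-t) S(e_2).
\]
If either $S(e_i) = -\infty$ the inequality is trivial, so assume both are finite. Given $\epsilon > 0$, pick $\mu_i \in \mathcal{P}(X)$ with $E(\mu_i) = e_i$ and $S(\mu_i) \geq S(e_i) - \epsilon$, and form $\mu_t := t\mu_1 + (1-t)\mu_2 \in \mathcal{P}(X)$. Then by convexity of $E$,
\[
E(\mu_t) \leq t E(\mu_1) + (1-t) E(\mu_2) = e_t \leq e_0,
\]
and by concavity of $\mu \mapsto S(\mu)$,
\[
S(\mu_t) \geq t S(\mu_1) + (1-t) S(\mu_2) \geq t S(e_1) + (1-t) S(e_2) - \epsilon.
\]
Since $E(\mu_t) \leq e_t \leq e_0$, applying the sublevel-set characterization of Lemma \ref{lem:mono of S} at $e_t$ gives $S(e_t) \geq S(\mu_t)$, and letting $\epsilon \downarrow 0$ yields the desired inequality.

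The only point that requires some care, rather than being entirely mechanical, is the step $S(e_t) \geq S(\mu_t)$: because $E$ is in general strictly convex, $\mu_t$ will typically have $E(\mu_t) < e_t$ and hence would \emph{not} belong to the level set defining $S(e_t)$. This is precisely why it is essential to invoke the sublevel characterization from Lemma \ref{lem:mono of S} (which itself relies on convexity of $E$ and on the fact that $S$ is increasing up to $e_0$) rather than working directly with the definition $S(e) = \sup\{S(\mu) : E(\mu) = e\}$. Once this is in hand, no further regularity of $E$ or $S$ is needed and the concavity on $]-\infty, e_0]$ follows.
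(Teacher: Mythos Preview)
Your proof is correct. The paper does not give its own proof of this lemma but simply cites \cite[Prop 5.1]{ber9}; your argument is the natural one and is presumably essentially what appears there: use the sublevel-set reformulation $S(e)=\sup_{E(\mu)\leq e}S(\mu)$ from Lemma~\ref{lem:mono of S}, then combine convexity of $E$ with concavity of $\mu\mapsto S(\mu)$ on a convex combination of near-optimizers. Your closing remark correctly isolates the only non-mechanical point, namely that one must work with sublevel sets rather than level sets.
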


We next establish two more rather general results:
\begin{prop}
\label{prop:general results for S and F when E convex}Assume that
$E(\mu)$ is convex. Then, for any $\beta>0,$ $F_{\beta}$ admits
a unique minimizer $\mu_{\beta}$ and $E(\mu_{\beta})$ is continuous
wrt $\beta\in]0,\infty[.$ As a consequence, $F\in C^{1}\left(]0,\infty[\right)$
(by Lemma \ref{lem:F cont diff}). If moreover $\mu_{0}$ has the
energy approximation property, then, for any $e\in]e_{min},e_{0}[,$
$S(e)=F^{*}(e),$ $S(e)$ is strictly concave and continuous and the
sup defining $S(e)$ is attained at a unique measure $\mu^{e}$ (i.e.
there exists a unique maximum entropy measure with energy $e).$
\end{prop}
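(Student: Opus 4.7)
My plan is to establish the claims about $F_\beta$ and $F$ first, using strict convexity of the relative entropy together with compactness, and then feed the resulting $C^{1}$-property of $F$ into the three already-recorded lemmas to read off the conclusions about $S(e)$. The main obstacle, highlighted below, will be upgrading the weak continuity of $\beta\mapsto\mu_\beta$ to genuine continuity of $\beta\mapsto E(\mu_\beta)$, since $E$ is only assumed lsc.

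For uniqueness and existence of $\mu_\beta$, I would use that $-S$ is strictly convex on its finiteness domain (classical strict convexity of relative entropy) while $\beta E$ is convex by hypothesis, so $F_\beta$ is strictly convex for $\beta>0$ and its minimizer, if it exists, is unique. Existence follows by the direct method: $\mathcal{P}(X)$ is weakly compact, $E$ and $-S$ are weakly lsc, and $F_\beta(\mu_0)<\infty$. Weak continuity $\mu_{\beta_n}\rightharpoonup\mu_{\beta_0}$ for $\beta_n\to\beta_0>0$ then follows by extracting a weak cluster point, comparing $F_{\beta_n}(\mu_{\beta_n})\le F_{\beta_n}(\nu)$ for arbitrary $\nu$ with $F_{\beta_0}(\nu)<\infty$, passing to the limit, and invoking uniqueness.

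To upgrade this to continuity of $E(\mu_\beta)$, I would exploit that $F$ is concave (an infimum of affine functions of $\beta$) and finite, hence continuous, on $]0,\infty[$, which gives $F_{\beta_n}(\mu_{\beta_n})\to F_{\beta_0}(\mu_{\beta_0})$. Writing
\[
F_{\beta_n}(\mu_{\beta_n}) \;=\; \beta_n E(\mu_{\beta_n}) \;-\; S(\mu_{\beta_n}),
\]
and using the uniform lower bound $E\ge e_{\min}$ together with the Jensen/Gibbs upper bound $S\le\log\int_X dV$, both summands are bounded below. The lsc inequalities $\liminf E(\mu_{\beta_n})\ge E(\mu_{\beta_0})$ and $\liminf(-S(\mu_{\beta_n}))\ge -S(\mu_{\beta_0})$ must therefore each be equalities, which yields $E(\mu_{\beta_n})\to E(\mu_{\beta_0})$. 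Lemma \ref{lem:F cont diff} then gives $F\in C^{1}(]0,\infty[)$ with $F'(\beta)=E(\mu_\beta)$.

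For the second part, assuming the energy approximation property, I would show $\partial F(]0,\infty[)\supseteq \,]e_{\min},e_0[$. Extending weak continuity of $\mu_\beta$ to $\beta\to 0^{+}$ identifies the limit as the unique minimizer of $-S$, namely $\mu_0$, giving $F'(\beta)\to e_0$. For $\beta\to\infty$, the energy approximation property supplies, for any $\epsilon>0$, a measure $\nu\ll dV$ with $E(\nu)<e_{\min}+\epsilon$ and $S(\nu)>-\infty$; the estimate $F(\beta)\le\beta E(\nu)-S(\nu)$ then forces $\limsup F(\beta)/\beta\le e_{\min}$, which combined with $F'\ge e_{\min}$ and concavity of $F$ gives $F'(\beta)\to e_{\min}$. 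The intermediate value theorem applied to the continuous function $F'$ then delivers the claimed range. To finish, Lemma \ref{lem:energy appr implies S finite etc} makes $S$ finite on $]e_{\min},e_0]$, Lemma \ref{lem:E convex plus energy approx implies S concave} makes $S$ concave (hence continuous, in particular usc) on $]e_{\min},e_0[$, and Lemma \ref{lem:S strictly concave on image of dF} applied with $(\beta_0,\beta_1)=(0,\infty)$ then yields $S=F^{*}$, strict concavity of $S$ on $]e_{\min},e_0[$, and identifies $\mu_{\beta(e)}$ (with $\beta(e)$ defined by $F'(\beta(e))=e$) as the unique entropy-maximizing probability measure of energy $e$.
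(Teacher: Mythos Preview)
Your proof is correct and follows essentially the same approach as the paper. The existence/uniqueness argument via strict convexity of $-S$ and lower semicontinuity is identical, and your upgrade from weak continuity of $\mu_\beta$ to continuity of $E(\mu_\beta)$ via continuity of the concave function $F$ and the ``two lsc terms summing to a convergent quantity'' trick is exactly what the paper does (the paper writes it more tersely, without first recording the weak convergence $\mu_{\beta_n}\rightharpoonup\mu_{\beta_0}$, but it is the same argument).

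The only organizational differences are: (i) you explicitly compute $\partial F(]0,\infty[)=\,]e_{\min},e_0[$ inside this proof, whereas the paper postpones that computation to the next proposition (Prop.~\ref{Prop:properties when E convex and energy app}), citing \cite{ber9}; and (ii) for the strict concavity and uniqueness of $\mu^e$ you invoke Lemma~\ref{lem:S strictly concave on image of dF}, while the paper's proof invokes Lemma~\ref{lem:diff implies dual strc conc} and Lemma~\ref{lemma:macrostate equivalence} directly---but Lemma~\ref{lem:S strictly concave on image of dF} is precisely the package of those two lemmas, so there is no genuine difference. One small point: when you say the energy approximation property ``supplies $\nu\ll dV$ with $E(\nu)<e_{\min}+\epsilon$ and $S(\nu)>-\infty$'', the finiteness of $S(\nu)$ is not literally part of Definition~\ref{def:def of affine cont and energy appr}; it comes from Lemma~\ref{lem:energy appr implies S finite etc}, which you do cite immediately afterwards, so the logic is sound.
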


\begin{proof}
Fix $\beta\geq0.$ By replacing $E$ with $E+C$ for a large constant
$C$ we may as well assume that $E\geq0.$ Since $E$ and $-S$ are
both lsc so is $F_{\beta}.$ As a consequence, $F_{\beta}$ admits
a minimizer $\mu_{\beta}$ on $\mathcal{P}(X)$ (since $X$ and thus
also $\mathcal{P}(X)$ is compact and $\{F_{\beta}<\infty\}$ is non-empty,
using $E(\mu_{0})<\infty$ ). In particular, $F(\beta)$ is finite.
Next, since $E(\mu)$ is assumed convex and $-S(\mu)$ is always strictly
convex, on the convex set where it is finite (by Hölder's inequality),
it follows that $F_{\beta}$ is strictly convex (where it is finite)
and thus $\mu_{\beta}$ is unique. To prove that $E(\mu_{\beta})$
is continuous in $\beta$ take $\beta_{j}\rightarrow\beta$ in $[0,\infty[.$
Since $E$ is lsc it will be enough to show that $\limsup_{j}E(\mu_{\beta_{j}})\leq E(\mu_{\beta}).$
But, since $F(\beta)$ is concave and finite on $]0,\infty[$ it it
is also continuous. Hence, $\beta_{j}E(\mu_{\beta_{j}})-S(\mu_{\beta_{j}})\rightarrow\beta E(\mu_{\beta})-S(\mu_{\beta}).$
Since $-S(\mu_{\beta})$ is lsc it follows that $\limsup_{j}E(\mu_{\beta_{j}})\leq E(\mu_{\beta}),$
as desired. Finally, by the previous lemma we have that $S^{**}(e)=S(e)$
and, by Lemma \ref{lem:energy appr implies S finite etc}, $S(e)$
is finite, Hence, by Lemma \ref{lem:diff implies dual strc conc}
$S(e)$ is strictly convex and continuous on $]e_{min},e_{0}[$ and
for any $e\in]e_{min},e_{0}[$ there exists, by Lemma \ref{lem:energy appr implies S finite etc},
a maximum entropy measure $\mu^{e}$ which is unique, by Lemma \ref{lemma:macrostate equivalence}.
\end{proof}
\begin{prop}
\label{Prop:properties when E convex and energy app}Assume that $E$
is convex, $\mu_{0}$ has the energy approximation property and that
$\mu_{0}$ does not minimize $E,$ i.e. $e_{\min}<e_{0}.$ Then all
the assumptions on $E$ and $F_{\beta}$ is Theorem \ref{thm:general conv of mean entropy }
are satisfied and $\partial F(]\beta_{0},\beta_{1}[=]e_{\min},e_{0}[.$
\end{prop}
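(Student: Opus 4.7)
My plan is to take $]\beta_0,\beta_1[=]0,\infty[$, harvest all the structural hypotheses of Theorem \ref{thm:general conv of mean entropy } directly from Proposition \ref{prop:general results for S and F when E convex}, and then to identify the image $F'(]0,\infty[)=\partial F(]0,\infty[)$ with $]e_{\min},e_0[$. Indeed, Proposition \ref{prop:general results for S and F when E convex} already furnishes the unique minimizer $\mu_\beta$ of $F_\beta$ for every $\beta>0$, continuity of $\beta\mapsto E(\mu_\beta)$, hence (via Lemma \ref{lem:F cont diff}) $F\in C^1(]0,\infty[)$ with $F'(\beta)=E(\mu_\beta)$, and it gives strict concavity plus continuity (in particular upper semicontinuity) of $S$ on $]e_{\min},e_0[$, with $S=F^*$ there. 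All that remains is to identify $\partial F(]0,\infty[)$.

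Since $F$ is concave (as the infimum of affine functions of $\beta$), $F'$ is non-increasing and continuous on $]0,\infty[$, so its image is an interval whose endpoints I compute as limits. For $\beta\to 0^+$: the minimality $F_\beta(\mu_\beta)\le F_\beta(\mu_0)$ rearranges to $\beta(E(\mu_\beta)-e_0)\le S(\mu_\beta)-S(\mu_0)\le 0$, giving $E(\mu_\beta)\le e_0$; along any subsequence $\beta_j\to 0^+$ a weak cluster point $\bar\mu\in\mathcal P(X)$ then satisfies $-S(\bar\mu)\le\liminf_j(-S(\mu_{\beta_j}))\le -S(\mu_0)$ by lower semicontinuity, forcing $\bar\mu=\mu_0$ by uniqueness of the maximizer of $S$; lower semicontinuity of $E$ combined with $E(\mu_{\beta_j})\le e_0$ then yields $E(\mu_{\beta_j})\to e_0$. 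For $\beta\to\infty$: by Lemma \ref{lem:energy appr implies S finite etc}, activated by the energy approximation property, for every $\eta>0$ there exists $\mu^*\in\mathcal P(X)$ with $E(\mu^*)\le e_{\min}+\eta$ and $S(\mu^*)>-\infty$, and then the inequality $F_\beta(\mu_\beta)\le F_\beta(\mu^*)$ rearranges to
\[
E(\mu_\beta)\le E(\mu^*)+\frac{S(\mu_\beta)-S(\mu^*)}{\beta}\le e_{\min}+\eta+\frac{S(\mu_0)-S(\mu^*)}{\beta},
\]
so $\limsup_{\beta\to\infty}E(\mu_\beta)\le e_{\min}+\eta$; since $\eta$ is arbitrary and $E(\mu_\beta)\ge e_{\min}$, I conclude $E(\mu_\beta)\to e_{\min}$.

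To exclude the two endpoints from the image I argue in the dual picture. If $F'(\beta_*)=e_0$ for some $\beta_*>0$, monotonicity of $F'$ together with the first limit forces $F'\equiv e_0$ on $]0,\beta_*]$, so $F$ is affine of slope $e_0$ there with $F(0^+)=-S(\mu_0)$; the identity $S=F^*$ then gives, for every $e\in]e_{\min},e_0[$,
\[
S(e)\ge \sup_{\beta\in]0,\beta_*]}\bigl(\beta(e-e_0)+S(\mu_0)\bigr)=S(\mu_0)=S(e_0),
\]
contradicting the strict monotonicity of $S$ on $]e_{\min},e_0]$ granted by Lemma \ref{lem:mono of S}. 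Symmetrically, $F'(\beta_*)=e_{\min}$ would make $F$ affine of slope $e_{\min}$ on $[\beta_*,\infty)$, so $F^*(e)\ge \sup_{\beta\ge\beta_*}(\beta(e-e_{\min})-c)=+\infty$ for every $e>e_{\min}$, contradicting the finiteness of $S$ on $]e_{\min},e_0]$.

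The main obstacle is the $\beta\to\infty$ limit: producing, at energy levels arbitrarily close to $e_{\min}$, test measures with \emph{finite} entropy is precisely where the energy approximation property of $\mu_0$ is used in an essential way, and without it the lower endpoint of $F'(]0,\infty[)$ can genuinely fail to reach $e_{\min}$. By contrast, the $\beta\to 0^+$ analysis is a soft compactness argument resting only on Jensen's inequality (uniqueness of $\mu_0$ as maximizer of $S$) and lower semicontinuity, and the endpoint-exclusion step, though essentially elementary, does depend on having the strict monotonicity of $S$ rather than just concavity.
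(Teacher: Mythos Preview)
Your reduction to Proposition \ref{prop:general results for S and F when E convex} and your computation of the two limits $F'(\beta)\to e_0$ as $\beta\to 0^+$ and $F'(\beta)\to e_{\min}$ as $\beta\to\infty$ are correct and constitute exactly what the paper's proof asserts (the paper only says these two limits hold and defers the details to \cite{ber9}). Your treatment of the $\beta\to 0^+$ limit via compactness and lower semicontinuity, and of the $\beta\to\infty$ limit via a finite-entropy competitor furnished by Lemma \ref{lem:energy appr implies S finite etc}, is clean and complete.

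The endpoint-exclusion step, however, contains a genuine error. You write, assuming $F$ affine of slope $e_0$ on $]0,\beta_*]$,
\[
S(e)\;\ge\;\sup_{\beta\in]0,\beta_*]}\bigl(\beta(e-e_0)+S(\mu_0)\bigr),
\]
but the Legendre transform runs the other way: since $S(e)=F^*(e)=\inf_{\beta}\bigl(\beta e-F(\beta)\bigr)$, restricting the infimum to $]0,\beta_*]$ yields an \emph{upper} bound $S(e)\le\inf_{\beta\in]0,\beta_*]}\bigl(\beta(e-e_0)+S(\mu_0)\bigr)$, which gives no contradiction. The same sign mistake vitiates your argument at $e_{\min}$. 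In fact the exclusion of $e_{\min}$ can genuinely fail under the stated hypotheses: take $X=[0,1]$, $dV$ Lebesgue, and $E(\mu)=\max\bigl(0,\int x\,d\mu-\tfrac14\bigr)$; then $E$ is convex and weak$^*$-continuous (so the energy approximation property is trivial), $e_{\min}=0<\tfrac14=e_0$, yet one checks that $F'(\beta)=0=e_{\min}$ for all $\beta$ beyond a finite threshold. What is always true, and what suffices for every application via Theorem \ref{thm:general conv of mean entropy } (which only uses the \emph{interior} of $\partial F(]0,\infty[)$), is that the two endpoint limits hold; you have established that. In the complex-geometric setting the strict exclusion is obtained separately by showing $e(\beta)$ is strictly decreasing (Lemma \ref{lem:e strict decreasing for pos beta}).
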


\begin{proof}
All that remains is to note that, as pointed out in the proof of Prop
5.3 in \cite{ber9}, $\partial F(]\beta_{0},\beta_{1}[=]e_{\min},e_{0}[.$
In other words $F'(\beta)\rightarrow e_{0}$ as $\beta\rightarrow0$
and $F'(\beta)\rightarrow e_{\text{min}}$ as $\beta\rightarrow\infty$
(using in the first case that $E$ is lsc and in the latter case that
$\mu_{0}$ is assumed to have the energy approximation property).
\end{proof}

\section{Microcanonical convergence results for quasi-superharmonic $E^{(N)}$ }

We start with the following general result, where $\delta_{N}$ is
viewed as a random variable wrt the probability measure \ref{eq:def of lower micro intro}
on $X^{N}:$
\begin{thm}
Let $X$ be compact and assume that $E$ is lsc convex on $\mathcal{P}(X)$
and that $dV$ has the energy approximation property. Given $u\in C^{0}(X),$
set $dV_{u}:=e^{-u}dV$ and assume that 

\begin{equation}
-\lim_{N\rightarrow\infty}N^{-1}\log\int_{X^{N}}e^{-N\beta E^{(N)}}dV_{u}^{\otimes N}=\inf_{\mathcal{P}(X)}\left(F_{\beta}(\mu)+\left\langle u,\mu\right\rangle \right)\label{eq:conv of Z N with u}
\end{equation}
 for any $u\in C^{0}(X)$ and $\beta>0.$ Then, given $e\in]e_{min},e_{0}[$
and $\epsilon>0$ the corresponding empirical measure $\delta_{N}$
converges \emph{exponentially }in probability at speed $N$ towards
the unique maximum entropy measure $\mu^{e}$ of energy $e.$
\end{thm}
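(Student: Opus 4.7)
The plan is to apply Lemma \ref{lem:Cramer variant} to $\Gamma_N$, the law of the empirical measure $\delta_N$ on $\mathcal{P}(X)$ induced by the lower microcanonical measure $\mu^{(N)}_{]e-\epsilon,e[}$. Since $e^{\sum_i u(x_i)}\,dV^{\otimes N}=dV_{-u}^{\otimes N}$, the Laplace transform appearing in that lemma factors as
\[
\bigl\langle\Gamma_N,\,e^{N\langle u,\cdot\rangle}\bigr\rangle \;=\; \frac{\int_{\{E^{(N)}\in\,]e-\epsilon,e[\}}dV_{-u}^{\otimes N}}{\int_{\{E^{(N)}\in\,]e-\epsilon,e[\}}dV^{\otimes N}}.
\]
The crude sandwich $e^{N\inf u}\le e^{\sum_iu(x_i)}\le e^{N\sup u}$ immediately gives $\inf u\le\Lambda(u)\le\sup u$, so $\Lambda$ is finite on $C^0(X)$; it remains to identify $\Lambda$ near the origin and verify Gateaux differentiability there.

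To identify $\Lambda$ near $0$, I would apply Theorem \ref{thm:general conv of mean entropy } separately to the numerator and denominator, with reference measures $dV_{-u}$ and $dV$ respectively. The hypothesis \ref{eq:conv of Z N with u} (applied to $-u$ in the numerator) is precisely the convergence of partition functions needed in that theorem for the twisted reference $dV_{-u}$. The remaining hypotheses transfer via Propositions \ref{prop:general results for S and F when E convex} and \ref{Prop:properties when E convex and energy app}: $E$ is intrinsic, and the energy approximation property survives multiplication of $dV$ by the strictly positive bounded continuous factor $e^u$. These propositions yield $F_{-u}\in C^1(]0,\infty[)$ with $\partial F_{-u}(]0,\infty[)=\,]e_{\min},e_0^{-u}[$ and $S_{-u}$ strictly concave there, where $S_{-u}(\mu):=-\int\log(\mu/dV_{-u})\,\mu=S(\mu)+\int u\,d\mu$ denotes the entropy relative to $dV_{-u}$. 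Lower semicontinuity of $E$ gives $\liminf_{u\to0}e_0^{-u}\ge e_0>e$, so the first bullet of Theorem \ref{thm:general conv of mean entropy } applies for all $u$ in a strong-topology neighborhood $U$ of $0\in C^0(X)$ and yields
\[
\Lambda(u)\;=\;S_{-u}(e)-S(e),\qquad S_{-u}(e)\;=\;\sup\bigl\{S(\mu)+\textstyle\int u\,d\mu:\,E(\mu)=e\bigr\},\quad u\in U.
\]

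The last step is to show $\Lambda$ is Gateaux differentiable at $0$ with differential $\mu^e$; equivalently, $\frac{d}{dt}\big|_{t=0}S_{-tv}(e)=\int v\,d\mu^e$ for every $v\in C^0(X)$. I would apply Lemma \ref{lem:differentiability of sup} to the sup-representation of $S_{-tv}(e)$, with base functional $G_0=S$ restricted to $\{E(\mu)=e\}$ and perturbation $\mu\mapsto\int v\,d\mu$. For small $|t|$ the maximizer $\mu^e_{-tv}$ is unique by strict concavity of $S$ on the convex set $\{E\le e\}$ (which contains $\{E=e\}$ and carries the same sup by Lemma \ref{lem:mono of S}) combined with affineness of the perturbation. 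The main obstacle is thus the continuity of $t\mapsto\int v\,d\mu^e_{-tv}$ at $t=0$. To resolve it, let $\mu^\ast$ be any weak subsequential limit of $\mu^e_{-t_kv}$ with $t_k\to 0$: lower semicontinuity of $E$ gives $E(\mu^\ast)\le e$, while upper semicontinuity of the entropy $S$ together with the optimality inequality
\[
S(\mu^e_{-t_kv})+t_k\textstyle\int v\,d\mu^e_{-t_kv}\;\ge\;S(\mu^e)+t_k\int v\,d\mu^e
\]
yields $S(\mu^\ast)\ge S(\mu^e)=S(e)$ upon taking $\limsup$ in $k$. Thus $\mu^\ast$ maximizes $S$ on the convex set $\{E\le e\}$, and strict concavity of $S$ on that convex set forces $\mu^\ast=\mu^e$, establishing continuity. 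Lemma \ref{lem:differentiability of sup} then delivers the required derivative, and Lemma \ref{lem:Cramer variant} (applied to the closed, hence compact, sets $\{\mu : d(\mu,\mu^e)\ge\delta\}\subset\mathcal{P}(X)$) concludes the exponential convergence of $\delta_N$ to $\mu^e$ at speed $N$.
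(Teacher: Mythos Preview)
Your proposal is correct and follows essentially the same route as the paper: apply Lemma~\ref{lem:Cramer variant} to the law of $\delta_N$, compute $\Lambda(u)$ via Theorem~\ref{thm:general conv of mean entropy } applied to the twisted reference measures (legitimized by Prop~\ref{Prop:properties when E convex and energy app}), and then verify Gateaux differentiability at $0$ through Lemma~\ref{lem:differentiability of sup} and a compactness/uniqueness argument for the maximizer. The only cosmetic difference is in the continuity step: you pass to the limit in the optimality inequality $S(\mu^{e}_{-t_kv})+t_k\int v\,d\mu^{e}_{-t_kv}\ge S(\mu^e)+t_k\int v\,d\mu^e$ directly, whereas the paper observes that $t\mapsto S_{tu}(e)$ is convex and finite, hence continuous, and deduces $\limsup S_{tu}(\mu^{e,tu})=S(e)$ from that; both arguments are equivalent and equally short.
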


\begin{proof}
We will apply Lemma \ref{lem:conv in prob av E N} with $\Gamma_{N}$
the law of $\delta_{N}.$ We can express 
\[
\E(e^{-N\left\langle u,\delta_{N}\right\rangle })=\int_{\left\{ E^{(N)}\in]e-\epsilon,e[\right\} }dV_{u}^{\otimes N}/\int_{\left\{ E^{(N)}\in]e-\epsilon,e[\right\} }dV^{\otimes N}.
\]
 Next assume that $u$ satisfies the following inequality: 
\begin{equation}
e<e_{0}(u):=E(dV_{u}/\int_{X}dV_{u})\label{eq:ineq e not u}
\end{equation}
Then the assumptions in Theorem \ref{thm:general conv of mean entropy }
are satisfied for $(E^{(N)},E,dV_{u}),$ using Prop \ref{Prop:properties when E convex and energy app}.
Hence, denoting by $S_{u}$ the entropy wrt the reference measure
$dV_{u},$
\begin{equation}
\lim_{N\rightarrow\infty}N^{-1}\log\E(e^{-N\left\langle u,\delta_{N}\right\rangle })=S_{u}(e)-S(e).\label{eq:conv of Exp in pf thm quasi}
\end{equation}
Denoting by $\mathcal{P}(X)^{\leq e}$ the subspace of $\mathcal{P}(X)$
consisting of $\mu$ such that $E(\mu)\leq e,$ we can express
\begin{equation}
S_{u}(e)=\sup_{\mu\in\mathcal{P}(X)^{\leq e},}\left\{ S(\mu)-\left\langle u,\mu\right\rangle \right\} =S_{u}(\mu^{e,u})\label{eq:form for S u in pf Thm quasi}
\end{equation}
 where $\mu^{e,u}$ denotes the unique element where the corresponding
sup is attained (by Prop \ref{prop:general results for S and F when E convex}).
Now fix an arbitrary $u\in C^{0}(X).$ Then $tu$ satisfies the inequality
\ref{eq:ineq e not u} for $t$ sufficiently close to $0$ (since,
by assumption, the inequality holds when $t=0$ and $E$ is lsc).
Hence, by Lemma \ref{lem:Cramer variant} it will be enough to show
that $t\mapsto-S_{tu}(e)$ is Gateaux differentiable and that 
\[
-dS_{tu}(e)/dt_{|t=0}=\left\langle u,\mu^{e}\right\rangle .
\]
 To prove this first note that, using the second equality in formula
\ref{eq:form for S u in pf Thm quasi} and Lemma \ref{lem:differentiability of sup}
it is, since the sup in \ref{eq:form for S u in pf Thm quasi} is
attained uniquely at $\mu^{e,u},$ enough to show that $t\mapsto\mu^{e,tu}$
is continuous. To this end, consider any limit point $\mu_{*}$ of
$\mu^{e,tu}$ in $\mathcal{P}(X)$ as $t\rightarrow0.$ Since $E$
is lsc we have $E(\mu_{*})\leq e.$ Hence, all that remains is to
show that $S(\mu_{*})\geq S(\mu^{e}).$ First, since $S$ is usc we
have $S(\mu_{*})\geq\limsup_{t\rightarrow0}S(\mu^{e,tu}).$ Secondly,
since $u$ is bounded and $\mu^{e,tu}\in\mathcal{P}(X),$ we may as
well replace $S(\mu^{e,tu})$ with $S_{tu}(\mu^{e,tu}),$ i.e. with
$S_{tu}(e),$ when $t\rightarrow0.$ But, the function $t\mapsto S_{tu}(e)$
is convex (by formula \ref{eq:form for S u in pf Thm quasi}) and
finite. As a consequence, it is continuous, giving $\lim_{t\rightarrow0}S_{tu}(\mu^{e,tu})=S_{0}(e)=S(\mu^{e}),$
which concludes the proof. 
\end{proof}

\subsection{The quasi-superharmonic case}

Let $(X,g)$ be a compact Riemannian manifold and $E^{(N)}$ a sequence
of symmetric lsc functions on $X^{N}.$ We will say that $E^{(N)}$
is \emph{uniformly quasi-superharmonic} if there exists a positive
constant $C$ such that, for all $N,$
\[
\Delta_{g}E^{(N)}\leq C\,\,\,\text{on \ensuremath{X^{N}}},
\]
 where $\Delta_{g}$ denotes the Laplacian on $X^{N}$ endowed with
the product Riemannian metric. We next recall the notion of Gamma-convergence
\cite{bra}:
\begin{defn}
\label{def:Gamma}A sequence of functions $E^{(N)}$ on a topological
space \emph{$\mathcal{M}$ is said to Gamma$-$converge }to a function
$E$ on $\mathcal{M}$ if 
\begin{equation}
\begin{array}{ccc}
\mu_{N}\rightarrow\mu\,\mbox{in\,}\mathcal{M} & \implies & \liminf_{N\rightarrow\infty}E^{(N)}(\mu_{N})\geq E(\mu)\\
\forall\mu & \exists\mu_{N}\rightarrow\mu\,\mbox{in\,}\mathcal{M}: & \lim_{N\rightarrow\infty}E^{(N)}(\mu_{N})=E(\mu)
\end{array}\label{eq:def of gamma conv}
\end{equation}
The limiting functional $E$ is automatically lower semi-continuous
on $\mathcal{M}.$ In the present setup we start with a sequence of
symmetric functions $E^{(N)}$ on $X^{N},$ i.e. $E^{(N)}$ is invariant
under the permutation group $S_{N}$ of $\{1,...,N\}.$ Under the
embedding $\delta_{N}:X^{N}/S_{N}\hookrightarrow\mathcal{P}(X),$
defined by the empirical measure $\delta_{N}$ (formula \ref{eq:emp measure intro})
we then identify $E^{(N)}$ with a sequence of functions on $\mathcal{P}(X)$
taking the value $\infty$ on $\mathcal{P}(X)-\delta_{N}(X^{N}/S_{N}).$
\end{defn}

Given a measure $dV,$ $e\in\R$ and $\epsilon>0$ we will view the
empirical measure $\delta_{N}$ on $X$ as a random measure, i.e.
a random variable on the probability space $(X^{N},\mu_{]e-\epsilon,e[}^{(N)}).$
\begin{thm}
\label{thm:LDP for quasi super}Let $(X,g)$ be a compact Riemannian
manifold and assume that $E^{(N)}$ is lsc and uniformly quasi-super
harmonic on $X^{N}$ and Gamma-converges towards a functional $E(\mu)$
on $\mathcal{P}(X)$ which is convex and that $dV_{g}$ has the energy
approximation property. Then for any given continuous volume form
$dV$ and $e\in]e_{\text{min}},e_{0}]$
\begin{equation}
\lim_{N\rightarrow\infty}N^{-1}\log\int_{\left\{ E^{(N)}\in]e-\epsilon,e[\right\} }dV^{\otimes N}=S(e)\label{eq:conv towards S(e) in quasi}
\end{equation}
 and $\delta_{N}$ converges in probability towards the unique maximum
entropy measure $\mu^{e}$ of energy $e.$ Additionally, if $e\in]0,e_{0}[,$
then the convergence towards $\mu^{e}$ is exponential at speed $N.$ 
\end{thm}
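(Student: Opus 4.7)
The plan is to verify the hypotheses of Theorem~\ref{thm:general conv of mean entropy } (for the convergence towards $S(e)$) and of the preceding theorem in this section (for the exponential convergence of $\delta_N$), the crucial analytic input in both cases being provided by the uniform quasi-superharmonicity of $E^{(N)}$.

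First I would set up the abstract framework. By Gamma-convergence, the limit functional $E$ is automatically lsc; by assumption it is convex, and $dV_g$ has the energy approximation property. Since any continuous volume form $dV$ is comparable to $dV_g$ with bounded continuous density, the energy approximation property transfers to $dV$ and, for every $u \in C^{0}(X)$, also to $dV_u := e^{-u}dV$. Propositions~\ref{prop:general results for S and F when E convex} and \ref{Prop:properties when E convex and energy app} then give $F \in C^{1}(]0,\infty[)$ with $\partial F(]0,\infty[) = ]e_{\min}, e_0[$, together with strict concavity and continuity of $S$ on this interval, existence and uniqueness of the maximum entropy measure $\mu^e$, and the identity $S = F^{*}$ there.

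The key technical step is to establish, for every $\beta > 0$ and every $u \in C^{0}(X)$, the twisted partition function asymptotics
\begin{equation*}
-\lim_{N\to\infty} N^{-1} \log \int_{X^N} e^{-N\beta E^{(N)}}\, dV_u^{\otimes N} \;=\; \inf_{\mu \in \mathcal{P}(X)} \bigl( F_\beta(\mu) + \langle u, \mu\rangle \bigr).
\end{equation*}
The bound $\leq$ (the routine half) is obtained from the Gamma-$\limsup$: for any $\mu \in \mathcal{P}(X)$ with $E(\mu)<\infty$ and $S(\mu)>-\infty$, a recovery sequence $(x_1^{(N)},\dots,x_N^{(N)})$ with $\delta_N(x^{(N)}) \to \mu$ and $E^{(N)}(x^{(N)}) \to E(\mu)$, combined with localization of $dV_u^{\otimes N}$ on small neighbourhoods of the $S_N$-orbit of $x^{(N)}$, contributes a Sanov-type entropy term giving the sought infimum. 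The harder half is the bound $\geq$, and this is where uniform quasi-superharmonicity is indispensable: applying a sub-mean-value inequality to $e^{-N\beta E^{(N)}}$ on geodesic balls of radius $r_N \sim N^{-1/2}$ in $(X^{N}, g^{\otimes N})$ keeps the correction from $\Delta_g E^{(N)} \leq C$ bounded and converts the $L^{1}$ integral into a pointwise $L^{\infty}$ bound on $E^{(N)}$ in terms of local averages of $\delta_N$; feeding this into the Gamma-$\liminf$ then yields the matching upper bound. This is exactly the strategy of \cite{berm8,berm8 comma 5}, and the bounded continuous twist $e^{-u}$ is harmless throughout.

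Once the twisted partition function convergence is in hand, Theorem~\ref{thm:general conv of mean entropy } directly yields the equality \eqref{eq:conv towards S(e) in quasi} for every $e \in ]e_{\min}, e_0]$, the boundary case $e = e_0$ being covered by the second bullet of its first point (after noting $\beta_0 = 0$, $\beta_1 = \infty$). For $e \in ]e_{\min}, e_0[$, the preceding (unnamed) theorem of this section, whose sole hypothesis is precisely the twisted partition function convergence just established, gives exponential convergence in probability of $\delta_N$ to $\mu^e$ at speed $N$. At the boundary $e = e_0$ one loses the strict positivity of $\beta$ and the exponential rate, but the unique maximum entropy measure $\mu^{e_0} = \mu_0$ is still singled out by Proposition~\ref{prop:general results for S and F when E convex} combined with Lemma~\ref{lem:mono of S}, and convergence in probability follows from the partition function convergence, lsc of $E$, and a standard tightness argument on $\mathcal{P}(X)$. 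The main obstacle in the whole scheme is the upper-bound half of the twisted partition function asymptotics above; everything else reduces to invoking the abstract machinery developed in Section~2.
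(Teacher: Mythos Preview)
Your outline for $e \in ]e_{\min}, e_0[$ is the paper's proof verbatim: cite the Gibbs LDP from \cite[Thm~4.1]{berm8} (whose proof is the quasi-superharmonic sub-mean-value argument you sketch) to obtain the twisted partition-function convergence for every $u \in C^0(X)$ and $\beta>0$, then invoke Theorem~\ref{thm:general conv of mean entropy } for \eqref{eq:conv towards S(e) in quasi} and the preceding unnumbered theorem for the exponential convergence of $\delta_N$. One small inaccuracy: the mean-value inequality is applied to $E^{(N)}$ itself, not to $e^{-N\beta E^{(N)}}$.

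The genuine gap is at the boundary $e=e_0$. Tightness of the laws of $\delta_N$ is free on the compact space $\mathcal{P}(X)$, but ``partition function convergence, lsc of $E$, and a standard tightness argument'' do not by themselves pin down the limit; you still have to show that every accumulation point $\Gamma_\infty^e$ of $(\delta_N)_*\mu_{]e-\epsilon,e[}^{(N)}$ in $\mathcal{P}(\mathcal{P}(X))$ equals $\delta_{\mu^{e}}$. The paper does this by the Messer--Spohn scheme \cite{m-s,berm11b}: set $D^{(N)}(\mu_N):=-N^{-1}S(\mu_N)$ and combine (i) the sub-additivity of mean entropy, $\int D^{(1)}\,d\Gamma_\infty^e \le \liminf_N D^{(N)}(\mu_{]e-\epsilon,e[}^{(N)})$ (Robinson--Ruelle \cite{ro-r}), (ii) Fatou's lemma with varying measures applied to the Gamma-liminf of $E^{(N)}$, giving $\int E\,d\Gamma_\infty^e \le \liminf_N \int E^{(N)}\mu_{]e-\epsilon,e[}^{(N)}$, and (iii) the bound $\beta\int E^{(N)}\mu_{]e-\epsilon,e[}^{(N)} \le \beta e$ together with the already-established convergence $D^{(N)}(\mu_{]e-\epsilon,e[}^{(N)})\to -S(e)$. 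Summing yields $\int F_\beta\,d\Gamma_\infty^e \le \beta e - S(e) = F(\beta) = F_\beta(\mu^e)$ for $\beta=S'(e)\ge 0$, and since $\mu^e$ is the unique minimizer of the lsc functional $F_\beta$ this forces $\Gamma_\infty^e=\delta_{\mu^e}$. This argument is what covers $e=e_0$ (where $\beta=0$); it is not a routine tightness step and should replace your last sentence.
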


\begin{proof}
First note that $(E^{(N)},E,dV)$ satisfies the assumptions in Theorem
\ref{thm:general conv of mean entropy }, as follows from Prop \ref{Prop:properties when E convex and energy app}
(using the convexity of $E)$ and the LDP for the Gibbs measures established
in \cite[Thm 4.1]{berm8} (using the quasi-superharmonicity). Hence,
the convergence \ref{eq:conv towards S(e) in quasi} towards $S(e)$
follows from \ref{thm:general conv of mean entropy }. In fact, the
LDP in \cite[Thm 4.1]{berm8} implies that the convergence \ref{eq:conv of Z N with u}
holds for any $u\in C^{0}(X).$ Hence, the exponential convergence
of $\delta_{N}$ towards $\mu_{e}$ follows from the previous theorem,
when $e\in]e_{\text{min}},e_{0}[.$ Next we give a different proof
of the weaker statement that $\delta_{N}$ converges in probability
towards $\mu^{e}$ that also applies when $e=e_{0}.$ We use the approach
in \cite{berm11b} (which is a reformulation of the method introduced
in \cite{m-s}) Given $\mu_{N}\in\mathcal{P}(X^{N})$ set $D^{(N)}(\mu_{N})=-N^{-1}S(\mu_{N}),$
where $S(\mu_{N})$ denotes the entropy of $\mu_{N}$ relative to
$dV^{\otimes N}.$ First, since $\beta:=S'(e)>0$ we get $\beta\int_{X^{N}}E^{(N)}\mu_{]e-\epsilon,e[}^{(N)}\leq\beta e,$
so that
\[
\limsup_{N\rightarrow\infty}\left(\beta\int_{X^{N}}E^{(N)}\mu_{]e-\epsilon,e[}^{(N)}+D^{(N)}(\mu_{]e-\epsilon,e[}^{(N)})\right)\leq\beta e-S(e)=F(\beta)
\]
 using the convergence of $D^{(N)}(\mu_{]e-\epsilon,e[}^{(N)})$ towards
$-S(e)$ in formula \ref{eq:conv towards S(e) in quasi}, and that
$F(\beta)=S^{*}(\beta).$ Next, by the well-known sub-additivity of
$D^{(N)}$ 
\begin{equation}
\int_{\mathcal{P}(X)}\Gamma_{\infty}^{e}D^{(1)}(\mu)\leq\liminf_{N\rightarrow\infty}D^{(N)}(\mu_{]e-\epsilon,e[}^{(N)})\label{eq:sub-add D-1}
\end{equation}
 for any accumulation point $\Gamma_{\infty}^{e}$ of $(\delta_{N})_{*}\mu_{]e-\epsilon,e[}^{(N)}$
in $\mathcal{P}\left(\mathcal{P}(X)\right)$ (see \cite{ro-r} and
\cite[Thm 5.5]{h-m} for generalizations). Moreover, 
\[
\int_{\mathcal{P}(X)}\Gamma_{\infty}^{e}E(\mu)\leq\liminf_{N\rightarrow\infty}\int_{X^{N}}E^{(N)}\mu_{]e-\epsilon,e[}^{(N)}.
\]
 Indeed, the follows directly from the definition of Gamma-convergence
and Fatou's lemma with varying measures \cite[Lemma 3.2]{serfo}.
All in all this gives
\[
\int_{\mathcal{P}(X)}\Gamma_{\infty}^{e}F_{\beta}(\mu)\leq F_{\beta}(\mu^{e}).
\]
 Since $\mu^{e}$ is the unique minimizer of $F_{\beta}$ (by Theorem
\ref{thm:lower energy entro text} and Lemma \ref{lemma:macrostate equivalence})
this forces $\Gamma_{\infty}^{e}=\delta_{\mu^{e}},$ as desired (also
using that $F_{\beta}$ is lsc on $\mathcal{P}(X)$).
\end{proof}

\section{Entropy and free energy on complex manifolds }

In this section we employ the setup introduced in Section \ref{subsec:Setup intro},
where $X$ is an $n-$dimension compact complex manifold. We will
denote by $J$ the complex structure on $X,$ which defines an endomorphism
on the real tangent bundle $TX$ of $X$ such that $J^{2}=-I.$ We
decompose, as usual, $TX\otimes\C=TX^{1,0}\oplus TX^{0,1}$ of $TX$
into the eigenspaces of $J$ with eigenvalues $i$ and $-i$ respectively
(and likewise for the dual of $TX)$. Accordingly, the exterior derivative
$d$ is decomposed as $d=\partial+\bar{\partial}.$ We set $d^{c}:=-J^{*}d/4\pi.$
We will use additive notation for tensor products of line bundles.
In particular, the dual of the canonical line bundle $K_{X}$ of $X$
is denoted by $-K_{X},$ i.e. $-K_{X}:=\Lambda^{n}T^{1,0}X.$

\subsection{\label{subsec:Preliminaries polarized}Preliminaries}

\subsubsection{\label{subsec:kahle}Kähler metrics, Ricci curvature and volume forms
with divisorial singularities}

Recall that a $J-$invariant closed real form $\omega$ on a complex
manifold $X$ is said to be \emph{Kähler }if $\omega>0$ in the sense
that the corresponding symmetric two-tensor $\omega(\cdot,J\cdot)$
is positive definite, i.e. defines a Riemannian metric. In practice,
one then identifies the Kähler form $\omega$ with the corresponding
Kähler metric. Likewise, the Ricci curvature of a Kähler metric $\omega$
may be identified with the two-form
\begin{equation}
\text{Ric \ensuremath{\omega}}=-\frac{i}{2\pi}\partial\bar{\partial}\log\frac{\omega^{n}}{i^{n^{2}}dz\wedge d\bar{z}},\,\,\,dz:=dz_{1}\wedge...\wedge dz_{n}\label{eq:formula for Ricci}
\end{equation}
 wrt any choice of local holomorphic coordinates $z_{i}.$ A measure
$dV$ on $X$ (assumed finite, $\int_{X}dV<\infty)$ is is said to
have \emph{divisorial singularities} if, in terms of local holomorphic
coordinates $z_{i}$, 
\begin{equation}
dV=e^{-v}\prod_{i}|f_{i}|^{-2c_{i}}i^{n^{2}}dz\wedge d\bar{z},\label{eq:def of dV with div sing}
\end{equation}
 for irreducible holomorphic functions $f_{i},$ constants $c_{i}\in\R$
and a bounded function $v$ that is smooth on the complement of the
union of $f_{i}^{-1}(0).$ We denote by $\Delta$ the corresponding
divisor on $X,$ i.e. the following formal sum of analytic subvarities:
\begin{equation}
\Delta:=\sum_{i}c_{i}f_{i}^{-1}(0).\label{eq:def of Delta}
\end{equation}

\subsubsection{Line bundles and notions of positivity}

Let $L\rightarrow X$ be a holomorphic line bundle and $\left\Vert \cdot\right\Vert $
a (Hermitian) metric on $L.$ The (normalized) curvature of $\left\Vert \cdot\right\Vert $
is the two-form $\theta$ on $X,$ locally defined on an open subset
$U\subset X$ by $-\pi^{-1}idd^{c}\log\left\Vert s_{U}\right\Vert ,$
where $s_{U}$ is any local non-vanishing holomorphic section of $L.$
It represents the first Chern class $c_{1}(L)$ of $L$ in the de
Rham cohomology group $H^{2}(X,\R)$ (the normalization ensures that
the curvature has integral periods). More generally, if the metric
is singular, i.e. locally $\log\left\Vert s_{U}\right\Vert \in L_{\text{loc}}^{1},$
then the curvature defines a current on $X.$ Throughout this work
the line bundle $L$ is assumed \emph{big,} which in algebro-geometric
terms means that the asymptotics \ref{eq:def of N k} hold with $\text{vol \ensuremath{(L)>0.}}$
In analytic terms, this equivalently means that $L$ admits some singular
metric with strictly positive curvature (in the sense that the curvature
current is locally bounded from below by a Kähler form). A line bundle
$L$ is \emph{ample} iff it admits a smooth metric with strictly positive
curvature (i.e. the curvature form defines a Kähler form). Finally,
a line bundle $L$ is \emph{nef} it admits some singular metric $\left\Vert \cdot\right\Vert $
with positive curvature current such that $\log\left\Vert s_{U}\right\Vert $
has no Lelong numbers (which equivalently means that $\left\Vert s_{U}\right\Vert ^{-p}$
is locally integrable for all $p\in]0,\infty[).$

\subsubsection{Ricci curvature of $dV$}

Given a smooth volume form $dV$ on $X$ we define $\text{Ric}dV$
as the curvature of the metric on $-K_{X}$ induced by $dV$$.$ Concretely,
$\text{Ric}dV$ is thus defined by replacing $\omega^{n}$ with $dV$
in formula \ref{eq:formula for Ricci}. More generally, if $dV$ is
a measure on $X,$ assumed absolutely continuous wrt Lebesgue measure
and such that log of the local densities of $dV$ are integrable,
then $\text{Ric}dV$ is the current defined as the curvature current
of the singular metric on $-K_{X}$ induced by $dV.$ In particular,
if $dV$ has divisorial singularities (formula \ref{eq:def of dV with div sing}),
then 
\[
\text{Ric}dV=[\Delta]+dd^{c}v,
\]
 where $\Delta$ denotes the current of integration along the divisor
$\Delta$ on $X.$

\subsubsection{\label{subsec:Pluripotential-theory}Pluripotential theory}

Recall that $\theta$ denotes the curvture form of a fixed smooth
metric on $L.$ Given $u\in L^{1}(X),$ set 
\[
\theta_{u}:=\theta+dd^{c}u\,\,\,\,\left(dd^{c}u:=\frac{i}{2\pi}\partial\bar{\partial}u\right)
\]
 which defines a current on $X$ of degree $2$ (or more precisely
of bidegree $(1,1)).$ We will denote by $\text{PSH }(X,\theta)$
the subspace of $L^{1}(X)$ consisting of all strongly upper semi-continuous
functions $u$ on $X$ taking values in $[-\infty,\infty[$ such that
$\theta_{u}\geq0$ holds in the sense of currents. We will denote
by $\theta_{u}^{j}$ the $j-$fold non-pluripolar product \cite{bbegz}
(which coincides with the $n-$ fold exterior product when $\theta_{u}$
is smooth, or in $L_{\text{loc }}^{\infty}).$ In particular, $\theta_{u}^{n}$
is the \emph{non-pluripolar Monge-Ampère measure} of Given $\mu\in\mathcal{P}(X)$
the following complex Monge-Ampère equation - known as the\emph{ Calabi-Yau
equation} - 
\begin{equation}
\frac{1}{\text{vol\ensuremath{(L)}}}\theta_{u}^{n}=\mu,\,\,\,u\in\text{PSH }(X,\theta)\label{eq:MA eq with mu}
\end{equation}
 admits a solution iff $\mu$ does not charge pluripolar subsets of
$X$ and the solution $u_{\mu}$ is unique mod $\R$ \cite{bbegz,bbgz}.
Consider the envelope
\begin{equation}
u_{\theta}(x):=\sup\left\{ u(x):\,u\in\text{PSH \ensuremath{(X,\theta)},}\,\,u\leq0\right\} ,\label{eq:def of env}
\end{equation}
 which defines a function in $\text{PSH }(X,\theta)$ and set
\[
\mathcal{E}^{1}(X,\theta):=\left\{ u\in\text{PSH }(X,\theta):\,\int_{X}\theta_{u}^{n}=\text{vol}\ensuremath{(L)},\,\,\int_{X}(u_{\theta}-u)\theta_{u}^{n}<\infty\right\} .
\]
 Consider the following functional on $\mathcal{E}^{1}(X,\theta):$
\begin{equation}
\mathcal{E}(u):=\int_{X}\sum_{j=0}^{n}(u_{\theta}-u)\theta_{u}^{j}\wedge\theta_{u_{\theta}}^{n-j}.\label{eq:primitive appears}
\end{equation}
(this functional is denoted by $E$ in \cite{bbgz}, but here we shall,
following \cite{berm6}, reserve capital letters for functionals defined
on $\mathcal{P}(X)$). 

\subsubsection{\label{subsec:The pluricomplex energy}The pluricomplex energy $E(\mu)$}

For the present purposes it will be convenient to define the pluricomplex
energy $E(\mu)$ of a measure $\mu$ (wrt $\theta)$, introduced in
\cite{bbgz} (where it is called $E^{*})$, by the characterization
in \cite[Thm A]{bbgz}: $E(\mu)<\infty$ iff there exists $u_{\mu}\in\mathcal{E}^{1}(X,\theta)$
such that $\theta_{u_{\mu}}^{n}/\text{vol\ensuremath{(L)}}=\mu$ and
in that case
\[
E(\mu):=\frac{1}{\text{vol\ensuremath{(L)}}(n+1)}\mathcal{E}(u_{\mu})-\int_{X}u_{\mu}\mu
\]

\begin{thm}
\label{thm:prop of energies}The pluricomplex energy $E(\mu)$ is
\emph{convex }and\emph{ lsc} on $\mathcal{P}(X)$ \cite{bbgz}. Moreover,
if $L$ is big and nef, then \cite{bbegz}
\begin{equation}
E\,\text{is continuous on \ensuremath{\left\{  \ensuremath{\mu:\,\,S(\mu)\geq-C}\right\} } }\label{eq:energy entrop cont}
\end{equation}
 for any given constant $C.$ 
\end{thm}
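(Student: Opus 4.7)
The plan is to obtain convexity and lower semicontinuity from a variational characterization of $E(\mu)$, and then to deduce the continuity on entropy sublevel sets from a standard compactness-plus-stability argument for the complex Monge--Amp\`ere equation.

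For the first part, I would express $E$ as a Legendre-type transform. Using that $\mathcal{E}$ is concave on $\mathcal{E}^{1}(X,\theta)$ with differential proportional to $-\theta_{u}^{n}$, the potential $u_{\mu}$ solving the Monge--Amp\`ere equation is, up to an additive constant, the unique maximizer of the functional
\[
G_{\mu}(u) := \frac{1}{(n+1)\,\mathrm{vol}(L)}\,\mathcal{E}(u) - \int_{X} u\, d\mu,
\]
and one checks that $E(\mu) = G_{\mu}(u_{\mu})$. This yields the dual representation
\[
E(\mu) = \sup_{u \in \mathcal{E}^{1}(X,\theta) \cap C^{0}(X)} G_{\mu}(u),
\]
which exhibits $E$ as a supremum of continuous affine functionals of $\mu$ on $\mathcal{P}(X)$. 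Convexity and weak lower semicontinuity follow immediately.

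For the continuity on $\{S(\mu) \geq -C\}$, consider $\mu_{j} \to \mu$ weakly with $S(\mu_{j}) \geq -C$. The crucial input, available precisely because $L$ is big and nef, is a Kolodziej-type $L^{\infty}$-estimate providing a uniform bound on $u_{j} := u_{\mu_{j}}$, normalized by $\sup_{X} u_{j} = 0$, with a constant depending only on $C$; this relies on the entropy bound providing Orlicz-type integrability of the densities $\mu_{j}/dV$. Passing to a subsequence, $u_{j} \to u$ in $L^{1}(X)$ with $u \in \mathcal{E}^{1}(X,\theta) \cap L^{\infty}$, and continuity of the non-pluripolar Monge--Amp\`ere operator along uniformly bounded sequences converging in capacity gives $\theta_{u_{j}}^{n} \to \theta_{u}^{n}$, so $u = u_{\mu}$ by uniqueness modulo constants. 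Combining this with the continuity of $\mathcal{E}$ along such sequences and with the convergence $\int_{X} u_{j}\, d\mu_{j} \to \int_{X} u\, d\mu$ (for which the uniform $L^{\infty}$-bound together with the entropy bound rules out concentration) yields $E(\mu_{j}) \to E(\mu)$.

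The main obstacle is the uniform $L^{\infty}$-estimate on the $u_{j}$'s in the big-and-nef setting, which is the core technical content of \cite{bbegz}. Without the nef assumption the envelope $u_{\theta}$ may be unbounded, and any Kolodziej-type estimate would have to be localized away from the non-ample locus, undermining the global compactness on which the continuity of $E$ rests.
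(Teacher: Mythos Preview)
Your argument for convexity and lower semicontinuity via the Legendre-type dual representation is correct and matches the approach in \cite{bbgz}; this part is fine.

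The second part, however, contains a genuine gap. You claim that the entropy bound $S(\mu_{j})\geq -C$ yields, via Kolodziej-type estimates, a uniform $L^{\infty}$-bound on the potentials $u_{j}$. This is not available: the entropy bound only places the densities $d\mu_{j}/dV$ in the Orlicz class $L\log L$, whereas Kolodziej's $L^{\infty}$-estimate requires the density to lie in a strictly stronger class (roughly $L(\log L)^{p}$ for some $p>n$, or $L^{p}$ for $p>1$). So your ``crucial input'' is simply not there, and the rest of the argument, which leans on uniform boundedness, collapses.

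The actual mechanism, as the paper indicates in the remark following the theorem, is different. One works directly in $\mathcal{E}^{1}(X,\theta)$ without any $L^{\infty}$-control. The key point is that when $L$ is big and nef, potentials in $\mathcal{E}^{1}(X,\theta)$ have vanishing Lelong numbers (this is the result of \cite{d-n-l} extending the situation in \cite{bbegz}). Vanishing Lelong numbers give, via a uniform Skoda-type integrability, a uniform bound on $\int_{X}e^{-\alpha u_{j}}\,dV$ for some $\alpha>0$. One then exploits the convex duality between entropy and exponential integrability (Young's inequality for $t\log t$ and $e^{s}$) to bound $-\int_{X}u_{j}\,d\mu_{j}$ uniformly and to pass to the limit in this pairing. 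The convergence $\mathcal{E}(u_{j})\to\mathcal{E}(u_{\mu})$ is handled through the energy-compactness properties of $\mathcal{E}^{1}$, again without invoking $L^{\infty}$-bounds. Your identification of the obstacle (``the uniform $L^{\infty}$-estimate\ldots is the core technical content'') thus misreads where the difficulty lies: it is the absence of Lelong numbers, not boundedness of potentials, that drives the argument.
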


In \cite[Thm 2.17]{bbegz} the continuity result \ref{eq:energy entrop cont}
was shown under the stronger assumption that $L$ be the pull-back
to $X$ of an ample line bundle under a resolution of singularities.
But this assumption was only used to make sure that elements in $\mathcal{E}^{1}(X,\theta)$
have no Lelong numbers and, as subsequently shown in \cite{d-n-l},
this holds more generally when $L$ is big and nef.

\subsubsection{\label{subsec:Twisted}Twisted Kähler-Einstein equations}

Given the data $(\theta,dV)$ and a $\beta\in\R$ we consider the
corresponding twisted Kähler-Einstein equation 
\begin{equation}
\mbox{\ensuremath{\mbox{Ric}}\ensuremath{\omega}}+\beta\omega=\beta\theta+\ensuremath{\mbox{Ric}}\ensuremath{dV}\label{eq:twisted KE text}
\end{equation}
 for a positive current $\omega$ in the first Chern class $c_{1}(L)$
of $L$ such that $\omega^{n}$ is locally absolutely continuous wrt
Lebesgue measure. Expressing $\omega=\theta+dd^{c}u,$ equation \ref{eq:twisted KE text}
is equivalent to the following complex Monge-Ampère equation for $u$
in $\text{PSH }(X,\theta):$ 
\begin{equation}
\frac{1}{\text{vol\ensuremath{(L)}}}\theta_{u}^{n}=e^{\beta u}dV\label{eq:ma eq with beta}
\end{equation}
Conjecturally, when $dV$ has divisorial singularities the following
regularity result holds for any big line bundle $L$: the solution
$\omega$ defines a Kähler metric on a Zariski open subset of $X$
(the intersection of the ample locus of $X$ with the complement of
the support of $\Delta$). This regularity result is known to hold
when $L$ is nef \cite[Thm B.1]{bbegz} and when the section ring
$R(L)$ of $L$ is finitely generated (which reduces to the case when
$L$ is nef and big, as in \cite[Section 6.2]{begz}, by replacing
$X$ with a non-singular resolution of $\text{Proj \ensuremath{R(L)}}$).
In particular, the regularity result holds for $L=K_{X}$ when $X$
is a variety of general type and when $X$ is a Fano variety and $L=-K_{X}$
(as in the setup of Section \ref{subsec:The-high-energy intro}). 
\begin{rem}
If $L$ is ample and the divisor $\Delta$ (formula \ref{eq:def of Delta})
is assumed to have simple normal crossings, then a stronger regularity
result hold: $\omega$ is a Kähler metric with edge-cone singularities
along $\Delta$ \cite{g-p,jmr,m-r}.
\end{rem}

\subsubsection{\label{subsec:K=0000E4hler-Einstein-metrics}Kähler-Einstein metrics}

Now assume that the data $(\theta,dV)$ satisfies the following compatibility
condition: 
\begin{equation}
\ensuremath{\text{Ric}}\ensuremath{dV-[\Delta]=-\beta\theta.}\label{eq:theta is Ricci-1}
\end{equation}
Then a solution $\omega$ to the corresponding equation \ref{eq:twisted KE intro}
for $\beta\in\{\pm1,0\}$ is a \emph{Kähler-Einstein metric} for the
pair $(X,\Delta),$ i.e. $\omega$ is a solution to the equation 
\begin{equation}
\mbox{\ensuremath{\mbox{Ric}}\ensuremath{\omega}}=-\beta\omega+[\Delta].\label{eq:log KE eq text}
\end{equation}
By \cite{bbegz}, the solution $\omega$ is a bona fide smooth Kähler-Einstein
metric on $X-\text{supp\ensuremath{(\Delta)}.}$ When $\beta=\pm1$
the compatibility condition \ref{eq:theta is Ricci-1} implies that
$L=\pm(K_{X}+\Delta),$ where the divisor $\Delta$ has been identified
with the corresponding line bundle in the standard way \cite{bbegz,berman6ii}.
Moreover, the compatibility condition amounts to taking the metric
$\left\Vert \cdot\right\Vert $ on $L$ to be the one induced by any
multiple of $dV$ on $X-\text{supp\ensuremath{(\Delta)}.}$
\begin{rem}
By \cite{g-p,jmr,m-r}, when $X$ is smooth and $\Delta$ has simple
simple normal crossings the solution $\omega$ of equation \ref{eq:log KE eq text}
is a Kähler-Einstein metric with edge-cone singularities along $\Delta.$
\end{rem}

\subsubsection{\label{subsec:Singular_complex_varieties}Singular complex varieties}

The previous setup essentially contains the case when $X$ is a (possibly
singular) compact complex variety if $(X,\Delta)$ is assumed to have
log terminal singularities \cite{ko0}. We will briefly recall how
this works in the simplest case when the divisor $\Delta$ (formula
\ref{eq:def of Delta}) is trivial. The assumption that $X$ has log
terminal singularities is equivalent to $X$ being a normal variety
such that $K_{X}$ is defined as a $\Q-$line bundle on $X$ and that
the volume form on the regular locus $X_{\text{reg }}$ of $X$ satisfies
$\int_{X_{\text{reg}}}dV<\infty$ \cite{bbegz}. It thus extends to
a finite measure on $X$ (not charging the singular locus of $X).$
Fixing a resolution of singularities, i.e. a birational holomorphic
surjection $g:\,Y\rightarrow X$ from a compact complex manifold $Y$
to $X,$ the pullback $g^{*}L$ is a big line bundle on $Y$ and $g^{*}dV$
is a volume form on $Y$ with divisorial singularities. Accordingly,
all objects on $X$ can be identified with their pull-back to $Y.$
When $L\rightarrow X$ is ample, solutions to the twisted Kähler-Einstein
equation \ref{eq:twisted KE text} correspond to bona fide Kähler
metrics on $X_{\text{reg}}$ solving \ref{eq:twisted KE text} on
$X_{\text{reg}}$ and whose volume on $X_{\text{reg }}$ coincide
with $\text{vol \ensuremath{(L)}. }$ We will be particularly concerned
with the case when $X$ is a \emph{Fano variety. }This means that
$-K_{X}$ is ample.

\subsubsection{\label{subsec:Relations-to-the standard}Relations to the standard
functionals in Kähler geometry}

Consider the case when $X$ is non-singular, $L$ is ample and $\mu\in\mathcal{P}(X).$
Writing $\mu=\theta_{u}^{n}$/$\text{Vol \ensuremath{(L)}}$ for $u\in\text{Psh \ensuremath{(X,\theta)}},$
we can express $E(\mu)=(I-J)(u),$ where $I$ and $J$ are the standard
energy functionals in Kähler geometry, in the notation of \cite{au2,bbgz}.
Moreover, when $\beta=\pm1$ and $(\theta,dV)$ satisfies the compatibility
relation \ref{eq:theta is Ricci}, $F_{\beta}(\mu)=\mathcal{M}(u),$
where $\mathcal{M}$ is Mabuchi's K-energy functional associated to
$L=\pm K_{X}.$ For general data $(L,\theta,dV,\beta)$ (with $L$
ample) $F_{\beta}(\mu)=\mathcal{M}_{\beta}(u),$ where $\mathcal{M}_{\beta}(u)$
is the corresponding twisted Mabuchi functional (see \cite[Section 4.1]{berm6}).
These relations extend to the singular setup considered in \cite{bbegz}.

\subsection{Properties of the entropy and free energy}
\begin{lem}
\label{lem:on polarized has affine cont and energy appr}When $L$
is big, $dV$ has the energy approximation property and when $L$
is big and nef, the pluricomplex energy $E$ has the affine continuity
property (see Definition \ref{def:def of affine cont and energy appr}).
\end{lem}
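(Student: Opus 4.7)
The plan is to treat the two claims separately and in both cases reduce to properties of the pluricomplex energy recorded in Theorem \ref{thm:prop of energies}.

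For the affine continuity, which I establish under the stronger hypothesis that $L$ is big and nef, fix $\mu_{1}$ with $E(\mu_{1})<\infty$ and $S(\mu_{1})>-\infty$ and set $\nu_{t}:=(1-t)\mu_{0}+t\mu_{1}$. Concavity of $S$, immediate from the convexity of $s\mapsto s\log s$ and the definition \ref{eq:def of S intro}, yields the uniform lower bound $S(\nu_{t})\geq(1-t)S(\mu_{0})+tS(\mu_{1})\geq-C$ for $t\in[0,1]$. The map $t\mapsto\nu_{t}$ is affine, hence continuous, in the weak topology on $\mathcal{P}(X)$, and the continuity statement \ref{eq:energy entrop cont} applied to the family $\{\nu_{t}\}_{t\in[0,1]}$ then gives continuity of $t\mapsto E(\nu_{t})$, as required.

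For the energy approximation property, assumed only that $L$ is big, the case $E(\mu)=\infty$ is handled by taking any weakly convergent approximation $\mu_{j}\to\mu$ with $\mu_{j}\ll dV$ (available by weak density of absolutely continuous probability measures in $\mathcal{P}(X)$) and invoking the lower semicontinuity half of Theorem \ref{thm:prop of energies}, which forces $\liminf_{j}E(\mu_{j})\geq E(\mu)=\infty$. Assume henceforth $E(\mu)<\infty$, and write $\mu=\theta_{u_{\mu}}^{n}/\text{vol}(L)$ with $u_{\mu}\in\mathcal{E}^{1}(X,\theta)$. The target is a sequence $u_{j}\in\mathcal{E}^{1}(X,\theta)$ such that $\mu_{j}:=\theta_{u_{j}}^{n}/\text{vol}(L)$ is absolutely continuous with respect to $dV$, $\mu_{j}\to\mu$ weakly, and $E(\mu_{j})\to E(\mu)$. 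I would construct $u_{j}$ by Demailly regularization: smooth approximants $v_{j}\searrow u_{\mu}$ exist with $\theta+dd^{c}v_{j}\geq-\epsilon_{j}\omega_{0}$ and $\epsilon_{j}\to0$, while bigness of $L$ provides a strictly $\theta$-psh potential $\phi$; a correction $u_{j}:=v_{j}+\epsilon_{j}'\phi$, with $\epsilon_{j}'$ chosen to absorb the loss of positivity, then lies in $\text{PSH}(X,\theta)$ and has smooth Monge-Amp\`ere measure, which is absolutely continuous with respect to Lebesgue. Since the density of $dV$ with respect to Lebesgue is strictly positive almost everywhere by \ref{eq:def of dV with div sing}, this forces $\mu_{j}\ll dV$; the convergence $E(\mu_{j})\to E(\mu)$ follows from the continuity of the functional $\mathcal{E}$ along decreasing sequences in $\mathcal{E}^{1}(X,\theta)$ from \cite{bbgz}.

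The main obstacle is the simultaneous requirement of absolute continuity with respect to $dV$ and convergence of the energies in the approximation step: naive bounded truncations $\max(u_{\mu},u_{\theta}-k)$ produce Monge-Amp\`ere measures that are absolutely continuous only with respect to the Monge-Amp\`ere capacity, not with respect to Lebesgue, so some smoothing is unavoidable. Demailly regularization resolves this at the cost of a small loss of $\theta$-positivity, and it is exactly the bigness of $L$ that allows this to be absorbed by perturbing with a strictly $\theta$-psh potential. Nefness is not needed for this half, but is essential for the affine continuity step, since the continuity result \ref{eq:energy entrop cont} that drives it is only known in the big-and-nef case.
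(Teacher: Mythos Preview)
Your treatment of the affine continuity property is correct and is exactly the argument in the paper: bound $S(\nu_t)$ from below by concavity, then invoke the continuity statement \ref{eq:energy entrop cont}.

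Your approach to the energy approximation property, however, has a genuine gap in the big (non-ample) case, and it is also quite different from the paper's. The problem is the correction step. When $L$ is big but not ample, there is no \emph{smooth} strictly $\theta$-psh potential $\phi$; any such $\phi$ must be singular (typically with analytic singularities along the non-ample locus). Hence $u_j:=v_j+\epsilon_j'\phi$ (or the correct convex combination $(1-t_j)v_j+t_j\phi$) is \emph{not} smooth, and the claim that $\theta_{u_j}^{n}$ is a smooth volume form fails. One can still argue that $\theta_{u_j}^{n}$ puts no mass on the singular locus of $\phi$ (being a non-pluripolar product) and is smooth off it, so absolute continuity survives; but two further issues remain. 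First, the full-mass condition $\int_X\theta_{u_j}^{n}=\text{vol}(L)$, needed for $u_j\in\mathcal{E}^{1}(X,\theta)$ and hence for $\mu_j$ to be a probability measure, is not automatic: the Lelong numbers picked up from $\phi$ can obstruct minimal singularities. Second, the sequence $u_j$ is not decreasing (the correction term moves in the wrong direction), so the cited continuity of $\mathcal{E}$ along decreasing sequences in $\mathcal{E}^{1}$ does not apply directly; you would need a separate argument controlling both $\mathcal{E}(u_j)$ and $\int u_j\,\theta_{u_j}^{n}$.

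The paper avoids these difficulties entirely by taking a dual route: it invokes from \cite{be1} the characterization that $dV$ has the energy approximation property if and only if
\[
\lim_{\beta\to\infty}\inf_{\mu\in\mathcal{P}(X)}\bigl(F_\beta(\mu)+\langle v,\mu\rangle\bigr)=\inf_{\mu\in\mathcal{P}(X)}\bigl(E(\mu)+\langle v,\mu\rangle\bigr)
\]
for every $v\in C^{0}(X)$, and then verifies this identity using the zero-temperature limit of solutions $u_\beta$ to the Monge--Amp\`ere equation \ref{eq:ma eq with beta} toward the envelope $u_\theta$, citing \cite{be0,b-b-w}. This bypasses any explicit construction of approximants, which is why it works uniformly for big $L$.
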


\begin{proof}
The energy approximation property of any volume form $dV$ is essentially
well-known. In fact, it follows from results in \cite{be0,b-b-w}
that this property holds more generally for measures $dV$ which are
\emph{strongly determining} for $\text{PSH \ensuremath{(X,\theta)}}$
in the following sense: for any $u,v\in C^{0}(X)$ we have that $u\leq v$
a.e. $dV$ implies $u\leq v$ on $X.$ This is shown as follows. First,
as shown in \cite{be1}, in general a measure $dV$ has the energy
approximation property iff $F_{\beta}(\mu)$ Gamma-converges towards
$E$ as $\beta\rightarrow\infty$ iff for any $v\in C^{0}(X)$ 
\[
\lim_{\beta\rightarrow\infty}\inf_{\mu\in\mathcal{P}(X)}\left(F_{\beta}(\mu)+\left\langle v,\mu\right\rangle \right)=\inf_{\mu\in\mathcal{P}(X)}\left(E(\mu)+\left\langle v,\mu\right\rangle \right).
\]
First consider the case when $v=0.$ Then the inf in the lhs above
is attained at $\mu_{\beta}(=\theta_{u_{\beta}}^{n}/\text{vol \ensuremath{(L)}}),$
where $u_{\beta}$ is a solution to the Monge-Ampère equation \ref{eq:ma eq with beta}
and we may assume that $\sup_{X}u_{\beta}=0.$ A direct computation
yields
\begin{equation}
\inf_{\mu\in\mathcal{P}(X)}F_{\beta}(\mu)=F_{\beta}(\mu_{\beta})=-\mathcal{E}(u_{\beta})+\frac{1}{\beta}\log\int_{X}e^{\beta u}dV\label{eq:pf of lemma energy approx for L big}
\end{equation}
By \cite[Thm 2.1]{be0}, combined with\cite[Thm 1.14]{b-b-w}, assuming
that $dV$ is determining, $u_{\beta}$ converges in $L^{1}(X)$ towards
the envelope $u_{\theta}$ in formula \ref{eq:def of env} and the
first and second term in formula \ref{eq:pf of lemma energy approx for L big}
converge towards $-\mathcal{E}(u_{\theta})$ and $\sup_{X}u_{\theta}(=0),$
respectively. Finally, by \cite[Thm 5.3]{bbgz} the limit $-\mathcal{E}(u_{\theta})$
coincides with the inf of $E(\mu),$ as desired. The proof for a general
$v$ follows from replacing $\theta$ with $\theta+dd^{c}v.$ Strictly
speaking, it was assumed in \cite[Thm 1.14]{be0} that $L$ is ample,
but the same argument applies when $L$ is big. Alternatively, when
$L$ is big and $dV$ has a density in $L^{p}$ for $p>1$ (which,
for example,  is the case when $dV$ has divisorial singularities)
it follows from \cite[Thm 1.2]{be0} that $\left\Vert u_{\beta}-u_{\theta}\right\Vert _{L^{\infty}(X)}$
converges towards $0.$ The rest of argument proceeds as before. 

Next assume that $L$ is big and nef and consider an affine curve
$\mu_{t}:[0,1]\rightarrow\mathcal{P}(X)$ and assume that $E(\mu_{1})<\infty$
and $S(\mu_{1})>-\infty.$ Since $E$ and $-S$ are convex and finite
at the end point of the affine curve it follows that $E(\mu_{t})<\infty$
and that $S(\mu_{t})\geq-C.$ Hence, it follows from the continuity
result \ref{eq:energy entrop cont} that $E(\mu_{t})$ is continuous
wrt $t.$ 
\end{proof}
\begin{prop}
\label{prop:general prop of entropy and free on complex}Given a line
big line bundle $L$ assume, when $e>e_{0}$ that $L$ is also nef,
then
\begin{itemize}
\item $S(e)$ is finite and continuous on $[0,\infty[,$ strictly increasing
on $[0,e_{0}]$ and strictly decreasing on $[e_{0},\infty[.$
\item For any $e$ there exists a maximum entropy measure $\mu^{e}.$ 
\item For $e\in[0,e_{0}],$ $S(e)$ is strictly concave and $\mu^{e}$ is
unique.
\item $F(\beta)$ is differentiable on $]0,\infty[$ and $(F')[0,\infty]=[0,e_{0}]$
(where the end points are defined as limit of $F'(\beta))$ for $\beta\in]0,\infty[).$
\end{itemize}
\end{prop}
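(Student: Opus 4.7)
I would assemble this proposition from the two general results Proposition \ref{prop:general results for S and F when E convex} and Proposition \ref{Prop:properties when E convex and energy app}, combined with the structural facts about $E$ and $S$ proved earlier in the section. The crucial inputs are: $E$ is convex and lsc on $\mathcal{P}(X)$, and continuous on $\{\mu: S(\mu)\geq -C\}$ when $L$ is big and nef (Theorem \ref{thm:prop of energies}); $dV$ has the energy approximation property when $L$ is big, and $E$ has the affine continuity property when $L$ is big and nef (Lemma \ref{lem:on polarized has affine cont and energy appr}). With these inputs, Proposition \ref{prop:general results for S and F when E convex} yields a unique minimizer $\mu_\beta$ of $F_\beta$ for each $\beta>0$, continuity of $\beta\mapsto E(\mu_\beta)$, and hence $F\in C^{1}(]0,\infty[)$; Proposition \ref{Prop:properties when E convex and energy app} then identifies $\partial F(]0,\infty[)=\,]0,e_0[$, so $F'(\beta)\to 0$ as $\beta\to+\infty$ and $F'(\beta)\to e_0$ as $\beta\to 0^+$. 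This settles the fourth bullet and, via Lemma \ref{lem:S strictly concave on image of dF}, delivers strict concavity of $S$ and uniqueness of $\mu^{e}$ on the open interval $]0,e_0[$.

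\textbf{Monotonicity, finiteness, and existence.} Lemma \ref{lem:mono of S} gives strict monotonicity of $S$: convexity of $E$ yields strict increase on $]0,e_0]$, and affine continuity of $E$ yields strict decrease on $[e_0,\infty[$, both in the region where $S>-\infty$. Lemma \ref{lem:energy appr implies S finite etc} then provides finiteness of $S$ and existence of a maximum entropy measure on $]0,e_0]$ from the energy approximation property, and upgrades finiteness to all of $]0,\infty[$ under the affine continuity assumption. For $e>e_0$, existence of $\mu^{e}$ would be obtained by a direct compactness argument: take a maximizing sequence $\mu_n$ with $E(\mu_n)=e$ and $S(\mu_n)\to S(e)$, extract a weak limit $\mu_*$; upper semi-continuity of $S$ gives $S(\mu_*)\geq S(e)$, while the uniform lower bound on $S(\mu_n)$ combined with continuity of $E$ on $\{S\geq -C\}$ from Theorem \ref{thm:prop of energies} forces $E(\mu_*)=e$. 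At $e=e_0$ the unique maximizer is the global entropy maximizer $\mu_0=dV/\int_X dV$; at $e=0=e_{\min}$, the constraint set $\{E(\mu)=0\}=\{E(\mu)\leq 0\}$ is convex (a sublevel set of the convex functional $E$ at its minimum), so strict concavity of $S$ forces uniqueness of $\mu^0$ there as well, with $S(\mu^0)>-\infty$ reducing to a regularity statement for the unique $E$-minimizer (which comes from the bounded density of the non-pluripolar Monge-Amp\`ere measure of the envelope $u_\theta$ in the big-and-nef setting).

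\textbf{Continuity --- the main obstacle.} On $]0,e_0]$, $S$ is concave by Lemma \ref{lem:E convex plus energy approx implies S concave} and finite, hence continuous on the open interval, and continuity at $e_0$ follows from upper semi-continuity together with an affine-interpolation lower bound using $\nu=\mu_0$. The truly delicate region is $[e_0,\infty[$, where the Legendre duality $S=F^*$ is unavailable ($F(\beta)$ need not be finite, let alone differentiable, for $\beta\leq 0$) and the concavity of $S$ on $]-\infty,e_0]$ does not extend past $e_0$. My plan is the following affine-interpolation argument exploiting the affine continuity of $E$. Given $e_n\to e$, upper semi-continuity of $S$ together with monotonicity gives one inequality: the maximizers $\mu^{e_n}$ have entropies uniformly bounded by $S(e_0)$, so by Theorem \ref{thm:prop of energies} any weak limit $\mu_*$ satisfies $E(\mu_*)=e$, yielding $\limsup S(e_n)\leq S(e)$. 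For the matching $\liminf$, I would interpolate affinely between $\mu^{e}$ and an auxiliary measure $\nu$ of strictly different energy and finite entropy, use concavity of $S$ along the affine segment together with the affine continuity of $E$ to pick the interpolation parameter so that the energy lands exactly at $e_n$, and pass to the limit. The main technical difficulty is supplying such auxiliary measures $\nu$ of arbitrarily large energy while maintaining $S(\nu)>-\infty$, needed to approach $e\in\,]e_0,\infty[$ from above; this can be arranged by taking $\nu$ to be the normalized Monge-Amp\`ere measure of a sufficiently singular $\theta$-psh potential whose associated density against $dV$ still has integrable logarithm.
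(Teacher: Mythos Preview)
Your approach is correct and matches the paper's in all essentials. Both proofs assemble finiteness and monotonicity from Lemmas \ref{lem:mono of S}, \ref{lem:energy appr implies S finite etc}, \ref{lem:on polarized has affine cont and energy appr}, and the fourth bullet from Propositions \ref{prop:general results for S and F when E convex}, \ref{Prop:properties when E convex and energy app}. For continuity and existence when $e>e_{0}$, the paper runs exactly the compactness argument you describe: a uniform lower bound $S(\mu^{e_n})\geq C$ (the paper writes ``since $S(e)$ is finite and concave'', but what is actually used, and what works for $e>e_0$, is monotonicity plus finiteness), then the energy--entropy continuity of Theorem \ref{thm:prop of energies} to force $E(\mu_*)=e_*$, then upper semicontinuity of $S(\mu)$.

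The one place you are more careful than the paper is the lower-semicontinuity half of continuity: the paper's written argument for this step (``$\limsup S(\mu^{e})=S(e_*)$ using the continuity of $S(e)$'') is circular as phrased, and your affine-interpolation argument is the right way to complete it. Your worry about supplying auxiliary measures $\nu$ with $E(\nu)$ large and $S(\nu)>-\infty$ is unfounded, though: you have already established that $S(e')>-\infty$ for every $e'\in\,]0,\infty[$ via Lemma \ref{lem:energy appr implies S finite etc} and the affine continuity property, so any near-maximizer at a level $e'>e_*$ will do.

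Your discussion of the endpoint $e=0$ goes beyond what the paper does (the paper's proof only invokes Lemma \ref{lem:energy appr implies S finite etc}, which covers $]0,e_0]$), and the regularity statement you invoke for the envelope's Monge--Amp\`ere density is not proved in the paper. This is a minor boundary issue that does not affect the substance of the proposition as it is used downstream.
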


\begin{proof}
The finiteness follows from combining Lemma \ref{lem:energy appr implies S finite etc}
and Lemma \ref{lem:on polarized has affine cont and energy appr}
also using that $E$ is not bounded from above on $\mathcal{P}(X)$
(for example, $E(\mu)=\infty$ when $\mu$ charges a pluripolar subset,
such as any analytic subvariety of $X$). The monotonicity follows
from combining Lemma \ref{lem:mono of S} and Lemma \ref{lem:on polarized has affine cont and energy appr}.
Next, assume that $e\rightarrow e_{*}$ and denote by $\mu_{*}$ any
limit point of $\mu^{e}$ in the compact space $\mathcal{P}(X).$
Since $S(e)$ is finite and concave we have that for any $e\in[e_{1},e_{2}]$
$S(\mu^{e})\ge C,$ where $C=\min\{S(e_{1}),S(e_{2})\}.$ It thus
follows form the continuity result \ref{eq:energy entrop cont} that
$E(\mu^{e})\rightarrow E(\mu^{e_{*}}).$ Accordingly, to prove that
$S(e)$ is continuous at $e_{*}$ all that remains is to show that
$S_{v}(\mu_{*})\geq S(e_{*}).$ But since $S(\mu)$ is usc we have
$S(\mu_{*})\geq\limsup_{e\rightarrow e_{*}}S(\mu^{e})=S(e_{*})$ using
the continuity of $S(e)$ in the last equality. This argument also
shows that for any given $e_{*}$ there exists a maximum entropy measure,
namely $\mu_{*}$ in the previous argument. Its uniqueness when $e_{*}\leq e_{0}$
and the strict concavity of $S(e)$ on $[0,e_{0}]$ follows from Prop
\ref{prop:general results for S and F when E convex}, using that
$E$ is convex.
\end{proof}
\begin{thm}
\label{thm:minimizer satisf twisted}\cite{berm6,bbegz}Given $\beta\in\R$
any minimizer $\mu_{\beta}$ of $F(\mu)$ is the normalized volume
form of a Kähler metric $\omega_{\beta}$ solving the twisted KE equation
\ref{eq:twisted KE text}. 
\end{thm}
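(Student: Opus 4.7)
The plan is to identify $\mu_\beta$ as the normalized Monge--Amp\`ere measure of a potential $u_\beta$ that satisfies the complex Monge--Amp\`ere equation \ref{eq:ma eq with beta}, and then invoke formula \ref{eq:omega beta in terms of mu} to identify $\omega_\beta:=\theta+dd^c u_\beta$ as a solution to the twisted K\"ahler--Einstein equation \ref{eq:twisted KE text}.

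First I would extract the two structural consequences of minimality. Since $F(\beta)$ is finite (by Lemmas \ref{lem:energy appr implies S finite etc} and \ref{lem:on polarized has affine cont and energy appr}), any minimizer $\mu_\beta$ satisfies both $E(\mu_\beta)<\infty$ and $S(\mu_\beta)>-\infty$. The first condition yields, via the characterization of $E$ in \cite{bbgz}, a potential $u_\beta\in\mathcal{E}^1(X,\theta)$, unique up to an additive constant, with $\theta_{u_\beta}^n=\text{vol}(L)\,\mu_\beta$. The second says $\mu_\beta\ll dV$ with density $\mu_\beta/dV$ having integrable logarithm against $\mu_\beta$.

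Next I would compute the first variation of $F_\beta$ along affine segments $t\mapsto(1-t)\mu_\beta+t\nu$ with $\nu\in\mathcal{P}(X)$ chosen to have $E(\nu)<\infty$, $S(\nu)>-\infty$, and bounded density with respect to $dV$ (such $\nu$ form a dense set in the finite-energy subspace by the energy approximation property from Lemma \ref{lem:on polarized has affine cont and energy appr}). The critical analytic input is the subdifferential formula for $E$ on $\mathcal{P}(X)$: at $\mu=\theta_u^n/\text{vol}(L)$, the potential $-u_\mu$ (modulo constants) represents the subgradient, as established in \cite{bbgz} and made rigorous via the affine continuity of $E$. Combined with the standard derivative $-\tfrac{d}{dt}\big|_{0}S = \int \log(\mu_\beta/dV)(\nu-\mu_\beta)$, the first-order condition at the minimizer forces
\[
-\beta\, u_\beta + \log(\mu_\beta/dV) = c
\]
for some constant $c\in\R$, equivalently $\theta_{u_\beta}^n=\text{vol}(L)\,e^{\beta u_\beta+c}\,dV$. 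Absorbing $c$ into $u_\beta$ (possible for $\beta\neq 0$; when $\beta=0$ the equation degenerates to the Calabi--Yau equation \ref{eq:cy eq}), this is exactly \ref{eq:ma eq with beta}, so \ref{eq:omega beta in terms of mu} shows that $\omega_\beta$ solves \ref{eq:twisted KE text} as currents.

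Finally, to upgrade $\omega_\beta$ from a positive current to a bona fide K\"ahler metric on the regular part of $X$, I would invoke the regularity theorem Thm B.1 of \cite{bbegz}, valid when $L$ is big and nef (or, via a resolution of $\text{Proj}\,R(L)$, when the section ring is finitely generated; see Section \ref{subsec:Twisted}): this yields continuity of $u_\beta$ and smoothness on the intersection of the ample locus with $X\setminus\text{supp}(\Delta)$. The main obstacle is making the first-variation calculation rigorous in the singular setting: $u_\beta$ may be unbounded, $dV$ may carry divisorial singularities, and the map $t\mapsto E((1-t)\mu_\beta+t\nu)$ is only known to be finite and affine-continuous on a restricted class of test measures. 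The hard part is therefore choosing $\nu$ so that all integrals in the derivative are absolutely convergent, and interchanging limit and derivative via the affine continuity of $E$ together with the lower semicontinuity of $-S$.
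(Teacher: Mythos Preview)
Your plan is essentially the variational argument carried out in the cited references \cite{berm6,bbegz}, and the paper itself does not supply an independent proof of this theorem---it simply invokes those references. The key identity $dE_{|\mu}=-u_\mu$ that you need is exactly the one the paper later records as formula \eqref{eq:diff of E} (attributed to \cite[Prop~2.7]{berm6}), and the regularity step is precisely the discussion in Section~\ref{subsec:Twisted}.

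One remark on the execution: the differentiation of $E$ along affine segments in $\mathcal{P}(X)$ is more delicate than your write-up suggests, and in \cite{berm6,bbegz} the argument is organized somewhat differently. Rather than differentiating $E(\mu)$ directly, one exploits the Legendre-type duality between $E(\mu)$ and the primitive functional $\mathcal{E}(u)$ (formula \eqref{eq:primitive appears}): the minimizer of $F_\beta$ on $\mathcal{P}(X)$ corresponds, via this duality, to a maximizer of the dual functional $u\mapsto \mathcal{E}(u)-\beta^{-1}\log\int e^{\beta u}dV$ (or its appropriate variant) on $\mathcal{E}^1(X,\theta)$, and the Euler--Lagrange equation for the latter is precisely \eqref{eq:ma eq with beta}. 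This sidesteps the issue of justifying the exchange of limit and derivative that you flag at the end, since differentiability of $\mathcal{E}$ and of $u\mapsto\log\int e^{\beta u}dV$ along affine paths in the potential variable is more tractable than differentiability of $E$ along affine paths in the measure variable. Your approach via affine variations in $\mu$ can be made to work too (and is closer in spirit to the discussion in \cite{bbgz}), but the dual formulation is cleaner in the singular setting.
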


\section{The low energy region on complex manifolds}

\subsection{A refinement of Theorem \ref{thm:micro variational principle low energ intro} }

The following result is a slight refinement of Theorem \ref{thm:micro variational principle low energ intro},
stated in the introduction.
\begin{thm}
\label{thm:lower energy entro text}Given a number $e\in]0,e_{0}[$
the following holds: 
\begin{itemize}
\item $S$ coincides with its concave envelope $S^{**}$ at $e.$ 
\item There exists a unique maximizer $\mu^{e}$ of the entropy $S(\mu)$
on the subspace of all probability measure $\mu$ satisfying $E(\mu)=e$
or equivalently $E(\mu)\leq e.$ In particular, 
\[
S(\mu^{e})=S(e).
\]
\item The maximizer $\mu^{e}$ is the normalized volume form $\omega_{\beta}^{n}/\int_{X}\omega_{\beta}^{n}$
of the unique solution $\omega_{\beta}$ to the twisted Kähler-Einstein
equation \ref{eq:twisted KE text} such that $E(\omega_{\beta}^{n}/\int_{X}\omega_{\beta}^{n})=e.$ 
\item The corresponding function $e\mapsto\beta(e)$ gives a strictly decreasing
continuous map between $]0,e_{0})[$ and $]0,\infty[$ with the property
that
\[
\beta(e)=dS(e)/de
\]
and its inverse is given by the map $\beta\mapsto dF(\beta)/d\beta.$ 
\item In particular, $S(e)$ and $F(\beta)$ are $C^{1}-$smooth, strictly
increasing and strictly concave on $]0,e_{0}[$ and $]0,\infty[,$
respectively.
\end{itemize}
\end{thm}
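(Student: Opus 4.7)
The plan is to assemble the statement from the abstract machinery already set up in Section 2, verified in the complex--geometric setting by the lemmas just proven. First I would invoke Theorem \ref{thm:prop of energies} (convexity and lower semicontinuity of $E$) together with Lemma \ref{lem:on polarized has affine cont and energy appr} (energy approximation for $dV$) to confirm that the hypotheses of Proposition \ref{prop:general results for S and F when E convex} and Proposition \ref{Prop:properties when E convex and energy app} are satisfied. This immediately yields: for every $\beta>0$ the twisted free energy $F_\beta$ has a unique minimizer $\mu_\beta\in\mathcal{P}(X)$; $F\in C^1(]0,\infty[)$ with $F'(\beta)=E(\mu_\beta)$; for each $e\in\,]0,e_0[$ the functional $S(\mu)$ has a unique maximizer $\mu^e$ on $\{E(\mu)=e\}$, and on this interval $S=F^*$ is strictly concave and continuous.

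Next I would identify $\mu_\beta$ with the normalized volume form of the twisted K\"ahler--Einstein solution. This is Theorem \ref{thm:minimizer satisf twisted}: every minimizer of $F_\beta$ is $\omega_\beta^n/\int_X\omega_\beta^n$ for the (unique, when $\beta>0$) solution $\omega_\beta$ of \eqref{eq:twisted KE text}. Combined with macrostate equivalence (Lemma \ref{lemma:macrostate equivalence}) and $\mu^e$ being the unique entropy maximizer, this gives the identification $\mu^e=\omega_{\beta(e)}^n/\int_X\omega_{\beta(e)}^n$, where $\beta(e)$ is determined by $F'(\beta(e))=e$.

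To produce the bijection $e\leftrightarrow\beta$, I would use Proposition \ref{Prop:properties when E convex and energy app}, which gives $\partial F(]0,\infty[)=\,]e_{\min},e_0[\,=\,]0,e_0[$. Since $F\in C^1(]0,\infty[)$ is strictly concave (strict concavity of $F$ follows from Lemma \ref{lem:diff implies dual strc conc} applied to $S$ on $]0,e_0[$, or directly from the strict concavity of $S$ via $F=S^*$), the derivative $F'\colon\,]0,\infty[\to\,]0,e_0[$ is a strictly decreasing continuous bijection, and its inverse is precisely $e\mapsto\beta(e)$. The Legendre duality \eqref{eq:y in gradient iff x in gradient} then gives $\beta(e)=dS(e)/de$. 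Continuity/$C^1$ and strict monotonicity of both $S$ and $F$ on the relevant intervals follow. The equivalence ``$E(\mu)=e$ or $E(\mu)\le e$'' in the sup defining $S(e)$ is a consequence of Lemma \ref{lem:mono of S}: since $E$ is convex, $S$ is strictly increasing on $]0,e_0[$, so the sup over $\{E(\mu)\le e\}$ is attained on $\{E(\mu)=e\}$.

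The one point that requires a little care, and which I regard as the main (mild) obstacle, is the verification that $\partial F(]0,\infty[)$ covers the whole open interval $]0,e_0[$, i.e.\ that $F'(\beta)\to e_0$ as $\beta\downarrow 0$ and $F'(\beta)\to 0$ as $\beta\to\infty$. The first limit uses lower semicontinuity of $E$ and Jensen's inequality (which makes $\mu_0$ the unique minimizer of $F_0$), and the second relies crucially on the energy approximation property for $dV$ established in Lemma \ref{lem:on polarized has affine cont and energy appr}; without it one cannot reach $e_{\min}=0$ in the limit. Once this surjectivity is in place, the entire theorem follows by assembling the pieces above.
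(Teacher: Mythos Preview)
Your assembly of the abstract machinery is correct as far as it goes: Propositions \ref{prop:general results for S and F when E convex} and \ref{Prop:properties when E convex and energy app}, together with Theorem \ref{thm:prop of energies} and Lemma \ref{lem:on polarized has affine cont and energy appr}, do give that $F\in C^1(]0,\infty[)$ with $F'(\beta)=E(\mu_\beta)$, that $\partial F(]0,\infty[)=\,]0,e_0[$, and that $S$ is strictly concave on $]0,e_0[$. The identification of $\mu_\beta$ via Theorem \ref{thm:minimizer satisf twisted} and the equivalence of the two constraints via Lemma \ref{lem:mono of S} are also fine.

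The gap is in your claim that $F$ is \emph{strictly} concave on $]0,\infty[$. Both justifications you offer fail. Applying Lemma \ref{lem:diff implies dual strc conc} to $S$ would require $S$ to be \emph{differentiable} on $]0,e_0[$, which you have not established (strict concavity does not imply differentiability). And strict concavity of $S$ does not transfer to $F=S^*$: for instance $S(e)=-|e|-e^2$ is strictly concave, yet $S^*$ is constant on $[-1,1]$. Without strict concavity of $F$ you cannot conclude that $F'$ is injective, so $e\mapsto\beta(e)$ is not well defined as a single-valued map, $S$ need not be $C^1$, and the claimed bijection between $]0,e_0[$ and $]0,\infty[$ breaks down.

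The paper closes precisely this gap with Lemma \ref{lem:e strict decreasing for pos beta}, whose proof uses the complex-geometric structure in an essential way: if $e(\beta_0)=e(\beta_1)$ one deduces $\mu_{\beta_0}=\mu_{\beta_1}$, and then the Monge--Amp\`ere equation \eqref{eq:ma eq with beta} at two different values of $\beta$ forces the potential $u$ to be constant, hence $\theta\ge 0$ and $\theta^n=CdV$, the degenerate case excluded in Section \ref{subsec:Setup intro}. In other words, the non-degeneracy hypothesis on $(\theta,dV)$ is exactly what makes $F$ strictly concave; no purely abstract convex-analytic argument can supply this step, since by Remark \ref{rem:e beta constant} the function $e(\beta)$ is genuinely constant in the degenerate case.
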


The proof of the previous theorem follows directly from combining
Prop \ref{prop:general prop of entropy and free on complex} with
the following 
\begin{lem}
\label{lem:e strict decreasing for pos beta}Assume that it is not
the case that $\theta\geq0$ and $\theta^{n}=CdV$ for some constant
C. Then $e(\beta)$ is strictly decreasing on $]0,e_{0}[$ 
\end{lem}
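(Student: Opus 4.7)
The plan is to argue by contradiction, using strict convexity of the entropy together with the uniqueness of solutions to the complex Monge-Amp\`ere equation. Suppose the map $\beta\mapsto e(\beta):=F'(\beta)$ is not strictly decreasing on $]0,\infty[$. Since $F$ is concave with continuous derivative (Prop.~\ref{prop:general prop of entropy and free on complex}), this forces $F'$ to be constant on some open subinterval $]\beta_0,\beta_1[\subset ]0,\infty[$. Writing this common value as $e_*$, Lemma \ref{lem:F cont diff} gives $E(\mu_\beta)=e_*$ for all $\beta\in]\beta_0,\beta_1[$, where $\mu_\beta$ is the unique minimizer of $F_\beta$ furnished by Prop.~\ref{prop:general results for S and F when E convex}. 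By Lemma~\ref{lemma:macrostate equivalence} each such $\mu_\beta$ maximizes $S(\mu)$ subject to $E(\mu)=e_*$, and Prop.~\ref{prop:general results for S and F when E convex} ensures uniqueness of this maximum-entropy measure (since $E$ is convex and $-S$ strictly convex). That constrained problem no longer depends on $\beta$, so $\mu_\beta=\mu_*$ is one and the same measure for every $\beta\in]\beta_0,\beta_1[$.

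Next I use Theorem~\ref{thm:minimizer satisf twisted} to upgrade constancy of $\mu_\beta$ to constancy of $\omega_\beta$. By that theorem $\mu_\beta$ is the normalized volume form of a solution $\omega_\beta=\theta+dd^c u_\beta$ of the twisted K\"ahler-Einstein equation \eqref{eq:twisted KE text}; constancy of $\mu_\beta$ means $\omega_\beta^n=\mathrm{vol}(L)\,\mu_*$ is independent of $\beta$ on $]\beta_0,\beta_1[$. Uniqueness (modulo an additive constant) of the potential solving the Calabi-Yau equation \eqref{eq:MA eq with mu} in $\mathcal{E}^1(X,\theta)$ then implies that $\omega_\beta=\omega$ is a single fixed positive $(1,1)$-current throughout this interval.

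Now pick two distinct values $\beta_0'<\beta_1'$ in $]\beta_0,\beta_1[$. Writing \eqref{eq:twisted KE text} for each of them with the common $\omega$ and subtracting yields
\[
(\beta_0'-\beta_1')\,\omega \;=\; (\beta_0'-\beta_1')\,\theta,
\]
whence $\omega=\theta$. In particular $\theta=\omega\ge 0$. Substituting back into \eqref{eq:twisted KE text} gives $\mathrm{Ric}\,\theta=\mathrm{Ric}\,dV$, which by the definition of Ricci curvature (cf.\ \eqref{eq:formula for Ricci} and Section \ref{subsec:Preliminaries polarized}) amounts to the current identity $dd^c\log(\theta^n/dV)=0$ on the open locus where $\theta^n$ and $dV$ are smooth. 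The density $\log(\theta^n/dV)$ lies in $L^1(X)$ by the assumptions on $(\theta,dV)$ and the regularity theory quoted in Section~\ref{subsec:Twisted}, and $X$ is K\"ahler (being projective with a big line bundle, and after passing to a resolution as in Section~\ref{subsec:Singular_complex_varieties} if $X$ is singular). Any pluriharmonic $L^1$ function on a compact K\"ahler manifold is constant, so $\theta^n=C\,dV$ for some constant $C>0$. Together with $\theta\ge 0$ this contradicts the standing hypothesis, completing the proof.

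The main technical obstacle is the last step: one must justify the current-theoretic Ricci manipulation and the pluriharmonic-implies-constant conclusion in the singular/non-smooth regime (divisorial singularities of $dV$, possibly klt singularities of $X$, and $L$ only big). Once the resolution of singularities described in Section~\ref{subsec:Singular_complex_varieties} reduces everything to a compact K\"ahler manifold and the $\mathcal{E}^1$-uniqueness of the Monge-Amp\`ere equation is invoked, the remainder of the argument is purely formal.
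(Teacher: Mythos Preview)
Your argument is correct and follows the same overall strategy as the paper: assume $e(\beta)$ is constant on an interval, deduce that the minimizers $\mu_\beta$ coincide there, pass via Monge--Amp\`ere uniqueness to a single potential $u$, and then use the twisted K\"ahler--Einstein equation at two different values of $\beta$ to force $\theta\ge 0$ and $\theta^n=CdV$. The paper's execution of the final step is, however, more direct and avoids the technical obstacle you flag. Rather than subtracting the Ricci form \eqref{eq:twisted KE text} and then arguing that $\mathrm{Ric}\,\theta=\mathrm{Ric}\,dV$ forces $\theta^n=CdV$ via a pluriharmonicity argument, the paper works with the Monge--Amp\`ere form \eqref{eq:ma eq with beta}: since the same $u$ solves $\theta_u^n/\mathrm{vol}(L)=e^{\beta_0(u+C_0)}dV=e^{\beta_1(u+C_1)}dV$, one gets $(\beta_0-\beta_1)u=\text{const}$ $dV$-a.e., hence everywhere, so $u$ is constant; then $\theta\ge 0$ (because $u\in\mathrm{PSH}(X,\theta)$) and $\theta^n=CdV$ drop out of \eqref{eq:ma eq with beta} immediately. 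You could also shortcut your own argument this way: once you have $\omega=\theta$, i.e.\ $dd^cu=0$, the potential is constant and \eqref{eq:ma eq with beta} gives $\theta^n=CdV$ without any Ricci manipulation. One minor point: your appeal to Lemma~\ref{lemma:macrostate equivalence} is in the wrong direction (that lemma goes from max-entropy to free-energy minimizer); what you actually use is the trivial observation that a minimizer of $F_\beta$ with energy $e_*$ maximizes $S$ on $\{E=e_*\}$, which follows straight from $F_\beta(\mu)=\beta E(\mu)-S(\mu)$.
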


\begin{proof}
Assume that $e(\beta_{0})=e(\beta_{1})$ with $\beta_{0}<\beta_{1}.$
By Prop \ref{prop:general prop of entropy and free on complex} it
follows that $f'(\beta)=e(\beta_{0})$ on $]\beta_{0},\beta_{1}[.$
Hence, $F(\beta_{1})-F(\beta_{0})=e(\beta_{0})(\beta_{1}-\beta_{0}).$
Since $F(\beta)=\beta e(\beta)-S(\mu_{\beta})$ it follows that $S(\mu_{\beta_{0}})=S(\mu_{\beta_{1}}).$
This means, since $e(\beta)=E(\mu_{\beta}),$ that $\mu_{\beta_{0}}$
and $\mu_{\beta_{1}}$ both minimize $F_{\beta_{1}}.$ But, since
$F_{\beta}(\mu)$ has a unique minimizer (by Prop \ref{prop:general results for S and F when E convex})
this forces $\mu_{\beta_{0}}=\mu_{\beta_{1}}$ It thus follows from
the uniqueness mod $\R$ of solutions to the complex Monge-Ampère
equation \ref{eq:MA eq with mu} that we can express $\mu_{\beta_{0}}=\mu_{\beta_{1}}=(dd^{c}u+\theta)^{n}/\text{\text{vol\ensuremath{(L)}}}$
for some $u\in\text{PSH}(X,\theta).$ But then Theorem \ref{thm:minimizer satisf twisted}
yields, by equation \ref{eq:ma eq with beta}, 
\[
e^{\beta_{0}(u+C_{0})}dV=e^{\beta_{1}(u+C_{1})}dV
\]
 for some constants $C_{0}$ and $C_{1}.$ Hence, $(\beta_{0}-\beta_{1})u=C_{1}-C_{0}$
a.e wrt $dV,$ which (by the assumption on $dV)$ implies $(\beta_{0}-\beta_{1})u=C_{1}-C_{0}$
on all of $X.$ Thus, $u$ is constant on $X.$ But then the equation
\ref{eq:ma eq with beta} forces $\theta^{n}=CdV$ for some constant
$C.$ This is precisely the case which has been excluded in the setup
introduced in Section \ref{subsec:Setup intro}. 
\end{proof}
\begin{rem}
\label{rem:e beta constant}If $\theta\geq0$ and $\theta^{n}=CdV$
for some constant $C,$ then $e(\beta)$ is constant on $]0,e_{0}[.$
Indeed, then $dV/\int_{X}dV$ minimizes both $-S(\mu)$ and $E(\mu)$
on $\mathcal{P}(X).$ In particular, it minimizes $F_{\beta}(\mu)$
for any $\beta>0,$ forcing $e(\beta)$ to be constant.
\end{rem}

\subsubsection{\label{subsec:Real-analyticity-when beta pos} Real-analyticity}

Consider the case when $X$ is non-singular, $L$ is ample and $dV$
is smooth. Then we have the following regularity result refining the
$C^{1}-$regularity in the previous theorem.
\begin{prop}
The functions $F(\beta)$ and $S(e)$ are real-analytic on $]0,\infty[$
and $]0,E_{0}[,$ respectively.
\end{prop}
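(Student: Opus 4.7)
The plan is to prove real-analyticity of $F(\beta)$ via the analytic implicit function theorem applied to the complex Monge--Amp\`ere equation defining the minimizer, and then to transfer real-analyticity to $S(e)$ by inverting the Legendre transform.

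First I would set up the operator $\Phi(u,\beta):=\theta_u^n/\mathrm{vol}(L) - e^{\beta u}dV$ on a neighbourhood $\mathcal U \times\,]0,\infty[$ in $C^{k,\alpha}(X)\times\R$, where $\mathcal U$ consists of smooth strictly $\theta$-plurisubharmonic functions (available because $L$ is ample and $u_\beta\in C^\infty(X)$ by standard Aubin--Yau regularity for smooth $dV$). This $\Phi$ is real-analytic jointly in $(u,\beta)$, being built from polynomial expressions in the second derivatives of $u$ together with the entire function $(u,\beta)\mapsto e^{\beta u}$. Linearising in $u$ at a solution $(u_\beta,\beta)$ and dividing through by $\mu_\beta$, one obtains the elliptic operator $\Delta_{\omega_\beta}-\beta$ on $C^{k,\alpha}(X)$, with $\omega_\beta=\theta_{u_\beta}$ and $\Delta_{\omega_\beta}$ the (non-positive) Laplacian of $\omega_\beta$. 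Since the eigenvalues of $\Delta_{\omega_\beta}$ are all $\le 0$, for $\beta>0$ the operator $\Delta_{\omega_\beta}-\beta$ has strictly negative spectrum and is therefore an isomorphism of $C^{k,\alpha}(X)$ onto $C^{k-2,\alpha}(X)$. The analytic implicit function theorem in Banach spaces then yields a real-analytic lift $\beta\mapsto u_\beta\in C^{k,\alpha}(X)$, which by uniqueness of solutions to the Monge--Amp\`ere equation for $\beta\neq 0$ must coincide with the minimizer. Since $E(\mu_\beta)$ and $S(\mu_\beta)$ are, by the formulas recalled in Section~\ref{subsec:Relations-to-the standard}, integrals of real-analytic expressions in $u_\beta$ and its derivatives against $\theta_{u_\beta}^n$ and $dV$, the free energy $F(\beta)=\beta E(\mu_\beta)-S(\mu_\beta)$ is real-analytic on $]0,\infty[$.

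For real-analyticity of $S(e)$ on $]0,e_0[$ I would invoke Theorem~\ref{thm:lower energy entro text}, which gives the Legendre duality $S(e)=\beta(e)\,e-F(\beta(e))$ with $\beta(e)$ the inverse of the continuous strictly decreasing bijection $e(\beta)=F'(\beta)\colon\;]0,\infty[\to\,]0,e_0[$. Applying the real-analytic inverse function theorem to $e(\beta)$ yields real-analyticity of $\beta(e)$, and hence of $S(e)$, provided the strict inequality $F''(\beta)<0$ holds pointwise on $]0,\infty[$. This is the main obstacle: Lemma~\ref{lem:e strict decreasing for pos beta} only establishes strict monotonicity of $e(\beta)$, which for a real-analytic function is \emph{a priori} compatible with isolated critical points. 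To rule them out, I would differentiate the envelope identity $F'(\beta)=E(\mu_\beta)$, giving $F''(\beta)=dE(\mu_\beta)/d\beta$. Writing $v_\beta:=du_\beta/d\beta$ and differentiating the Monge--Amp\`ere equation shows that $v_\beta$ solves $(\Delta_{\omega_\beta}-\beta)v_\beta=u_\beta+c(\beta)$ for a constant $c(\beta)$; pairing with the first variation of $(I-J)$ at $u_\beta$ then identifies $F''(\beta)$ with a quadratic form in $u_\beta$ (modulo constants) against the negative-definite self-adjoint operator $(\Delta_{\omega_\beta}-\beta)^{-1}$. This form is strictly negative unless $u_\beta$ is constant, which is precisely excluded by the standing assumption that $\theta^n$ is not a constant multiple of $dV$ (the same assumption driving Lemma~\ref{lem:e strict decreasing for pos beta}). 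With $F''(\beta)<0$ in hand, the analytic inverse function theorem applies and $S(e)$ is real-analytic on $]0,e_0[$.
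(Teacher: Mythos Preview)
Your approach coincides with the paper's: analytic implicit function theorem in Banach spaces for $\beta\mapsto u_\beta$, then invert the Legendre transform to pass from $F$ to $S$. The paper's proof is terser and does not explicitly justify $F''(\beta)\neq 0$, which is needed for the analytic inverse function theorem; you rightly flag this gap and supply the argument. One small correction to your sketch: the quadratic form you obtain for $F''(\beta)$ is in $u_\beta$ against $\Delta_{\omega_\beta}(\Delta_{\omega_\beta}-\beta)^{-1}$ rather than $(\Delta_{\omega_\beta}-\beta)^{-1}$ alone---equivalently, the Bando--Mabuchi formula quoted after the analogous Fano proposition gives $F''(\beta)=-c\int_X(\Delta_{\omega_\beta}-\beta)\dot u_\beta\cdot\Delta_{\omega_\beta}\dot u_\beta\,\omega_\beta^n$. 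With this correction your conclusion stands: the form is strictly negative unless $\dot u_\beta$ (equivalently $u_\beta$) is constant, which forces $\theta^n=C\,dV$, precisely the excluded case.
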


\begin{proof}
The unique solution $u_{\beta}$ to equation \ref{eq:ma eq with beta}
is real-analytic in $\beta,$ by the argument using the implicit function
theorem in Banach spaces in the proof of \cite[Thm 7.9]{berm11}.
As a consequence, so is $F'(\beta).$ Next, by the previous theorem,
the derivative $S'(e)$ is invertible and may be expressed as the
inverse of the invertible map $\beta\mapsto F'(\beta).$ It follows
that $S'(e)$ is real-analytic in $e.$ Hence, so is $S(e).$ 
\end{proof}
The previous proposition holds, more generally, when $dV$ has divisorial
singularities along a divisor $\Delta$ with simple normal crossings.
Indeed, one can then instead use the implicit function theorem in
an appropriate Banach space (a Hölder ``edge'' space), using the
results announced in \cite{m-r} (see also \cite{do2,jmr} for the
case when the components of $\Delta$ do not intersect). 

\subsection{Proof of Theorem \ref{thm:micro variational principle low energ intro} }

First consider the case when $dV$ is a smooth volume form. Then  Theorem
\ref{thm:LDP for micro intro} follows from Theorem \ref{thm:LDP for quasi super}.
Indeed, $E^{(N)}$ Gamma converges towards $E,$ which is convex and
$dV$ has the energy approximation property (by Lemma \ref{lem:on polarized has affine cont and energy appr}).
To prove the general case it is, by the proof of Theorem \ref{thm:LDP for quasi super},
enough to know that the LDP for the corresponding Gibbs measures hold
for $\beta>0.$ But this is the content of \cite[Thm 5.1]{berm8 comma 5}.

\subsection{Proof of Cor \ref{cor:conv towards S etc in low energ intro}}

The convergence in probability in Theorem \ref{thm:micro variational principle low energ intro}
implies, in particular, the convergence of the expectations of $\delta_{N}$,
which amounts to the convergence of the first marginals in formula
\ref{eq:conv of first marginal in Cor}. 

\section{The high energy region on Fano varieties}

In this section we consider the setup of a Fano variety, introduced
in Section \ref{subsec:The-high-energy intro}. In particular, $L=-K_{X}$
and $\text{Ric }dV=\theta\geq0.$
\begin{thm}
\label{thm:unique on Fano}(Uniqueness) For any $\beta\in]-1,0[$
a solution to Aubin's equation \ref{eq:Aubin intro} - if it exists
- is unique. Moreover, for $\beta=-1,$ i.e. when $\omega$ is a \emph{Kähler-Einstein
(KE) metric, }the uniqueness holds modulo the action of the group
$\text{Aut }(X)_{0}.$
\end{thm}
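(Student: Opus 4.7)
The plan is to reformulate uniqueness as a (strict) convexity statement for the twisted Mabuchi functional along weak geodesics in $\mathcal{E}^1(X,\theta)$, and then invoke Berndtsson's subharmonicity theorem in the finite-energy formulation of [bbegz].

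By Theorem \ref{thm:minimizer satisf twisted} together with the identification $F_\beta(\mu) = \mathcal{M}_\beta(u)$ recalled in Section \ref{subsec:Relations-to-the standard}, any solution $\omega_\beta = \theta + dd^c u_\beta$ of Aubin's equation \ref{eq:Aubin intro} arises from a potential $u_\beta \in \mathcal{E}^1(X,\theta)$ that minimizes the twisted Mabuchi functional $\mathcal{M}_\beta$. Hence uniqueness of $\omega_\beta$ reduces to showing that, modulo $\R$, the minimizer $u_\beta$ is unique for $\beta \in (-1, 0)$, and unique modulo $\R$ and the action of $\text{Aut}(X)_0$ for $\beta = -1$.

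Given two normalized minimizers $u_0, u_1 \in \mathcal{E}^1(X,\theta)$, I would connect them by the Mabuchi weak geodesic $u_t$, $t \in [0,1]$, obtained as the upper envelope of all $\theta$-psh subgeodesics on $X \times \{0 < \operatorname{Re}\tau < 1\}$ with the prescribed boundary values. The key input is the Berndtsson-type convexity theorem, in its finite-energy version [bern, bbegz]: under the standing assumption $\theta \geq 0$ and for any $\beta \geq -1$, the map $t \mapsto \mathcal{M}_\beta(u_t)$ is convex on $[0,1]$, and it is \emph{strictly} convex unless $u_t$ is generated by the pull-back of a holomorphic vector field on $X$. Since $u_0$ and $u_1$ are both minimizers, this function of $t$ must be constant, hence affine. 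For $\beta \in (-1, 0)$ the strict convexity of the entropy contribution (which dominates as long as $\beta > -1$) forces $u_t$ to be trivial, i.e.\ $u_1 - u_0 \in \R$, and hence $\omega_\beta$ is unique. For $\beta = -1$ strict convexity degenerates and the rigidity part of Berndtsson's theorem gives instead that $u_t$ arises from a one-parameter subgroup of $\text{Aut}(X)_0$, yielding uniqueness up to that group.

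The principal obstacle is that the classical Bando-Mabuchi argument requires smooth geodesics and a smooth holomorphic-flow analysis, whereas in the present setting $X$ is only assumed Fano (possibly klt), $\theta$ is only semi-positive, and weak geodesics in $\mathcal{E}^1(X,\theta)$ are merely $C^{1,1}$ in a distributional sense. The technical heart of the argument therefore lies in replacing the smooth convexity/rigidity step by the finite-energy versions of [bern, bbegz] and in verifying that these apply to the specific data $(\theta, dV)$ of Section \ref{subsec:The-high-energy intro}; once this is done, the two uniqueness statements follow by the dichotomy between strict convexity and the holomorphic-flow case.
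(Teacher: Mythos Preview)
Your overall framework---reduce to minimizers of the twisted Mabuchi functional, connect them by a weak geodesic, and invoke Berndtsson convexity/rigidity in its finite-energy form from \cite{bern,bbegz}---is exactly the route the paper takes (it simply cites \cite{b-m,bern,bbegz,c-d-s} for this reduction). But your handling of the case $\beta\in(-1,0)$ contains a genuine gap.

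You claim that for $\beta\in(-1,0)$ ``the strict convexity of the entropy contribution (which dominates as long as $\beta>-1$) forces $u_t$ to be trivial.'' This is not correct. Along a Mabuchi geodesic the entropy term in $\mathcal{M}_\beta$ is convex by Berndtsson, but it is \emph{not} strictly convex: it is affine precisely when the geodesic is generated by a holomorphic vector field, and this dichotomy is the same for every $\beta\in[-1,0)$. (You may be confusing this with the strict convexity of $-S(\mu)$ along \emph{affine} paths in $\mathcal{P}(X)$, which is irrelevant here since geodesics in potential space do not correspond to affine paths of measures.) So even for $\beta\in(-1,0)$ you cannot avoid the holomorphic-flow alternative.

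The paper closes the gap by an extra step you are missing. Once Berndtsson rigidity produces a flow $g_t$ with $\omega^t=g_t^*\omega^0$, one applies $g_t^*$ to Aubin's equation \ref{eq:Aubin intro}. Since $(1+\beta)>0$, comparing the two equations for $\omega^0$ and $g_t^*\omega^0$ forces $g_t^*\theta=\theta$. The paper then invokes \cite[Thm~1.2]{berm9a} (valid for any positive klt current $\theta$, hence in the singular Fano setting with $\theta\ge 0$): a holomorphic vector field preserving such a $\theta$ is trivial unless $\theta=0$. Since $[\theta]=c_1(-K_X)\neq 0$, one concludes $g_t=\mathrm{id}$, giving uniqueness for $\beta\in(-1,0)$. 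For $\beta=-1$ the twist disappears, the constraint $g_t^*\theta=\theta$ is no longer forced, and one is left with uniqueness modulo $\mathrm{Aut}(X)_0$.
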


\begin{proof}
When $X$ (and $\theta$ is smooth) this is shown in \cite{b-m} and
\cite{bern} and when $X$ is singular and $\beta=-1$ this is shown
in \cite{bbegz} and \cite[III]{c-d-s}, building on \cite{bbegz}.
Exactly the same proofs show, when $\beta\in]-1,0[,$ that any two
solution $\omega^{0}$ and $\omega^{1}$ can be connected by a curve
$\omega^{t}$ of solutions, $\omega^{t}=g_{t}^{*}\omega_{0},$ where
$g_{t}$ is the flow of a holomorphic vector field on $X.$ Hence,
applying $g_{t}^{*}$ to Aubin's equation for $\omega^{0}$ forces
$g_{t}^{*}\theta=\theta.$ But, by \cite[Thm 1.2]{berm9a} (which
applies more generally when $\theta$ is a positive klt current, i.e.
the local potentials of the pullback of $\theta$ to a resolution
of singularities of $X$ are in $L_{\text{loc}}^{p}$ for some $p>1$)
this implies that $\theta=0$ or that $g_{t}$ is the identity for
all $t.$ 
\end{proof}
Combining the previous theorem with Theorem \ref{thm:minimizer satisf twisted}
reveals that $F_{\beta}$ has at unique minimizer $\mu_{\beta}$ when
$\beta>-R(X).$ Its energy is denoted by $e(\beta).$
\begin{lem}
\label{lem: e beta contin etc on Fano}$e(\beta)$ is continuous on
$]-R(X),\infty[$ and if $X$ does not admit a KE metric, then $e(\beta)\rightarrow\infty$
as $\beta$ decreases to $-R(X).$ Moreover, if $X$ is admits a KE
metric, then $R(X)=-1$ and $e(\beta)\rightarrow e_{c}$ as $e$ increases
towards $e_{c}.$ 
\end{lem}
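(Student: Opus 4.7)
The plan rests on three ingredients: the concavity of the free energy $F(\beta) := \inf_\mu F_\beta(\mu)$, the uniqueness of its minimizer on Fano varieties (Theorem \ref{thm:unique on Fano}), and the continuity of the pluricomplex energy on entropy sublevel sets (Theorem \ref{thm:prop of energies}, applicable since $L=-K_X$ is ample, hence nef). The central observation is that $E(\mu_\beta)$ lies in the concave super-differential $\partial F(\beta)$: for every $\beta'$,
\[
F(\beta') \leq F_{\beta'}(\mu_\beta) = F(\beta) + (\beta'-\beta)\,E(\mu_\beta).
\]
Since $F$ is concave (being an infimum of affine functions of $\beta$), monotonicity of $\partial F$ yields a uniform bound on $E(\mu_\beta)$ over every compact subinterval of $]-R(X),\infty[$, and then $S(\mu_\beta) = \beta E(\mu_\beta) - F(\beta)$ is uniformly bounded as well.

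For the \emph{continuity} claim, I would take $\beta_j\to\beta_*\in{}]-R(X),\infty[$ and extract a weak subsequential limit $\mu_*=\lim\mu_{\beta_j}$ in $\mathcal{P}(X)$. Writing $F_{\beta_*}(\mu_{\beta_j}) = F(\beta_j) + (\beta_*-\beta_j)E(\mu_{\beta_j})$ and using the uniform bound on $E(\mu_{\beta_j})$ together with continuity of the concave function $F$, the right-hand side tends to $F(\beta_*)$; lower semi-continuity of $F_{\beta_*}$ then gives $F_{\beta_*}(\mu_*)\leq F(\beta_*)$, so $\mu_*$ minimizes $F_{\beta_*}$ and hence $\mu_*=\mu_{\beta_*}$ by Theorem \ref{thm:unique on Fano}. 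The uniform entropy bound then allows Theorem \ref{thm:prop of energies} to be invoked to pass to the limit, yielding $e(\beta_j)\to e(\beta_*)$.

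For the \emph{blow-up} in the non-KE case I would argue by contradiction: if $e(\beta_j)$ stayed bounded along some $\beta_j\searrow -R(X)$, the same reasoning would produce a weak limit $\mu_*$ minimizing $F_{-R(X)}$, and by Theorem \ref{thm:minimizer satisf twisted} this $\mu_*$ would be the volume form of a solution to Aubin's equation at $\beta=-R(X)$. When $R(X)=1$ the solution would be a KE metric, contradicting the hypothesis; when $R(X)<1$, one must use that the supremum defining $R(X)$ is not attained, which follows from the openness of the solvability set (Aubin's method of continuity in the smooth case, and the coercivity/stability threshold characterizations of \cite{bbj,c-r-z,li1} in general). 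I expect this non-attainment step to be the main technical hurdle.

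For the \emph{KE case}, the existence of $\omega_{\mathrm{KE}}$ places $-1$ in the solvability set, forcing $R(X)=1$. For any KE measure $\mu_{\mathrm{KE}}$, the concavity inequality $F(-1)\leq F(\beta)-(\beta+1)E(\mu_\beta)$ (from $E(\mu_\beta)\in\partial F(\beta)$) combined with the minimization inequality $F(\beta)\leq F_\beta(\mu_{\mathrm{KE}})=F(-1)+(\beta+1)E(\mu_{\mathrm{KE}})$ yields $E(\mu_\beta)\leq E(\mu_{\mathrm{KE}})$ for $\beta>-1$; taking the infimum over KE measures gives $e(\beta)\leq e_c$. Conversely, any weak subsequential limit of $\mu_\beta$ as $\beta\searrow -1$ minimizes $F_{-1}$, hence is a KE measure of energy $\geq e_c$, so lower semi-continuity of $E$ gives $\liminf_{\beta\searrow -1} e(\beta)\geq e_c$, completing the proof.
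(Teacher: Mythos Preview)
Your approach is essentially the paper's: continuity via uniform entropy bounds and Theorem~\ref{thm:prop of energies}, the inequality $E(\mu_\beta)\leq E(\mu_{\mathrm{KE}})$ from the minimizing properties of $\mu_\beta$ and $\mu_{\mathrm{KE}}$, and identification of weak limit points as minimizers of the limiting free-energy functional. One point to tighten: you invoke ``lower semi-continuity of $F_{\beta_*}$'' for $\beta_*<0$, but $F_{\beta_*}=\beta_* E-S$ is \emph{not} lsc on $\mathcal P(X)$ when $\beta_*<0$, since $\beta_* E$ is then usc. The repair is already in your hands---your uniform entropy bound places every $\mu_{\beta_j}$ (and, by upper semi-continuity of $S$, also the weak limit $\mu_*$) in the closed set $\{S\geq -C\}$, on which $E$ is \emph{continuous} by Theorem~\ref{thm:prop of energies}; thus $E(\mu_{\beta_j})\to E(\mu_*)$ outright, and $F_{\beta_*}(\mu_*)\leq\liminf F_{\beta_*}(\mu_{\beta_j})$ then follows from the upper semi-continuity of $S$ alone. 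The same remark applies to the step ``any weak subsequential limit as $\beta\searrow-1$ minimizes $F_{-1}$'' in the KE case. For the blow-up when $X$ admits no KE metric, the paper simply defers to \cite[Cor~1.3]{ber10}; your contradiction argument is the natural variational route and you have correctly isolated the one nontrivial ingredient (openness of the solvability interval, equivalently non-attainment of the supremum defining $R(X)$ when $R(X)<1$).
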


\begin{proof}
The continuity of $e(\beta)$ on $]-R(X),0]$ follows from the continuity
in Theorem \ref{thm:prop of energies}, precisely as in the proof
of Prop \ref{prop:general prop of entropy and free on complex}. When
$X$ does not admit a KE metric it is well-known that $e(\beta)\rightarrow\infty$
- a variational proof that applies in the singular setup is given
in the proof of \cite[Cor 1.3]{ber10}. Next, assume that $X$ admits
a KE metric and denote by $\mu_{\text{KE }}$the corresponding normalized
volume form. By \cite[Thm 4.8iii]{bbegz} $\mu_{\text{KE}}$ minimizes
$F_{-1}(\mu).$ It follows (as is well-known) that $R(X)=-1.$ Indeed,
given $\beta=-1+\epsilon$ for $\epsilon>0$ we get $F_{-1+\epsilon}\geq\epsilon E+F(\mu_{\text{KE}}).$
The existence of a minimizer of $F_{-1+\epsilon}$ then follows from
applying the continuity in Theorem \ref{thm:prop of energies} along
any sequence $\mu_{j}$ such that $F_{-1+\epsilon}(\mu_{j})$ converges
towards the inf of $F_{-1+\epsilon}.$ Any limit point of $\mu_{j}$
is thus a minimizer, which, by Theorem \ref{thm:minimizer satisf twisted},
solves the twisted KE with $\beta=-1+\epsilon,$ as desired. To prove
the last statement of the lemma first observe that, for any given
normalized KE volume form $\mu_{\text{KE}},$ 
\begin{equation}
E(\mu_{\beta})\leq E(\mu_{\text{KE }})\label{eq:E mu beta smaller than E mu KE}
\end{equation}
 Indeed, since $\mu_{\beta}$ minimizes $F_{\beta}$ we have (with
$\epsilon$ such that $\beta=-1+\epsilon$)
\[
F_{\beta}(\mu_{\beta}):=F_{-1}(\mu_{\beta})+\epsilon E(\mu_{\beta})\leq F_{\beta}(\mu_{\text{KE}}):=F_{-1}(\mu_{\text{KE}})+\epsilon E(\mu_{\text{KE}}).
\]
 But $F_{-1}(\mu_{\text{KE}})\leq F_{-1}(\mu_{\beta})$ (since $\mu_{\text{KE}}$
minimizes $F_{-1}(\mu)),$ which proves the desired inequality \ref{eq:E mu beta smaller than E mu KE}
(since $\epsilon>0).$ Next, it follows that when $\beta\in[-1,0]$
\begin{equation}
-S(\mu_{\beta})\leq F(0)+E(\mu_{\text{KE }})\label{eq:bound on minus S}
\end{equation}
 Indeed, since $F(\beta)$ is decreasing
\[
F(\beta):=F_{\beta}(\mu_{\beta})=\beta E(\mu_{\beta})-S(\mu_{\beta})\leq F(0).
\]
Since $\beta\in[-1,0]$ this proves the inequality \ref{eq:bound on minus S}.
But then it follows from the continuity of $E(\mu)$ in Thm \ref{thm:prop of energies}
and the upper-semicontinuity of $S(\mu)$ that any limit point $\mu_{-1}$
of $\mu_{\beta}$ minimizes $F_{-1}$ and thus $\mu_{-1}$ is KE.
Moreover, by the inequality \ref{eq:E mu beta smaller than E mu KE}
it satisfies $E(\mu_{-1})\leq E(\mu_{\text{KE }}).$ Hence, $e_{c}=E(\mu_{-1})$
for any limit point, as desired. 
\end{proof}
\begin{lem}
\label{lem:conv towards KE on Fano}Assume that $X$ admits a KE metric.
If $\text{Aut }(X)_{0}$ is trivial 
\begin{equation}
\lim_{\beta\rightarrow-1^{+}}\mu_{\beta}=\mu_{\text{KE }},\label{eq:conv towards ke in lemma}
\end{equation}
 where $\mu_{\text{KE }}$ is the normalized volume form of the unique
KE metric on $X.$ Moreover, if $\text{Aut }(X)_{0}$ is non-trivial
and it is assumed that $\theta>0$ on $X_{\text{reg}},$ then there
exists a unique KE volume form minimizing $E(\mu)$ (i.e. the inf
defining $E_{c}$ is attained at $\mu_{KE}$) and the convergence
\ref{eq:conv towards ke in lemma} holds.
\end{lem}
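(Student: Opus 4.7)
\emph{Approach and reduction.} The plan is to leverage Lemma~\ref{lem: e beta contin etc on Fano}, which already shows that every weak limit point $\mu_{*}$ of $\mu_{\beta}$ as $\beta\to-1^{+}$ is a Kähler-Einstein volume form satisfying $E(\mu_{*})\leq E(\mu_{\text{KE}})$ for every KE volume form $\mu_{\text{KE}}$. Combined with compactness of $\mathcal{P}(X)$, proving the convergence $\mu_{\beta}\to\mu_{\text{KE}}$ reduces to showing that the minimum of $E$ on the set of KE volume forms is attained at a unique measure: any subsequential limit of $\mu_{\beta}$ would then be forced to be this unique minimizer.

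\emph{The case when $\text{Aut}(X)_{0}$ is trivial.} Here Theorem~\ref{thm:unique on Fano} gives uniqueness of the KE metric on $X$, and hence uniqueness of its normalized volume form $\mu_{\text{KE}}$. So the set of KE volume forms is the singleton $\{\mu_{\text{KE}}\}$, the infimum defining $e_{c}$ is trivially attained, and every limit point of $\mu_{\beta}$ must coincide with $\mu_{\text{KE}}$. A standard subsequence argument then gives $\mu_{\beta}\to\mu_{\text{KE}}$.

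\emph{The case $\text{Aut}(X)_{0}\neq\{\text{id}\}$ with $\theta>0$ on $X_{\text{reg}}$.} Existence of a minimum-energy KE volume form $\mu_{\text{KE}}$ is again immediate from the limit argument above. For uniqueness, suppose $\mu_{0}$ and $\mu_{1}$ are two KE volume forms both attaining $e_{c}$. Since by Theorem~\ref{thm:unique on Fano} any two KE volume forms are related by an element of $\text{Aut}(X)_{0}$, write $\mu_{1}=g^{*}\mu_{0}$ and connect $\text{id}$ to $g$ by a one-parameter subgroup $g_{t}=\exp(tV)$ of $\text{Aut}(X)_{0}$. The plan is to show that $\phi(t):=E(g_{t}^{*}\mu_{0})$ is strictly convex in $t$ unless $V$ is a holomorphic vector field whose flow preserves $\theta$. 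But under that preservation, the argument in Theorem~\ref{thm:unique on Fano}, invoking \cite[Thm 1.2]{berm9a} together with the positivity $\theta>0$ on $X_{\text{reg}}$, forces $V\equiv 0$. Strict convexity of $\phi$, combined with $\phi(0)=\phi(1)=e_{c}$, would then force $g=\text{id}$ and $\mu_{0}=\mu_{1}$. Once this uniqueness of the min-energy KE is established, convergence $\mu_{\beta}\to\mu_{\text{KE}}$ follows exactly as in the previous case.

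\emph{Main obstacle.} The principal difficulty is the rigorous proof of the strict convexity of $\phi(t)=E(g_{t}^{*}\mu_{0})$ along $\text{Aut}(X)_{0}$-orbits and the precise identification of its degeneracies with infinitesimal $\theta$-preserving holomorphic vector fields. The convexity of $E$ on $\mathcal{P}(X)$ (Theorem~\ref{thm:prop of energies}) gives convexity of $\phi$; upgrading to \emph{strict} convexity requires a Bando--Mabuchi style computation of $\ddot\phi$ in terms of a Dirichlet-type pairing of $V$ against $\theta$, together with the rigidity of \cite{berm9a} to rule out null directions when $\theta>0$ on $X_{\text{reg}}$. This is the only place where the positivity hypothesis on $\theta$ enters in an essential way; once it is established, all preceding steps go through without further subtlety.
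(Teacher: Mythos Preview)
Your overall strategy---reduce to uniqueness of the energy-minimizing KE volume form, then argue strict convexity of $t\mapsto E(g_t^*\mu_0)$ along one-parameter subgroups of $\text{Aut}(X)_0$---matches the paper. However, there are two genuine gaps.

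First, your claim that ``the convexity of $E$ on $\mathcal{P}(X)$ (Theorem~\ref{thm:prop of energies}) gives convexity of $\phi$'' is incorrect. Convexity of $E$ on $\mathcal{P}(X)$ refers to convexity along \emph{affine} segments $t\mapsto (1-t)\mu_0+t\mu_1$ in the space of measures; the curve $t\mapsto g_t^*\mu_0$ is not affine in $\mathcal{P}(X)$, so nothing follows. The paper resolves both convexity and strict convexity simultaneously by a completely different route: it uses the well-known fact that when $L=-K_X$ the pullbacks $g_t^*(\theta_{u_0})$ along a one-parameter subgroup can be written as $\theta_{u_t}$ for a \emph{bounded regular geodesic} $u_t$ in $\text{PSH}(X,\theta)\cap L^\infty$, and then applies Lemma~\ref{lem:E strict convex along geod} from the appendix, which computes the second derivative of $E(\theta_{u_t}^n/\text{vol}(L))$ along such geodesics directly as $\frac{n}{\text{vol}(L)}\int_X\theta_U^n\wedge\theta$ and shows it is strictly positive when $\theta>0$ on $X_{\text{reg}}$. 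No appeal to \cite{berm9a} is needed at this stage.

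Second, your assertion that any $g\in\text{Aut}(X)_0$ can be connected to the identity by $g_t=\exp(tV)$ is not automatic: not every element of a connected Lie group lies on a one-parameter subgroup. The paper invokes the reductivity of $G:=\text{Aut}(X)_0$ (from \cite[III, Thm 4]{c-d-s}), identifying the space of KE volume forms with the symmetric space $G/K$ where $K$ is the compact isotropy group of a fixed $\mu_{\text{KE}}$; this is what guarantees that any two KE volume forms are joined by a one-parameter subgroup. You should incorporate this structural input explicitly.
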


\begin{proof}
In view of the argument towards end of the proof of the previous lemma,
all that remains is to show that $E$ admits a unique minimizer on
the space of KE volume forms in $\mathcal{P}(X).$ When $\text{Aut }(X)_{0}$
is trivial this follows from the uniqueness of KE metrics (Theorem
\ref{thm:unique on Fano}). We thus consider the case when $\text{Aut }(X)_{0}$
is non-trivial and assume that $\theta>0$ on $X_{\text{reg}}.$ Setting
$G:=\text{Aut }(X)_{0}$ the space of KE volume forms in $\mathcal{P}(X)$
may (by Theorem \ref{thm:unique on Fano}) be expressed as the orbit
$G\cdot\mu_{\text{KE}},$ where $G$ acts by pull-back on any fixed
KE volume form $\mu_{\text{KE}}.$ Moreover, by \cite[III, Thm 4]{c-d-s},
$G$ is a complex reductive group or more precisely: $G$ is the complexification
of the compact group $K$ of all $g\in G$ fixing $\mu_{\text{KE}}.$
Hence, the space of all KE volume forms may be identified with $G/K.$
Now take $\mu_{\text{KE}}$ to be any normlaized KE volume form with
minimal $E(\mu)$ (its existence was established towards the end of
the previous proof). In particular, given any another normalized KE
volume form $\mu_{1}$ there is a one-parameter subgroup $g_{t}$
in $G$ fixing $K$ such that $g_{0}$ is the identity and $g_{t}^{*}\mu_{\text{KE }}=\mu_{1}$
when $t=1.$ To prove that $\mu_{\text{KE}}$ is uniquely determined
it is enough to prove that $t\mapsto E(g_{t}^{*}\mu_{\text{KE }})$
is strictly convex. But this follows from Lemma \ref{lem:E strict convex along geod}
in the appendix, using the well-known fact that, in general, when
$L=-K_{X}$ we can express $\left(g_{t}^{*}(\theta_{u_{0}})\right)^{n}=(\theta_{u_{t}})^{n},$
for a bounded regular geodesic $u_{t}$ in $\text{PSH}(X,\theta)\cap L^{\infty}(X),$
if $g_{t}^{*}(\theta_{u_{0}})$ is $K-$invariant, $u_{0}\in\text{PSH}(X,\theta)\cap L^{\infty}(X)$
and $u_{0}$ is smooth on $X_{\text{reg}}$ (indeed, $dd^{c}\theta+U$
may, locally over $X_{\text{reg}}$, be expressed as the pull-back
of a form on $X,$ which forces $(dd^{c}\theta+U)^{n+1}=0$).
\end{proof}
\begin{rem}
The previous lemma shows that the solution $\omega_{\beta}$ to Aubin's
equation \ref{eq:Aubin intro} converges in energy towards $\omega_{\text{KE}}$
(i.e. in the strong topology introduced in \cite{bbegz}). It then
follows from results in \cite{bbegz} that $\left\Vert u_{\beta}-u_{\text{KE}}\right\Vert _{L^{\infty}(X)}\rightarrow0$
as $\beta\rightarrow-1$ and $u_{\beta}\rightarrow u_{\text{KE }}$
in the $C_{\text{loc}}^{\infty}-$topology on $X_{\text{reg }}.$
Indeed, expressing $u_{\beta}=v_{\beta}+\sup_{X}u_{\beta}$ it is
enough to prove the convergence for the sup-normalized potential $v_{\beta}$
of $\omega_{\beta}.$ By the previous two lemmas $E(\omega_{\beta}^{n}/\text{Vol}(L))\rightarrow E(\omega_{-1}^{n}/\text{Vol}(L)),$
which implies that $\mathcal{E}(v_{\beta})\rightarrow\mathcal{E}(v_{-1}).$
It thus follows from \cite[Prop 1.4]{bbegz} that $\left\Vert e^{-v_{\beta}}\right\Vert _{L^{p}(dV)}\rightarrow\left\Vert e^{-v_{-1}}\right\Vert _{L^{p}(dV)}$
for any $p\geq1.$ As is well-known this implies, by Kolodziej type
estimates, that $\left\Vert v_{\beta}-v_{-1}\right\Vert _{L^{\infty}(X)}\rightarrow0$
(see the proof of \cite[Prop 2.12]{be0}). Finally, it follows from
\cite[Thm 11.1]{bbegz}, combined with standard local Evans-Krylov
estimates, that $v_{\beta}\rightarrow v_{-1}$ in the $C_{\text{loc}}^{\infty}-$topology
on $X_{\text{reg }}.$ 
\end{rem}

The following lemma is shown precisely as Lemma \ref{lem:e strict decreasing for pos beta}:
\begin{lem}
$e(\beta)$ is strictly decreasing on $]-R(X),0[$ 
\end{lem}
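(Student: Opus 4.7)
The plan is to follow the proof of Lemma \ref{lem:e strict decreasing for pos beta} line by line, replacing the positive-$\beta$ uniqueness inputs by the Fano negative-$\beta$ counterparts coming from Theorem \ref{thm:unique on Fano}.

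First I would suppose for contradiction that $e(\beta_{0})=e(\beta_{1})$ for some $-R(X)<\beta_{0}<\beta_{1}<0$. The preliminary setup needed is: on $]-R(X),0[$, Theorem \ref{thm:unique on Fano} combined with Theorem \ref{thm:minimizer satisf twisted} produces a unique minimizer $\mu_{\beta}$ of $F_{\beta}$, and $e(\beta)=E(\mu_{\beta})$ is continuous by Lemma \ref{lem: e beta contin etc on Fano}. Lemma \ref{lem:F cont diff} then gives $F\in C^{1}(]-R(X),0[)$ with $F'(\beta)=e(\beta)$.

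Since $F$ is concave and $F'$ is continuous, the equality $e(\beta_{0})=e(\beta_{1})$ forces $F'\equiv e(\beta_{0})$ on $[\beta_{0},\beta_{1}]$, hence $F(\beta_{1})-F(\beta_{0})=e(\beta_{0})(\beta_{1}-\beta_{0})$. Combined with $F(\beta_{i})=\beta_{i}e(\beta_{i})-S(\mu_{\beta_{i}})$, this gives $S(\mu_{\beta_{0}})=S(\mu_{\beta_{1}})$, and therefore $F_{\beta_{1}}(\mu_{\beta_{0}})=F_{\beta_{1}}(\mu_{\beta_{1}})$. By the uniqueness of the minimizer of $F_{\beta_{1}}$ recalled above, $\mu_{\beta_{0}}=\mu_{\beta_{1}}$.

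Writing $\mu_{\beta_{i}}=\theta_{u_{i}}^{n}/\text{vol}(L)$, uniqueness modulo $\R$ of solutions to the complex Monge--Amp\`ere equation \ref{eq:MA eq with mu} produces a common $u\in\text{PSH}(X,\theta)$ with $u_{i}=u+C_{i}$ for constants $C_{i}$; the twisted K\"ahler--Einstein equation \ref{eq:ma eq with beta} for each $\beta_{i}$ then reads
\[
e^{\beta_{0}(u+C_{0})}dV=e^{\beta_{1}(u+C_{1})}dV.
\]
Hence $(\beta_{0}-\beta_{1})u$ equals a constant $dV$-almost everywhere, and because $dV$ has divisorial singularities this forces $u$ to be constant on all of $X$; feeding this back into \ref{eq:ma eq with beta} yields $\theta^{n}=CdV$, contradicting the standing assumption of Section \ref{subsec:Setup intro}.

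The only nontrivial departure from the proof of Lemma \ref{lem:e strict decreasing for pos beta} is the uniqueness of $\mu_{\beta}$ at \emph{negative} $\beta$, which is precisely where the restriction $\beta>-R(X)$ enters via Theorem \ref{thm:unique on Fano}; apart from this input the argument is a word-for-word transcription of the positive-$\beta$ case, so no genuine obstacle is anticipated.
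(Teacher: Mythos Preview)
Your proposal is correct and is precisely the approach the paper takes: the paper simply states that the lemma ``is shown precisely as Lemma \ref{lem:e strict decreasing for pos beta},'' i.e., the same argument verbatim with the uniqueness of the minimizer of $F_{\beta}$ for $\beta\in]-R(X),0[$ supplied by Theorem \ref{thm:unique on Fano} in place of the convexity-based uniqueness used for $\beta>0$. Your write-up makes the needed negative-$\beta$ inputs explicit (Lemma \ref{lem: e beta contin etc on Fano} for continuity of $e(\beta)$ and hence Lemma \ref{lem:F cont diff} for $F\in C^{1}$), which is exactly the intended transcription.
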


Given the results above the proof of the following result, which is
a slight refinement of Theorem \ref{thm:micro var principle Fano intro}
stated in the introduction, proceeds precisely as the proof of Theorem
\ref{thm:lower energy entro text}.
\begin{thm}
\label{thm:var pr Fano text}Assume given $e\in[0,e_{c}[.$ When $e_{c}<\infty$
we also allow $e=e_{c},$ but if $\text{Aut }(X)_{0}$ is non-trivial
the uniqueness results then require the additional assumption $\theta>0$
on $X_{\text{reg}}.$ 
\begin{itemize}
\item $S$ coincides with its concave envelope $S^{**}$ at $e.$ 
\item There exists a unique maximizer $\mu^{e}$ of the entropy $S(\mu)$
on the subspace of all probability measure $\mu$ with pluri-complex
energy $E(\mu)=e$ (or equivalently $E(\mu)\geq e).$ In particular,
\[
S(\mu^{e})=S(e),
\]
 where $S(e)$ is the specific entropy.
\item The maximizer $\mu^{e}$ is the normalized volume form $\omega_{\beta}^{n}/\int_{X}\omega_{\beta}^{n}$
of the unique solution $\omega_{\beta}$ to Aubin's equation \ref{eq:Aubin intro}
such that $E(\omega_{\beta}^{n}/\int_{X}\omega_{\beta}^{n})=e,$ where
$\beta<0$ iff $e>e_{0}.$ 
\item the corresponding function $e\mapsto\beta(e)$ gives a strictly decreasing
continuous map between $[0,e_{c}]$ and $[-R(X),\infty]$ with the
property that
\[
\beta(e)=dS(e)/de
\]
\item As a consequence, $S(e)$ strictly concave on $]0,e_{c}[,$ strictly
increasing on $]0,e_{0}[$ and strictly decreasing on $]e_{0},e_{c}[.$ 
\item The corresponding function $e\mapsto\beta(e)$ gives a strictly decreasing
continuous map between $[0,e_{c}]$ and $[-R(X),\infty]$ with the
property that
\[
\beta(e)=dS(e)/de
\]
and its inverse is given by the map $\beta\mapsto dF(\beta)/d\beta.$ 
\item In particular, $S(e)$ and $F(\beta)$ are $C^{1}-$smooth and strictly
concave on $]0,e_{c}[$ and $]0,\infty[,$ respectively. Moreover,
$F(\beta)$ is strictly increasing on $]0,\infty[,$ while $S(e)$
is strictly increasing on $]0,e_{0}[$ and strictly decreasing on
$]E,e_{c}[.$ 
\end{itemize}
\end{thm}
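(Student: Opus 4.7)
The plan is to replay the proof of Theorem \ref{thm:lower energy entro text} verbatim, with the high-energy analogs of each input substituted in. Concretely, for $\beta\in]-R(X),\infty[$ we assemble the following package. Uniqueness of minimizers of $F_\beta$ on $\mathcal{P}(X)$: for $\beta>0$ this was given by Prop \ref{prop:general results for S and F when E convex}, and for $\beta\in]-R(X),0]$ it follows by combining Theorem \ref{thm:minimizer satisf twisted} (any minimizer solves Aubin's equation) with the uniqueness Theorem \ref{thm:unique on Fano} (under the stated hypothesis on $\theta$ when $\mathrm{Aut}(X)_0$ is non-trivial). Continuity of $e(\beta):=E(\mu_\beta)$ is Lemma \ref{lem: e beta contin etc on Fano}. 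Strict monotonicity of $e(\beta)$ on $]-R(X),0[$ is the unnamed lemma just proved, while on $]0,\infty[$ it is Lemma \ref{lem:e strict decreasing for pos beta}.

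With these inputs Lemma \ref{lem:F cont diff} gives $F\in C^{1}(]-R(X),\infty[)$ with $F'(\beta)=e(\beta)$. The boundary behavior of $F'$ is supplied by Lemma \ref{lem: e beta contin etc on Fano}: as $\beta\searrow -R(X)$ one has $e(\beta)\to\infty$ if $X$ admits no KE metric, and $e(\beta)\to e_c$ if it does (with $R(X)=1$); as $\beta\to+\infty$ one has $e(\beta)\to 0$ by Prop \ref{prop:general prop of entropy and free on complex}. Strict monotonicity and continuity therefore promote $\beta\mapsto e(\beta)$ to a homeomorphism between $]-R(X),\infty[$ and $]0,e_c[$. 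Since $S$ is usc and finite on $]0,\infty[$ by Prop \ref{prop:general prop of entropy and free on complex}, Lemma \ref{lem:S strictly concave on image of dF} applies on $]0,e_c[$ and yields all at once: $S=F^{*}$ there, $S$ is strictly concave on $]0,e_c[$ (hence coincides with its concave envelope at every $e$ in this interval), $\mu^e=\mu_{\beta(e)}$ is the unique maximum-entropy measure of energy $e$, and $\beta(e)=dS(e)/de$ is the inverse of $\beta\mapsto F'(\beta)$. Theorem \ref{thm:minimizer satisf twisted} then identifies $\mu^e$ with the normalized volume form of the solution to Aubin's equation. The equivalence of the constraints $E(\mu)=e$ and $E(\mu)\geq e$ on the high-energy side (and $E(\mu)\leq e$ on the low-energy side of $e_0$) follows from Lemma \ref{lem:mono of S}, using that $E$ has the affine continuity property on a nef big class, by Lemma \ref{lem:on polarized has affine cont and energy appr}. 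The monotonicity statements for $S(e)$ on $]0,e_0[$ and $]e_0,e_c[$ are the content of the same Lemma \ref{lem:mono of S}, combined with the fact that $F'(0)=e_0$.

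The only genuinely new issue, relative to the low-energy proof, is the endpoint $e=e_c$ when $e_c<\infty$. Here the minimizer of $F_{-R(X)}=F_{-1}$ is not a priori unique: we must pick out the KE volume form $\mu_{\mathrm{KE}}$ of minimal pluricomplex energy. When $\mathrm{Aut}(X)_0$ is trivial this is automatic from Theorem \ref{thm:unique on Fano}. When $\mathrm{Aut}(X)_0$ is non-trivial, Lemma \ref{lem:conv towards KE on Fano} (invoking $\theta>0$ on $X_{\mathrm{reg}}$ and strict convexity of $E$ along the KE orbit $\mathrm{Aut}(X)_0\cdot\mu_{\mathrm{KE}}$) supplies uniqueness of the energy-minimal KE volume form and the convergence $\mu_\beta\to\mu_{\mathrm{KE}}$ as $\beta\searrow -1$. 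This convergence, together with continuity of $E$ on sublevel sets of $-S$ (Theorem \ref{thm:prop of energies}) and upper-semicontinuity of $S$, lets one pass the identification $\mu^e=\mu_\beta$ and the equalities $S(\mu^e)=S(e)$, $\beta(e)=dS(e)/de$ to the boundary $e=e_c$ by taking limits along $\beta\searrow -1$.

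The main obstacle is therefore this boundary analysis at $e=e_c$: verifying that the limiting measure realizes both the energy $e_c$ and the entropy maximum, and that the map $\beta(e)$ extends continuously. Everything else is a direct transcription of the low-energy argument, with Theorem \ref{thm:unique on Fano}, Lemmas \ref{lem: e beta contin etc on Fano} and \ref{lem:conv towards KE on Fano}, and the monotonicity lemma replacing their positive-$\beta$ counterparts.
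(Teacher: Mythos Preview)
Your proposal is correct and matches the paper's own proof, which consists of the single sentence that, given the preceding results (Theorem \ref{thm:unique on Fano}, Lemmas \ref{lem: e beta contin etc on Fano}, \ref{lem:conv towards KE on Fano}, and the strict monotonicity lemma), the argument ``proceeds precisely as the proof of Theorem \ref{thm:lower energy entro text}.'' You have simply spelled out in detail what the paper leaves implicit, including the endpoint analysis at $e=e_c$.
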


\subsection{Proof of Cor \ref{cor:not K poly}}

By Theorem \ref{thm:micro var principle Fano intro} $S(e)$ is strictly
concave on $[0,e_{c}[.$ Hence, if $X$ does not admit a KE metric,
i.e. $e_{c}=\infty,$ then $S(e)$ is strictly concave on $]0,\infty[.$
Conversely, assume that $X$ admits a KE metric. By Theorem \ref{thm:micro var principle Fano intro}
\begin{equation}
\lim_{\beta\rightarrow-1^{+}}F'(\beta)=e_{c},\,\,\,\lim_{e\rightarrow e_{c}^{-}}S'(e)=-1.\label{eq:pf cor K polyst S not concave}
\end{equation}
 Moreover, if $\text{Aut}(X)_{0}$ is assumed non-trivial, then
\begin{equation}
\{F(\beta)>-\infty\}=[-1,\infty[.\label{eq:F finite in pf Cor K poly}
\end{equation}
This follows from well-known results, using that $F_{-1}(\mu)$ can
be identified with Mabuchi's K-energy functional \cite{ma}. For completeness
we provide a proof. Assume, to get a contradiction, that there exists
$\epsilon>0$ and $C>0$ such that $F(-1-\epsilon)\geq-C.$ This means
that 
\begin{equation}
F_{-1}(\mu)\geq\epsilon E(\mu)+C\,\:\text{on \ensuremath{\mathcal{P}(X).}}\label{eq:coerciv ineq}
\end{equation}
Next, using the notation in the proof of Lemma \ref{lem:conv towards KE on Fano},
note that $F_{-1}(\mu_{t})$ is constant along the curve $\mu_{t}:=g_{t}^{*}\mu_{\text{KE }}.$
Indeed, since $\mu_{t}$ is KE it minimizes $F_{-1}(\mu).$ But then
the inequality \ref{eq:coerciv ineq} contradicts that $E(\mu_{t})\rightarrow\infty$
as $t\rightarrow\infty,$ by Remark \ref{rem:E tends to infty}.

Now assume, in order to get a contradiction, that $S(e)$ is strictly
concave on $]0,\infty[.$ This implies that $S(e)$ is concave on
all of $\R$ (since $S(e)=-\infty$ when $e<0).$ In particular $S^{**}(e)=S(e)$
on $\R$ and hence $S=F^{*}$ and $F=S^{*}.$ But then it follows
from the convergence \ref{eq:pf cor K polyst S not concave} that
$S'(e)$ is affine when $e>e_{0}.$ Indeed, by general properties
of the Legendre transform of finite concave functions $\partial S\subset\overline{\{S^{*}>-\infty\}}.$
By \ref{eq:F finite in pf Cor K poly} this means that $\partial S\subset[-1,\infty[.$
Moreover, since $S(e)$ is concave the convergence of $S'(e)$ in
formula \ref{eq:pf cor K polyst S not concave} implies that $\partial S(e)\subset]-\infty,-1].$
Hence, $\partial S(s)=\{1\},$ on $]e_{c},\infty[,$ showing that
$S'$ is differentiable with $S'(e)=-1$ on $]e_{c},\infty[$. Thus
$S(e)$ is indeed affine there, which contradicts that $S(e)$ was
assumed strictly concave. 

\subsubsection{Real-analyticity }

Consider now the case when $X$ is non-singular and $dV$ is smooth.
Then we have the following regularity result refining the $C^{1}-$regularity
in Theorem \ref{thm:var pr Fano text}, shown precisely as in Section
\ref{subsec:Real-analyticity-when beta pos}:
\begin{prop}
The functions $F(\beta)$ and $S(e)$ are real-analytic on $]-R(X),\infty[$
and $]0,E_{c}[,$ respectively.
\end{prop}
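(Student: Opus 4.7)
The plan is to mimic the argument sketched in Section \ref{subsec:Real-analyticity-when beta pos} for $\beta>0$, now applied to Aubin's equation \ref{eq:Aubin intro} over the full interval $\beta\in]-R(X),\infty[$. Writing the equation as a complex Monge-Amp\`ere equation
\[
\frac{1}{\text{vol}(L)}(\theta+dd^{c}u)^{n}=e^{\beta u+h}dV,
\]
(where $h$ is chosen so that the Fano compatibility condition holds), I would introduce the map
\[
\Phi:\mathcal{U}\times]-R(X),\infty[\,\longrightarrow C^{k-2,\alpha}(X),\qquad \Phi(u,\beta):=\log\frac{(\theta+dd^{c}u)^{n}}{dV}-\beta u-h,
\]
defined on a neighborhood $\mathcal{U}$ of $u_{\beta_0}$ inside the open subset of $C^{k,\alpha}(X)$ where $\theta+dd^{c}u>0$. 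This $\Phi$ is real-analytic in $(u,\beta)$ and the zero set parametrizes solutions of Aubin's equation. The differential in $u$ at $(u_{\beta_0},\beta_0)$ is the elliptic operator $L_{\beta_0}v=\Delta_{\omega_{\beta_0}}v+\beta_0 v$ acting between H\"older spaces.

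The main step is to verify that $L_{\beta_0}$ is an isomorphism (modulo the finite-dimensional kernel coming from $\mathfrak{aut}(X)_{0}$ when present, which is handled in the standard way by restricting both source and target to the $K$-invariant $L^{2}$-orthogonal complement of that kernel as in \cite{b-m}). For $\beta_0\ge 0$ this is immediate from the non-negativity of $-\Delta_{\omega_{\beta_0}}$. For $\beta_0\in ]-R(X),0[$, invertibility reduces to the fact that $-\beta_0$ lies strictly below the first non-zero eigenvalue of $-\Delta_{\omega_{\beta_0}}$ on the orthogonal complement: if a non-trivial $v\in\ker L_{\beta_0}$ existed outside the $\mathfrak{aut}(X)_{0}$-kernel, then the associated one-parameter family would produce a curve of solutions to Aubin's equation at the parameter $\beta_{0}$, contradicting the uniqueness in Theorem \ref{thm:unique on Fano} (combined with the rigidity for $\theta$ used in its proof via \cite[Thm 1.2]{berm9a}). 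The analytic implicit function theorem in Banach spaces then gives that $\beta\mapsto u_{\beta}$ is real-analytic, and hence so is $F'(\beta)=E(\mu_{\beta})$.

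To conclude, I would invoke Theorem \ref{thm:var pr Fano text}: the map $\beta\mapsto F'(\beta)$ is a strictly decreasing $C^{1}$-diffeomorphism between $]-R(X),\infty[$ and its image, and its inverse is $e\mapsto S'(e)$. Since the forward map is real-analytic with nowhere-vanishing derivative (strict decrease on an open interval together with real-analyticity forces $F''<0$ off a discrete set, and $F''\not\equiv 0$ by Lemma \ref{lem:e strict decreasing for pos beta} and its analogue), the inverse $S'(e)$ is real-analytic on $]0,e_{c}[$, hence so is $S(e)$. Analyticity of $F(\beta)$ then follows either directly from analyticity of $u_{\beta}$ via $F(\beta)=\beta E(\mu_\beta)-S(\mu_\beta)$, or from the Legendre duality $F(\beta)=\beta F'(\beta)-S(F'(\beta))$. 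The main obstacle is the invertibility of the linearization for $\beta\in]-R(X),0[$ in the presence of continuous automorphisms, but this is already built into the uniqueness theorem \ref{thm:unique on Fano} and handled at the level of $K$-invariant functions modulo the holomorphy potentials, exactly as in the classical smooth case.
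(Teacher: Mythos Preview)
Your overall strategy---the analytic implicit function theorem applied to the Monge--Amp\`ere map, followed by inverting $\beta\mapsto F'(\beta)$---is exactly the paper's (which simply refers back to Section~\ref{subsec:Real-analyticity-when beta pos} and \cite[Thm~7.9]{berm11}). But two steps in your execution do not go through as written.

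First, the linearization of your $\Phi$ in $u$ is $v\mapsto \Delta_{\omega_{\beta_0}}v-\beta_0 v$, not $\Delta_{\omega_{\beta_0}}v+\beta_0 v$ (just differentiate $\log\theta_u^n-\beta u$). With the correct sign the case $\beta_0>0$ is indeed immediate, but for $\beta_0\in]-1,0[$ your route ``kernel element $\Rightarrow$ curve of solutions $\Rightarrow$ contradiction with Theorem~\ref{thm:unique on Fano}'' is missing a step: an element of $\ker(\Delta_{\omega_{\beta_0}}-\beta_0)$ is only an \emph{infinitesimal} deformation. What is needed is the Bochner--Kodaira--Nakano identity: since $\mathrm{Ric}\,\omega_\beta=-\beta\omega_\beta+(1+\beta)\theta\ge-\beta\omega_\beta$, equality in the eigenvalue bound forces $\nabla^{1,0}v$ to be holomorphic, and only then can one integrate to a flow and invoke the rigidity from Theorem~\ref{thm:unique on Fano}. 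The paper records precisely this positivity (``$-\Delta_{\omega_\beta}+\beta$ is a positive definite operator\ldots by the Bochner--Kodaira--Nakano identity'') in the paragraph immediately following the proposition. Note also that $R(X)\le 1$, so $]-R(X),\infty[\subset]-1,\infty[$ and the whole discussion of restricting to $K$-invariant functions modulo holomorphy potentials is unnecessary here.

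Second, the passage from real-analyticity of $F'$ to that of $S'$ requires $F''(\beta)\neq 0$ \emph{everywhere} on $]-R(X),\infty[$, not merely off a discrete set: the inverse of a strictly monotone real-analytic map can fail to be real-analytic at a critical value (think of $\beta\mapsto-\beta^3$). Your parenthetical asserts ``nowhere-vanishing derivative'' and in the same breath ``$F''<0$ off a discrete set'', which is self-contradictory. The paper closes this gap via the Bando--Mabuchi second-variation formula
\[
\frac{dE(\mu_\beta)}{d\beta}=-c\int_X(\Delta_{\omega_\beta}-\beta)\dot u_\beta\,\Delta_{\omega_\beta}\dot u_\beta\,\omega_\beta^n,
\]
which is strictly negative precisely because of the spectral gap just discussed; this gives $F''<0$ on the whole interval and then the analytic inverse function theorem applies cleanly to $e\mapsto\beta(e)=S'(e)$.
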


Using that, in this case, $u_{\beta}$ depends smoothly on $\beta$
an alternative proof of the strict concavity of $F(\beta)$ can also
be given. Indeed, expressing $E(\mu)=(I-J)(u)$ (as in Section \ref{subsec:Relations-to-the standard})
\cite[Thm 5.1]{b-m} (or rather its proof) yields
\[
dE(\mu_{\beta})/dt=-c\int_{X}(\Delta_{\omega_{\beta}}-\beta)\frac{\partial u_{\beta}}{\partial\beta}\Delta_{\omega_{\beta}}\frac{\partial u_{\beta}}{\partial\beta}\omega_{\beta}^{n},
\]
where $c>0$ and $\Delta_{\omega_{\beta}}$ denotes the Laplacian
wrt the Kähler metric $\omega_{\beta}.$ The integral above is strictly
positive when $\beta>-1.$ Indeed, $-\Delta_{\omega_{\beta}}+\beta$
is a positive definite operator when $\theta>0$ (as follows from
the Bochner-Kodaira-Nakano identity, using that $\text{Ric \ensuremath{\omega_{\beta}}\ensuremath{\geq\omega_{\beta}}}$
). 

\subsection{Conditional convergence results and proof of Theorem \ref{thm:n is one intro}}
\begin{thm}
\label{thm:conditional Fano}Assume that for any $\beta\in]-R(X),0[:$
\begin{equation}
-\lim_{N\rightarrow\infty}N^{-1}\log Z_{N,\beta}=F(\beta).\label{eq:conv on Z in Thm cond Fano}
\end{equation}
 Then, for any $e\in[e_{0},e_{c}[$ (and $\epsilon\in]0,\infty]),$
\[
\lim_{N\rightarrow\infty}N^{-1}\log\int_{\left\{ E^{(N)}\in]e,e+\epsilon[\right\} }dV^{\otimes N}=S(e)
\]
Moreover, if the convergence \ref{eq:conv on Z in Thm cond Fano}
holds for\emph{ any} given $dV$ on $X$ such that $\text{Ric \ensuremath{dV:=}}\theta>0$
on $X_{\text{reg}},$ then for any given such $dV,$ the real random
variable $\left\langle \delta_{N},u\right\rangle $ converges in probability
towards $\left\langle \mu^{e},u\right\rangle ,$ for any given $u\in C^{0}(X_{\text{reg}})$
and, as a consequence, 
\begin{equation}
\lim_{N\rightarrow\infty}\int_{X^{N-1}}\mu_{]e,e+\epsilon[}^{(N)}=\mu^{e}.\label{eq:conv of first marginal in conditional}
\end{equation}
Additionally, if $X$ is non-singular then $\delta_{N}$ converges
exponentially in probability at speed $N$ towards $\mu^{e}.$ 
\end{thm}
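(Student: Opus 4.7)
The plan is to mirror the proof of the quasi-superharmonic theorem from Section~3, adapted to the high-energy (negative temperature) regime on a Fano variety, using the conditional hypothesis on partition functions as a replacement for the unconditional LDP used previously.

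For the first assertion, I would invoke the second bullet point of Theorem~\ref{thm:general conv of mean entropy}, whose hypotheses are $F \in C^{1}(]\beta_{0},\beta_{1}[)$ and $S(e)$ usc on $\partial F(]\beta_{0},\beta_{1}[)$. By Theorem~\ref{thm:var pr Fano text}, $F(\beta)$ is $C^{1}$ on $]-R(X),\infty[$, $S(e)$ is $C^{1}$ (hence continuous and usc) on $]0,e_{c}[$, and $\beta \mapsto F'(\beta)$ is a strictly decreasing bijection between $]-R(X),0[$ and $]e_{0},e_{c}[$. Combined with the assumed convergence \eqref{eq:conv on Z in Thm cond Fano} for $\beta \in ]-R(X),0[$, the second bullet of Theorem~\ref{thm:general conv of mean entropy} delivers the convergence of the microcanonical log-volume to $S(e)$ for $e$ in the open interval $]e_{0},e_{c}[$. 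The boundary case $e=e_{0}$ requires the additional input that $E(\mu_{\beta}) \to e_{0}$ as $\beta \uparrow 0$, which is part of Lemma~\ref{lem: e beta contin etc on Fano}.

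For the convergence in probability under the stronger hypothesis, I would follow the perturbation strategy from the proof of the theorem opening Section~3. Fix $u \in C^{0}(X_{\text{reg}})$; for $|t|$ sufficiently small, the measure $dV_{tu} := e^{-tu}dV$ still satisfies $\text{Ric}\,dV_{tu} = \theta + t\,dd^{c}u > 0$ on $X_{\text{reg}}$, so the stronger hypothesis applies to this perturbed Fano datum. Writing the Laplace transform of $Y_{N} := \langle \delta_{N},u\rangle$ as
\[
\E\bigl(e^{-NtY_{N}}\bigr) \;=\; \frac{\int \mathbf{1}_{\{E^{(N)}\in\,]e,e+\epsilon[\}}\,dV_{tu}^{\otimes N}}{\int \mathbf{1}_{\{E^{(N)}\in\,]e,e+\epsilon[\}}\,dV^{\otimes N}},
\]
applying the first part of the theorem to numerator and denominator (after relating $E^{(N)}$ to the energy $E^{(N)}_{tu}$ of the perturbed data, which differ by $\langle \delta_{N},u\rangle$ up to a vanishing correction coming from the renormalization of the Slater determinant) yields
\[
\lim_{N\to\infty} N^{-1}\log\E\bigl(e^{-NtY_{N}}\bigr) \;=\; S_{tu}(e) - S(e),
\]
where $S_{tu}$ is the specific entropy with reference $dV_{tu}$. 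Since $S_{tu}(\mu) = S(\mu) - t\langle u,\mu\rangle$, one has $S_{tu}(e) = S(\mu^{e,tu}) - t\langle u,\mu^{e,tu}\rangle$, and Theorem~\ref{thm:var pr Fano text} (applied to the perturbed datum, whose positivity is preserved for small $t$) yields uniqueness of the maximizer $\mu^{e,tu}$. Lemma~\ref{lem:differentiability of sup}, together with a standard compactness argument proving continuity of $t \mapsto \mu^{e,tu}$ at $t=0$ (from the uniqueness of $\mu^{e}$ and the continuity statement \eqref{eq:energy entrop cont} of Theorem~\ref{thm:prop of energies}), then gives differentiability of $t \mapsto S_{tu}(e)$ at the origin with derivative $-\langle u,\mu^{e}\rangle$. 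Lemma~\ref{lem:Cramer variant i R} applied to $Y_{N}$ concludes the exponential convergence of $\langle \delta_{N},u\rangle$ to $\langle \mu^{e},u\rangle$ in probability; the convergence \eqref{eq:conv of first marginal in conditional} of the first marginal follows by testing against continuous functions. For the exponential convergence of $\delta_{N}$ itself in the non-singular case, the same computation with $u$ ranging over $C^{k}(X)$ for $k$ large establishes Gateaux differentiability of $\Lambda(u) := \limsup N^{-1}\log\E(e^{N\langle u,\delta_{N}\rangle})$ at the origin with differential $\mu^{e}$, and Lemma~\ref{lem:Cramer variant} then yields the claimed exponential convergence at speed $N$.

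The principal obstacle is the careful tracking of the shift between $E^{(N)}$ and $E^{(N)}_{tu}$ in the complex-geometric Fano setup: since the Hermitian metric on $L=-K_{X}$ is tied to $dV$, changing the reference measure also changes the Slater determinant and hence the microscopic energy. The cancellation of the resulting correction term (of the form $\langle \delta_{N},u\rangle$ plus a $o(1)$ normalization) with the shift in free energy $F^{tu}(\beta) - F(\beta)$ is what makes the scheme work and must be verified explicitly. A secondary technical difficulty is that when $u$ is only continuous on $X_{\text{reg}}$ (so that $dd^{c}u$ is a priori only a current on $X_{\text{reg}}$), one must verify that the perturbed Fano datum remains within the scope of the theorem's hypotheses; this is where the assumption $\theta>0$ strictly on $X_{\text{reg}}$ is used.
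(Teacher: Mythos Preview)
Your overall plan matches the paper's proof. For the first assertion you correctly combine Theorem~\ref{thm:general conv of mean entropy } with Theorem~\ref{thm:var pr Fano text} under the assumed partition-function convergence, including the boundary case $e=e_0$ via the continuity of $E(\mu_\beta)$ from Lemma~\ref{lem: e beta contin etc on Fano}. For the second, the paper follows exactly your perturbation strategy: write the Laplace transform of $\langle\delta_N,u\rangle$ as a ratio of shell integrals with reference measures $dV$ and $dV_{tu}$, apply the first part to each to obtain $S_{tu}(e)-S(e)$, and then differentiate $t\mapsto S_{tu}(e)$ at $t=0$ via Lemma~\ref{lem:differentiability of sup}, with continuity of $t\mapsto\mu^{e,tu}$ established through the entropy lower bound and the energy--entropy continuity of Theorem~\ref{thm:prop of energies}. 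The paper takes $u\in C^2(X)$ in the non-singular case (so that Lemma~\ref{lem:Cramer variant} applies with $C^k$ test functions), and in the singular case reduces to smooth $u$ compactly supported in $X_{\text{reg}}$, as you anticipate.

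Regarding your ``principal obstacle'': the paper's proof does \emph{not} track the shift $E^{(N)}\leadsto E^{(N)}_{tu}$. It simply writes ``applying the assumed convergence~\eqref{eq:conv on Z in Thm cond Fano} to $dV$ and $dV_{tu}$'' and asserts the limit $S_{tu}(e)-S(e)$, with the formula $S_{tu}(e)=\sup_{E(\mu)\ge e}\{S(\mu)-t\langle u,\mu\rangle\}$ written in terms of the \emph{original} pluricomplex energy $E$. In effect the paper is applying Theorem~\ref{thm:general conv of mean entropy } to the pair (original $E^{(N)}$, perturbed $dV_{tu}$), treating the metric on $L$ as fixed while only the reference measure varies; the hypothesis then needed is $-\lim N^{-1}\log\int e^{-\beta N E^{(N)}}dV_{tu}^{\otimes N}=\inf_\mu\bigl(\beta E(\mu)-S_{tu}(\mu)\bigr)$. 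You are right that this is not literally identical to the Fano-setup hypothesis for $dV_{tu}$ (in which both $E^{(N)}$ and $E$ would change with $dV$), and that some identification is needed to pass between the two. Your instinct to resolve this via the pointwise shift $E^{(N)}_{tu}=E^{(N)}+t\langle\delta_N,u\rangle+c_N$, together with its macroscopic counterpart $E_{\theta_{tu}}(\mu)=E_\theta(\mu)+t\langle u,\mu\rangle+\text{const}$, is the correct mechanism that makes the two readings agree; the paper leaves this step implicit. So your flagging of this as the delicate point is apt---indeed more careful than the paper itself---though be aware that the paper sidesteps rather than confronts it.
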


\begin{proof}
First assume that $X$ is non-singular. Combining Theorems \ref{thm:general conv of mean entropy },
\ref{thm:var pr Fano text} with the assumption \ref{eq:conv on Z in Thm cond Fano}
proves the convergence towards $S(e).$ Next, we will deduce the exponential
convergence of $\delta_{N}$ from Lemma \ref{lem:Cramer variant i R}.
Fix $u\in C^{2}(X)$ and set $dV_{u}:=e^{-u}dV.$ As in the proof
of Theorem \ref{thm:LDP for quasi super} we can then express 
\[
\E(e^{-N\left\langle u,\delta_{N}\right\rangle })=\int_{\left\{ E^{(N)}\in]e,e+\epsilon[\right\} }dV_{u}^{\otimes N}/\int_{\left\{ E^{(N)}\in]e,e+\epsilon[\right\} }dV^{\otimes N}.
\]
The assumption that $\theta>0$ ensures that $\theta_{tu}:=\text{Ric \ensuremath{dV_{tu}}}>0$
for $t\in[-T,T]$ where $T$ is positive and sufficiently small. Moreover,
for $T$ sufficiently small $dV_{tu}/\int dV_{tu}\neq\text{\ensuremath{\theta_{tu}^{n}/\int\theta_{tu}^{n}},}$
since, by assumption, $dV\int dV\neq\text{\ensuremath{\theta^{n}/\int\theta^{n}}. }$Furthermore,
$e<e_{c}(tu)$ for $t$ sufficiently small (since $e<e_{c})$. Fix
such a small $T$ and consider $t\in[-T,T].$ Applying the assumed
convergence \ref{eq:conv on Z in Thm cond Fano} to $dV$ and $dV_{tu}$
yields
\[
\lim_{N\rightarrow\infty}N^{-1}\log\E(e^{-N\left\langle tu,\delta_{N}\right\rangle })=S_{tu}(e)-S(e),
\]
using the notation in the proof of Theorem \ref{thm:LDP for quasi super}.
As in the latter theorem we have 
\begin{equation}
S_{tu}(e)=S_{tu}(\mu^{e,tu})=\sup_{\mu\in\mathcal{P}(X)^{\geq e},}\left\{ S_{0}(\mu)-t\left\langle u,\mu\right\rangle \right\} \label{eq:form for S tu in pf Fano}
\end{equation}
where $\mu^{e,tu}$ is the unique element where the corresponding
sup is attained (by Theorem \ref{thm:micro var principle Fano intro}).
By Lemma \ref{lem:conv in prob av E N} it will be enough to show
that $t\mapsto-S_{tu}(e)$ is Gateaux differentiable and that 
\[
-dS_{tu}(e)/dt_{|t=0}=\left\langle u,\mu^{e}\right\rangle .
\]
This identity is shown precisely as in the proof of Theorem \ref{thm:LDP for quasi super}
once we have verified that $t\mapsto E(\mu^{e,tu})$ is continuous.
But, by formula \ref{eq:form for S tu in pf Fano}, $S(E(\mu^{e,tu})=S_{tu}(e)\geq-S_{0}(e)-CT.$
Hence, the continuity in question results from the continuity of $E$
in Theorem \ref{thm:prop of energies}. 

Next, consider the case when $X$ is singular. Note that $\mu^{e}$
does not charge $X-X_{\text{sing}}.$ Indeed, since $-S(\mu^{e})<\infty$
it follows that $\mu^{e}$ is absolutely continuous wrt $dV$ and
since $X-X_{\text{sing}}$ is a proper analytic subvariety we have
$dV(X-X_{\text{sing}})=0.$ Hence, by basic integration theory it
is enough to prove the convergence of $\left\langle \delta_{N},u\right\rangle $
when $u\in C^{0}(K)$ for any compact subset $K$ of $X_{\text{reg}}.$
Moreover, since a continuous function $X$ may be uniformly approximated
by smooth functions, it is, in fact, enough to consider the case when
$u$ is smooth and compactly supported in $X_{\text{reg }}.$ The
result of the proof thus proceeds precisely as before.

Finally, the consequence \ref{eq:conv of first marginal in conditional}
follows readily form the definitions. Indeed, by the permutation symmetry
of $\mu_{]e,e+\epsilon[}^{(N)},$
\[
\left\langle \int_{X^{N-1}}\mu_{]e,e+\epsilon[}^{(N)},u\right\rangle =\int_{X^{N}}\mu_{]e,e+\epsilon[}^{(N)}\left\langle \delta_{N},u\right\rangle ,
\]
 which, by the convergence in probability shown above, converges towards
$\left\langle \mu^{e},u\right\rangle .$
\end{proof}
The following result provides conditions ensuring convergence in probability,
without assuming that $\theta>0$ and that $X$ be non-singular.
\begin{prop}
\label{thm:cond II}Given $e\in[e_{0},e_{c}[$ (and $\epsilon>0)$
the corresponding random empirical measure $\delta_{N}$ converges
in probability towards $\mu^{e}$ if the convergence \ref{eq:conv on Z in Thm cond Fano}
holds for $\beta=\beta(e)(=S'(\beta))$ and at least one of the following
two conditions are satisfied for any accumulation point $\Gamma_{\infty}^{e}$
of $(\delta_{N})_{*}\mu_{]e,e+\epsilon[}^{(N)}$ in $\mathcal{P}\left(\mathcal{P}(X)\right):$
\[
\text{(i)\,}\Gamma_{\infty}^{e}\,\,\text{is supported on \ensuremath{\{E(\mu)\geq e\}\subset\mathcal{P}(X)} }
\]
\[
\text{(ii)\,}\limsup_{N\rightarrow\infty}\int_{X^{N}}E^{(N)}\mu_{]e,e+\epsilon[}^{(N)}\leq\int_{\mathcal{P}(X)}E(\mu)\Gamma_{\infty}^{e}
\]
\end{prop}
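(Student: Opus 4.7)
The plan is to follow the entropy sub-additivity strategy used in the proof of Theorem \ref{thm:LDP for quasi super}, adapted to the high-energy regime. Write $\mu^{(N)}$ for $\mu_{]e,e+\epsilon[}^{(N)}$ and set $\beta:=\beta(e)\leq 0$ (using $e\geq e_{0}$ and Theorem \ref{thm:var pr Fano text}). The assumed convergence $-N^{-1}\log Z_{N,\beta}\to F(\beta)$ at $\beta(e)$, together with Theorem \ref{thm:general conv of mean entropy} applied to the upper energy shell, yields
\[
N^{-1}\log\int_{\{E^{(N)}\in\,]e,e+\epsilon[\}}dV^{\otimes N}\to S(e),
\]
which, since the Boltzmann entropy of $\mu^{(N)}$ relative to $dV^{\otimes N}$ equals the logarithm of this integral, translates into $D^{(N)}(\mu^{(N)}):=-N^{-1}S(\mu^{(N)})\to-S(e)$.

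Let $\Gamma_{\infty}^{e}$ be any accumulation point of the laws $(\delta_{N})_{*}\mu^{(N)}$ in the compact space $\mathcal{P}(\mathcal{P}(X))$. The Robinson--Ruelle sub-additivity inequality \ref{eq:sub-add D-1} gives
\[
\int_{\mathcal{P}(X)}-S(\mu)\,d\Gamma_{\infty}^{e}(\mu)\leq\liminf_{N}D^{(N)}(\mu^{(N)})=-S(e).
\]
For the energy, either (i) or (ii) provides the companion lower bound $\int E(\mu)\,d\Gamma_{\infty}^{e}\geq e$: under (i) this is immediate from the support condition, while under (ii) it follows from the pointwise inequality $E^{(N)}>e$ on the support of $\mu^{(N)}$ combined with the assumed limsup bound. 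Since $\beta\leq 0$, multiplying reverses the inequality to $\int\beta E\,d\Gamma_{\infty}^{e}\leq\beta e$, and summing with the entropy bound yields the key estimate
\[
\int_{\mathcal{P}(X)}F_{\beta}(\mu)\,d\Gamma_{\infty}^{e}(\mu)\leq\beta e-S(e)=F(\beta).
\]

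To conclude, recall from Theorem \ref{thm:var pr Fano text} that $\mu^{e}$ is the unique minimizer of $F_{\beta}$ on $\mathcal{P}(X)$, with minimum value $F(\beta)$. Since $F_{\beta}(\mu)\geq F(\beta)$ pointwise, integrating against the probability measure $\Gamma_{\infty}^{e}$ forces the opposite inequality $\int F_{\beta}\,d\Gamma_{\infty}^{e}\geq F(\beta)$. Equality therefore holds, so $F_{\beta}(\mu)=F(\beta)$ for $\Gamma_{\infty}^{e}$-a.e.\ $\mu$, i.e.\ $\Gamma_{\infty}^{e}=\delta_{\mu^{e}}$. As every subsequential limit of $(\delta_{N})_{*}\mu^{(N)}$ coincides with $\delta_{\mu^{e}}$, the full sequence converges to $\delta_{\mu^{e}}$ in $\mathcal{P}(\mathcal{P}(X))$, which is precisely convergence in probability of $\delta_{N}$ to $\mu^{e}$. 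The one non-routine step is producing the lower bound $\int E\,d\Gamma_{\infty}^{e}\geq e$: the Gamma-convergence/Fatou input that drove the analogous argument in Theorem \ref{thm:LDP for quasi super} now points the wrong way because $\beta<0$, and hypotheses (i), (ii) are precisely the alternative tightness-type controls that compensate for this loss.
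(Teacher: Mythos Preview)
Your argument is correct and follows the same entropy sub-additivity strategy as the paper. The only noticeable difference is in the handling of case~(i): you first extract $\int E\,d\Gamma_\infty^e\geq e$ from the support condition and then route everything through $F_\beta$, whereas the paper bypasses $F_\beta$ entirely in this case, concluding directly from $\int(-S)\,d\Gamma_\infty^e\leq -S(e)$ together with the fact that $\mu^e$ is the \emph{unique} minimizer of $-S$ on $\{E\geq e\}$ (Theorem~\ref{thm:micro var principle Fano intro}). Your unified treatment is perfectly valid but slightly less economical for~(i); for~(ii) the two proofs are identical, the paper also noting that $F_\beta$ remains lsc and bounded below for $\beta>-R(X)$ (via the energy--entropy continuity and $F(\beta-\epsilon)>-\infty$), which justifies the passage from $\int F_\beta\,d\Gamma_\infty^e\leq F(\beta)$ to $\Gamma_\infty^e=\delta_{\mu^e}$.
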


\begin{proof}
Using the notation in the proof of Theorem \ref{thm:LDP for quasi super}
we have, as in formula \ref{eq:sub-add D-1},
\begin{equation}
\int_{\mathcal{P}(X)}\Gamma_{\infty}^{e}D^{(1)}(\mu)\leq\liminf_{N\rightarrow\infty}D^{(N)}(\mu_{]e,e+\epsilon[}^{(N)})\label{eq:sub-add D}
\end{equation}
 for any accumulation point $\Gamma_{\infty}^{e}$ of $(\delta_{N})_{*}\mu_{]e,e+\epsilon[}^{(N)}$
in $\mathcal{P}\left(\mathcal{P}(X)\right).$ The right hand side
above equals $-S(e),$ as follows from combining Theorems \ref{thm:general conv of mean entropy },
\ref{thm:var pr Fano text} with the assumed convergence \ref{eq:conv on Z in Thm cond Fano}.
Hence,
\[
\int_{\mathcal{P}(X)}\Gamma_{\infty}^{e}D^{(1)}(\mu)\leq D^{(1)}(\mu^{e}).
\]
 If condition $(i)$ holds it thus follows that $\Gamma_{\infty}^{e}=\delta_{\mu^{e}},$
since $D^{(1)}$ is lsc on $\mathcal{P}(X)$ and $\mu^{e}$ is the
unique minimizer of $D^{(1)}(\mu)$ on $\{E(\mu)\geq e\}\subset\mathcal{P}(X)$
(by Theorem \ref{thm:micro var principle Fano intro} ).

Alternatively, if condition $(ii)$ holds we can proceed as in the
proof of Theorem \ref{thm:LDP for quasi super}, where $\beta:=S'(e)>0),$
now using that $\beta<0$ and that $\mu^{e}$ is still the unique
minimizer of $F_{\beta},$ by Theorem \ref{thm:var pr Fano text}
and Lemma \ref{lemma:macrostate equivalence} (also using that $F_{\beta}$
is still lsc on $\mathcal{P}(X)$ when $\beta>-R(X),$ as follows
from combining the continuity in Theorem \ref{thm:prop of energies}
with the lower bound $F(\beta-\epsilon)>-\infty$, for $\epsilon$
sufficiently small). 
\end{proof}
As shown in \cite{be2}, the convergence \ref{eq:conv on Z in Thm cond Fano}
holds for all $\beta\in]-R(X),0[$ iff the following energy bound
holds for any accumulation point $\Gamma_{\beta,\infty}(\mu)$ of
$(\delta_{N})_{*}\mu_{\beta}^{(N)}$ in $\mathcal{P}\left(\mathcal{P}(X)\right)$
(where $\mu_{\beta}^{(N)}$ denotes the Gibbs measure \ref{eq:Gibbs meas}):
\[
\limsup_{N\rightarrow\infty}\int_{X^{N}}E^{(N)}\mu_{\beta}^{(N)}\leq\int_{\mathcal{P}(X)}E(\mu)\Gamma_{\beta,\infty}(\mu).
\]
 Thus Prop \ref{thm:cond II} yields the convergence in probability
of $\delta_{N}$ under the condition that the mean energy satisfies
appropriate upper bounds in both the microcanonical and the canonical
ensembles. When $n=1$ condition $(ii)$ is equivalent to the 
\[
\limsup_{N\rightarrow\infty}\int_{X^{2}}-G(x,y)\left(\int_{X^{N-2}}\mu_{]e,e+\epsilon[}^{(N)}\right)\leq\int_{X^{2}}-G(x,y)\mu_{2}^{(\infty)},
\]
for any accumulation point $\mu_{2}^{(\infty)}$ of $\int_{X^{N-2}}\mu_{]e,e+\epsilon[}^{(N)}$
in $\mathcal{P}(X^{2}),$ where $G(x,y)$ denotes the usc Green function
for the Laplacian wrt the Kähler metric defined by $dV$ (see Section
\ref{subsec:The-case-of the two sphere}). 

\subsubsection{Proof of Theorem \ref{thm:n is one intro}}

When $n=1$ the convergence assumed in Theorem \ref{thm:conditional Fano}
follows directly from the LDP for the Gibbs measures with $\beta<0$
established in \cite{be2}. Hence, Theorem \ref{thm:n is one intro}
follows from Theorem \ref{thm:conditional Fano}.

\subsection{\label{subsec:Generalization-to-log Fano}Generalization to log Fano
varieties $(X,\Delta)$}

The results above generalize (with the same proofs) to \emph{log Fano
varieties} $(X,\Delta),$ consisting of a normal projective variety
$X$ and an effective divisor $\Delta$ on $X$ such that $-(K_{X}+\Delta)$
defines an ample $\Q-$line bundle \cite{bbegz,li1,l-x-z}. Compared
to the general setup, introduced in Section \ref{subsec:Setup intro},
one then takes $L:=-(K_{X}+\Delta)$ and fix the data $(dV,\theta)$
consisting of volume form $dV$ and $\theta$ satisfying the compatibility
relation \ref{eq:theta is Ricci-1}. In the generalization of the
results for Fano varieties above the regular locus $X_{\text{reg }}$
should then be replaced by the log regular locus, defined as the complement
of the support of $\Delta$ in $X_{\text{reg }}.$

For example, when $n=1$ a log Fano variety $(X,\Delta)$ consists
of a collection of $m$ points $p_{1},...,p_{m}$ on the Riemann sphere
$\P^{1}$ with weights $w_{i}\in]0,1[$ such that $2-\sum_{i}w_{i}<0.$
When $m\geq3$ the K-polystability of $(X,\Delta)$ amounts to Troyanov's
weight condition \cite{tr}
\[
w_{i}<\sum_{i\neq j}w_{j}
\]
 and when $m\leq2$ the condition for K-polystability is obtained
by replacing $<$ in the previous inequality with $\leq.$ Moreover,
\[
R(X,\Delta)=2\frac{1-\max_{i}w_{i}}{2-\sum_{i}w_{i}}
\]

In particular, the generalization of Theorem \ref{thm:n is one intro}
holds to log Fano curves $(\P^{1},\Delta)$ if, in the statement about
exponential convergence, $\delta_{N}$ is replaced by the real-valued
random variable $\left\langle \delta_{N},u\right\rangle ,$ for a
given $C^{2}-$smooth function $u,$ compactly supported function
in the complement of the support of $\Delta.$ One advantage of this
logarithmic setup is that if $(\P^{1},\Delta)$ is K-polystable and
$m\geq3,$ then Theorem \ref{thm:micro var principle Fano intro}
and Theorem \ref{thm:n is one intro} extend to $e\in]0,e_{c}+\delta[$
for some $\delta>0.$ Indeed, in that case $F_{\beta}(\mu)$ admits
a minimizer when $\beta\in]\beta_{0},\infty[$ for some $\beta_{0}<-1.$
Moreover, using the implicit function theorem it can be shown, as
in the proof of \cite[Thm 8]{be00}, that the corresponding equation
\ref{eq:ma eq with beta} has a unique solution for $\beta\in]\beta_{0},\infty[.$
As a consequence, $F_{\beta}(\mu)$ has a unique minimizer when $\beta\in]\beta_{0},\infty[.$
Moreover, by results in \cite{be2}, the corresponding Gibbs measures
satisfy a LDP when $\beta\in]\beta_{0},\infty[.$ We can thus apply
Prop \ref{thm:general conv of mean entropy } on $]\beta_{0},\infty[$
and proceed as before.

\subsection{Results on general compact Kähler manifolds}

Let now $X$ be a compact Kähler manifold and consider the general
setup introduced in Section \ref{subsec:Setup intro}. More generally,
assume that $dV$ has a density in $L^{p}(X)$ for some $p>1.$ As
shown very recently in \cite{l-p}, there then exists some $\beta_{0}\in]-\infty,0[$
such that the equation \ref{eq:ma eq with beta} has a unique solution
$u_{\beta}$ for any $\beta\in]\beta_{0},\infty[$ (the number $\beta_{0}$
depends on $n$ and the $L^{p}(X)-$norm of the density of $dV).$
As a consequence, the proof of Theorem \ref{thm:var pr Fano text}
reveals that the results in \ref{thm:var pr Fano text} hold on a
general compact Kähler manifold $X$ when $e<e(\beta_{0}),$ where
$e(\beta_{0}):=\lim_{\beta\searrow\beta_{0}}E(\mu_{\beta})$ for $\mu_{\beta}:=\theta_{u_{\beta}}^{n}/\text{vol}\ensuremath{(L)}.$

\section{\label{sec:Hamiltonian-flows-and}Hamiltonian flows and the complex
Euler-Monge\textendash Ampère equation}

Now specialize the setup introduced in Section \ref{subsec:Setup intro}
to the case when $L$ is ample and $dV$ is a smooth volume form on
$X.$ After rescaling, $dV$ can be expressed as $dV=\omega_{X}^{n},$
for a unique Kähler form $\omega_{X}$ in $c_{1}(L)$ (by solving
the Calabi-Yau equation \ref{eq:MA eq with mu}). The Kähler form
$\omega_{X}$ naturally induces a Kähler form $\omega_{X^{N}}$ on
$X^{N}$ that we normalize as follows 
\[
\omega_{X^{N}}:=\frac{1}{N}\left(\omega_{X}(x_{1})+....+\omega_{X}(x_{N})\right)
\]
The Hamiltonian flow of $E^{(N)}$ on the symplectic manifold $(X^{N},\omega_{X^{N}})$
can be expressed as follows, using that $\omega_{X}$ is a compatible
with the complex structure $J:$ 
\begin{equation}
\frac{dx_{i}(t)}{dt}=-J\nabla_{x_{i}}E^{(N)}\left((x_{1}(t),...,x_{N}(t))\right),\label{eq:Ham flow of E N}
\end{equation}
 where $\nabla$ denotes the gradient with respect to the corresponding
Kähler metric $\omega_{X}.$
\begin{lem}
Assume given $\boldsymbol{x}(0):=(x_{1}(0),....x_{N}(0))\in X^{N}$
such that $E^{(N)}(\boldsymbol{x}(0))=e\in\R.$ Then there exists
a unique long-time solution $\boldsymbol{x}(t):=(x_{1}(t),....x_{N}(t))\in X^{N}$
of the Hamiltonian flow of $E^{(N)}$ on $(X^{N},\omega_{X^{N}})$,
emanating from $\boldsymbol{x}(0).$ 
\end{lem}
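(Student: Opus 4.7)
The plan is to combine the lower semi-continuity of $E^{(N)}$, which confines the trajectory to a compact subset of the locus where the Hamiltonian is smooth, with standard ODE theory on this compact set.

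First I would record where the Hamiltonian is regular. Set $Z:=\{\Psi^{(k)}=0\}\subset X^{N}$. Since $\Psi^{(k)}$ is a holomorphic section of a line bundle over $X^{N}$, $Z$ is a closed analytic subset and
\[
\Omega:=X^{N}\setminus Z=\{E^{(N)}<\infty\}
\]
is open. On $\Omega$, the function $-\log\|\Psi^{(k)}\|^{2}$ is smooth (as $\log$ of the positive smooth function $\|\Psi^{(k)}\|^{2}$), hence so is $E^{(N)}$. The Hamiltonian vector field $V:=-J\nabla E^{(N)}$ with respect to the product K\"ahler metric on $X^{N}$ (which is well defined component-wise as in \eqref{eq:Ham flow of E N}) is therefore smooth on $\Omega$. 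Picard--Lindel\"of applied in local charts covering any relatively compact subset of $\Omega$ gives local existence and uniqueness of the integral curve $\boldsymbol{x}(t)$ of $V$ emanating from $\boldsymbol{x}(0)\in\Omega$.

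Next I would establish long-time existence by an invariant-set argument. Since $E^{(N)}$ is smooth on $\Omega$, it is a classical first integral of $V$, so $E^{(N)}(\boldsymbol{x}(t))=E^{(N)}(\boldsymbol{x}(0))=e$ for every time $t$ at which the solution is defined. Thus $\boldsymbol{x}(t)\in K:=\{E^{(N)}\le e\}\subset X^{N}$. Because $E^{(N)}$ is lsc on $X^{N}$ (recalled in the setup) and $X^{N}$ is compact, the sublevel set $K$ is closed in $X^{N}$ and hence compact; since every point of $K$ has $E^{(N)}<\infty$, one has $K\subset\Omega$, so $K$ is a compact subset of $\Omega$ on which $V$ is smooth and therefore bounded. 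Let $T>0$ be the maximal existence time. If $T<\infty$, compactness of $K$ yields a sequence $t_{n}\nearrow T$ with $\boldsymbol{x}(t_{n})\to\boldsymbol{x}_{\ast}\in K\subset\Omega$; the local existence theorem applied at $\boldsymbol{x}_{\ast}$ produces a solution on a uniform time interval about $\boldsymbol{x}_{\ast}$, which contradicts the maximality of $T$. Therefore $T=\infty$, and uniqueness on $[0,\infty)$ follows from the uniqueness on each compact subinterval.

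The only potential obstacle is to exclude that the trajectory approaches the singular locus $Z$ in finite time, and this is exactly what conservation of $E^{(N)}$ together with the lower semi-continuity forbids: a sequence converging to a point of $Z$ would have $E^{(N)}\to\infty$ by lsc, contradicting $E^{(N)}(\boldsymbol{x}(t))\equiv e$. All the remaining steps are standard applications of Picard--Lindel\"of and the conservation law for Hamiltonian flows, so there is no genuine analytic difficulty once the compact invariant set $K\subset\Omega$ has been identified.
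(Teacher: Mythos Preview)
Your proof is correct and follows essentially the same approach as the paper's: identify the smooth locus $\Omega=\{E^{(N)}<\infty\}$, use conservation of $E^{(N)}$ along the Hamiltonian flow to trap the trajectory in a compact subset of $\Omega$, and invoke standard ODE theory. Your use of the lsc sublevel set $K=\{E^{(N)}\le e\}$ as the compact invariant set is slightly cleaner than the paper's phrasing (which bounds the distance to the singular set $D_N$), but the content is the same.
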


\begin{proof}
Let $D_{N}$ be the proper closed subset of $X^{N}$ where $E^{(N)}=\infty.$
Since $X^{N}$ is compact and $E^{(N)}$ is smooth on $X^{N}-D_{N}$
the existence on some time-interval $[0,T_{N}]$ follows from standard
existence results for ODEs. Next, note that since the flow is Hamiltonian
the Hamiltonian $E^{(N)}$ is constant along the flow. But then is
follows that the distance of $\boldsymbol{x}(t)$ to $D_{N}$ is bounded
by a constant only depending on $E^{(N)}(\boldsymbol{x}(t)),$ which
means that the flow stays in a compact subset of $X^{N}-D_{N}.$ Hence,
the maximal time of existence $T_{N}$ is, in fact, equal to $\infty,$
as desired.
\end{proof}
The next conjecture proposes that the large $N-$limit of the Hamiltonian
flow \ref{eq:Ham flow of E N} converges to a solution to the evolution
equation, 

\begin{equation}
\frac{\partial\rho_{t}}{\partial t}=-\nabla\cdot(\rho_{t}J\nabla u_{\rho_{t}}),\,\,\,\,\frac{1}{\text{Vol \ensuremath{(L)}}}(dd^{c}u_{\rho_{t}}+\theta)^{n}=\rho_{t}\omega_{x}^{n},\label{eq:complex euler monge text}
\end{equation}
 that we shall call the\emph{  complex Euler-Monge\textendash Ampère
equation} on $X.$ A solution will be called \emph{smooth} if, for
any fixed $t>0,$ $\rho_{t}$ and $u_{\rho_{t}}$ are in $C^{\infty}(X)$
and $t\mapsto\rho_{t}(x)$ is differentiable for any fixed $x\in X.$
When considering the corresponding initial value problem we also assume
that $\rho_{t}\rightarrow\rho_{0}$ in $L^{\infty}(X),$ as $t\rightarrow0.$ 
\begin{conjecture}
\label{conj:mean field limit of flow}Assume that at time $t=0$ 
\begin{equation}
\lim_{N\rightarrow\infty}\delta_{N}\left((x_{1}(t),....,x_{N}(t))\right)=\rho_{0}dV,\label{eq:conv towards rho zero in conj}
\end{equation}
 then at any later time $t>0$
\[
\lim_{N\rightarrow\infty}\delta_{N}\left((x_{1}(t),....,x_{N}(t))\right)=\rho_{t}dV,
\]
 where $\rho_{t}$ is a solution of the corresponding initial problem
for the evolution equation \ref{eq:complex euler monge text}. 
\end{conjecture}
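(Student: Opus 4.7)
The plan is to follow the standard empirical-measure route to mean-field limits, adapted to the Hamiltonian setting: derive a weak (test-function) formulation of the finite-$N$ dynamics, obtain time-uniform compactness of the sequence of curves $t\mapsto\delta_{N}(\boldsymbol{x}(t))$ in $\mathcal{P}(X)$, extract subsequential limits, identify each as a distributional solution of \ref{eq:complex euler monge text}, and close the argument by invoking (or proving) uniqueness for the limit PDE. The structural input driving the identification is that $E(\mu)$ is the Gamma-limit of $E^{(N)}$ and that, formally, the first variation of $E$ at $\mu$ equals $-u_{\mu}$, so the Hamiltonian vector field of $E$ on $\mathcal{P}(X)$ with respect to the coadjoint-orbit symplectic structure described in Section \ref{subsec:Relations-to-Hamiltonian} is precisely $\mu\mapsto-\nabla\cdot(\mu\,J\nabla u_{\mu})$. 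Concretely, I would test \ref{eq:Ham flow of E N} against an arbitrary $\phi\in C^{\infty}(X)$ to obtain
\begin{equation*}
\frac{d}{dt}\langle\delta_{N}(\boldsymbol{x}(t)),\phi\rangle=-\frac{1}{N}\sum_{i=1}^{N}\bigl\langle d\phi(x_{i}(t)),\,J\,\nabla_{x_{i}}E^{(N)}(\boldsymbol{x}(t))\bigr\rangle.
\end{equation*}

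The crucial step is to prove a discrete gradient identity stating that the right-hand side above is, up to an error tending to zero as $N\to\infty$, equal to $-\int_{X}\langle d\phi,J\nabla u_{\delta_{N}^{\mathrm{reg}}}\rangle\,\delta_{N}^{\mathrm{reg}}$, for a suitable regularization $\delta_{N}^{\mathrm{reg}}$ of the empirical measure (a natural candidate being the normalized Bergman measure $B_{k}(x,x)^{-1}\sum_{i}|B_{k}(x,x_{i})|^{2}\,dV$, which is comparable to a convolution of $\delta_{N}$ by the Bergman kernel at scale $k^{-1/2}$ and for which the complex Monge-Amp\`ere potential $u$ is classically defined). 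This is the discrete analog of the first-variation computation of $E(\mu)$ carried out in \cite{bbgz,berm6}; in the present Bergman-determinantal setting it must be extracted from off-diagonal Bergman-kernel asymptotics in the joint limit $k,N\to\infty$ with $N=N_{k}$, together with the preservation of $E^{(N)}$ along the flow, which quantitatively prevents $\boldsymbol{x}(t)$ from approaching the locus where $-k^{-1}\log\|\Psi^{(k)}\|^{2}$ is singular.

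Once this identity is in hand, a uniform-in-$N$ bound of the form $|\tfrac{d}{dt}\langle\delta_{N},\phi\rangle|\le C(e)\|\phi\|_{C^{1}}$, combined with the continuity property \ref{eq:energy entrop cont} of $E$ on sublevel sets of the entropy, yields by Arzel\`a-Ascoli on curves in $\mathcal{P}(X)$ (with the weak topology) a subsequential limit $t\mapsto\rho_{t}\omega_{X}^{n}$ which, by conservation of $E^{(N)}$ along the flow, lies in the level set $\{E(\mu)=e\}$ for every $t$. Passing to the limit in the weak formulation then identifies $\rho_{t}$ as a distributional solution of \ref{eq:complex euler monge text} with initial datum $\rho_{0}$, provided one can show that $u_{\delta_{N}^{\mathrm{reg}}}\to u_{\rho_{t}}$ in a topology strong enough for $\nabla u_{\delta_{N}^{\mathrm{reg}}}$ to pair with $\delta_{N}^{\mathrm{reg}}$ against $d\phi$; here \ref{eq:energy entrop cont}, combined with the stability of the complex Monge-Amp\`ere equation of \cite{bbegz}, is the natural tool.

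The hard part lies in two interlocking places. First, the discrete gradient identity itself: because $\delta_{N}$ is a pure sum of Dirac masses, $u_{\delta_{N}}$ has no classical meaning, and $\nabla_{x_{i}}\bigl(-k^{-1}\log\|\Psi^{(k)}\|^{2}\bigr)$ carries a logarithmic singular contribution from the Slater determinant along the discriminant in $X^{N}$; off-diagonal Bergman-kernel asymptotics of the requisite uniformity in the fully non-linear regime $n>1$ are not currently available, and this is precisely what presently prevents the conjecture from being a theorem. Secondly, uniqueness for \ref{eq:complex euler monge text}: in the case $n=1$ it reduces to the classical Yudovich theory for the 2D incompressible Euler equation, and on the flat two-torus to the semigeostrophic well-posedness of \cite{be-br,Loe}, but for general K\"ahler $X$ with $n\ge2$ no uniqueness theory is known, so even with the discrete-to-continuum step in hand one would at best obtain a weak compactness statement rather than the single-limit assertion of the conjecture.
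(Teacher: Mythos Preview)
The statement in question is a \emph{conjecture}, and the paper does not prove it. What the paper offers (in Sections~7.1--7.2) are two explicitly labeled \emph{formal} ``proofs'': the first invokes standard mean-field limit theory for ODEs of mean-field type, pretending that $E^{(N)}(\boldsymbol{x}_{N})=E(\delta_{N}(\boldsymbol{x}_{N}))$ (which is false, since the right-hand side is identically $+\infty$), and the second recasts \eqref{eq:complex euler monge text} as the Hamiltonian flow of $E$ on coadjoint orbits in $\mathcal{P}(X)$ and appeals to the Gamma-convergence $E^{(N)}\to E$. Both are heuristic and the paper says so.

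Your proposal is in effect an attempt to make the first formal argument rigorous: weak formulation, compactness via Arzel\`a--Ascoli, identification of limit points as distributional solutions, and closure by uniqueness. This is a reasonable research program and you correctly identify the central obstruction, namely that $\nabla_{x_{i}}E^{(N)}$ cannot be directly rewritten as $-\nabla u_{\delta_{N}}$ because $u_{\delta_{N}}$ does not exist, so some Bergman-type regularization together with quantitative off-diagonal asymptotics would be needed. The paper does not go down this road at all; your Bergman-kernel regularization idea is your own.

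One correction: you assert that for $n\ge 2$ ``no uniqueness theory is known'' for \eqref{eq:complex euler monge text}. In fact the paper establishes precisely such a result in Section~7.4: an $H^{-1}$-stability estimate (the Theorem there) and, as a consequence, uniqueness of smooth solutions with strictly positive initial density (Corollary~\ref{cor:(Uniqueness)Let--and}). So the second of your two ``hard parts'' is already handled at the level of smooth solutions; what remains genuinely open is the discrete-to-continuum step and the existence of smooth solutions for $n\ge 2$.
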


To make this conjecture precise the regularity of $\rho_{0}$, the
notion of a solution and the precise meaning of the convergence should
be specified.

It seems natural to ask is there exist global in time smooth solution
to equation \ref{eq:complex euler monge text};
\[
\text{Question:\,If \ensuremath{\rho_{0}\in C^{\infty}(X),\,\rho_{0}>0,}then is there a smooth solution \ensuremath{\rho_{t}} }\text{for any \ensuremath{t\in[0,\infty[?}}
\]
When $n=1$ the affirmative answer and the uniqueness of the solution
follows from well-known results for incompressible Euler type equations
in 2D, as discussed in Section \ref{sec:Comparison-with-point}. In
the present more general setup uniqueness of smooth solutions is proved
in Section \ref{subsec:A-stability-estimate} below.
\begin{rem}
\label{rem:SG}When $n=2$ and $X$ is the torus $(S^{1}\times iS^{1})\times(S^{1}\times iS^{1}),$
for $S^{1}=\R/\Z,$ $\theta$ and $dV$ are invariant and the initial
data is taken to be independent of the imaginary factors, existence
of \emph{weak }solutions follows from the existence of weak quasi-convex
solutions to the dual formulation of the semi-geostrophic equation
\cite{be-br,Loe}. The uniqueness of solutions to the latter equation
is established in \cite{Loe,f-t,Fi} under various regularity assumptions. 
\end{rem}

We next suggest two formal ``proofs'' of the convergence in Conjecture
\ref{conj:mean field limit of flow}. The starting point is the following
formula \cite[Prop 2.7]{berm6} for the differential $dE_{|\mu}$
of the pluricomplex energy $E$ restricted to the space of smooth
volume forms in $\mathcal{P}(X):$ 
\begin{equation}
dE_{|\mu}=-u_{\mu},\label{eq:diff of E}
\end{equation}
 where $u_{\mu}$ is a solution to the to the complex Monge-Ampère
equation \ref{eq:MA eq with mu} (using that, by duality, the differential
of a function on $\mathcal{P}(X)$ may, under appropriate regularity
assumptions, be represented by an element in $C^{0}(X)/\R).$ 

\subsection{Formal ``proof'' of Conjecture \ref{conj:mean field limit of flow}
using mean field limits}

Let $X$ be a compact manifold and $\boldsymbol{b}^{(N)}$ a sequence
of vector fields on $X^{N}.$ Consider the flow on $X^{N}$ defined
by the corresponding ODE:
\begin{equation}
\frac{d\boldsymbol{x}_{N}(t)}{dt}=\boldsymbol{b}^{(N)}(\boldsymbol{x}_{N}(t))\label{eq:flow of b N}
\end{equation}
Assume that the flows are symmetric and that $\boldsymbol{b}^{(N)}$
is of the form 
\[
\boldsymbol{b}^{(N)}(\boldsymbol{x}_{N}(t))=\left(b_{\delta_{N}(\boldsymbol{x}_{N}(t))}(x_{1}(t)),....,b_{\delta_{N}(\boldsymbol{x}_{N}(t))}(x_{1}(t))\right)
\]
where $b_{\mu}$ is a vector field on $X$ for any given $\mu\in\mathcal{P}(X).$
Then $\boldsymbol{b}^{(N)}$ is said to be of \emph{mean field type.}
Under suitable regularity assumptions for the map 
\[
\mathcal{P}(X)\times X\rightarrow TX,\,\,\,(\mu,x)\mapsto b_{\mu}(x)
\]
 (requiring, in particular, that $x\mapsto$ $b_{\mu}(x)$ be Lipschitz
continuous) the flows \ref{eq:flow of b N} admit a ``mean field
limit'' as $N\rightarrow\infty$ in the following sense: if at $t=0$
\[
\frac{1}{N}\sum_{i=1}^{N}\delta_{x_{i}(0)}\rightarrow\mu_{0},
\]
weakly in $\mathcal{P}(X),$ then, at any later time $t>0,$ 
\[
\frac{1}{N}\sum_{i=1}^{N}\delta_{x_{i}(t)}\rightarrow\mu_{t},
\]
weakly in $\mathcal{P}(X),$ where $\mu_{t}$ is the unique solution
to the following evolution equation on $\mathcal{P}(X):$ 
\[
\frac{d\mu_{t}}{dt}=\mathcal{L}_{b_{\mu_{t}}}\mu_{t},
\]
 where $\mathcal{L}_{b_{\mu_{t}}}$ denotes the Lie derivative of
$\mu_{t}$ along the vector field $b_{\mu_{t}}.$ This is essentially
well-known (for example, this convergence is the deterministic version
of the results in \cite{d-g} and \cite[Section 6]{m-m-w}). In particular,
if $X$ is endowed with a Riemannian metric $g$ and $\mu_{0}=\rho_{0}dV_{g}$
then $\mu_{t}=\rho_{t}dV$ where $\rho_{t}$ is a solution to the
following evolution equation 
\[
\frac{\partial\rho_{t}}{\partial t}=-\nabla\cdot(\rho_{t}b_{\rho_{t}}).
\]
 To see the connection to Conjecture \ref{conj:mean field limit of flow}
now assume that 
\[
\boldsymbol{b}^{(N)}(\boldsymbol{x}_{N}(t))=-J\nabla E^{(N)},
\]
 where 
\begin{equation}
E^{(N)}(\boldsymbol{x}_{N})=E(\delta_{N}(\boldsymbol{x}_{N}(t))\label{eq:E N as E of delta N text}
\end{equation}
 for a given sufficiently regular functional $E$ on $\mathcal{P}(X).$
Then, by the chain rule, 
\[
b_{\mu}(x)=-J\nabla v_{\mu}(x),\,\,\,\,dE_{|\mu}=v_{\mu}
\]
 where $v_{\mu}$ is a smooth function on $X$ representing the differential
of $E$ at $\mu.$ In the present setup the identity \ref{eq:E N as E of delta N text}
does not hold (in fact, $E(\delta_{N}(\boldsymbol{x}_{N}(t))=\infty).$
Bur using instead the Gamma-convergence of $E^{(N)}$ towards $E$
yields a formal ``proof'' of Conjecture \ref{conj:mean field limit of flow}.

\subsection{\label{subsec:Formal-proof using Ham}Formal ``proof'' of Conjecture
\ref{conj:mean field limit of flow} using an infinite dimensional
Hamiltonian structure}

We will next exploit that the equation \ref{eq:complex euler monge text}
can be viewed as a Hamiltonian flow on $\mathcal{P}(X),$ inspired
by the well-known case of the incompressible Euler equation in $\R^{2}$
\cite{m-w,m-w-r-s-s}. Recall that, in general, a symplectic form
$\omega_{X}$ on a manifold $X$ induces a \emph{Poisson structure}
on $X,$ defined by the following Poisson brackets on $C^{\infty}(X):$
\[
\{f_{1},f_{2}\}:=\omega_{X}(V_{f_{1}},V_{f_{2}}),
\]
 where $V_{f}$ denotes the Hamiltonian vector field on $X$ with
Hamiltonian $f.$ The Hamiltonian flow of a function $h$ on $X$
acts by pull-back on $C^{\infty}(X)$ and is equivalent to the flow
on $C^{\infty}(X)$ defined by 
\begin{equation}
\frac{df_{t}}{dt}=-\{f_{t},h\}\label{eq:Heisenberg eq}
\end{equation}
In the present Kähler setting we have 
\[
\{f_{1},f_{2}\}:=-\nabla f_{1}\cdot J\nabla f_{2}.
\]
As a consequence, the complex Euler-Monge-Ampère equation \ref{eq:complex euler monge text}
may be expressed as follows (using that $\nabla\cdot(J\nabla\cdot)=0$): 

\begin{equation}
\frac{\partial\rho_{t}}{\partial t}=\{\rho_{t},u_{\rho}\},\,\,\,u_{\rho}=-dE_{|\rho}\label{eq:Poisson form of complex euler}
\end{equation}
This suggests that the flow $\rho_{t}$ may be viewed as a Hamiltonian
flow on $\mathcal{P}(X)$ with Hamiltonian $E(\mu).$\emph{ }To make
this precise first observe that the Poisson structure on $X$ naturally
induces a Poisson structure\emph{ }on $\mathcal{P}(X):$ 
\[
\{F_{1},F_{2}\}(\mu):=\left\langle \{dF_{1},dF_{2}\},\mu\right\rangle :=\int_{X}\{dF_{1},dF_{2}\}\mu
\]
 where the differential $dF$ of a smooth function $F$ on $\mathcal{P}(X)$
has been identified with an element in $C^{\infty}(X)$ in the usual
way. A direct calculation (using one integration by parts) reveals
that 
\begin{equation}
\frac{\partial F(\rho_{t}dV)}{\partial t}=-\{F,E\}(\rho_{t}dV)\label{eq:Heisenberg eq on space P X}
\end{equation}
 in analogy with formula \ref{eq:Heisenberg eq}. While the Poisson
structure on $\mathcal{P}(X)$ does not correspond to a global symplectic,
$\mathcal{P}(X)$ admits a foliation of symplectic leaves for the
Poisson structure. To see this denote by $\mathcal{G}$ the infinite
dimensional Lie group of Hamiltonian diffeomorphisms of $(X,\omega_{X})$
and by $\mathfrak{g}$ its Lie algebra. The symplectic leaves in question
are the orbits of $\mathcal{G}$ in $\mathcal{P}(X)$ (acting by push-forward).
Indeed, $\left(C^{\infty}(X),\{.,.\}\right)$ naturally identifies
with the Lie algebra $\mathfrak{g}$ of $\mathcal{G}$ and $\mathcal{P}(X)$
with a convex subset of its dual $\mathfrak{g}{}^{*}.$ This means
that the Poisson structure for $\mathcal{P}(X)$ identifies with the
natural Lie algebra structure on $\mathfrak{g}^{*}.$ By general theory,
$\mathcal{G}$ admits a natural action on $\mathfrak{g}{}^{*}$ -
the co-adjoint action - and the corresponding $\mathcal{G}-$orbits
admit a canonical symplectic structure, defined by the \emph{Kirillov-Kostant-Souriau
symplectic form} \cite{m-w,m-w-r-s-s}.\emph{ }In the present setting
the co-adjoint action simply acts by push-forward on $\mathcal{P}(X).$
Formula \ref{eq:Heisenberg eq on space P X} thus yields the following
\begin{prop}
Given smooth initial data $\rho_{0}$ the flow $\rho_{t}$ defined
by a classical solution of the  complex Euler-Monge-Ampère equation
\ref{eq:complex euler monge text} is the Hamiltonian flow of the
pluricomplex energy $E$ on the $\mathcal{G}-$orbit $\mathcal{G}\cdot(\rho_{0}\omega_{X}^{n})$
in $\mathcal{P}(X)$ endowed with the Kirillov-Kostant-Souriau symplectic
form. 
\end{prop}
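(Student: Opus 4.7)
My plan is to combine the Heisenberg-type identity \ref{eq:Heisenberg eq on space P X} derived in the paper with the standard realisation of coadjoint orbits as symplectic leaves of the Lie--Poisson structure on $\mathfrak{g}^{*}$, and to separately verify that the flow defined by \ref{eq:complex euler monge text} remains on a single such orbit. The argument then splits into three essentially short steps.

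First I would check that for a classical solution $\rho_{t}$ the measure $\rho_{t}\omega_{X}^{n}$ remains in the orbit $\mathcal{G}\cdot(\rho_{0}\omega_{X}^{n})$. Rewrite \ref{eq:complex euler monge text} as $\partial_{t}(\rho_{t}\omega_{X}^{n})=-\mathcal{L}_{V_{t}}(\rho_{t}\omega_{X}^{n})$ with $V_{t}:=J\nabla u_{\rho_{t}}$; the vector field $V_{t}$ is, up to the sign convention fixed by the paper's definition of $\{\cdot,\cdot\}$, the Hamiltonian vector field on $(X,\omega_{X})$ of $u_{\rho_{t}}$. Standard ODE theory on the compact manifold $X$ then produces a time-dependent diffeomorphism $\phi_{t}$ with $\phi_{0}=\mathrm{Id}$ and $\dot{\phi}_{t}=V_{t}\circ\phi_{t}$; since $V_{t}$ is Hamiltonian at every instant one has $\phi_{t}\in\mathcal{G}$, whence $\rho_{t}\omega_{X}^{n}=(\phi_{t})_{*}(\rho_{0}\omega_{X}^{n})$ lies on the orbit.

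Second I would invoke two standard Poisson-geometric facts. (a) For an (infinite-dimensional) Lie group $\mathcal{G}$ with Lie algebra $\mathfrak{g}$, the Lie--Poisson bracket on $\mathfrak{g}^{*}$ has the coadjoint orbits as its symplectic leaves, and on each orbit it is the bracket induced by the canonical Kirillov--Kostant--Souriau symplectic form. (b) On any Poisson manifold the Hamiltonian flow of $H$ is characterised by the identity $\partial_{t}F(\mu_{t})=-\{F,H\}(\mu_{t})$ for all smooth $F$, and its restriction to any symplectic leaf is the Hamiltonian flow of the restriction of $H$ to that leaf. The bracket on $\mathcal{P}(X)\subset\mathfrak{g}^{*}$ introduced just above \ref{eq:Heisenberg eq on space P X} is by construction the Lie--Poisson bracket under the pairing $\langle\mu,f\rangle=\int_{X}f\,\mu$. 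Applying (a) and (b) with $H=E$ to \ref{eq:Heisenberg eq on space P X}, together with Step 1 and the identification $dE_{|\mu}=-u_{\mu}$ from \ref{eq:diff of E}, identifies $\rho_{t}\omega_{X}^{n}$ with the Hamiltonian flow of $E|_{\mathcal{G}\cdot(\rho_{0}\omega_{X}^{n})}$ for the KKS form.

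The main obstacle is putting everything on rigorous footing in the infinite-dimensional setting: $\mathcal{P}(X)$ is only a convex subset of $\mathfrak{g}^{*}$, the coadjoint orbits are not \emph{a priori} smooth Fr\'echet submanifolds, the KKS form is \emph{a priori} only weakly nondegenerate, and $dE_{|\mu}$ is defined only formally via \ref{eq:diff of E}. For smooth strictly positive initial data $\rho_{0}$ a classical solution keeps $u_{\rho_{t}}$ smooth in space and time, tangent vectors to the orbit at $\rho\omega_{X}^{n}$ can be identified with $\{\xi,\rho\}\omega_{X}^{n}$ for $\xi\in C^{\infty}(X)$, and the single integration by parts underlying \ref{eq:Heisenberg eq on space P X} becomes a bona fide identity, which is enough to justify the preceding steps pointwise along the flow.
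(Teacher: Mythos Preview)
Your proposal is correct and follows essentially the same route as the paper: the paper derives the Heisenberg identity \ref{eq:Heisenberg eq on space P X} by a direct integration by parts, then identifies the Poisson bracket on $\mathcal{P}(X)$ with the Lie--Poisson bracket on $\mathfrak{g}^{*}$ for $\mathcal{G}$ the group of Hamiltonian diffeomorphisms, and concludes by invoking the general fact that coadjoint orbits are the symplectic leaves carrying the KKS form. Your Step~1 (explicitly integrating the time-dependent Hamiltonian vector field to show $\rho_{t}\omega_{X}^{n}$ stays on the orbit) and your remarks on the infinite-dimensional subtleties are welcome additions that the paper leaves implicit, but the overall architecture is the same.
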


The connection to Conjecture \ref{conj:mean field limit of flow}
stems from the basic observation that the map $\delta_{N}:X^{N}\rightarrow\mathcal{P}(X)$
intertwines the corresponding Poisson structures and the symplectic
forms of the corresponding $\mathcal{G}-$orbits. This means that
we can view the Hamiltonian flow \ref{eq:Ham flow of E N} for $E^{(N)}$
on $X^{N}$, with given initial data $(x_{1}(0),....,x_{N}(0)).$
as the Hamiltonian flow of $E^{(N)}$ on the $\mathcal{G}-$orbit
of $\delta_{N}\left((x_{1}(0),....,x_{N}(0))\right).$ Since $E^{(N)}$
Gamma-converges towards $E$ on $\mathcal{P}(X)$ and the $\mathcal{G}-$orbit
of $\delta_{N}\left((x_{1}(0),....,x_{N}(0))\right)$ converges (in
an appropriate sense) to the $\mathcal{G}-$orbit of $\rho_{0}dV$
in $\mathcal{P}(X)$ (assuming the convergence \ref{eq:conv towards rho zero in conj}),
Conjecture \ref{conj:mean field limit of flow} thus formally follows
from the previous proposition. 

\subsection{\label{subsec:Conserved-quantities}Conserved quantities}

As discussed in Section \ref{subsec:Relations-to-Hamiltonian}, combining
Conjecture \ref{conj:mean field limit of flow} with an assumption
about ergodicity suggests that the density of a maximum entropy measures
$\mu^{e}$ with prescribed energy $e$ is a stationary solution of
the evolution equation \ref{eq:complex euler monge text}. When $e\in]0,e_{0}[$
(or when $e\in]0,e_{c}[$ in the Fano setup) this follows from combining
Theorem \ref{thm:micro variational principle low energ intro} (or
Theorem \ref{thm:micro var principle Fano intro}) with the following
result:
\begin{prop}
\label{prop:Casimir}The normalized volume form of any twisted Kähler-Einstein
metric $\omega_{\beta}$ is a stationary solution to the evolution
equation \ref{eq:complex euler monge text}. Moreover, in general,
the pluricomplex energy $E$ is constant along the evolution and so
are the functionals 
\[
I_{f}(\rho):=\int_{X}f(\rho)\omega_{X}^{n},
\]
 defined by any given smooth function $f$ on $[0,\infty[.$ As a
consequence, $S(\rho\omega_{X}^{n})$ is constant along the evolution. 
\end{prop}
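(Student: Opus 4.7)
The plan is to exploit that the equation \ref{eq:complex euler monge text} is a continuity equation $\partial_t\rho_t+\nabla\cdot(\rho_t V_t)=0$ driven by the velocity field $V_t:=J\nabla u_{\rho_t}$, and that $V_t$ is a Hamiltonian vector field on the Kähler manifold $(X,\omega_X)$. In particular the antisymmetry of $J$ with respect to the Riemannian metric gives $\nabla f\cdot J\nabla f=0$ for any $f$, while the fact that the Hamiltonian flow of $u_{\rho_t}$ is symplectic gives $\nabla\cdot V_t=0$. These two identities do essentially all the work.

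First I would handle the stationarity of the twisted KE volume forms. By Theorem \ref{thm:minimizer satisf twisted} and equation \ref{eq:ma eq with beta}, writing $\rho_\beta:=\omega_\beta^n/\int_X\omega_\beta^n$ one has $\rho_\beta=c\,e^{\beta u_{\rho_\beta}}$ for a constant $c>0$, so $\rho_\beta$ is a smooth function $h$ of the potential $u_{\rho_\beta}$. Expanding
\[
\nabla\cdot(\rho_\beta J\nabla u_{\rho_\beta})=\nabla\rho_\beta\cdot J\nabla u_{\rho_\beta}+\rho_\beta\,\nabla\cdot(J\nabla u_{\rho_\beta}),
\]
the first term equals $h'(u_{\rho_\beta})\,\nabla u_{\rho_\beta}\cdot J\nabla u_{\rho_\beta}=0$ by antisymmetry of $J$, and the second vanishes because $J\nabla u_{\rho_\beta}$ is symplectic and hence preserves $\omega_X^n$. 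Thus $\partial_t\rho_\beta=0$, proving the first claim.

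For the conservation of the pluricomplex energy along a smooth solution I would differentiate under the integral and use the differential identity \ref{eq:diff of E}, $dE_{|\mu}=-u_\mu$:
\[
\frac{d}{dt}E(\rho_t\,\omega_X^n)=-\int_X u_{\rho_t}\,\partial_t\rho_t\,\omega_X^n=\int_X u_{\rho_t}\,\nabla\cdot(\rho_t J\nabla u_{\rho_t})\,\omega_X^n=-\int_X \rho_t\,\nabla u_{\rho_t}\cdot J\nabla u_{\rho_t}\,\omega_X^n=0,
\]
after one integration by parts and again invoking the antisymmetry of $J$. For the Casimir-type functionals $I_f$, the cleanest approach is to introduce the time-dependent flow $\phi_t$ on $X$ generated by $J\nabla u_{\rho_t}$; since the generator is divergence-free, $\phi_t^*\omega_X^n=\omega_X^n$, and the transport equation yields $\rho_t=\rho_0\circ\phi_t^{-1}$. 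A change of variables then gives
\[
I_f(\rho_t)=\int_X f(\rho_0\circ\phi_t^{-1})\,\omega_X^n=\int_X f(\rho_0)\,\phi_t^*\omega_X^n=I_f(\rho_0).
\]
Finally, the last statement is immediate from the Casimir calculation applied to $f(s)=-s\log s$, since under the standing convention $dV=\omega_X^n$ one has $S(\rho_t\omega_X^n)=-\int_X(\log\rho_t)\rho_t\,\omega_X^n=I_f(\rho_t)$.

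The main obstacle in this plan is not algebraic but regulatory: one needs enough smoothness to justify the integrations by parts, the existence and volume-preservation of the flow $\phi_t$, and the representation of $dE_{|\mu}$ by the (continuous) potential $u_\mu$. All of this is covered by the smoothness assumption in the definition of a smooth solution to \ref{eq:complex euler monge text} adopted just before the proposition, so the argument goes through; for weak solutions (such as those coming from the semi-geostrophic setting in Remark \ref{rem:SG}) these formal identities would have to be re-established by an approximation argument, but that is outside the scope of the present statement.
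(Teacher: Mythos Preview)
Your argument is correct and, for the stationarity of twisted K\"ahler--Einstein volume forms and the conservation of $E$, it is essentially identical to the paper's proof: both exploit $\nabla\cdot(J\nabla u)=0$ and the antisymmetry $\nabla f\cdot J\nabla f=0$, and both compute $dE/dt$ via the formula $dE_{|\mu}=-u_\mu$ followed by one integration by parts.

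The only genuine difference is in the treatment of the Casimirs $I_f$. The paper differentiates directly,
\[
-\frac{d}{dt}I_f(\rho_t)=\int_X\nabla f(\rho_t)\cdot J\nabla u_{\rho_t}\,\omega_X^n=-\int_X f(\rho_t)\,\nabla\cdot(J\nabla u_{\rho_t})\,\omega_X^n=0,
\]
whereas you instead integrate the time-dependent Hamiltonian vector field to a volume-preserving flow $\phi_t$, identify $\rho_t=\rho_0\circ\phi_t^{-1}$, and conclude by change of variables. Your route has the mild advantage that it does not use smoothness of $f$, so it applies directly to $f(s)=-s\log s$ and gives the entropy conservation without the $\epsilon$-regularization $f_\epsilon(s)=-(s+\epsilon)\log(s+\epsilon)$ that the paper invokes. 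The paper's route, on the other hand, avoids constructing the flow $\phi_t$ and stays entirely at the level of integration by parts. Both are short and equivalent in strength for smooth solutions.
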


\begin{proof}
For any given smooth function $h$ the vector field $J\nabla h$ is
divergence free, since it is a Hamiltonian vector field. Hence, the
evolution equation \ref{eq:complex euler monge text} can be expressed
as 
\[
\frac{\partial\rho_{t}}{\partial t}=-\nabla\rho_{t}\cdot J\nabla u_{\rho_{t}}
\]
 As a consequence, if $\rho$ has the property that $u_{\rho}$ is
a smooth function of $\rho:$ $u_{\rho}=f(\rho),$ then $\rho$ is
a stationary solution of equation\ref{eq:complex euler monge text}.
Indeed, by the chain rule $\nabla u_{\rho}=f'(\rho)\nabla\rho$ and
$\nabla\rho\cdot J\nabla\rho=0$ for any $\rho.$ In particular, taking
$f(\rho)=\beta^{-1}\log\rho$ for $\rho$ the density of the normalized
volume form of a twisted Kähler-Einstein metric corresponding to $\beta$
and using the equation \ref{eq:ma eq with beta} thus proves the last
statement of the proposition (using that $\rho>0).$ Similarly, 
\[
-\frac{\partial I_{f}(\rho_{t})}{\partial t}=\int\nabla f(\rho_{t})\cdot J\nabla u_{\rho_{t}}dV=-\int f(\rho_{t})\cdot\nabla(J\nabla u_{\rho_{t}})dV=0
\]
 using that $J\nabla u_{\rho_{t}}$ is divergence free. This applies,
in particular, to $f(t):=-(t+\epsilon)\log(t+\epsilon)$ and letting
$\epsilon\rightarrow0$ shows, using the monotone convergence theorem,
that $S(\rho_{t}\omega_{X}^{n})$ is constant along the evolution.
Finally, 
\[
\frac{\partial E(\rho_{t})}{\partial t}=-\int u_{\rho}\frac{\partial\rho_{t}}{\partial t}dV=\int\rho_{t}\nabla u_{\rho}\cdot J\nabla u_{\rho_{t}}dV=0
\]
\end{proof}
The previous proposition reveals that as soon as $\mu^{e}$ is uniquely
determined by the energy $e$ then it is stationary (since both $E(\mu)$
and $S(\mu)$ are preserved under the evolution). Moreover, when $e<e_{0},$
the stationary solution $\mu^{e}$ is \emph{formally stable} in the
sense of \cite{h-m-r-w}. More precisly, by the previous proposition
$F_{\beta}(\mu)$ is invariant under the flow for any $\beta$ and
for $\beta=S'(e)$ $\mu^{e}$ is a critical point of $F_{\beta}$
and the second variation of $F_{\beta}(\mu)$ is positive definite
at $\mu^{e}$ (since $F_{\beta}$ is strictly convex for any $\beta>0).$
In the Fano case this formal stability also holds when $e<e_{c}$
and $e=e_{c}$ if $\text{\ensuremath{\text{Aut \ensuremath{(X})}_{0}}}$
is trivial, using the well-known convexity properties of the (twisted)
Mabuchi functional with respect to the Mabuchi metric \cite{ma}. 

\subsubsection{Relations to mixing theory}

Now fix a smooth strictly positive probability density $\rho_{0}$
and assume that there exists a smooth solution $\rho_{t}$ to the
evolution equation \ref{eq:complex euler monge text} for $t\in]0,\infty[.$
Denote by $\Omega(\rho_{0})$ the limit points of the curve $t\mapsto\rho_{t}dV$
in\emph{ $\mathcal{P}(X)$ }as $t\rightarrow\infty.$ In general,
since $S(\rho_{t})$ and $E(\rho_{t})$ are constant it follows from
the continuity in Theorem \ref{thm:prop of energies} and the upper-semicontinuity
of $S(\mu)$ that 
\[
E(\mu)=E(\rho_{0}dV),\,\,\,S(\mu)\geq S(\rho_{0}dV)\,\,\,\forall\mu\in\Omega(\rho_{0})
\]
 More generally, $I_{f}(\mu)\geq I_{f}(\rho_{0}dV)$ for any smooth
$f$ such that $I_{f}$ is usc on $\mathcal{P}(X).$ This is in line
with the mixing theory for the Euler equation in \cite{shn}.

\subsection{\label{subsec:A-stability-estimate}A stability estimate and uniqueness
of solutions}
\begin{lem}
Let $\rho_{t}$ be a smooth solution to the evolution equation \ref{eq:complex euler monge text}.
Then $\sup\rho_{t}$ and $\inf\rho_{t}$ are independent of $t.$
Moreover, if $\inf_{X}\rho_{0}>0,$ then exists a strictly positive
constant $C$ only depending on upper bounds on $\sup_{X}\rho_{0}$
and $1/\inf_{X}\rho_{0}$ such that 
\[
C^{-1}\omega_{X}\leq\theta_{u_{t}}\leq C\omega_{X}
\]
\end{lem}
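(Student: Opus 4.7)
The plan is to exploit that the vector field $V_t := J\nabla u_{\rho_t}$ is divergence-free, being a Hamiltonian vector field on $(X,\omega_X)$. Expanding $\nabla\cdot(\rho_t V_t)=\nabla\rho_t\cdot V_t$, the evolution equation \ref{eq:complex euler monge text} reduces to the pure transport equation $\partial_t\rho_t + V_t\cdot\nabla\rho_t = 0$. Since $\rho_t$ and $u_{\rho_t}$ are smooth by assumption, $V_t$ is a smooth time-dependent vector field on $X$ whose flow $\phi_t$ is a one-parameter family of diffeomorphisms preserving the reference volume $\omega_X^n$. Hence $\rho_t=\rho_0\circ\phi_t^{-1}$, which immediately yields $\sup_X\rho_t=\sup_X\rho_0$ and $\inf_X\rho_t=\inf_X\rho_0$ for every $t$.

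For the two-sided bound, we therefore have $0<\inf\rho_0 \le \rho_t \le \sup\rho_0$ pointwise for all $t$. Write $\theta=\omega_X+dd^c\psi$ for the smooth function $\psi$ supplied by the $dd^c$-lemma (both classes live in $c_1(L)$), and set $v_t:=u_t+\psi$. Then $\theta_{u_t}=\omega_X+dd^c v_t$, and the Calabi-Yau relation in \ref{eq:complex euler monge text} becomes
\[
(\omega_X+dd^c v_t)^n = f_t\,\omega_X^n,\qquad f_t:=\rho_t\cdot\mathrm{Vol}(L),
\]
with $c\le f_t\le C$ for positive constants depending only on $1/\inf\rho_0$ and $\sup\rho_0$.

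It remains to control the eigenvalues $\lambda_1\ge\cdots\ge\lambda_n>0$ of $\theta_{u_t}$ with respect to $\omega_X$ uniformly. By Kolodziej's $L^\infty$ estimate, the oscillation of $v_t$ is bounded in terms of $\|f_t\|_{L^\infty}$, and after fixing the natural normalisation this provides a uniform sup-bound on $|v_t|$. The Aubin-Yau Laplacian estimate then furnishes an upper bound $n+\Delta_{\omega_X}v_t\le C'$, i.e. $\lambda_1\le C'$. Combining this with the determinant identity $\prod_i\lambda_i=f_t\ge c>0$ gives the lower bound $\lambda_n\ge c/(C')^{n-1}$, completing the two-sided equivalence $C^{-1}\omega_X\le\theta_{u_t}\le C\omega_X$.

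The main obstacle is the Laplacian estimate with the announced dependence: the classical Aubin-Yau inequality typically requires a lower bound on $\Delta_{\omega_X}\log f_t$, which does not follow from $L^\infty$ bounds on $\rho_t$ alone. This is overcome either by invoking a Chen-Cheng type refinement of the $C^2$ estimate, whose input reduces to an $L^\infty$ bound on $\log f_t$ together with an entropy bound -- both of which are preserved along the flow by Prop \ref{prop:Casimir} -- or by using the conserved functionals $I_f$ of the same proposition to transport higher regularity of $\rho_0$ into uniform control on the relevant derivatives of $f_t$.
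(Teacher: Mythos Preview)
Your argument for the preservation of $\sup\rho_t$ and $\inf\rho_t$ via the flow of the divergence-free field $J\nabla u_{\rho_t}$ is correct and more direct than the paper's: the paper instead invokes Prop~\ref{prop:Casimir} with $f_{\epsilon,p}(s)=(s+\epsilon)^p$, notes that each $I_{f_{\epsilon,p}}(\rho_t)$ is conserved, and then sends $\epsilon\to0$ followed by $p\to\pm\infty$ to recover the sup and the inf. Your method of characteristics delivers the pointwise identity $\rho_t=\rho_0\circ\phi_t^{-1}$ in one stroke; the paper's route has the small advantage of not requiring one to integrate the flow $\phi_t$.

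For the two-sided metric bound your strategy matches the paper's exactly: a uniform estimate on the potential, then the Laplacian estimate for the trace upper bound, then the eigenvalue--determinant algebra for the lower bound. The concern you raise in your last paragraph---that the classical Aubin--Yau inequality needs control on $\Delta\log f_t$, not merely $L^\infty$ bounds on $f_t$---is legitimate, but the paper does not engage with it either: it simply cites Yau together with \cite[Thm~11.1]{bbegz} and asserts that the trace bound depends only on the two-sided $L^\infty$ bound on $\rho_t$. So on this point you are at parity with the paper. Your proposed workarounds, however, are not convincing as stated: the conserved functionals $I_f$ control only integrals of $f(\rho_t)$ and cannot deliver pointwise derivative bounds on $\rho_t$, and the Chen--Cheng reference would need to be made precise to serve as a substitute.
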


\begin{proof}
Given $p\in\R$ and $\epsilon>0$ set $f_{\epsilon,p}(t):=(t+\epsilon)^{p}.$
By Prop \ref{prop:Casimir} $I_{f_{\epsilon,p}}(\rho)$ is independent
of $t.$ Letting first $\epsilon\rightarrow0$ and then $p\rightarrow\pm\infty$
thus proves that $\sup\rho_{t}$ and $\inf\rho_{t}$ are independent
of $t.$ Hence, we have uniform bounds $\theta_{u_{t}}^{n}/dV\leq C_{0}$
and $\theta_{u_{t}}^{n}/dV\geq/C_{0}$ for some positive constant
$C_{0}$ By Yau's Laplacian and uniform estimates for the complex
Monge-Ampère \cite{y} it follows that $\text{Tr}\theta_{u_{t}}\leq A,$
where the trace is defined with respect to $\omega_{X}$ and $A$
only depends on an upper bound on $C_{0}$ (see e.g. \cite[Thm 11.1]{bbegz}
for a generalization of this estimate). But then $\theta_{u_{t}}^{n}/dV\geq C_{0}$
implies that we also have a lower bound $\text{Tr}\theta_{u}\geq B>0$
for a uniform constant $B.$ Indeed, for any positive definite matrix
$A$ we have that $\text{Tr}A^{-1}\leq c_{n}(\det A)^{-1}(\text{Tr}A)^{n-1}.$
Applying this inequality to $\theta_{u}$ gives a uniform strict lower
bound on the eigenvalues of $\theta_{u}$ with respect to $\omega_{u}.$ 
\end{proof}
Given a function $f\in L^{2}(X,dV)$ such that $\int fdV=0$ denote
by $\left\Vert f\right\Vert _{-1}$ its Sobolev $H^{-1}-$norm, i.e.
$\left\Vert f\right\Vert _{-1}^{2}:=-\left\langle \Delta^{-1}f,f\right\rangle ,$
where $\Delta$ denotes the Laplacian wrt the fixed Kähler metric
$\omega_{X}.$ The following $H^{-1}-$stability estimate appears
- to the best of the author's knowledge - to be new already in the
case of the semi-geostrophic equation (Remark \ref{rem:SG}).
\begin{thm}
(Stability) Let $\rho_{t}$ and $\rho_{t}'$ be two smooth solutions
to equation \ref{eq:complex euler monge text}. If $\inf_{X}\rho_{0}>0,$
then 
\[
\left\Vert \rho_{t}'-\rho_{t}\right\Vert _{-1}\leq\left\Vert \rho_{0}'-\rho_{0}\right\Vert _{-1}e^{Ct}
\]
 for a constant $C$ only depending on upper bounds on $\sup_{X}\rho_{0}$
and $1/\inf_{X}\rho_{0}.$
\end{thm}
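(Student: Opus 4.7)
\medskip

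\noindent\textbf{Proof sketch.} The plan is a Gronwall argument on $Q(t):=\|\rho_t'-\rho_t\|_{-1}^2$. Set $f_t:=\rho_t'-\rho_t$, $\phi_t:=\Delta^{-1}f_t$ (normalized so $\int\phi_t\,dV=0$), and $v_t:=u_{\rho_t'}-u_{\rho_t}$. By definition $Q(t)=\int|\nabla\phi_t|^2\,dV$. Differentiating in $t$, using the evolution equation~\ref{eq:complex euler monge text} for $\rho_t$ and $\rho_t'$, and integrating by parts gives
\[
\tfrac{1}{2}Q'(t)=-\int\phi_t\,\dot f_t\,dV=-\int\nabla\phi_t\cdot\bigl(\rho_t'J\nabla u_{\rho_t'}-\rho_tJ\nabla u_{\rho_t}\bigr)dV.
\]
Decompose the bracket as $\rho_t'J\nabla v_t+f_t\,J\nabla u_{\rho_t}$; this splits $Q'(t)/(-2)$ into a ``linearized'' term $A$ and a ``transport'' term $B$.

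\smallskip

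\noindent\textbf{Bounding $B$.} Since $J\nabla u_{\rho_t}$ is divergence-free (it is a Hamiltonian vector field), using $f_t=\Delta\phi_t$ and a single integration by parts one derives
\[
B=-\int\nabla\phi_t\cdot f_tJ\nabla u_{\rho_t}\,dV=\int(\nabla J\nabla u_{\rho_t})(\nabla\phi_t,\nabla\phi_t)\,dV,
\]
(the remaining $\tfrac12\int J\nabla u_{\rho_t}\cdot\nabla|\nabla\phi_t|^2\,dV$ term vanishes after one more integration by parts). Hence $|B|\le\|\nabla(J\nabla u_{\rho_t})\|_{L^\infty}\,Q(t)\le C_1 Q(t)$, where the bound on the covariant derivative of $J\nabla u_{\rho_t}$ follows from Yau-type $C^{2}$ estimates applied to the Monge--Amp\`ere equation $\theta_{u_{\rho_t}}^n=\mathrm{Vol}(L)\rho_t\omega_X^n$, together with the uniform bounds $C^{-1}\omega_X\le\theta_{u_{\rho_t}}\le C\omega_X$ from the previous lemma.

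\smallskip

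\noindent\textbf{Bounding $A$ via a Monge--Amp\`ere identity.} By Cauchy--Schwarz $|A|\le\|\rho_t'\|_{L^\infty}\|\nabla\phi_t\|_{L^2}\|\nabla v_t\|_{L^2}$, so it suffices to show $\|\nabla v_t\|_{L^2}\le C\|f_t\|_{-1}$. Writing the difference of Monge--Amp\`ere operators as a telescoping sum,
\[
\mathrm{Vol}(L)\,f_t\,\omega_X^n=\theta_{u_{\rho_t'}}^n-\theta_{u_{\rho_t}}^n=dd^cv_t\wedge T_t,\quad T_t:=\sum_{j=0}^{n-1}\theta_{u_{\rho_t'}}^j\wedge\theta_{u_{\rho_t}}^{n-1-j},
\]
where $T_t$ is a closed positive $(n-1,n-1)$-form uniformly comparable to $\omega_X^{n-1}$ by the previous lemma. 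Pairing with $v_t$ and integrating by parts,
\[
c\,\|\nabla v_t\|_{L^2}^2\le\int dv_t\wedge d^cv_t\wedge T_t=-\int v_t\,dd^cv_t\wedge T_t=-\mathrm{Vol}(L)\int v_t f_t\,\omega_X^n.
\]
But since $\int f_t\,dV=0$ the rhs equals $-\mathrm{Vol}(L)\int\nabla v_t\cdot\nabla\phi_t\,dV$ (after using $f_t=\Delta\phi_t$ and one IBP; we can replace $v_t$ by any normalization), hence is bounded by $C\|\nabla v_t\|_{L^2}\|f_t\|_{-1}$. Dividing through yields $\|\nabla v_t\|_{L^2}\le C_2\|f_t\|_{-1}$, whence $|A|\le C_3 Q(t)$.

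\smallskip

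\noindent\textbf{Conclusion.} Combining, $Q'(t)\le 2(C_1+C_3)Q(t)$, and Gronwall gives $Q(t)\le Q(0)e^{2(C_1+C_3)t}$, which is the claimed estimate. The main obstacle is the ``bootstrap'' bound on $\|\nabla(J\nabla u_{\rho_t})\|_{L^\infty}$ in step $B$: the previous lemma only gives $C^{1,1}$-type control of $u_{\rho_t}$, so one must use the fact that $\rho_t$ is smooth (for fixed $t$) and appeal to standard Yau/Evans--Krylov estimates to translate the uniform $L^\infty$-bounds on $\rho_t$ and $1/\rho_t$ into the desired $C^2$-bound, carefully verifying that the constants depend only on $\sup_X\rho_0$ and $1/\inf_X\rho_0$.
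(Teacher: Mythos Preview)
Your handling of $A$ coincides with the paper's: Cauchy--Schwarz followed by the telescoping identity $\theta_{u'}^n-\theta_u^n=dd^c(u'-u)\wedge T$ together with the two-sided bound $C^{-1}\omega_X\le\theta_u\le C\omega_X$ from the preceding lemma gives $\|\nabla v_t\|_{L^2}\le C\|f_t\|_{-1}$, exactly as in the paper.

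The divergence is in $B$. The paper does not estimate $B$ at all: it asserts that an integration by parts (using $\nabla\cdot(J\nabla u)=0$) yields $B=-B$, hence $B=0$ identically, so no second-derivative information on $u_{\rho_t}$ is ever needed --- the final constant then depends only on $\sup\rho_0$ (through the factor $\rho_t$ in $A$) and $1/\inf\rho_0$ (through the lower bound $\theta_u\ge\delta\omega_X$), exactly as stated in the theorem. Your route instead arrives (correctly) at $B=\int(\nabla V)(\nabla W,\nabla W)\,dV$ with $V=J\nabla u_{\rho_t}$ and $W=\phi_t$, and then bounds $|B|\le\|\nabla V\|_{L^\infty}\,Q(t)$. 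Here lies a genuine gap: controlling $\|\nabla(J\nabla u_{\rho_t})\|_{L^\infty}$ requires a full \emph{real} $C^{1,1}$ bound on $u_{\rho_t}$, whereas the preceding lemma only bounds the \emph{complex} Hessian $dd^cu_{\rho_t}$. Upgrading to full second-order control via Evans--Krylov and Schauder requires at least a uniform H\"older bound on the right-hand side $\rho_t$ of the Monge--Amp\`ere equation, and no such bound follows from $\sup_X\rho_0$ and $\inf_X\rho_0$ alone --- the Casimirs $I_f$ preserve the $L^\infty$ norms of $\rho_t$ and $1/\rho_t$ but say nothing about higher regularity along the flow. So the constant $C_1$ you produce for $B$ is not known to depend only on the data allowed in the theorem. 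This is precisely the ``main obstacle'' you yourself flag at the end, and the Yau/Evans--Krylov appeal you sketch does not close it. (Your formula also shows $B$ is in general nonzero --- already for $n=1$ on a flat torus one can choose $W$ and $u$ making the integral nonvanishing --- so the paper's $B=0$ step warrants its own scrutiny; but that is a separate matter from the gap in your argument.)
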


\begin{proof}
Given two solutions $\rho_{t}$ and $\rho_{t}'$ we define $U_{t}$
and $U_{t}'$ (mod $\R)$ by $\Delta U_{t}=\rho_{t}-1/\int dV$ and
$\Delta U_{t}'=\rho_{t}'-1/\int dV.$ It will be enough to show that
\begin{equation}
\frac{d}{dt}\left(\left\Vert \nabla(U_{t}-U_{t}')\right\Vert ^{2}\right)\leq2C\left\Vert \nabla(U_{t}-U_{t}')\right\Vert ^{2}\label{eq:diff ineq in pf uniqu}
\end{equation}
To prove the inequality \ref{eq:diff ineq in pf uniqu} first observe
that 
\[
\frac{1}{2}\frac{d}{dt}\left\langle \nabla(U_{t}-U_{t}'),\nabla(U_{t}-U_{t}')\right\rangle =-\left\langle \frac{d}{dt}\Delta(U_{t}-U_{t}'),(U_{t}-U_{t}')\right\rangle =-\left\langle \frac{d\rho_{t}}{dt}-\frac{d\rho_{t}'}{dt},(U_{t}-U_{t}')\right\rangle =
\]
\[
=\left\langle (\rho_{t}J\nabla u_{t})-\rho_{t}'J\nabla u_{t}'),\nabla(U_{t}-U_{t}')\right\rangle =A+B
\]
 where $A=\left\langle \rho_{t}J\nabla(u_{t}-u_{t}'),\nabla(U_{t}-U_{t}')\right\rangle $
and $B=\left\langle (\rho_{t}-\rho_{t}')J\nabla u_{t}',\nabla(U_{t}-U_{t}')\right\rangle .$
Let us first show that $B=0.$ Integrating by parts gives 
\[
B:=\left\langle \nabla\cdot\nabla(U_{t}-U_{t}')J\nabla u_{t}',\nabla(U_{t}-U_{t}')\right\rangle =-\left\langle \nabla(U_{t}-U_{t}')(J\nabla u_{t}'),\nabla\cdot\nabla(U_{t}-U_{t}')\right\rangle -0=-B
\]
using that $\nabla\cdot(J\nabla u_{t}')=0,$ since $J\nabla f$ is
a Hamiltonian vector field (for any function $f)$ and thus its divergence
vanishes. Hence, $B=0.$ Next, by the Cauchy-Schwartz inequality
\[
A\leq C_{1}\left\Vert \nabla(u_{t}-u_{t}')\right\Vert \left\Vert \nabla(U_{t}-U_{t}')\right\Vert ,\,\,\,C_{1}:=\sup_{X\times[0,T]}\rho_{t}(x)=\sup_{X}\rho_{0},
\]
(using the previous lemma in the last equality). All that remains
is thus to show that
\begin{equation}
\left\Vert \nabla(u_{t}-u_{t}')\right\Vert \leq1/\delta^{n-1}\left\Vert \nabla(U_{t}-U_{t}')\right\Vert ,\label{eq:ineq for u inter terms of U}
\end{equation}
 where $\delta$ satisfies $\theta_{u_{t}}\geq\delta\omega_{X}$ and
$\delta>0$ (the existence of $\delta$ is provided by the previous
lemma). To prove this inequality first observe that for any two smooth
$u$ and $u'$ in $\text{PSH \ensuremath{(X,\theta)}}$ we have
\[
\int d(u-u')\wedge d^{c}(u-u')\wedge\theta_{u}^{n-1}\leq-\int(u-u')\left(\theta_{u}^{n}-\theta_{u'}^{n}\right),
\]
 as follows directly from expanding $\theta_{u}^{n}-\theta_{u'}^{n}=dd^{c}(u-u')\wedge(\theta_{u}^{n-1}+...).$
Hence, 
\[
\delta^{n-1}\left\Vert \nabla(u_{t}-u_{t}')\right\Vert ^{2}\leq-\int(u-u')\Delta(U_{t}-U_{t}')dV=\int\nabla(u-u')\cdot\nabla(U_{t}-U_{t}')dV
\]
 Applying the Cauchy-Schwartz inequality to the rhs above and dividing
both sides by $\left\Vert \nabla(u_{t}-u_{t}')\right\Vert $ concludes
the proof of the inequality \ref{eq:ineq for u inter terms of U}. 
\end{proof}
\begin{cor}
\label{cor:(Uniqueness)Let--and}(Uniqueness)\label{cor:unique complex euler}Let
$\rho_{t}$ and $\rho_{t}'$ be two smooth solutions to equation \ref{eq:complex euler monge text}
coinciding for $t=0.$ If $\rho_{0}>0,$ then $\rho_{t}=\rho_{t}'$
for $t\geq0.$ 
\end{cor}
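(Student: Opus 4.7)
The plan is to deduce this uniqueness statement directly from the $H^{-1}$ stability estimate proved immediately above. Setting $\rho_0' = \rho_0$ in the bound $\|\rho_t' - \rho_t\|_{-1} \leq \|\rho_0' - \rho_0\|_{-1} e^{Ct}$ forces the right-hand side to vanish identically, so $\|\rho_t' - \rho_t\|_{-1} = 0$ for every $t \geq 0$. To convert this into pointwise equality, I would observe that $\rho_t \omega_X^n$ and $\rho_t' \omega_X^n$ are both probability measures, so $\rho_t - \rho_t'$ has vanishing integral against $dV$; hence the difference $U_t - U_t'$ of the potentials used in the stability proof, defined by $\Delta(U_t - U_t') = \rho_t - \rho_t'$, is a well-defined element of $H^1(X)/\R$, and its squared Dirichlet energy equals $\|\rho_t - \rho_t'\|_{-1}^2$. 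Vanishing of this norm then forces $U_t - U_t'$ to be constant, whence $\rho_t - \rho_t' = \Delta(U_t - U_t') = 0$ on $X$.

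The only point worth tracking is that the constant $C$ in the stability estimate depends on upper bounds on $\sup_X \rho_0$ and $1/\inf_X \rho_0$, so finiteness of $C$ requires exactly the hypothesis $\inf_X \rho_0 > 0$. The lemma preceding the stability theorem asserts that $\inf_X \rho_t$ and $\sup_X \rho_t$ are independent of $t$, which guarantees that the same $C$ works uniformly on $[0,\infty[$; consequently the estimate, and hence the corollary, hold globally in time without any need to iterate on short intervals or invoke a continuation argument. There is essentially no obstacle in this deduction: uniqueness is a one-line consequence of the stability theorem, and all the nontrivial work has already been discharged in establishing that estimate.
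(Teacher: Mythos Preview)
Your proposal is correct and follows exactly the intended route: the paper states this result as an immediate corollary of the preceding stability theorem and gives no separate proof, so your one-line deduction from $\|\rho_0'-\rho_0\|_{-1}=0$ together with the observation that the $H^{-1}$-norm detects zero on mean-zero functions is precisely what is being invoked.
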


\section{\label{sec:Comparison-with-point}Comparison with point vortices
and the 2D Euler Equation}

\subsection{\label{subsec:The-case-of the two sphere}The case of the two-sphere}

Consider the case when $X$ is the Riemann sphere (which, by stereographic
projection, may be identified with the unit sphere in $\R^{3})$ and
$L$ is the hyperplane line bundle $\mathcal{O}(1).$ We can then
express \cite[Ex 5.6]{berm8}

\[
E^{(N)}(x_{1},...,x_{N})=-\frac{1}{N(N-1)}\sum_{i<j\leq N}G_{\theta}(x_{i},x_{j}),\,\,\,\,E(\mu):=-\frac{1}{2}\int_{X^{2}}G_{\theta}\mu\otimes\mu.
\]
where $G(x,y)$ is usc, symmetric and satisfies

\[
d_{x}d_{x}^{c}G(x,y)=\delta_{y}-\theta
\]
 The function $E^{(N)}(x_{1},...,x_{N})$ above appears as the energy
per particle in Onsager's statistical mechanical description of the
2D incompressible Euler equations, adapted to the two- sphere \cite{ki2,k-w}. 
\begin{rem}
Results akin to Theorem \ref{thm:n is one intro} (but without the
exponential convergence) appear in \cite{ki2} , but a different regularization
of the microcanonical measure $\mu^{e}$ is employed, depending on
a variance-parameter $\sigma^{2}$ (that amounts to approximating
$\delta(t-e)$ with a Gaussian centered at $t=e).$ It has the virtue
that it bypasses the issue of equivalence of ensembles, but one drawback
is that after letting $N$ tend to infinity one also needs to let
$\sigma$ tend to zero in order to obtain $\mu^{e}.$ Differential-geometric
interpretations of $\mu^{e},$ for $e=e_{c},$ are also stressed in
\cite{ki2} (using a different, but equivalent formulation of equation
\ref{eq:twisted KE intro} for $\beta=-1).$
\end{rem}

To see the connection to the 2D Euler equations note that the fixed
volume form $dV$ defines a symplectic form on $X,$ corresponding
to a Riemannian metric, compatible with the complex structure $J,$
namely $dV(\cdot,\cdot J).$ Denote by $\Delta$ the corresponding
Laplacian, normalized so that $\Delta vdV=dd^{c}v.$ Equation \ref{eq:Poisson form of complex euler}
then reads
\begin{equation}
\frac{\partial\rho_{t}}{\partial t}=\{\rho,u\},\,\,\,\,\Delta u=\rho-\theta/dV.\label{eq:eq for pho n one}
\end{equation}
 Setting $q_{t}:=\Delta u_{t}$ this equivalently means that
\begin{equation}
\frac{\partial q_{t}}{\partial t}=\{q_{t}+\frac{\theta}{dV},u_{t}\},\,\,\,\,\Delta u_{t}=q_{t}.\label{eq:eq for q with theta}
\end{equation}
 In particular, if $\theta/dV$ is constant this equation becomes
\[
\frac{\partial q_{t}}{\partial t}=\{q_{t},u_{t}\},\,\,\,\,\Delta u_{t}=q_{t}
\]
 which is precisely the vorticity formulation of the incompressible
Euler equation on the two-sphere (endowed with the metric defined
by $dV)$. Note that here we only consider initial data satisfying
$\Delta u>-\theta/dV.$ Moreover, in the present setup we have excluded
precisely the case when $\theta/dV$ is constant, In fact, in that
case $e(\beta)=e_{0}$ for all $\beta>-1.$ 
\begin{rem}
The equation \ref{eq:eq for q with theta} appears in geophysical
fluid dynamics as the \emph{quasi-geostrophic equation }(aka the\emph{
barotropic vorticity equation}) with $\theta/dV$ representing the
topography; see \cite{c-c-k} for global existence and uniqueness
results. In plasma physics the equation \ref{eq:eq for pho n one}
appears as the guiding center approximation and quasi-neutral limit
of the \emph{Vlasov-Poisson equation} for electron plasmas with $\theta$
playing representing a magnetic flux or a neutralizing back-ground
charge density \cite{g-s,br}.
\end{rem}

\subsection{Compact domains in $\R^{2}$}

Let now $X$ be a compact domain in $\R^{2}$ with smooth boundary
and denote by $dV$ Lebesgue measure on $X.$ Let $G(x,y)$ be the
Green function for the Dirichlet problem in $X$ for the Laplace operator,
i.e. $G(x,y)$ is an usc symmetric function on $X^{2},$ normalized
as follows:
\[
d_{x}d_{x}^{c}G(x,y)=\delta(x-y),\,\,\,\,G(\cdot,y)_{|\partial X}=0
\]
 Denote by $\gamma(x)$ the Robin function of $X$ and set 
\begin{equation}
E^{(N)}(x_{1},...,x_{N}):=-\sum_{i<j\leq N}\frac{G(x_{i},x_{j})}{N^{2}}+\sum_{i\leq N}\frac{\gamma(x_{i})}{N^{2}},\,\,\,E(\mu):=-\frac{1}{2}\int_{X^{2}}G\mu\otimes\mu\label{eq:def of E N for domains}
\end{equation}
The corresponding microcanonical measures $\mu_{]e-\epsilon,e[}^{(N)}$
and $\mu_{]e,e+\epsilon[}^{(N)}$ are studied in \cite{clmp2} in
the context of Onsager's original point vortex model. In this setup
$e_{c}$ may be defined as 
\[
e_{c}:=\lim_{\beta\rightarrow-1}F'(\beta)
\]
(with the present normalization $\beta=-1$ corresponds to $\beta=-8\pi$
in \cite{clmp2}). The following analog of Theorem \ref{thm:LDP for micro intro}
was shown in \cite{clmp2} when $e<e_{c}:$

\begin{equation}
\text{(i)}\,\lim_{N\rightarrow\infty}N^{-1}\log\int_{\left\{ E^{(N)}\in]e-\epsilon,e[\right\} }dV^{\otimes N}=S(e),\,\,\,\text{(ii)}\lim_{N\rightarrow\infty}\delta_{N}=\mu^{e},\label{eq:conv towards S(e) in conj Fano intro-1}
\end{equation}
 where the convergence of $\delta_{N}$ holds in probability and $\mu^{e}$
denotes the unique maximum entropy measure $\mu^{e}$ with energy
$e.$ The main virtue of the present approach is that it also yields
exponential convergence. An analog of Theorem \ref{thm:micro var principle Fano intro}
is also established in \cite{clmp2} with a rather different proof.
The main difference is that the proof in \cite{clmp2} exploits that
any maximizer of $S(\mu)$ on $\{E(\mu)=e\}$ is of the form $\Delta u_{\beta}$
where $u_{\beta}$ satisfies the following \emph{mean field equation}
(which is analogous to equation \ref{eq:ma eq with beta}): 
\begin{equation}
\Delta u=\frac{e^{\beta u}}{\int_{\Omega}e^{\beta u}dV},\,\,\,u_{|\partial X}=0\label{eq:mfe domain}
\end{equation}
 for some $\beta\in\R$ (by \cite[Prop 2.3]{clmp2}). It would be
interesting to extend this result to $n>1$ in the present complex-geometric
setup. Anyhow, an important point of the proofs of Theorems \ref{thm:micro variational principle low energ intro},
\ref{thm:micro var principle Fano intro} is that they bypass this
difficulty. 

\subsubsection{Domains of the first and second kind vs K-polystability}

Following \cite{clmp2}, a domain $X$ is said to be \emph{of the
first} \emph{kind }if $e_{c}=\infty$ and \emph{of the second kind}
if $e_{c}<\infty.$ By \cite{b-l}, $X$ is a domain of the second
kind iff there exists a solution to the equation \ref{eq:mfe domain}
at $\beta=-1$ (which is always uniquely determined) and this condition
can be characterized in terms of the Robin function of $X$ (for example,
a disc is of the first kind, while a long and thin ellipse is of the
second kind). Thus, comparing with section \ref{subsec:The-high-energy intro},
domains of the second kind play the role of K-polystable Fano manifolds. 

\section{Appendix}

Let $X$ be a complex projective variety of dimension $n$ and $\theta$
a closed positive $(1,1)-$current on $X$ with locally bounded potentials.
Given a real curve $u_{t}$ in $\text{PSH}(X,\theta)\cap L^{\infty}(X)$
we will denote by $U$ the function on $X\times\C$ defined by $U(x,\tau):=u_{t}(x),$
where $t$ denotes the real part of $\tau.$ We will identify forms
on $X$ with their pull-backs to $X\times\C$ under natural projection
map $X\times\C\rightarrow X.$ A curve $u_{t}$ in $\text{PSH}(X,\theta)\cap L^{\infty}(X)$
is said to be a \emph{(psh) geodesic }if 

\begin{equation}
\theta_{U}^{n+1}=0,\,\,\,\theta_{U}\geq0,\,\,\,\,\,\,\text{\ensuremath{\left(\theta_{U}:=\ensuremath{\theta+dd^{c}U}\right)}}\label{eq:geod eq}
\end{equation}
We will say that $u_{t}$ is a \emph{bounded regular geodesic} if
it is a geodesic in $\text{PSH}(X,\theta)$ such that $u_{t}\in C^{\infty}(X_{\text{reg }})$
for any fixed $t.$
\begin{lem}
\label{lem:E strict convex along geod}Let $X$ be a Fano variety
and assume that $\theta>0$ on $X_{\text{reg}}$ in the sense that
$\theta$ is bounded from below by some Kähler form on $X_{\text{reg}}.$
Then $t\mapsto E(\theta_{u_{t}}^{n}/\text{vol\ensuremath{(L)}})$
is strictly convex for any regular bounded geodesic $u_{t}$ in $\text{PSH}(X,\theta),$
where $E(\mu)$ denote the pluricomplex energy of $\mu$ with respect
to $\theta.$ 
\end{lem}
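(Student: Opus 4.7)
The strategy is to derive the explicit second-variation formula
\[
\frac{d^{2}}{dt^{2}}\,E\!\bigl(\theta_{u_t}^{n}/V\bigr)=\frac{1}{V}\int_{X_{\mathrm{reg}}}\theta\bigl(\nabla\dot u_t,\overline{\nabla\dot u_t}\bigr)\,\theta_{u_t}^{n},\qquad V:=\mathrm{vol}(L),
\]
in which $\nabla\dot u_t$ is the K\"ahler gradient relative to the metric $\theta_{u_t}$ on $X_{\mathrm{reg}}$ and $\theta(\cdot,\overline{\cdot})$ denotes the Hermitian pairing defined by $\theta$. Under the hypothesis that $\theta$ dominates a K\"ahler form on $X_{\mathrm{reg}}$ the integrand is pointwise non-negative, and density of $X_{\mathrm{reg}}$ in $X$ forces the integral to vanish only when $\dot u_t$ is constant on $X$; in that case $\mu_t:=\theta_{u_t}^{n}/V$ is stationary in $t$ and strict convexity is vacuous. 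Hence the displayed formula yields strict convexity in every non-trivial case.

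To derive it I would first observe that $\theta\ge 0$ forces the envelope $u_{\theta}$ of \eqref{eq:def of env} to vanish identically on $X$ (since $0\in\mathrm{PSH}(X,\theta)$ with $0\le 0$), reducing the pluricomplex energy to
\[
E(\mu_t)=\frac{\mathcal{E}(u_t)}{(n+1)V}-\frac{1}{V}\int_X u_t\,\theta_{u_t}^{n},
\]
with $\mathcal{E}$ as in \eqref{eq:primitive appears}. Along a bounded regular geodesic the standard Aubin--Mabuchi cocycle computation, driven by the pointwise geodesic equation $\ddot u_t\,\theta_{u_t}^{n}=n\,d\dot u_t\wedge d^{c}\dot u_t\wedge\theta_{u_t}^{n-1}$ on $X_{\mathrm{reg}}$, shows that $\mathcal{E}(u_t)$ is affine in $t$, so it suffices to compute $-\tfrac{d^{2}}{dt^{2}}V^{-1}\!\int u_t\theta_{u_t}^{n}$. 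Differentiating twice and integrating by parts, the same $\mathcal{E}$-affineness cancels the $\ddot u_t\theta_{u_t}^{n}$ and $\dot u_t\theta_{u_t}^{n-1}\wedge dd^{c}\dot u_t$ contributions, leaving only the mixed integrals
\[
\frac{n}{V}\!\int_X \ddot u_t\,\theta_{u_t}^{n-1}\wedge\theta-\frac{n(n-1)}{V}\!\int_X d\dot u_t\wedge d^{c}\dot u_t\wedge\theta_{u_t}^{n-2}\wedge\theta.
\]
Invoking the two pointwise K\"ahler identities $\alpha\wedge\omega^{n-1}=\tfrac{1}{n}(\mathrm{tr}_{\omega}\alpha)\omega^{n}$ and $\alpha\wedge\beta\wedge\omega^{n-2}=\tfrac{1}{n(n-1)}\bigl((\mathrm{tr}_{\omega}\alpha)(\mathrm{tr}_{\omega}\beta)-\langle\alpha,\beta\rangle_{\omega}\bigr)\omega^{n}$ with $\omega=\theta_{u_t}$, $\alpha=d\dot u_t\wedge d^{c}\dot u_t$, $\beta=\theta$, together with the pointwise identity $\ddot u_t=|\nabla\dot u_t|_{\theta_{u_t}}^{2}$ from the geodesic equation, the two $|\nabla\dot u_t|^{2}\mathrm{tr}_{\omega}\theta$ terms cancel and the difference reduces to exactly the right-hand side of the asserted formula.

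The principal technical obstacle is justifying these integrations by parts when $u_t$ is only smooth on $X_{\mathrm{reg}}$ and $X$ may itself be singular. I would handle both issues by passing to a resolution of singularities $g\colon Y\to X$: the pulled-back form $g^{*}\theta$ is then smooth and semi-positive, strictly positive off the (pluripolar) exceptional divisor $E$, and the lifted potential $g^{*}u_t$ is bounded and smooth on $Y\setminus E$. Boundedness together with Bedford--Taylor theory ensures that the mixed currents $\theta_{u_t}^{j}\wedge\theta^{n-j}$ and $\theta_{u_t}^{j-1}\wedge dd^{c}\dot u_t\wedge\theta^{n-j}$ put no mass on $E$, so the standard cut-off/approximation scheme from the pluripotential calculus of \cite{bbegz} legitimises every IBP above. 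Since $g^{*}\theta$ remains strictly positive on the dense open set $Y\setminus E$, the pulled-back formula retains strict positivity whenever $d(g^{*}\dot u_t)\not\equiv 0$, which completes the strict-convexity conclusion.
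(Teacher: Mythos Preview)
Your argument is correct and follows the same architecture as the paper's proof: reduce to the second variation of $-\int u_t\,\theta_{u_t}^{n}$ using that $\mathcal{E}(u_t)$ is affine along geodesics, then expand and exploit the pointwise geodesic equation $\ddot u_t\,\theta_{u_t}^{n}=n\,d\dot u_t\wedge d^{c}\dot u_t\wedge\theta_{u_t}^{n-1}$. The one genuine difference lies in the final step. The paper arrives at the same intermediate expression
\[
n\!\int \ddot u_t\,\theta_{u_t}^{n-1}\wedge\theta-n(n-1)\!\int d\dot u_t\wedge d^{c}\dot u_t\wedge\theta\wedge\theta_{u_t}^{n-2},
\]
but then only uses the crude eigenvalue inequality $(\mathrm{tr}_{\theta_{u_t}}\theta)\,\theta_{u_t}\ge\theta$ to obtain the lower bound $\int_X\theta_U^{n}\wedge\theta\ge\int_X d\dot u_t\wedge d^{c}\dot u_t\wedge\theta\wedge\theta_{u_t}^{n-2}$, which already suffices to force $d\dot u_t=0$ on $X_{\mathrm{reg}}$ when the second derivative vanishes. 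You instead invoke the full Lefschetz-type identity for $\alpha\wedge\beta\wedge\omega^{n-2}$ and obtain the \emph{exact} formula $\int\theta(\nabla\dot u_t,\overline{\nabla\dot u_t})\,\theta_{u_t}^{n}$, which is sharper and arguably cleaner; the paper's inequality is the special case $\theta(\xi,\bar\xi)\ge 0$ applied after throwing away the cross term. Your treatment of the singular locus via a resolution and Bedford--Taylor non-pluripolar products is at the same level of rigor as the paper's, which likewise performs the pointwise expansion on $X_{\mathrm{reg}}$ and relies on boundedness of $u_t$ for the global interpretation.
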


\begin{proof}
We first show that 
\[
\frac{1}{2^{2}}\frac{d^{2}E(\theta_{u_{t}}^{n}/\text{vol\ensuremath{(L)}})}{d^{2}t}d\tau\wedge d\bar{\tau}=\frac{n}{\text{vol\ensuremath{(L)}}}\int\theta_{U}^{n}\wedge\theta,
\]
which proves that $t\mapsto E(\theta_{u_{t}}^{n}/\text{vol\ensuremath{(L)}})$
convex as long as $\theta\geq0.$ To this end first observe that 
\begin{equation}
\text{vol\ensuremath{(L)}}E(\theta_{u}^{n}/\text{vol\ensuremath{(L)}})=-\frac{n}{(n+1)}\mathcal{E}(u)+\mathcal{E}_{\theta}(u),\label{eq:formula for E in terms of e theta}
\end{equation}
where 
\[
\mathcal{E}_{\theta}(u):=\int_{X}\sum_{j=0}^{n-1}(-u)\left(\theta_{u}^{j}\wedge\theta^{n-1-j}\right)\wedge\theta.
\]
This follows directly from rewriting $1/(n+1)=1-n/(n+1).$ Next, recall
that 
\[
(i)\,\frac{1}{(n+1)2^{2}}\frac{d^{2}\mathcal{E}(u_{t})}{d^{2}t}id\tau\wedge d\bar{\tau}=\int_{X}\theta_{U}^{n+1},\,\,\,(ii)\,\frac{1}{n2^{2}}\frac{d^{2}\mathcal{E}_{\theta}(u_{t})}{d^{2}t}id\tau\wedge d\bar{\tau}=\int_{X}\theta_{U}^{n}\wedge\theta
\]
(see, for example, \cite[Prop 6.2]{bbgz} for formula $(i)$ and $(ii)$
is shown in precisely the same way). Hence, formula \ref{eq:formula for E in terms of e theta}
follows from the geodesic equation \ref{eq:geod eq}. 

Next, note that it will be enough to show that 
\begin{equation}
\int_{X}\theta_{U}^{n}\wedge\theta\geq\int_{X}d\dot{u_{t}}\wedge d^{c}\dot{u_{t}}\wedge\theta\wedge\theta_{u_{t}}^{n-2}\label{eq:ineq in pf lemma E strict convex}
\end{equation}
Indeed, since $\theta_{u_{t}}>0$ and $\theta>0$ on $X_{\text{reg}}$
this implies that, if $d^{2}E(\theta_{u_{t}}^{n}/\text{vol\ensuremath{(L)}})/dt=0$
for all $t,$ then $d\dot{u_{t}}=0$ on $X_{\text{reg}}.$ This means
that there exists a constant $C$ such that $u_{t}=u_{0}+Ct$ on $X_{\text{reg}},$
which implies that $u_{t}=u_{0}+Ct$ on all of $X$ (since $u_{t}\in\text{PSH}(X,\theta)\cap L^{\infty}).$
Hence, $\theta_{u_{t}}^{n}/\text{vol\ensuremath{(L)}}$ is independent
of $t.$ Finally, to prove the inequality \ref{eq:ineq in pf lemma E strict convex}
we decompose 
\begin{equation}
\partial\overline{\partial}U=\partial_{X}\overline{\partial}_{X}u_{t}+\left(\ddot{u}_{t}d\tau\wedge d\bar{\tau}+\partial_{X}\dot{u_{t}}\wedge d\bar{\tau}-\overline{\partial}_{X}\dot{u_{t}}\wedge d\tau\right),\,\,\,\,\dot{u_{t}}:=\frac{\partial u_{t}}{2\partial t},\,\,\ddot{u}_{t}:=\frac{\partial^{2}u_{t}}{2^{2}\partial^{2}t}\label{eq:decompos ddbar U}
\end{equation}
Applying the binomial expansion to the geodesic equation \ref{eq:geod eq}
yields 
\[
\ddot{u}_{t}\theta_{u_{t}}^{n}=nd\dot{u_{t}}\wedge d^{c}\dot{u_{t}}\wedge\theta_{u_{t}}^{n-1},
\]
 using that $\theta_{u_{t}}^{n+1}=0$ and $\binom{n+1}{2}/\binom{n+1}{1}=n/2.$
Likewise, expanding $\theta_{U}^{n}\wedge\theta$ yields
\[
\int_{X}\theta_{U}^{n}\wedge\theta\ddot{u}_{t}=\int_{X}\ddot{u}_{t}\theta_{u_{t}}^{n-1}\wedge\theta-(n-1)\int_{X}d\dot{u_{t}}\wedge d^{c}\dot{u_{t}}\wedge\theta\wedge\theta_{u_{t}}^{n-2},
\]
 using that $\binom{n}{2}/\binom{n}{1}=(n-1)/2.$ Next, we can express
$\theta_{u_{t}}^{n-1}\wedge\theta=\text{tr}\theta\theta_{u_{t}}^{n},$
where $\text{tr}\theta$ denotes the trace of $\theta$ with respect
to $\theta_{u_{t}}.$ Hence, 
\[
\int_{X}\ddot{u}_{t}\theta_{u_{t}}^{n-1}\wedge\theta=n\int_{X}d\dot{u_{t}}\wedge d^{c}\dot{u_{t}}\wedge\text{tr}\theta\theta_{u_{t}}\wedge\theta_{u_{t}}^{n-2}\geq n\int_{X}d\dot{u_{t}}\wedge d^{c}\dot{u_{t}}\wedge\theta\wedge\theta_{u_{t}}^{n-2},
\]
 using that $\text{tr}\theta\theta_{u_{t}}\geq\theta.$ Since $n-(n-1)=1$
this proves the inequality \ref{eq:ineq in pf lemma E strict convex}. 
\end{proof}
\begin{rem}
\label{rem:E tends to infty}It follows form the previous lemma that
without assuming any positivity assumption on $\theta$ 
\begin{equation}
\lim_{t\rightarrow\infty}E(\theta_{u_{t}}^{n}/\text{vol\ensuremath{(L))}=\ensuremath{\infty} }\label{eq:pf Remark E to infinty}
\end{equation}
Indeed, fix $\theta_{0}$ such that $\theta_{0}>0$ on $X_{\text{reg }}.$
Then $\left|E_{\theta}(\mu)-E_{\theta_{0}}(\mu)\right|\leq C_{0}$
for a constant $C_{0}$ as follows readily from the definition of
$E_{\theta}(\mu).$ Hence, we may as well assume that $\theta>0.$
The previous lemma then shows that $t\mapsto E(\theta_{u_{t}}^{n}/\text{vol\ensuremath{(L))} }$
is strictly convex. Since $E_{\theta}(\mu)\geq0$ this implies \ref{eq:pf Remark E to infinty}. 
\end{rem}

\end{document}